\newcommand{\be}{\begin{equation}}
\newcommand{\ee}{\end{equation}}
\newcommand{\ba}{\begin{eqnarray*}}
\newcommand{\ea}{\end{eqnarray*}}
\newcommand{\bal}{\begin{align}}
\newcommand{\eal}{\end{align}}
\newcommand{\baln}{\begin{align*}}
\newcommand{\ealn}{\end{align*}}
\newcommand{\bi}{\begin{itemize}}
\newcommand{\ei}{\end{itemize}}
\newcommand{\bn}{\begin{enumerate}}
\newcommand{\en}{\end{enumerate}}
\newcommand{\bbm}{\begin{bmatrix}}
\newcommand{\ebm}{\end{bmatrix}}
\newcommand{\bpm}{\begin{pmatrix}}
\newcommand{\epm}{\end{pmatrix}}
\newcommand{\bsm}{\left ( \begin{smallmatrix}}
\newcommand{\esm}{\end{smallmatrix} \right) }
\newcommand{\bp}{\begin{proof}}
\newcommand{\ep}{\end{proof}}
\newcommand{\mr}{\ensuremath{\mathrm}}
\newcommand{\scr}{\ensuremath{\mathscr}}
\newcommand{\ga}{\ensuremath{\gamma}}
\newcommand{\la}{\ensuremath{\lambda }}
\newcommand{\om}{\ensuremath{\omega}}
\def\nbdom{\mathrm{Dom} \, }
\def\nbker{\mathrm{Ker} \, }
\newcommand{\Addresses}{{
  \bigskip

Fouad~Naderi, \textsc{Department of Mathematics, University of Manitoba}\par\nopagebreak
  \textit{E-mail address:} \texttt{naderif@myumanitoba.ca}

\vspace{1cm}

}}
\newcommand{\ip}[2]{\ensuremath{\left\langle {#1} , {#2} \right\rangle}}
\newtheorem{thm}{Theorem}
\newtheorem{lemma}{Lemma}
\newtheorem{prop}{Proposition}
\newtheorem{cor}{Corollary}
\newtheorem*{thm*}{Theorem}
\theoremstyle{definition}
\newtheorem{defn}{Definition}
\newtheorem{remark}{Remark}
\newtheorem{eg}{Example}
\title{Non-commutative Lebesgue decomposition of  non-commutative measures}
\author[]{Fouad Naderi}
\affil[]{\footnotesize University of Manitoba}
\date{\vspace{-1.2cm}}
\begin{document}
\maketitle
\begin{abstract}
   A positive non-commutative (NC) measure is a positive linear functional  on the free disk operator system which is generated by a $d$-tuple of non-commuting isometries. By introducing the hybrid forms, their Cauchy transforms, and techniques from NC reproducing kernel Hilbert spaces (RKHS), we construct a natural Lebesgue decomposition for any positive NC measure against any  other such measure. Our work extends the Jury-Martin decomposition, which originally decomposes positive NC measures  against the standard NC Lebesgue measure. In fact, we give a more generalized definition of absolute continuity and singularity, which reduces to their definition when the splitting measure is the standard NC Lebesgue measure. This generalized definition makes it possible to extend Jury-Martin theory for any splitting NC measure, and it recovers their decomposition when the splitting NC measure is the Lebesgue one. Our work implies a Lebesgue decomposition for representations of the Cuntz-Toeplitz  C*-algebra. Furthermore, our RKHS method gives a new proof of the classical Lebesgue decomposition when applied to the classical one dimensional setting, i.e., $d=1$. 
\end{abstract}
\section{Introduction}
 Classically,  any positive, finite and regular Borel measure on the complex unit circle can be viewed as a positive linear functional on the space of continuous functions on the unit circle by the Riesz-Markov theorem. By the Weierstrass approximation theorem, the continuous functions on the unit circle can be obtained as the norm-closure of the disk algebra and its conjugate. Here, the disk algebra is the unital Banach algebra of analytic functions in the disk which extend continuously to the boundary. Disk algebra can be identified with the unital norm closed operator algebra generated by the forward shift, $S=M_{z}$, the isometry of multiplication by $z$ on the Hardy space $H^{2}$, the Hilbert space of square-summable Taylor series in the complex unit disk. 
 
 In the classical measure theory, Lebesgue theorem tells us if we have two positive finite Borel measures $\mu$ and $\la$, then one can  decompose $\mu$  into absolutely continuous and singular parts with respect to $\la$. The aim of the current work is to pursue the Lebesgue decomposition in several non-commuting variables, i.e., when one has several different copies of the shift operator which do not commute with each other. By using the techniques of positive quadratic forms and reproducing kernel Hilbert spaces (RKHS), we will first translate the Lebesgue decomposition into the language of operator theory and operator algebras,   and then we use the language of non-commutative function theory to further generalize our Lebesgue decomposition to the case when several non-commutative versions of the shift operator are present. Interestingly, when one translates back our results to the classical single variable setting, they find a new proof of classical Lebesgue decomposition based on RKHS theory.

In recent years several authors have shown some versions of  Lebesgue decomposition of one NC measure with respect to a standard NC Lebesgue measure, see \cite{ArvI}, \cite{DLP-ncld}, \cite{GK-ncld}, and \cite{JM-ncld}. Some of these studies have been implemented on NC measures on C*-algebra, see  \cite{ArvI} and \cite{GK-ncld}. However, when the domain of these measures is  the bonafide generalization of continuous functions, i.e., the free disk operator system, the Lebesgue decomposition is more challenging.  For measures living on the free disk operator system, Jury and Martin in \cite{JM-ncld} put forth a theory for Lebesgue decomposition of an NC measure with respect to the canonical NC Lebesgue measure. Nevertheless, they have not explored how to generalize their construction to a pair of arbitrary NC measures, i.e., when one intends to split an arbitrary NC measure $\mu$ with respect to another arbitrary splitting NC measure $\la$.  Their method, however, can be generalized to the case when the splitting measure $\la$ is similar in structure to the Lebesgue one, i.e., when $\la$ is a non-Cuntz measure. Here, a Cuntz measure is the one whose  GNS representation is surjective.  When the splitting measure is Cuntz, we have to generalize Jury-Martin theory so that our generalization recovers their case as well. It turns out their definitions somehow uses the classical properties of the measures, so we have to generalize their definitions to construct our Lebesgue decomposition. We give a counter example that does not fall within the scope of their theory for splitting Cuntz measures, and based on it, we give our suggested generalization. 

There are four approaches to the Lebesgue decomposition:
\begin{enumerate}
    \item The classical approach, which has been generalized in \cite{GK-ncld} by Gheondea-Kavruk  in its utmost generality for measures living on C*-algebras. We call this approach as GKLD standing for  Gheondea-Kavruk-Lebesgue decomposition.  
    \item The RKHS approach for measures living on a special and important operator system. This theory was first discussed by Jury-Martin in the NC setting in the special case when the splitting  measure is the standard Lebesgue measure. We will refine and generalize their theory here  to accommodate any splitting measure. 
    \item Form approach which is based on von Neumann method using inner product spaces, as generalized by Simon \cite{Simon1}. This was also generalized by Jury-Martin to the NC setting in the special case when the splitting  measure is the standard NC Lebesgue measure. We will generalize it here for any splitting measure; however, there is an inclusion condition on the isotripic left ideals of zero length elements, which at the moment we are not able to remove.  
    \item The weak*-approach, which was pursued in \cite{DLP-ncld} and  \cite{DY-freesemi}, and generalized by Jury-Martin  
 in \cite{JM-ncld} to the NC setting in the special case when the splitting  measure is the standard Lebesgue measure. The generalized version of this theory is under investigation by the current authors. We mainly interested in this approach because it helps us to prove some cone-like and hereditary property of absolutely continuous and singular measures, while proving such properties in RKHS and form approaches is difficult.
\end{enumerate}
The following simple diagram shows the equivalence among these theories for the classical measures, and such an equivalence is a desired one in our non-commutative setting. 

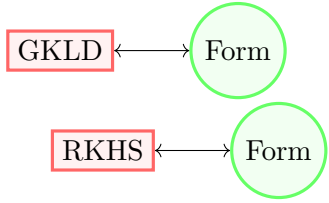
\begin{figure}[ht]
    \centering
\begin{tikzpicture}[
        roundnode/.style={circle, draw=green!60, fill=green!5, very thick, minimum size=7mm},
        squarednode/.style={rectangle, draw=red!60, fill=red!5, very thick, minimum size=5mm},
    ]
    \node[roundnode] (Form) {Form};
    \node[squarednode] (GKLD) [left=of Form] {GKLD};

    \draw[<->] (GKLD.east) -- (Form.west);
    \end{tikzpicture}
    \hspace{1cm} 
    
    \begin{tikzpicture}[
        roundnode/.style={circle, draw=green!60, fill=green!5, very thick, minimum size=7mm},
        squarednode/.style={rectangle, draw=red!60, fill=red!5, very thick, minimum size=5mm},
    ]
    \node[roundnode] (Form) {Form};
    \node[squarednode] (RKHS) [left=of Form] {RKHS};

    \draw[<->] (GKLD.east) -- (Form.west);
    \end{tikzpicture}
    
    \caption{relationship among various theories}
\end{figure}

That $(\text{GKLD} \Longleftrightarrow \text{Form})$ was proved in \cite{GK-ncld} for measures living on C*-algebras. In this paper, we show $(\text{Form} \Longleftrightarrow \text{RKHS})$  for any splitting NC measure, not just the standard NC Lebesgue measure. The common conjunction, forms here, might be misleading since the domain of them for each equivalence might differ, so we cannot say for sure that $(\text{GKLD} \Longleftrightarrow \text{RKHS})$, see Remark \ref{GK-MN-relation} for further explanations. We should mention that our starting definitions of absolute continuity and singularity is different from Gheondea-Kavruk's definitions, and it extends Jury-Martin definitions \cite{JM-ncld}.  Furthermore, the Gheondea-Kavruk theory does not apply for splitting non-Cuntz measures on the disk operator system, whereas our theory remains effective and also implies the Jury-Martin theory.  

Like Jury-Martin in \cite{JM-ncld} and \cite{JM-ncFatou}, we carry out  the Lebesgue decomposition in two different ways, i.e.,  the decomposition of RKHS and the decomposition of forms. We will introduce the hybrid forms and their Cauchy transforms to construct the Lebesgue decomposition for splitting Cuntz NC measures.  In addition, we also show some cone-like and hereditary property for the set of positive measures which are absolutely continuous (AC) or singular (sing) with respect to a given fixed NC measure. Here, a sub-cone $\mathcal{S}$  of a positive cone $\mathcal{C}$ is hereditary if whenever $s \in \mathcal{S}$ and $c \in \mathcal{C}$ with $c \leq s$, then $c \in \mathcal{S}$. Our work also develops a Lebesgue decomposition for representations of the Cuntz-Toeplitz C*-algebra. This is because every NC measure comes equipped with a cyclic representation of the Cuntz-Toeplitz algebra via a Gelfand-Naimark-Segal (GNS) construction, and if we allow for operator valued NC measures, then we can obtain every representation of Cuntz-Toeplitz C*-algebra up to unitary equivalence \cite{JM-freeCE}.

Outline of the paper: In section 2, we introduce some preliminaries. In sections 3  we generalize Jury-Martin-Lebesgue decomposition when the splitting measure is non-Cuntz.  In section 4 we introduce hybrid forms, and we show that for the case of non-Cuntz measures, the decomposition of hybrid forms and RKHS are equivalent. Theorems and proofs in sections 3 and 4 are generalizations and modifications of Jury-Martin proofs in \cite{JM-ncFatou} and \cite{JM-ncld}, and sometimes we need to do some adjustments in the proofs. However, for the sake of clarity and future records, we will give the complete proofs of the theorems to show how they work in their ultimate generality. On the other hand, in section 5 we show that Jury-Martin theory \cite{JM-ncld} needs some extension to allow for the splitting Cuntz measures since their theory uses direct sum decomposition of RKHS' and reducing sub-spaces, which is not available in the general form. The direct sum decomposition and reducing subspaces work well for non-Cuntz measures, but for Cuntz measures we should relax them.  So in our final section, we give our refined definitions of absolute continuity and singularity based on Toeplitz property of Radon-Nikodym derivatives and 
combine them with Cauchy transforms of hybrid forms to obtain the maximal Lebesgue decomposition, regardless of the fact that  the splitting measure is Cuntz or not. We abandon Jury-Martin's direct sum decomposition, and instead we use some maximal subspace on which the Radon-Nikodym derivative has Toeplitz property. We  will also explain why our theory implies the Jury-Martin theory for non-Cuntz measures. 
\section{Preliminaries}

Fix a natural number $d \in \mathbb{N}$, which we use it as the dimension of our non-commutative world. By the NC universe, we mean
\begin{equation*}
\mathbb{C}^{d}_{\mathbb{N}}=\bigsqcup_{n=1}^{\infty} \mathbb{C}^{d}_{n} ; \quad  \mathbb{C}^{d}_{n}=\mathbb{C}^{n \times n} \otimes \mathbb{C}^{1 \times d}.
\end{equation*}
In the above, any element of $\mathbb{C}^{d}_{n}=\mathbb{C}^{n \times n} \otimes \mathbb{C}^{1 \times d}$ is a $d$-tuple $Z=(Z_{1}, ..., Z_{d})$ of $n \times n$ matrices with complex entries. When $d=1$, we do not write the super script. There are several norms one can impose on $\mathbb{C}^{d}_{\mathbb{N}}$, but the most important one to us here is the row-norm defined by
\begin{equation*}
\parallel Z \parallel_{row}={\left\lVert \sum_{1}^{d} Z_{j}Z_{j}^{*}   \right\rVert}^{\frac{1}{2}}.
\end{equation*}

\noindent We mainly work with functions on the open unit row ball which is defined by

\begin{equation*}
\mathbb{B}^{d}_{\mathbb{N}}=\bigsqcup_{n=1}^{\infty} \mathbb{B}^{d}_{n} ; \quad  \mathbb{B}^{d}_{n}=\{ Z=(Z_{1}, ..., Z_{d}) \in \mathbb{C}^{d}_{n}~: ~  \parallel Z \parallel_{row} < 1  \}
\end{equation*}

The NC Hardy space, ${\mathbb{H}}^{2}_{d}$, is defined as the set of all formal power series in several non-commuting variables, $Z := ( Z _1, \cdots, Z _d ) \in \mathbb{B}^{d}_{\mathbb{N}}$, with square summable complex coefficients. Namely, any $h \in {\mathbb{H}}^{2}_{d}$ is a power series of the form:
\begin{eqnarray*}
   h(Z ) = \sum _{\omega \in {\mathbb{F}}^{d}} \hat{h}_{\omega} Z ^\omega, 
\end{eqnarray*}
 where $\hat{h}_{\omega}  \in \mathbb{C}$ and 
 \begin{eqnarray*}
     \sum _{\omega \in {\mathbb{F}}^{d}}  | \hat{h} _{\omega} | ^2 < + \infty.
 \end{eqnarray*}
 Here, ${\mathbb{F}}^{d}$ denotes the \emph{free monoid}, the set of all words in the $d$ letters $\{ 1, \cdots , d \}$, with the action defined by the concatenation of words, and the unit empty word denoted by $\emptyset$. The \emph{free monomials}, $Z ^\omega$ are defined as $Z ^\omega = Z _{i_1} \cdots Z_{i_n}$ if $\omega = i_1 \cdots i_n$, $i_j \in \{ 1, \cdots , d \}$ and $Z ^\emptyset := 1$.  Let $f:= \sum _{\omega \in {\mathbb{F}}^{d} } \hat{f_{\omega}} Z^{\omega}$ and $g:= \sum _{\omega \in {\mathbb{F}}^{d} } \hat{g_{\omega}} Z^{\omega}$ be in ${\mathbb{H}}^{2}_{d}$, the inner product of them is given by
 \begin{eqnarray*}
     \langle f , g \rangle = \sum _{\omega \in {\mathbb{F}}^{d} } \overline{ \hat{f_{\omega}}} \hat{g_{\omega}}.
 \end{eqnarray*}
 We should mention that in the sequel, we follow the agreement that  all inner products are conjugate linear in the first argument while linear in the second one. 

Left free shifts are isometries of left multiplication by the independent variables on ${\mathbb{H}}^{2}_{d}$, i.e., $L_k = M_{Z_{k}}^{L}$,  $k=1, \cdots, d$. Since we have $d$ non-commuting variables $Z _1, \cdots, Z _d$, we see that the ranges of $L_k$'s are pairwise orthogonal
\begin{equation} \label{row-isometry-property}
L^{*}_{k}L_{j}=\delta_{k,j} I_{{\mathbb{H}}^{2}_{d}}
\end{equation}
so that the $d$-tuple 
\begin{equation} \label{NC shift}
L:=(L_{1}, ..., L_{d}): {\mathbb{H}}^{2}_{d} \otimes {\mathbb{C}}^{d} \longrightarrow {\mathbb{H}}^{2}_{d}
\end{equation} 
defines a row isometry. One can similarly define right shifts and a row isometry of them.  Popescu's extension of the Wold decomposition \cite{Pop-dil} says $L$ is the universal pure row isometry. Popescu \cite{Pop-dil} shows that any row isometry can be decomposed into two parts, the $L$-part which is a direct sum of several copies of $L$, and the Cuntz part which is a surjective row isometry, i.e., unitary.  By the Popescu-von Neumann inequality for free polynomials \cite{Pop-vN-inq}, one can show that for any $f \in {\mathbb{H}}^{2}_{d}$ and any $Z \in \mathbb{B}^{d}_{\mathbb{N}}$ we have
\begin{equation} \label{pointwise bounded}
\parallel f(Z)\parallel^{2} \leq \frac{\parallel f \parallel_{{\mathbb{H}}^{2}_{d}}^{2} } {1-\parallel Z \parallel_{\mathrm{row}} ^2}.
\end{equation}

For any $n \in \mathbb{N}$, we can put the trace inner product on ${\mathbb{C}}^{n \times n }$ by declaring $\langle A, B \rangle =\mathrm{tr}(A^{*}B)$. Since ${\mathbb{C}}^{n \times n }$ is finite dimensional, the induced trace norm and the matrix operator norm are equivalent on it. For any $Z\in \mathbb{B}^{d}_{\mathbb{N}}$, let $\ell_{Z}: {\mathbb{H}}_{d}^{2} \longrightarrow  {\mathbb{C}}^{n \times n }$ be the point evaluation map defined by $\ell_{Z}(f)=f(Z)$. By inequality \ref{pointwise bounded} and the equivalence of the trace and operator norms on ${\mathbb{C}}^{n \times n }$, we see that $\ell_{Z}$ is a bounded operator, so we can define its Hilbert space adjoint operator by
\begin{eqnarray*}
K_{Z}={\ell_{Z}}^{*}: {\mathbb{C}}^{n \times n } &\longrightarrow& {\mathbb{H}}_{d}^{2} \\
 yv^{*} &\longrightarrow& K_{Z}(yv^{*})=K\{Z, y, v\}, \quad y , v \in \mathbb{C}^{n}  \\
 \langle K\{Z, y, v\} , f \rangle &=& \langle y, f(Z)v \rangle, \quad y , v \in \mathbb{C}^{n}
\end{eqnarray*}
\noindent Now, we can define the NC kernel function \cite{BMV}, for any $n , m \in \mathbb{N}$ by
\begin{eqnarray}
K: \mathbb{B}^{d}_{n} \times \mathbb{B}^{d}_{m} & \longrightarrow &   \mathscr{B} ({\mathbb{C}}^{n \times m }) \nonumber \\
(Z,W) & \longmapsto &  K(Z,W) \nonumber \\
\langle  y, K(Z,W)(vu^{*})x \rangle & = & \langle K\{Z,y,v\} , K\{W,x,u\} \rangle, \label{rkhs_inner_product}
\end{eqnarray}

\noindent where $Z \in {\mathbb{C}}_{n}^{d}, ~ y, v \in {\mathbb{C}}^{n}$  and $W \in {\mathbb{C}}_{m}^{d }, ~ x, u \in {\mathbb{C}}^{m}$. For any $Z$ and $W$, the mapping $K(Z,W)$ is completely bounded, and when $Z=W$ it is completely positive. One can then define the NC reproducing kernel Hilbert space (RKHS)  corresponding to the NC kernel $K$, by
\begin{equation*}
\mathcal{H}_{nc} (K)=\bigvee_{Z, y, v} K\{Z,y,v\}, 
\end{equation*}
\noindent with the inner product given by \ref{rkhs_inner_product}. For example, for the NC Szego kernel 
$$ K(Z,W)[\cdot]=\sum_{\alpha \in \mathbb{F}^{d}} Z^{\alpha} [\cdot] W^{\alpha *},$$ 
one has
$\mathcal{H}_{nc} (K)={\mathbb{H}}_{d}^{2}$, see \cite{BMV} and \cite{JM-ncld}. The construction of NC RKHS's parallels the classical ones, see \cite[Chapters 1 \& 2]{Paulsen-rkhs}.

Let $\mathbb{D}$ be the open unit ball of the complex numbers and $\mathbb{T}$ be its boundary. Recall that $A(\mathbb{D})$ is the disk algebra, the Banach space of all analytic functions on the open unit disk $\mathbb{D}$ which have continuous extension to the closure of $\mathbb{D}$, and $H^{\infty}(\mathbb{D})$ denotes the Banach space of all bounded analytic functions on the unit disc $\mathbb{D}$. In addition, the Toeplitz C*-algebra $\mathcal{E}$ is the universal C*-algebra generated by a non-surjective isometry and the identity operator.  By using the left shift \ref{NC shift}, one can construct the non-commutative multi-variable counterparts of these classical spaces. In the following table,  the notations \emph{op} and \emph{wk*} mean that the closure is being taking in operator norm topology and weak* operator topology on $\mathscr{B}(\mathbb{H}_{d}^{2})$ respectively. We also used $L$ for the family of left shift operators $\{L_1 , \cdots, L_d \}$.

\renewcommand{\arraystretch}{1.5}

\vspace{2mm}
\begin{tabular}{c| c| c}
 Non-commutative & nomenclature &  counterpart of classical \\
\hline 
$\mathbb{A}_{d}={\overline{Alg[I, L]}}^{op}$ & free disk algebra & $A(\mathbb{D})$ \\
$\mathbb{L}_{d}^{\infty} = {\overline{\mathbb{A}_{d}}}^{wk*}$ & free Hardy algebra &   $H^{\infty}(\mathbb{D})$\\
$\mathcal{A}_{d}={\overline{\mathbb{A}_{d} + \mathbb{A}^{*}_{d} }}^{op}$ & free disk operator system & $C(\mathbb{T})$ \\ 
$\mathcal{L}_{d}^{\infty}={\overline{\mathcal{A}_{d}}}^{wk*}$ & free Toeplitz system & $L^{\infty}(\mathbb{T})$ \\

 $ \mathcal{E}_{d}=C^{*}\{I, L\}$ & Cuntz-Toeplitz C*-algebra & $\mathcal{E}$
\end{tabular}
\vspace{2mm}

The  Cuntz-Toeplitz C*-algebra, $ \mathcal{E}_{d}$,  was first introduced and studied by Cuntz \cite{Cuntz}, and it has been a main object in the study and the development of the non-commutative Hardy spaces and dilation theory of row contractions. A very important property of the NC disk algebra is the \emph{semi-Dirichlet property} expressed by the following inclusion,
\begin{equation} \label{semi-Dirichlet}
 {\mathbb{A}_{d}}^{*} \mathbb{A}_{d} \subseteq   {\overline{\mathbb{A}^{*}_{d} +\mathbb{A}_{d}  }}^{op}=\mathcal{A}_{d},
\end{equation}
which is an application of the equation \ref{row-isometry-property}.

Let $(\mathcal{A}_{d})_{+}^{\dagger}$ denote the set of all positive linear functionals on the operator system $\mathcal{A}_{d}$. We can think of $(\mathcal{A}_{d})_{+}^{\dagger}$ as the set of all positive NC measures as the following explanation shows. Let $M(\mathbb{T})$ denote the classical Banach space of all bounded complex regular Borel measures under the total variation norm on the unit circle. Since in the one variable setting $\mathcal{A}_{1} \cong C(\mathbb{T})$,   by the analogy with the classical Riesz-Markov Theorem $M(\mathbb{T}) \cong C(\mathbb{T})^{\dagger}$, we can consider  $(\mathcal{A}_{d})_{+}^{\dagger}$ as the set of all positive NC measures. However, we should keep in mind that the NC measures we consider live on an operator system when $d \geq 2$, so working with them are tricky and sometimes we need other properties like semi-Dirichlet property \ref{semi-Dirichlet} to deal with them. 

We usually specify an NC measure by its moments, and in doing so we  need the semi-Dirichlet property \ref{semi-Dirichlet}.  One important NC measure on $\mathcal{A}_{d}$ is the standard NC Lebesgue measure whose moments are given by 
\begin{equation*}
    m(L^{\alpha})= \langle 1, L^{\alpha} 1 \rangle =\delta_{\alpha, \emptyset}.
\end{equation*}
\noindent This definition is plausible since in the classical case the moments of the Lebesgue measure are similar to the ones above. Also, in the classical case the Herglotz transformation takes the Lebesgue measure to the constant unit function, and we have a similar situation by applying the NC Herglotz transform, defined below, against the NC Lebesgue measure. What is more, it even complies with the classical fact that it is the Clark measure  of the zero function. 

We now define two important transformations which are analogous to  classical ones and are important in the theory of Hardy spaces. Assume that $id_{n}$ is the identity map on ${\mathbb{C}}^{n \times n}$ and $I_{n}$ is the identity matrix in it. Let $I_{{\mathbb{H}}_{d}^{2}}$ be the identity map on ${\mathbb{H}}_{d}^{2}$ as well, then the NC Herglotz transformation is defined by
\begin{eqnarray*}
  H: (\mathcal{A}_{d})_{+}^{\dagger} &\longrightarrow & \mathscr{L}_{d}^{+} \\
 \mu &\longmapsto & H_{\mu}, \\
 H_{\mu}(Z) &=& id_{n} \otimes \mu [(I_{n} \otimes I_{{\mathbb{H}}_{d}^{2}} -Z \otimes L^{*})^{-1} (I_{n} \otimes I_{{\mathbb{H}}_{d}^{2}}+Z \otimes L^{*})],
\end{eqnarray*}
\noindent where $Z\in \mathbb{B}^{d}_{n}$ and  $Z \otimes L^{*} =\sum_{k=1}^{d} Z_{k}\otimes L_{k}^{*}$. Here, $\mathscr{L}_{d}^{+}$ denotes the positive cone of locally bounded NC holomorphic functions on the NC unit ball with positive semi definite real part. Corresponding to the NC holomorphic function $H_{\mu}$ and the NC Szego kernel $K$, there is an NC kernel $\prescript{\mu}{}{K}$ defined by
\begin{equation} \label{mu-kernel}
\prescript{\mu}{}{K}(Z,W)[\cdot]=\frac{1}{2}[H_{\mu}(Z)K(Z,W)[\cdot]+K(Z,W)[\cdot]H_{\mu}(W)^{*}],
\end{equation}
The NC RKHS generated by $\prescript{\mu}{}{K}$ is $\mathscr{H}^{+}(H_{\mu})$, i.e., $\mathcal{H}_{nc}(\prescript{\mu}{}{K})=\mathscr{H}^{+}(H_{\mu})$, see \cite{JM-ncFatou} and \cite{BMV}. For the Lebesgue NC measure $m$, we have $\mathscr{H}^{+}(H_{m})=\mathbb{H}_{d}^{2}$.

For any NC positive measure $\mu \in (\mathcal{A}_{d})_{+}^{\dagger} $, we can mimic the GNS construction for the pair $({\mathbb{A}}_{d} , \mu)$, 
[see \cite{JM-ncFatou}], to construct a Hilbert space denoted by $\mathbb{H}_{d}^{2}(\mu)$, which takes the role of the classical Hardy space $H^{2}(\mu) \subseteq L^{2}(\mu)$ in the NC setting. Roughly speaking, the classical $H^{2}(\mu)$ is the closed linear span of all analytic polynomials in $L^{2}(\mu , \mathbb{T})$. To start with our GNS construction, we can use the semi-Dirichlet property \ref{semi-Dirichlet} to define a  pre-inner product on $\mathbb{A}_{d}$  by

\begin{equation*}
    \langle a , b \rangle_{\mu}=\mu(a^{*}b), \quad a, b \in \mathbb{A}_{d}.
\end{equation*}

The GNS space $\mathbb{H}_{d}^{2}(\mu)$ is then the Hilbert space completion of $\mathbb{A}_{d}$  modulo zero length vectors with respect to the above inner product. The equivalence class of $a\in \mathbb{A}_{d}$ is denoted by $a+N_{\mu}$, where $N_{\mu} =\{ a\in \mathbb{A}_{d} : \mu(a^{*}a)=0 \} $ is the left ideal of all elements of zero length. Moreover, we have the left regular representation,
\begin{eqnarray}
\pi_{\mu}: \mathbb{A}_{d} &\longrightarrow & \mathscr{B}(\mathbb{H}_{d}^{2}(\mu)) \nonumber \\
a &\longrightarrow& \pi_{\mu} (a); \quad  \pi_{\mu} (a)(b+N_{\mu})  =  ab+N_{\mu}. \nonumber 
\end{eqnarray}

\noindent  This representation extends naturally to a $*$-representation of the Cuntz-Toeplitz C*-algebra $\mathcal{E}_d$, and in this way one can obtain, up to  unitary equivalence, all the cyclic $*$-representations of  $\mathcal{E}_{d}$, \cite[Lemma 2.2]{JMT-NCFM}. In fact, we can obtain all $*$-representations if we use operator valued positive NC measures \cite{JM-freeCE}. Now,  corresponding to the $d$ left multiplications $\pi_{\mu} (L_{i})$, $i=1,2,...,d$,  we can define the row isometry $\Pi_{\mu}= \pi_{\mu}(L)$ by letting $\Pi_{\mu , i} = \pi_{\mu} (L_{i})$ , and note that
\begin{eqnarray*}
  &\Pi_{\mu}&   : \mathbb{H}_{d}^{2}(\mu) \otimes {\mathbb{C}}^{d} \longrightarrow \mathbb{H}_{d}^{2}(\mu)\\
  &\Pi_{\mu}&=(\Pi_{\mu , i})_{i=1}^{d}=(\pi_{\mu} (L_{i}))_{i=1}^{d}.
\end{eqnarray*}
It is important to note that $\Pi_{\mu , i}$'s act basically by left multiplication on $\frac{\mathbb{A}_d}{N_{\mu}}$. Notice that $\pi_\mu$ is a representation and $\Pi_\mu$ is the corresponding row isometry. If $\Pi_{\mu}$ is surjective; hence, a unitary, then  $\mu$ is called a Cuntz measure. However, if $\Pi_\mu$ is a non-surjective isometry, $\mu $ is called a non-Cuntz measure. For a non-Cuntz measur $\mu$, we will later show that $N_\mu=0$. So, for a classical non-Cuntz measure $\mu$ on the unit circle, applying the GNS procedure retrieves $H^{2}(\mu)$. Thus, we see for a classical non-Cuntz measure $\mu$, the Hardy space $H^{2}(\mu)$ is the closed linear span of analytic polynomials. Interestingly, when $\mu$ is a classical Cuntz measure, we will see that $H^2(\mu)=L^2(\mu)$. Furthermore, for the standard NC Lebesgue measure $m$, one has $N_{m}=0$, $\mathbb{H}_{d}^{2}(\mu)=\mathbb{H}_{d}^{2}$ and $\Pi_m = L$.

Define the NC Cauchy transform generated  by $\mu$ by
\begin{eqnarray*}
 \mathscr{C}_{\mu}:\mathbb{H}_{d}^{2}(\mu) &\longrightarrow & \mathscr{H}^{+}(H_{\mu}) \\
  p & \longmapsto & \mathscr{C}_{\mu}(p),\\
   \mathscr{C}_{\mu}(p)(Z) & =& id_{n} \otimes \mu [(I_{n} \otimes I_{{\mathbb{H}}_{d}^{2}} -Z \otimes L^{*})^{-1} (I_{n} \otimes p(L))],
\end{eqnarray*}
\noindent  where $Z\in \mathbb{B}^{d}_{n}$ and  $Z \otimes L^{*} =\sum_{k=1}^{d} Z_{k}\otimes L_{k}^{*}$, and other notations are like the NC Herglotz transformation. Also note that $H_{\mu}(Z) = 2~ \mathscr{C}_{\mu}(1+N_{\mu})( Z) -\mu(I_{{\mathbb{H}}^{2}}) I_{n}$, like the classical theory. Besides, one can show
\begin{equation*}
   \left( \mathscr{C}_{\mu}p \right) (Z) =\sum_{\alpha} Z^{\alpha} \mu (L^{\alpha *} p(L) ).
\end{equation*}

\noindent Recall that $\mathcal{H}_{nc}(\prescript{\mu}{}{K})=\mathscr{H}^{+}(H_{\mu})$, so we define the following row isometry between the RKHS  which is unitary equivalent to the row isometry $\Pi_{\mu}$ between GNS spaces:

$$V_{\mu}=(V_{\mu , k})_{k=1}^{d}:\mathscr{H}^{+}(H_{\mu}) \otimes {\mathbb{C}}^{d} \longrightarrow \mathscr{H}^{+}(H_{\mu})$$ 
$$V_{\mu , k}= \mathscr{C}_{\mu} \Pi_{\mu, k} {\mathscr{C}_{\mu}}^{*}$$
This formula can be translated into the language of intettwiners by knowing  that the Cauchy transform $\mathscr{C}_{\mu}$ is a unitary operator hence an intertwiner for $\Pi_{\mu}$ and $V_{\mu}$, i.e., $\mathscr{C}_{\mu} \Pi_{\mu,k}= V_{\mu, k} \mathscr{C}_{\mu} $ with the following commutative diagram

\begin{center}
\begin{tikzcd}
\mathbb{H}_{d}^{2}(\mu) \arrow[r, "\mathscr{C}_{\mu}"]
& \mathscr{H}^{+}(H_{\mu}) \\
\mathbb{H}_{d}^{2}(\mu) \arrow[r, "\mathscr{C}_{\mu}"] \arrow[u, "\Pi_{\mu, k}"]
& \mathscr{H}^{+}(H_{\mu}) \arrow[u, "V_{\mu , k} "]
\end{tikzcd}
\end{center}

\noindent For the NC Lebesgue measure $m$,  $V_{m}$ is just the left free shift $L$ or $M_{Z}^{L}$, see \cite{JM-ncFatou}.

Now, let $\lambda$ and $\tau$ be two positive NC measures such that $\lambda \leq \tau$, then by NC Aronszjan theorems \cite[Theorems 4.1 and 4.2]{JM-ncld} and \cite{BMV} we have the inclusion among RKHS's, i.e., $\mathscr{H}^{+}(H_{\lambda}) \subseteq \mathscr{H}^{+}(H_{\tau})$. So, we can define the contractive embedding (see \cite[Theorem3]{JM-ncFatou} )
\begin{eqnarray*}
 e_{\lambda}=e_{\tau, \lambda}:\mathscr{H}^{+}(H_{\lambda}) &\longrightarrow & \mathscr{H}^{+}(H_{\tau}) \\
  f & \longmapsto & f.
  \end{eqnarray*}
\noindent In this situation, $N_{\tau} \subseteq N_{\lambda}$, and we can define the contractive co-embedding among GNS spaces ( see \cite[Lemma 3]{JM-ncFatou})
\begin{eqnarray*}
 E_{\lambda}=E_{\lambda, \tau}:\mathbb{H}_{d}^{2}(\tau) &\longrightarrow & \mathbb{H}_{d}^{2}(\lambda) \\
  f+N_{\tau} & \longmapsto & f+N_{\lambda},
  \end{eqnarray*}
\noindent so that $E_{\lambda} = \mathscr{C}_{\la}^{*} e_{\lambda}^{*} \mathscr{C}_{\tau}$ according to the following diagram 

\begin{center}
\begin{tikzcd}
\mathbb{H}_{d}^{2}(\lambda) \arrow[r, "\mathscr{C}_{\la}"]
& \mathscr{H}^{+}(H_{\lambda}) \arrow[d, "e_{\lambda}"] \\
\mathbb{H}_{d}^{2}(\tau) \arrow[r, "\mathscr{C}_{\tau}"] \arrow[u, "E_{\lambda}"]
& \mathscr{H}^{+}(H_{\tau}) 
\end{tikzcd}
\end{center}
Note that by \cite[Lemma 3]{JM-ncFatou}, $||E_\la ||=||e_\la|| \leq 1$. The contractive co-embedding $E_{\lambda}$, by its very definition, is an intertwiner between $\Pi_{\lambda}$ and $\Pi_{\tau}$, i.e., $E_{\lambda} \Pi_{\tau,k}=\Pi_{\la, k} E_{\lambda} $ with the following commutative diagram
\begin{center}
\begin{tikzcd}
\mathbb{H}_{d}^{2}(\tau) \arrow[r, "E_{\lambda}"]
& \mathbb{H}_{d}^{2}(\lambda) \\
\mathbb{H}_{d}^{2}(\tau) \arrow[r, "E_{\lambda}"] \arrow[u, "\Pi_{\tau, k}"]
& \mathbb{H}_{d}^{2}(\lambda) \arrow[u, "\Pi_{\lambda , k} "]
\end{tikzcd}
\end{center}

When the two measures   $\lambda$ and $\tau$ are not comparable but $N_\la \subseteq N_\tau$, we can   define $e_{\lambda, \tau}$ and $E_{\tau, \lambda}$ as  densely defined and closed unbounded operators on suitable domains (see sections 4 and 5), and again $E_{ \tau, \la}$ is an intertwiner.

\section{The Lebesgue reproducing kernel Hilbert space decomposition with respect to a non-Cuntz measure}

In the classical Hardy space theory, absolute continuity of measures on the unit circle $\mathbb{T}$ can be recast in terms of containment of reproducing kernel Hilbert spaces. Namely, given two finite positive regular Borel measures $\mu$ and $\lambda$ on $\mathbb{T}$ recall that $\mu$ is absolutely continuous (AC) with respect to $\lambda$, in notations $\mu \ll \la$, if there is an increasing sequence of finite positive regular Borel measures $\mu_{n}$, which are dominated by $\lambda$, and increasing monotonically to $\mu$:
\begin{eqnarray*}
0 \leq \mu_{n} \leq \mu, \quad \mu_{n} \uparrow \mu, \\
\mu_{n} \leq t_{n}^2 \lambda, \quad t_{n} >0 .
\end{eqnarray*}
\noindent By a classical formula similar to \ref{mu-kernel}, we see that $\prescript{\mu_{n}}{}{K} \leq \prescript{\mu}{}{K}$; hence, by Aronszajn's theorem \cite[Theorem 5.1]{Paulsen-rkhs},  $\mathcal{H}(\prescript{\mu_{n}}{}{K}) \subseteq \mathcal{H}(\prescript{\mu}{}{K})$, and also
\begin{equation*} 
    \bigvee \mathscr{H}^{+}(H_{\mu_{n}}) =\mathscr{H}^{+}(H_{\mu}).
\end{equation*}

\noindent Similarly, for $\mu_{n} \leq t_{n}^2 \lambda$ we see that  $\mathcal{H} (\prescript{\mu_{n}}{}{K}) \subseteq \mathcal{H} (\prescript{\lambda}{}{K})$. Thus, the intersection space 
$$ \mathrm{int}(\mu , \lambda)={\mathscr{H}}^{+}(H_{\mu}) \cap {\mathscr{H}}^{+}(H_{\lambda}) $$ 
\noindent is dense in ${\mathscr{H}}^{+}(H_{\mu})$. When $\mu$ is singular with respect to $\lambda$, their joint minimum is zero, see \cite[Theorem 37.5]{Aliprantis}. So, there is no  absolutely continuous part of $\mu$ with respect to $\la$. Hence, we will assume that the intersection space is just the zero vector. This assumption when $\la$ is non-Cuntz is valid; otherwise, it might be problematic, for example see the Example \ref{classical-sing-not-RKHS-Sing}.  For now, the NC analogue of these observations is summarized in the following definition.

\begin{defn} \label{RK-AC-Sing}
Given $\mu, \la \in \left( \scr{A} _d   \right) ^\dag _{+}$, we say that:
\begin{itemize}
    \item $\mu$ is \emph{absolutely continuous with respect to $\la$ in the reproducing kernel sense}, in notations $\mu \ll_{R} \lambda$, if 
$$ \mathrm{int} (\mu , \la ) := \scr{H} ^+ (H_{\mu} ) \cap \scr{H} ^+ (H_{\la} ), $$ is norm--dense in $\scr{H} ^+ (H_{\mu})$. The set of all positive NC measures that are absolutely continuous with respect to $\la$ in the repreducing kernel Hilbert space sense is denoted by $AC_{R}[\la]$.
\item $\mu$ is \emph{singular with respect to $\la$ in the reproducing kernel sense} , in notations $\mu \perp_{R} \lambda$, if $\mr{int} (\mu , \la ) = \{ 0 \}$.  The set of all positive NC measures that are singular with respect to $\la$ in the repreducing kernel Hilbert space sense is denoted by $SG_{R}[\la]$.
\item the decomposition $\mu=\mu_{1}+\mu_{2}$ with respect to $\la$ is called a Lebesgue RKHS  decomposition if $\mu_{1} \ll_{R} \la$ and $\mu_{2} \perp_{R} \la$. We may write this decomposition by $\mu=\mu^{R}_{1}+\mu^{R}_{2}$, where the superscript $R$ pints to the RKHS decomposition sense.
\end{itemize}
We might also say  \emph{ $\mu$ is RK-AC with respect to $\la$} instead of $\mu \ll_{R} \lambda$ and \emph{ $\mu$ is RK-Sing with respect to $\la$} in place of of $\mu \perp_{R} \lambda$. We further define $\mathrm{Int} (\mu , \la) := \mathrm{int} (\mu , \la ) ^{-\| \cdot \| _{H_{\mu}}}$, the closure of $\mathrm{int} (\mu , \la )$ in $\scr{H} ^+ (H _{\mu} )$. Note that in $\mathrm{Int} (\mu , \la)$  the order of $\mu$ and $\la$ tells with respect to which space we are taking the closure.
\end{defn}
\begin{remark} \label{a_classical_non_tranlable_fact}
    In section 5, we will see that this definition is a special case of a more general definition that contains the role of Toeplitz operators as well. For splitting non-Cuntz measures the above definition certainly works, but for Cuntz splitting measures, or generally any splitting measure, we need a replacement.  In fact, the problem is with singularity definition which is does not cover the splitting Cuntz measures, see Example \ref{non-example} and Example \ref{classical-sing-not-RKHS-Sing}. This condition has something to do with the fact that the derivative of singular measures with respect to the Lebesgue measure is zero \cite[Proposition 3.30]{Folland}. Besides, singularity should somehow be the antithesis of absolute continuity, but the above definition does not cover the other alternatives. Even if the intersection space is not dense, there might be some portions of $\mu$ that provides a dense intersection.  Also, prior to the Definition \ref{RK-AC-Sing}, we showed that absolutely continuous in the classical  sense implies that the intersection space is dense. Let's point out here that  the converse  is also true. That is if the intersection space is dense, then we have classical-AC; see  Corollary \ref{RK-AC-equiv-AC} and also \cite[Theorem 6]{BMN}. The proof of this equivalence needs a lot of machinery that will be built later. $\blacksquare$
\end{remark}

\begin{defn} \label{RKHS-order-of-Nc-measures}
  We put a RKHS order on positive NC measures by declaring that  
  $\mu \leq_{R} \nu$ if ${\mathscr{H}}^{+}(H_{\mu}) \subseteq {\mathscr{H}}^{+}(H_{\nu}) $.   
\end{defn}

\begin{remark} \label{maximal-Lebesgue-decomp}
    Suppose that we have two positive NC measures $\mu$ and $\la$. Assume that we have a notion of absolute continuity and singularity with respect to $\la$, which we depict them by $\ll$ and $\perp$. Furthermore, suppose that we have an order among positive NC measures, denoted by $\leq$, which in some ways is related to the way how  $\ll$ and $\perp$ are defined. Then, when we say that a    
    decomposition $\mu=\mu_1 +\mu_2$ with respect to $\la$ is a \emph{Lebesgue decomposition}, we mean that $\mu_1 \ll \la$ and $\mu_2 \perp \la$, and $\mu_i$'s are positive NC measures bounded above by $\mu$. Also, when we say that the decomposition $\mu=\mu_1 +\mu_2$ is \emph{ the maximal Lebesgue decomposition} of $\mu$ with respect to $\la$, not only do we require it to be a Lebesgue decomposition, but also we want $\mu_1$ to be the maximal among those measures which are absolutely continuous  with respect to $\la$ and at the same time are less than or equal to $\mu$. Obviously, maximal Lebesgue decomposition is unique whenever it exists.
\end{remark}

\begin{defn}
    A positive NC measure $\lambda$ is called Cuntz if its GNS row isometry $\Pi_{\lambda}$ is surjective; i.e., $\Pi_{\lambda}$ is unitary. By \cite[Theorem 6.4]{JM-freeCE}, such measures are also called column extreme (CE).
\end{defn}
At this point we would like to record some general facts about non-Cuntz measure which we will need later.
\begin{remark} \label{Popescu-decomposition-of-row-isometry}
    In \cite[Theorem 6.4]{JM-freeCE}, a characterization of Cuntz measures is given. For example, $\la$ is Cuntz if and only if $\mathscr{H}^{+}(H_\la)$ does not contain constant functions, and this is equivalent to saying that the constant monomial $I+N_\la$ can be approximated by the closed linear span free monomials $L^{\alpha}+N_\la$. Hence, one can see that   
    the standard NC Lebesgue measure $m$ is an example of non-Cuntz measures, while any singular classic measure is an example of a Cuntz measure.  Furthermore, by \cite[Theorem 1.3]{Pop-dil}, for any positive NC measure $\la$, one can split its GNS row isometry $\Pi_\la$ into two pieces, the $L$-part and the Cuntz part. That is $\Pi_\la=\Pi_{\la; L} \oplus \Pi_{\la; C}$, such that $\Pi_{\la; L} =L \otimes I_{\mathcal{K}}$ where $I_{\mathcal{K}}$ is the identity operator on some Hilbert space $\mathcal{K}$, and $\Pi_{\la; C}$ is Cuntz. Here, the $L$-part $\Pi_{\la; L}$ is non-Cuntz, while the Cuntz part $\Pi_{\la; C}$ is Cuntz. We should mention that $\la$ is Cuntz if and only if $\Pi_{\la; L}=0$, i.e., it has no $L$-part. Thus, while a Cuntz measure is solely composed  of Cuntz part,  a non-Cuntz measure always has a non-zero $L$-part plus probably a Cuntz part. 
\end{remark}
Recall that for a positive NC  measure $\tau$ the left ideal of zero length elements, $N_{\tau}$, is defined by $N_\tau = \left\{ a \in \mathbb{A}_{d} : \tau(a^* a)=0 \right\}.$
\begin{thm} \label{left-ideal-non-Cuntz=0}
    For any non-Cuntz measure $\tau$, the left ideal of zero length elements, $N_{\tau}$, is zero.
\end{thm}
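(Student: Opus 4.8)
The plan is to show that every power--series coefficient of an element of $N_{\tau}$ must vanish, which forces that element to be $0$. Write $\Omega := 1 + N_{\tau}$ for the cyclic vector of the GNS space $\mathbb{H}_{d}^{2}(\tau)$, and let $Q := I - \sum_{k=1}^{d}\Pi_{\tau,k}\Pi_{\tau,k}^{*}$ be the orthogonal projection onto the wandering subspace $\mathcal{W} = \ker \Pi_{\tau}^{*}$. The first step is to extract the one piece of positivity the hypothesis gives us. Since $\overline{\mathrm{span}}\{L^{\alpha}+N_{\tau} : |\alpha|\geq 1\} = \overline{\sum_{k}\ran{\Pi_{\tau,k}}}$, the characterization of Cuntz measures recalled in Remark \ref{Popescu-decomposition-of-row-isometry} says that $\tau$ is non--Cuntz precisely when $\Omega \notin \overline{\sum_{k}\ran{\Pi_{\tau,k}}}$, i.e. precisely when the wandering component $w_{0} := Q\Omega$ of the vacuum is nonzero. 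This single vector $w_{0}\neq 0$ is the engine of the whole argument (and its vanishing in the Cuntz case is exactly why $N_{\tau}$ may then be nontrivial).

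Next I would record two consequences of the row--isometry relations $\Pi_{\tau,k}^{*}\Pi_{\tau,j}=\delta_{k,j}I$. First, $\{\Pi_{\tau}^{\alpha}w_{0}\}_{\alpha \in \mathbb{F}^{d}}$ is an orthogonal family, each of norm $\|w_{0}\|$: for incomparable words the inner product vanishes by the relations, while for $\beta=\alpha\beta'$ with $|\beta'|\geq 1$ one gets $\langle w_{0},\Pi_{\tau}^{\beta'}w_{0}\rangle = 0$ because $w_{0}\in\mathcal{W}\perp \ran{\Pi_{\tau}}$. Second, for a free polynomial $a=\sum_{\beta}\hat a_{\beta}L^{\beta}$ I would use $\Pi_{\tau}^{*\alpha}\Pi_{\tau}^{\beta}\Omega = \Pi_{\tau}^{\beta'}\Omega$ when $\beta=\alpha\beta'$, $=\Pi_{\tau}^{*\alpha'}\Omega$ when $\alpha=\beta\alpha'$ with $|\alpha'|\geq 1$, and $=0$ otherwise, to obtain
\begin{equation*}
\langle \Pi_{\tau}^{\alpha}w_{0},\, a + N_{\tau}\rangle = \|w_{0}\|^{2}\,\hat a_{\alpha} + \sum_{\beta \prec \alpha}\hat a_{\beta}\,\langle \Pi_{\tau}^{\alpha\setminus\beta}w_{0},\Omega\rangle,
\end{equation*}
where $\beta\prec\alpha$ means $\beta$ is a proper prefix of $\alpha$ and $\alpha\setminus\beta$ is the corresponding suffix. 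The crucial feature is that the diagonal coefficient $\|w_{0}\|^{2}$ is nonzero while the remaining terms involve only strictly shorter words, so the linear system relating the numbers $\langle\Pi_{\tau}^{\alpha}w_{0}, a+N_{\tau}\rangle$ to the coefficients $\hat a_{\alpha}$ is lower triangular with invertible diagonal. Both sides are norm--continuous in $a\in\mathbb{A}_{d}$ (the right--hand side is a fixed finite sum, and $|\hat a_{\beta}| = |\langle L^{\beta}1, a\,1\rangle| \leq \|a\|_{op}$), so the identity persists for all $a\in\mathbb{A}_{d}$ by density of the free polynomials.

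To finish, take $a\in N_{\tau}$, so $a+N_{\tau}=0$ and hence $\langle\Pi_{\tau}^{\alpha}w_{0}, a+N_{\tau}\rangle = 0$ for every $\alpha$. Inducting on $|\alpha|$ in the triangular identity — the base case $\alpha=\emptyset$ gives $\|w_{0}\|^{2}\hat a_{\emptyset}=0$, and at each level the shorter--word terms are killed by the inductive hypothesis — forces $\hat a_{\alpha}=0$ for all $\alpha$, so the symbol $a\cdot 1$ vanishes in $\mathbb{H}_{d}^{2}$ and therefore $a=0$; thus $N_{\tau}=\{0\}$. I expect the main obstacle to be the bookkeeping in the second step: one must check that the only contributions to $\langle\Pi_{\tau}^{\alpha}w_{0}, a+N_{\tau}\rangle$ come from prefixes of $\alpha$ (so the system really is triangular and the induction closes), and in particular that the proper--prefix cross terms — which are exactly the terms that would vanish in the pure Fock space but need \emph{not} vanish in a general GNS space with a Cuntz summand — never disturb the invertible diagonal. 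A cleaner but less self--contained alternative would be to invoke the Popescu--Wold decomposition directly: the reducing $L$--part of $\Pi_{\tau}$ is unitarily equivalent to $L\otimes I_{\mathcal{W}}$, under which $\Omega$ maps to a vector $v$ with nonzero lowest--order component $e_{\emptyset}\otimes w_{0}$, and $a+N_{\tau}=0$ yields $(a(L)\otimes I_{\mathcal{W}})v=0$; since the free algebra has no zero divisors, $a(L)$ is injective for $a\neq 0$, contradicting $v\neq 0$.
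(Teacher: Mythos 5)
Your proposal is correct, but it follows a genuinely different route from the paper's proof. The paper argues at the level of measures: it invokes the Popescu--Wold decomposition to split $\tau = \tau_L + \tau_C$, uses the fact (cited from Jury--Martin) that the $L$-part is a weighted NC Lebesgue measure, $\tau_L(a^*a) = \| a(L)h \|^2_{\mathbb{H}^2_d}$ with weight $h = P_L(I+N_\tau) \neq 0$, and then finishes with the Davidson--Pitts theorem that every nonzero element of $\mathbb{L}^\infty_d$ acts injectively on $\mathbb{H}^2_d$: positivity of $\tau_L$ and $\tau_C$ forces $a(L)h = 0$, hence $a = 0$. You never decompose $\tau$; you work entirely inside the GNS space with the single vector $w_0 = Q\Omega$, whose nonvanishing is indeed equivalent to $\tau$ being non-Cuntz by the characterization recalled in Remark \ref{Popescu-decomposition-of-row-isometry} (your $w_0$ is precisely the lowest-order, wandering component of the paper's weight $h$). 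Your case analysis of $\Pi_\tau^{*\alpha}\Pi_\tau^{\beta}\Omega$ is right, the diagonal entry is $\langle w_0, \Omega\rangle = \|Q\Omega\|^2 = \|w_0\|^2 \neq 0$, and the lower-triangular induction on $|\alpha|$, together with the norm-continuity/density step passing from free polynomials to all of $\mathbb{A}_d$, correctly kills every coefficient $\hat a_\alpha$. What your route buys is self-containment: it replaces the two heaviest imported ingredients (the weighted-Lebesgue description of $\tau_L$ and Davidson--Pitts injectivity) by elementary computations with the row-isometry relations. What it costs is the very last inference, which you assert without proof: that $a\cdot 1 = 0$ in $\mathbb{H}^2_d$ forces $a = 0$ for $a \in \mathbb{A}_d$. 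This is true but not automatic; it is the statement that the vacuum is separating for $\mathbb{A}_d$, and it deserves one line: every $a \in \mathbb{A}_d$ commutes with the right free shifts $R_k$ (multiplication by $Z_k$ on the right), so $a Z^\beta = a R^{\beta^t} 1 = R^{\beta^t}(a1) = 0$ for every word $\beta$, and since the monomials are total, $a = 0$. This fact is far more elementary than the injectivity theorem the paper cites, so the trade is favorable. Your closing ``alternative'' sketch via the Wold decomposition and injectivity of $a(L)$ is, by contrast, essentially the paper's own argument.
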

\begin{proof}
    By the discussion in Remark \ref{Popescu-decomposition-of-row-isometry}, $\tau$ decomposes as $\tau=\tau_{L}+\tau_{C}$, where both summands are positive NC measures corresponding to the $L$-part and Cuntz part of  GNS row isometry $\Pi_{\tau}$. By the discussion after  \cite[Definition 6.12]{JM-ncld}, there is $h=P_{L}(I+N_{\tau}) \in \mathbb{H}^{2}_{d} $  such that $\tau_{L}(p^*q)=\left \langle p(L)h, q(L)h \right \rangle_{\mathbb{H}^{2}_{d}}$. That is the $L$-part of $\tau$ is a weighted Lebesgue measure with weight $h$. By \cite[Theorem 1.7]{DP-inv}, any element of  $\mathbb{A}_{d} \subseteq \mathbb{L}^{\infty}_{d}$ is injective; hence, $\tau_{L}(p^*p)=\parallel p(L)h \parallel^{2}_{\mathbb{H}^{2}_{d}}=0$ if and only if $p=0$. Now, suppose that $p \in N_\tau$, so $0=\tau(p^*p)=\tau_L (p^*p)+\tau_C (p^*p)$. However, since both summands are positive NC measure, we see that $\tau_L (p^*p)$ and $\tau_C (p^*p)$ are non-negative, so they both must be zero. In particular, $\tau_{L}(p^*p)=0$, which forces $p=0$ by the previous discussion. Thus, $N_\tau =0$.
\end{proof}

\begin{cor} \label{left-ideal-Cuntz+non-Cuntz=0}
    Let $\la$ be a Cuntz measure, and $\nu$ be a non-Cuntz measure, then $\la+\nu$ is non-Cuntz and $N_{\la+\nu}=0$
\end{cor}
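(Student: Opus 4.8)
The plan is to treat the two assertions separately: the vanishing of the zero-length ideal comes almost for free from comparability, and the non-Cuntz claim reduces to the characterization of Cuntz measures recorded in Remark \ref{Popescu-decomposition-of-row-isometry}, pushed across the co-embedding intertwiner.

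First I would dispose of $N_{\la+\nu}=0$. Since $\la \geq 0$ we have $\nu \leq \la+\nu$, and as recalled in the preliminaries the inclusion of zero-length ideals reverses the order of the measures, so $N_{\la+\nu} \subseteq N_{\nu}$. Because $\nu$ is non-Cuntz, Theorem \ref{left-ideal-non-Cuntz=0} gives $N_{\nu}=0$, whence $N_{\la+\nu}=0$. (Once the non-Cuntz claim below is established, this also follows by applying Theorem \ref{left-ideal-non-Cuntz=0} directly to $\la+\nu$; I keep this short argument because it is logically independent of the second part.)

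For the non-Cuntz assertion, set $\tau := \la+\nu$ and argue by contradiction: suppose $\tau$ were Cuntz. By the characterization in Remark \ref{Popescu-decomposition-of-row-isometry}, this says the cyclic vector $I+N_{\tau}$ can be approximated in $\mathbb{H}_{d}^{2}(\tau)$ by linear combinations of the free monomials $L^{\alpha}+N_{\tau}$ with $\alpha \neq \emptyset$, i.e. it lies in the closed span $\bigvee_{\alpha \neq \emptyset}(L^{\alpha}+N_{\tau})$. Since $\nu \leq \tau$, the preliminaries provide the contractive co-embedding intertwiner
\[
E := E_{\nu, \tau} : \mathbb{H}_{d}^{2}(\tau) \longrightarrow \mathbb{H}_{d}^{2}(\nu), \qquad a+N_{\tau} \longmapsto a+N_{\nu}.
\]
Being bounded and linear, $E$ sends $I+N_{\tau}$ to $I+N_{\nu}$, sends each $L^{\alpha}+N_{\tau}$ to $L^{\alpha}+N_{\nu}$, and therefore maps $\bigvee_{\alpha \neq \emptyset}(L^{\alpha}+N_{\tau})$ into $\bigvee_{\alpha \neq \emptyset}(L^{\alpha}+N_{\nu})$ (a bounded operator carries the closure of a set into the closure of its image). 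Applying $E$ to the membership just obtained yields $I+N_{\nu} \in \bigvee_{\alpha \neq \emptyset}(L^{\alpha}+N_{\nu})$, which by the same characterization forces $\nu$ to be Cuntz, contradicting the hypothesis. Hence $\tau=\la+\nu$ is non-Cuntz.

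The argument is short, and I do not expect a genuine obstacle. The only points requiring care are the faithful use of Remark \ref{Popescu-decomposition-of-row-isometry} to identify "$\mu$ is Cuntz" with the approximability of the constant vector $I+N_{\mu}$ by positive-length free monomials, and the elementary continuity fact that the bounded map $E$ pushes the closed span $\bigvee_{\alpha\neq\emptyset}(L^{\alpha}+N_{\tau})$ into $\bigvee_{\alpha\neq\emptyset}(L^{\alpha}+N_{\nu})$. No estimate beyond the contractivity of $E$ (equivalently, $\nu \leq \tau$) enters, so the proof stays at the level of the intertwining relation rather than any explicit computation.
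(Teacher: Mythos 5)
Your proof is correct, and it takes a genuinely different route from the paper's. The paper's own argument is structural and very terse: it appeals to the Popescu--Wold decomposition discussed in Remark \ref{Popescu-decomposition-of-row-isometry} to assert that $\la+\nu$ still has a non-zero $L$-part (hence is non-Cuntz), and then applies Theorem \ref{left-ideal-non-Cuntz=0} to the sum to conclude $N_{\la+\nu}=0$. You instead argue by contradiction using the other half of that remark, namely the characterization (from \cite[Theorem 6.4]{JM-freeCE}) of Cuntz measures as those for which $I+N_\mu$ lies in the closed span of the positive-length monomials, and you transport that membership from $\la+\nu$ down to $\nu$ via the contractive co-embedding $E_{\nu,\la+\nu}$; separately, you obtain $N_{\la+\nu}=0$ directly from the ideal inclusion $N_{\la+\nu}\subseteq N_\nu=0$, which only uses Theorem \ref{left-ideal-non-Cuntz=0} for $\nu$ itself. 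What your approach buys is completeness: the paper never actually explains why the Wold decomposition of $\Pi_{\la+\nu}$ inherits a non-zero $L$-part from the dominated summand $\nu$ (restricting a row isometry to an invariant subspace does not obviously preserve the $L$-part), and your co-embedding argument proves exactly this point, in contrapositive form, using nothing beyond contractivity and continuity. What the paper's route buys, once its unproven step is granted, is brevity and a closer tie to the structure theory of row isometries; your version is the one that would survive refereeing without further elaboration.
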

\begin{proof}
    Note that $\la+\nu$ has a non-Cuntz part by the discussion in Remark \ref{Popescu-decomposition-of-row-isometry}; hence, the result follows form  Theorem \ref{left-ideal-non-Cuntz=0}.
\end{proof}
\begin{cor} \label{H2(la)-Cuntz-nonCuntz}
    Let  $\la$ is a classical measure on the unit circle $\mathbb{T}$. 
    \begin{enumerate}
        \item  If $\la$ is non-Cuntz,   $H^2(\la)$ is the completion of the polynomials in the inner product $\langle p,q \rangle_{\la} = \la(\overline{p}q)$.
        \item If $\la$ is Cuntz, ${H}^{2}(\la)=L^{2}(\la)$.
        \end{enumerate}
\end{cor}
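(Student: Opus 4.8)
The plan is to identify the GNS Hardy space $H^{2}(\la)=\mathbb{H}_{1}^{2}(\la)$ with a concrete subspace of $L^{2}(\la)$ and then read off each case from the structure of $N_\la$ and the Cuntz characterization. First I would record the basic identification: in the single-variable setting the pre-inner product on $\mathbb{A}_{1}=A(\mathbb{D})$ is $\langle a,b\rangle_{\la}=\la(a^{*}b)=\int_{\mathbb{T}}\overline{a}\,b\,d\la$, which is exactly the $L^{2}(\la)$ inner product. Hence the canonical map $a+N_\la\mapsto a$ embeds $\mathbb{H}_{1}^{2}(\la)$ isometrically into $L^{2}(\la)$, with range the $L^{2}(\la)$-closure of the analytic polynomials; this range is the classical $H^{2}(\la)=\overline{\mathrm{span}}\{z^{n}:n\geq 0\}$.

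For part (1) I would invoke Theorem \ref{left-ideal-non-Cuntz=0}: since $\la$ is non-Cuntz, $N_\la=0$, so the GNS space is the completion of all of $\mathbb{A}_{1}=A(\mathbb{D})$ in the seminorm $\|a\|_{\la}=\la(a^{*}a)^{1/2}$. The polynomials are operator-norm dense in the disk algebra, and from $\|a\|_{\la}^{2}=\la(a^{*}a)\leq \la(I)\,\|a\|_{op}^{2}$ operator-norm density forces $\|\cdot\|_{\la}$-density. Therefore the completion of the polynomials in $\langle\cdot,\cdot\rangle_{\la}$ coincides with the completion of $A(\mathbb{D})$, which is $H^{2}(\la)$. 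This amounts to bookkeeping once $N_\la=0$ is in hand.

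Part (2) carries the real content. By the Cuntz characterization recalled in Remark \ref{Popescu-decomposition-of-row-isometry}, $\la$ being Cuntz means the constant $I+N_\la$ lies in the closed linear span of the positive-degree monomials; in the $L^{2}(\la)$ picture this reads $1\in\overline{\mathrm{span}}\{z^{n}:n\geq 1\}=M_{z}H^{2}(\la)$, where $M_{z}$ is multiplication by $z$. Since $M_{z}H^{2}(\la)$ already contains every $z^{n}$ with $n\geq 1$, adjoining $1$ shows it contains $H^{2}(\la)$; together with the obvious inclusion $M_{z}H^{2}(\la)\subseteq H^{2}(\la)$ this gives $M_{z}H^{2}(\la)=H^{2}(\la)$. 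The crux is that on $L^{2}(\la)$ the operator $M_{z}$ is unitary (because $|z|=1$ holds $\la$-a.e.), so it restricts to a unitary of $H^{2}(\la)$ and its inverse $M_{\overline{z}}$ also preserves $H^{2}(\la)$. Consequently $H^{2}(\la)$ is invariant under multiplication by $\overline{z}=z^{-1}$, hence contains $z^{n}$ for every $n\in\mathbb{Z}$.

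Finally I would close by density: the Laurent polynomials $\mathrm{span}\{z^{n}:n\in\mathbb{Z}\}$ are uniformly dense in $C(\mathbb{T})$ by Stone--Weierstrass, and $C(\mathbb{T})$ is dense in $L^{2}(\la)$ for any finite positive Borel measure, so their $L^{2}(\la)$-closure is all of $L^{2}(\la)$. Since this closure sits inside $H^{2}(\la)\subseteq L^{2}(\la)$, we conclude $H^{2}(\la)=L^{2}(\la)$. I expect the main obstacle to be the careful translation of the abstract Cuntz condition (phrased via $N_\la$ and approximation of $I+N_\la$) into the concrete statement $1\in M_{z}H^{2}(\la)$, together with the verification that the GNS space genuinely embeds isometrically into $L^{2}(\la)$; once these identifications are in place, the unitarity of $M_{z}$ is exactly what upgrades one-sided (analytic) invariance to two-sided invariance and thereby recovers all of $L^{2}(\la)$.
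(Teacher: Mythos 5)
Your proof is correct. Part (1) follows the paper's own route exactly: Theorem \ref{left-ideal-non-Cuntz=0} gives $N_\la = 0$, so the GNS space is the completion of the disk algebra, and uniform density of the polynomials in $A(\mathbb{D})$ transfers to $\|\cdot\|_\la$-density, so the two completions agree. For part (2) you take a genuinely different route. The paper translates the Cuntz condition into ``$1$ lies in the $L^2(\la)$-closed span of $\{z^n : n \geq 1\}$'' and then simply cites the Szeg\H{o}--Kolmogorov--Krein theorem from Hoffman to conclude $H^2(\la) = L^2(\la)$; that citation implicitly runs through the function-theoretic density criterion (divergence of the logarithmic integral of the absolutely continuous part). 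You instead give a self-contained operator-theoretic argument: the Cuntz condition says exactly that the closed subspace $M_z H^2(\la)$ contains $1$, hence contains all of $H^2(\la)$, so the restricted shift is a surjective isometry of $H^2(\la)$; since $M_z$ is unitary on $L^2(\la)$ (as $|z|=1$ holds $\la$-a.e.), its inverse $M_{\overline{z}}$ then also preserves $H^2(\la)$, so $H^2(\la)$ contains every Laurent monomial $z^n$, $n \in \mathbb{Z}$, and Stone--Weierstrass together with the density of $C(\mathbb{T})$ in $L^2(\la)$ finishes the proof. What the paper's citation buys is brevity and a link to the classical Szeg\H{o} infimum formula; what your argument buys is elementarity and transparency --- no splitting of $\la$ into absolutely continuous and singular parts, no logarithmic integrability criterion --- using only unitarity of multiplication by $z$ to upgrade one-sided invariance to two-sided invariance. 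Your explicit verification that the GNS space embeds isometrically onto the closure of the analytic polynomials in $L^2(\la)$ is a point the paper leaves implicit, and it is in fact needed for both parts of the statement to make sense, so including it strengthens rather than pads the argument.
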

\bp
1. By Theorem \ref{left-ideal-non-Cuntz=0}, $N_\la=0$, so $H^2(\la)$ is the completion of the disc algebra in $\langle \cdot, \cdot \rangle_\la$. However, polynomials are dense in the disc algebra. 

2. Since $\la$ is Cuntz, by 
\cite[Theorem 6.4]{JM-freeCE}, constant functions can be approximated by monomials $z^n,~ n \in \mathbb{N}$, then by Szego-Kolomogorov-Krein theorem \cite[Chapter 4]{Hoff} we see that ${H}^{2}(\la)=L^{2}(\la)$.
\ep

\begin{remark} \label{GNS-of-non-Cuntz-vs-m}
Let $\tau$ be a non-Cuntz measure, so by Theorem \ref{left-ideal-non-Cuntz=0} we know that $N_\tau=0$. Thus, if we want to construct the GNS space of $\tau$, we see that $\frac{\mathbb{A}_d}{N_\tau} \cong \mathbb{A}_d $, which is similar to the case of NC Lebesgue measure $m$ with $N_m=0$. However, the difference is in completion, i.e., if we complete $\mathbb{A}_d $ with respect to the inner product induced by $\tau$, we get $\mathbb{H}^{2}_{d}(\tau)$. While completing $\mathbb{A}_d$ with respect to $m$ gives the Fock space $\mathbb{H}^{2}_{d}$. Note though in the decomposition $\tau=\tau_L +\tau_C$ the part $\tau_L$ is given by the weighted Lebesgue measure, we cannot generally compare the two norms induced by $\tau$ and $m$. Hence, we cannot say anything more about the comaprison between the GNS spaces of $\tau$ and $m$. Furthermore, as far as we work with $\mathbb{A}_d $ the GNS isometry $\Pi_\tau$ is   essentially $L$; however, when we work with the extended isometry they are different as they live on different GNS spaces.
\end{remark}
\begin{prop}\label{reducing-intersection}
    Let $\mu$ and $\lambda$ be two arbitrary positive NC measures on the operator system ${\mathcal{A} }_{d}$, and $\lambda$ be non-Cuntz. Then, the intersection space ${\mathscr{H}}^{+}(H_{\mu}) \cap {\mathscr{H}}^{+}(H_{\lambda}) $  is $V_\mu$ reducing.
\end{prop}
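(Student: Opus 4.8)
The plan is to verify directly that the linear manifold $\mathrm{int}(\mu,\la)$ is invariant under each $V_{\mu,k}$ and under each $V_{\mu,k}^{*}$, which is exactly what it means to reduce the row isometry $V_\mu$ (the closed reducing subspace $\mathrm{Int}(\mu,\la)$ then follows by continuity of the $V_{\mu,k}$). The whole argument rests on writing both $V_{\mu,k}$ and $V_{\mu,k}^{*}$ explicitly at the level of the Taylor coefficients of the NC holomorphic functions in $\scr{H}^{+}(H_\mu)$. Recall that every $g \in \scr{H}^{+}(H_\mu)$ is a Cauchy transform $g = \mathscr{C}_\mu(p_g)$ of a unique $p_g \in \mathbb{H}_d^{2}(\mu)$, with scalar coefficients $\hat g_\alpha = \mu(L^{\alpha *} p_g(L)) = \langle L^\alpha + N_\mu , p_g \rangle_\mu$, by the identity $(\mathscr{C}_\mu p)(Z) = \sum_\alpha Z^\alpha \mu(L^{\alpha *} p(L))$. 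Since $\mathscr{C}_\mu$ is a unitary intertwiner, $V_{\mu,k}\mathscr{C}_\mu = \mathscr{C}_\mu \Pi_{\mu,k}$ and, taking adjoints, $V_{\mu,k}^{*}\mathscr{C}_\mu = \mathscr{C}_\mu \Pi_{\mu,k}^{*}$; combined with $\Pi_{\mu,k}(L^\alpha + N_\mu) = L^{k\alpha} + N_\mu$, these let me read off all coefficients.

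For the adjoint, the computation is immediate: the $\alpha$-coefficient of $V_{\mu,k}^{*}g = \mathscr{C}_\mu \Pi_{\mu,k}^{*} p_g$ is $\langle L^\alpha + N_\mu , \Pi_{\mu,k}^{*} p_g \rangle_\mu = \langle L^{k\alpha} + N_\mu , p_g \rangle_\mu = \hat g_{k\alpha}$. In other words $V_{\mu,k}^{*}$ acts as the purely formal backward shift $g \mapsto \sum_\alpha \hat g_{k\alpha} Z^\alpha$ on power-series coefficients, by a rule that does not depend on $\mu$ at all. This yields $V_{\mu,k}^{*}$-invariance of the intersection at once: if $f \in \mathrm{int}(\mu,\la)$ then $f \in \scr{H}^{+}(H_\la)$ as well, so $V_{\mu,k}^{*}f$ and $V_{\la,k}^{*}f$ have the same coefficients $\hat f_{k\alpha}$ and hence are the same NC holomorphic function; thus $V_{\mu,k}^{*}f = V_{\la,k}^{*}f$ lies in both $\scr{H}^{+}(H_\mu)$ and $\scr{H}^{+}(H_\la)$, i.e.\ in $\mathrm{int}(\mu,\la)$. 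Note this half is symmetric in $\mu$ and $\la$ and uses nothing about Cuntz-ness.

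For the forward direction the same bookkeeping gives, for $\beta = k\gamma$, the coefficient $\langle \Pi_{\mu,k}^{*}(L^{k\gamma}+N_\mu), p_g\rangle_\mu = \hat g_\gamma$; for $\beta \neq \emptyset$ not beginning with $k$ it gives $0$; and for $\beta = \emptyset$ it gives $\langle \Pi_{\mu,k}^{*}(1+N_\mu), p_g\rangle_\mu =: c_k^\mu(g)$. Hence, as an identity of NC holomorphic functions, $V_{\mu,k}g = Z_k g + c_k^\mu(g)\cdot 1$, that is, $V_{\mu,k}$ is multiplication by $Z_k$ up to a scalar multiple of the constant function. This is where non-Cuntz-ness of $\la$ enters: by Remark~\ref{Popescu-decomposition-of-row-isometry}, $\scr{H}^{+}(H_\la)$ then contains the constants, so $1 \in \scr{H}^{+}(H_\la)$. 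Given $f \in \mathrm{int}(\mu,\la)$, applying the same formula to $\la$ gives $Z_k f = V_{\la,k}f - c_k^\la(f)\cdot 1 \in \scr{H}^{+}(H_\la)$, and therefore $V_{\mu,k}f = Z_k f + c_k^\mu(f)\cdot 1 \in \scr{H}^{+}(H_\la)$; since $V_{\mu,k}f \in \scr{H}^{+}(H_\mu)$ automatically, we get $V_{\mu,k}f \in \mathrm{int}(\mu,\la)$, completing the forward invariance.

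I expect the forward invariance to be the only genuine obstacle, precisely because of the additive correction $c_k^\mu(g)\cdot 1$: the operator $V_{\mu,k}$ is \emph{not} simply multiplication by $Z_k$, and that stray constant can be absorbed only when the target space contains the constants. This is exactly why $\la$ must be non-Cuntz; for a splitting Cuntz $\la$ one has $1 \notin \scr{H}^{+}(H_\la)$ and the argument breaks, consistent with the counterexamples (Example~\ref{non-example}, Example~\ref{classical-sing-not-RKHS-Sing}) the paper invokes later. By contrast the $V_{\mu,k}^{*}$-direction costs nothing, since the backward shift is coefficient-wise the identical operation on every Herglotz space $\scr{H}^{+}(H_\mu)$.
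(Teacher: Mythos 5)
Your proposal is correct. Both halves of your argument are sound: the coefficient identity $\widehat{(V_{\mu,k}^{*}g)}_{\alpha}=\langle L^{k\alpha}+N_{\mu},p_{g}\rangle_{\mu}=\hat{g}_{k\alpha}$ does show that $V_{\mu,k}^{*}$ is a measure-independent backward shift (giving co-invariance for free), and the formula $V_{\mu,k}g=Z_{k}g+c_{k}^{\mu}(g)\cdot 1$ correctly isolates the constant correction that can only be absorbed when $\mathscr{H}^{+}(H_{\lambda})$ contains the constants, i.e.\ when $\lambda$ is non-Cuntz.

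The route differs from the paper's in that the paper's proof of this proposition is essentially two citations: co-invariance of the intersection space is quoted from Theorem 4.5 of \cite{JM-ncld}, and invariance is obtained by combining the presence of constants in $\mathscr{H}^{+}(H_{\lambda})$ (Theorem 6.4 of \cite{JM-freeCE}) with Proposition 4.8 of \cite{JM-ncld}. You instead reprove both cited facts from first principles at the level of Taylor coefficients, using only the unitarity of the Cauchy transform, the intertwining relations $V_{\mu,k}\mathscr{C}_{\mu}=\mathscr{C}_{\mu}\Pi_{\mu,k}$, and the row-isometry identities $\Pi_{\mu,k}^{*}\Pi_{\mu,j}=\delta_{k,j}I$. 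Your forward-direction computation is in fact the same mechanism the paper itself deploys later, in the proof of Proposition \ref{reducing-projections}, where the identity $(V_{\sigma,k}f)(Z)-(V_{\sigma,k}f)(0_{n})=Z_{k}f(Z)$ is used for the sum measure $\sigma=\mu+\lambda$. What your version buys is self-containedness and transparency: it makes explicit exactly which operator-theoretic facts the citations encode, and it pinpoints the stray constant $c_{k}^{\mu}(g)\cdot 1$ as the sole obstruction, which explains cleanly why the hypothesis on $\lambda$ cannot be dropped (consistent with Example \ref{non-example}). What the paper's approach buys is brevity, by delegating both halves to the Jury--Martin results. One minor point in your favor: you are careful to prove invariance of the (possibly non-closed) linear manifold $\mathrm{int}(\mu,\lambda)$ under each $V_{\mu,k}$ and $V_{\mu,k}^{*}$ and then pass to the closure by continuity, a distinction the paper's statement glosses over.
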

\bp
By \cite[Theorem 4.5]{JM-ncld}, the intersection space is $V_\mu$ co-invariant. On the other hand, since $\lambda$ is non-Cuntz, ${\mathscr{H}}^{+}(H_{\lambda})$ contains constant functions by \cite[Theorem 6.4]{JM-freeCE}. Hence, by \cite[Proposition 4.8]{JM-ncld} the intersection space is $V_\mu$ invariant.
\ep

\begin{thm} \label{rkhs-non-CE}
Let $\mu$ and $\lambda$ be two arbitrary positive NC measures on the operator system ${\mathcal{A} }_{d}$, and $\lambda$ be non-Cuntz. Then, $\mu$ has the  maximal Lebesgue RKHS decomposition $\mu =\mu_{ac} + \mu_{s} $ with respect to $\lambda$, i.e.,  $\mu_{ac}$ is the maximal positive NC measure bounded above by $\mu$, which is also RK-AC with respect to $\la$. Also, $\mu_s$  is RK-Sing with respect to $\la$ and $0 \leq \mu_{s} \leq \mu $. The measures $\mu_{ac}$ and $\mu_s$ are defined by
$${\mathscr{H}}^{+}(H_{\mu_{ac}}):= {\overline{{\mathscr{H}}^{+}(H_{\mu}) \cap {\mathscr{H}}^{+}(H_{\lambda}) }}^{H^{+}(H_{\mu})}=\mathrm{Int}(\mu, \la)$$
and 
$${\mathscr{H}}^{+}(H_{\mu_{s}}):={\mathscr{H}}^{+}(H_{\mu}) \ominus {\mathscr{H}}^{+}(H_{\mu_{ac}}).$$
Both $ {\mathscr{H}}^{+}(H_{\mu_{ac}})$ and ${\mathscr{H}}^{+}(H_{\mu_{s}})$ are reducing for $V_{\mu}$ and 
$${\mathscr{H}}^{+}(H_{\mu})={\mathscr{H}}^{+}(H_{\mu_{ac}}) \oplus {\mathscr{H}}^{+}(H_{\mu_{s}}).$$
\end{thm}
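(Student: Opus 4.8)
The plan is to realize the orthogonal splitting of $\mathscr{H}^+(H_\mu)$ dictated by the intersection space as a splitting of the underlying measure, and then to read off absolute continuity, singularity and maximality from the reducing geometry. First I would record that the candidate subspace $M := \mathrm{Int}(\mu,\la)$ is $V_\mu$-reducing: Proposition \ref{reducing-intersection} gives that $\mathrm{int}(\mu,\la) = \mathscr{H}^+(H_\mu) \cap \mathscr{H}^+(H_\la)$ is reducing for $V_\mu$ (this is exactly where the non-Cuntz hypothesis on $\la$ is used, since it upgrades co-invariance to invariance), and since each $V_{\mu,k}$ and each $V_{\mu,k}^*$ is bounded, they carry the $\|\cdot\|_{H_\mu}$-closure of a reducing subspace back into itself. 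Hence $M$ and $M^\perp = \mathscr{H}^+(H_\mu) \ominus M$ are both reducing and $\mathscr{H}^+(H_\mu) = M \oplus M^\perp$ orthogonally.

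Next I would manufacture the two measures from this geometry. Let $v := \mathscr{C}_\mu(1 + N_\mu)$, the image of the cyclic GNS vector, which is cyclic for $V_\mu$, and let $P$ be the orthogonal projection onto $M$. Because $M$ is reducing, $P$ commutes with every monomial $V_\mu^\alpha$ and its adjoint, so for all words $\alpha, \beta$ the cross terms vanish and
\[
\langle V_\mu^\alpha v, V_\mu^\beta v\rangle = \langle V_\mu^\alpha Pv, V_\mu^\beta Pv\rangle + \langle V_\mu^\alpha (I-P)v, V_\mu^\beta (I-P)v\rangle .
\]
Since $\mu(L^{\alpha*}L^\beta) = \langle V_\mu^\alpha v, V_\mu^\beta v\rangle$, I would define $\mu_{ac}$ and $\mu_s$ by the two summands on the right. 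The pairs $(M, V_\mu|_M, Pv)$ and $(M^\perp, V_\mu|_{M^\perp}, (I-P)v)$ are cyclic row isometries with cyclic vectors ($Pv$ is cyclic in $M$ because $P$ maps the $V_\mu$-dense orbit of $v$ onto a dense subset of $M$), so by the GNS correspondence between cyclic row-isometric representations of $\mathcal{E}_d$ and positive NC measures they determine positive NC measures $\mu_{ac}, \mu_s$ with $\mu = \mu_{ac} + \mu_s$. Transporting back through the Cauchy transform and using additivity of the Herglotz kernel in the measure, $\prescript{\mu}{}{K} = \prescript{\mu_{ac}}{}{K} + \prescript{\mu_s}{}{K}$, which identifies $\mathscr{H}^+(H_{\mu_{ac}}) = M$ and $\mathscr{H}^+(H_{\mu_s}) = M^\perp$ isometrically inside $\mathscr{H}^+(H_\mu)$, yielding the asserted orthogonal direct sum and the reducing property of both summands.

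Absolute continuity and singularity then fall out of a chain of inclusions. For $\mu_{ac}$: the set $\mathrm{int}(\mu,\la)$ lies inside $\mathscr{H}^+(H_{\mu_{ac}}) \cap \mathscr{H}^+(H_\la) = \mathrm{int}(\mu_{ac}, \la)$ and is dense in $M = \mathscr{H}^+(H_{\mu_{ac}})$ by construction, so $\mu_{ac} \ll_R \la$. For $\mu_s$: any $f \in \mathscr{H}^+(H_{\mu_s}) \cap \mathscr{H}^+(H_\la) = M^\perp \cap \mathscr{H}^+(H_\la)$ lies in $\mathscr{H}^+(H_\mu) \cap \mathscr{H}^+(H_\la) = \mathrm{int}(\mu,\la) \subseteq M$, whence $f \in M \cap M^\perp = \{0\}$ and $\mu_s \perp_R \la$.

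Finally, for maximality I would take any positive NC measure $\nu$ with $\nu \le \mu$ and $\nu \ll_R \la$. From $\nu \le \mu$ and the NC Aronszajn theorem the inclusion $\mathscr{H}^+(H_\nu) \hookrightarrow \mathscr{H}^+(H_\mu)$ is contractive, while $\nu \ll_R \la$ makes $\mathscr{H}^+(H_\nu) \cap \mathscr{H}^+(H_\la)$ dense in $\mathscr{H}^+(H_\nu)$; since this intersection lies in $\mathrm{int}(\mu,\la) \subseteq M$ and $M$ is closed in $\mathscr{H}^+(H_\mu)$, contractivity forces $\mathscr{H}^+(H_\nu) \subseteq M = \mathscr{H}^+(H_{\mu_{ac}})$. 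I would then invoke the equivalence between contractive containment of Herglotz RKHS's, domination of kernels $\prescript{\nu}{}{K} \preceq \prescript{\mu_{ac}}{}{K}$, and the measure order $\nu \le \mu_{ac}$ to conclude. I expect the principal obstacle to be exactly this last step together with the identification $\mathscr{H}^+(H_{\mu_{ac}}) = M$: one must verify that the compression of $\prescript{\mu}{}{K}$ to the reducing subspace is genuinely the Herglotz kernel of the abstractly constructed $\mu_{ac}$, and that kernel domination can be reflected back to positivity of $\mu_{ac} - \nu$ as a functional on $\mathcal{A}_d$.
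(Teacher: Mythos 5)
Your proposal is correct and follows essentially the same route as the paper: the non-Cuntz hypothesis enters only through Proposition \ref{reducing-intersection} to make $\mathrm{Int}(\mu,\la)$ reducing for $V_\mu$, and the decomposition, the verification that $\mu_{ac} \ll_R \la$ and $\mu_s \perp_R \la$, and maximality are then all read off from the orthogonal splitting $\mathscr{H}^{+}(H_\mu) = \mathrm{Int}(\mu,\la) \oplus \mathrm{Int}(\mu,\la)^{\perp}$. The only difference is one of packaging: where you construct $\mu_{ac}$ and $\mu_s$ by hand (splitting the moments of $\mu$ via the projection onto the reducing subspace and the Cauchy transform of the cyclic GNS vector, then identifying the resulting Herglotz spaces with $M$ and $M^{\perp}$ --- the step you rightly flag as the technical crux), the paper outsources exactly this reducing-subspace-to-measure correspondence to \cite[Theorems 4.7 and 4.9]{JM-ncld}.
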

\begin{proof}
     When $\lambda$ is not Cuntz, by Proposition \ref{reducing-intersection} the intersection space ${\mathscr{H}}^{+}(H_{\mu}) \cap {\mathscr{H}}^{+}(H_{\lambda})$ is $V_{\mu}={\mathscr{H}}^{+}(H_{\mu}) \cap {\mathscr{H}}^{+}(H_{\lambda})$ reducing. Hence,  \cite[Theorem 4.7 ]{JM-ncld} applies to $\mathcal{M}=\overline{{\mathscr{H}}^{+}(H_{\mu}) \cap {\mathscr{H}}^{+}(H_{\lambda})}^{\mu}$ , and like  \cite[Theorem 4.9]{JM-ncld} we get our result. Note that the decomposition is maximal since by the way $\mu_{ac}$ is defined, it just depends on the intersection space $\mathrm{int}(\mu, \la)$ and its closure with respect to $\mathscr{H}^{+}(H_{\mu})$. Thus, for example if $\mu_{ac} \leq_{R} \mu_1$, then $\mathrm{int}(\mu_{ac}, \la) \subseteq \mathrm{int}(\mu_1, \la) \subseteq \mathrm{int}(\mu, \la)$. Upon taking closure with respect to $\mathscr{H}^{+}(H_{\mu})$, we see that $\mathrm{Int}(\mu_{ac}, \la) = \mathrm{Int}(\mu_1, \la) = \mathrm{Int}(\mu, \la)$, which shows that $\mathscr{H}^{+}(H_{\mu_{ac}})=\mathscr{H}^{+}(H_{\mu_1})$, i.e., $\mu_{ac}=\mu_1$.
     \end{proof}
\begin{thm} \label{positive-cone-AC-{R}(la)}
    Let $\la$ be any positive NC measure. Then, the set $AC_{R}[\la]=\{ \mu \in \left(\mathcal{A}_d \right)^{\dagger}_{+}: \mu \ll_{R} \la \}$ is a positive cone.
\end{thm}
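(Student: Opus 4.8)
The plan is to verify the two defining closure properties of a positive cone: invariance under multiplication by a positive scalar and closure under addition. Throughout I will exploit that the Herglotz transform is linear in the measure, so that $H_{t\mu} = t H_{\mu}$ and $H_{\mu_1 + \mu_2} = H_{\mu_1} + H_{\mu_2}$, and that by \ref{mu-kernel} this linearity passes to the associated NC kernels, giving $\prescript{t\mu}{}{K} = t \, \prescript{\mu}{}{K}$ and $\prescript{\mu_1 + \mu_2}{}{K} = \prescript{\mu_1}{}{K} + \prescript{\mu_2}{}{K}$.

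For positive scaling, fix $\mu \ll_{R} \la$ and $t > 0$. Since $\prescript{t\mu}{}{K} = t \, \prescript{\mu}{}{K}$ with $t > 0$, the space $\mathscr{H}^{+}(H_{t\mu})$ coincides with $\mathscr{H}^{+}(H_{\mu})$ as a set of functions, its norm being merely a positive rescaling of $\|\cdot\|_{H_{\mu}}$. Consequently $\mathrm{int}(t\mu, \la) = \mathrm{int}(\mu, \la)$ as sets, and since the two norms are proportional, norm--density of the intersection space in $\mathscr{H}^{+}(H_{\mu})$ is equivalent to its norm--density in $\mathscr{H}^{+}(H_{t\mu})$. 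Hence $t\mu \ll_{R} \la$. This step is routine.

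The substantive step is closure under addition. Let $\mu_1, \mu_2 \in AC_{R}[\la]$ and set $\mu = \mu_1 + \mu_2$. Because $\prescript{\mu}{}{K} = \prescript{\mu_1}{}{K} + \prescript{\mu_2}{}{K}$, the NC Aronszajn sum-of-kernels theorem (the NC analogue of \cite[Theorem 5.7]{Paulsen-rkhs} used in \cite{JM-ncld} and \cite{BMV}) identifies $\mathscr{H}^{+}(H_{\mu})$ with $\{ f_1 + f_2 : f_i \in \mathscr{H}^{+}(H_{\mu_i}) \}$ and supplies the crucial norm inequality $\|f_1 + f_2\|_{H_{\mu}}^2 \le \|f_1\|_{H_{\mu_1}}^2 + \|f_2\|_{H_{\mu_2}}^2$ for every such decomposition. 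Since $\mu_i \le \mu$, the preamble's RKHS inclusion gives $\mathscr{H}^{+}(H_{\mu_i}) \subseteq \mathscr{H}^{+}(H_{\mu})$, whence $\mathrm{int}(\mu_i, \la) = \mathscr{H}^{+}(H_{\mu_i}) \cap \mathscr{H}^{+}(H_{\la}) \subseteq \mathscr{H}^{+}(H_{\mu}) \cap \mathscr{H}^{+}(H_{\la}) = \mathrm{int}(\mu, \la)$, and therefore $\mathrm{int}(\mu_1, \la) + \mathrm{int}(\mu_2, \la) \subseteq \mathrm{int}(\mu, \la)$.

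To conclude I would show $\mathrm{int}(\mu, \la)$ is dense in $\mathscr{H}^{+}(H_{\mu})$. Take $f \in \mathscr{H}^{+}(H_{\mu})$ and use the sum theorem to write $f = f_1 + f_2$ with $f_i \in \mathscr{H}^{+}(H_{\mu_i})$. Exploiting $\mu_i \ll_{R} \la$, approximate each $f_i$ by $g_i^{(n)} \in \mathrm{int}(\mu_i, \la)$ in the $\|\cdot\|_{H_{\mu_i}}$--norm. Then $g_1^{(n)} + g_2^{(n)} \in \mathrm{int}(\mu, \la)$, and the sum-norm inequality yields $\|f - (g_1^{(n)} + g_2^{(n)})\|_{H_{\mu}}^2 \le \|f_1 - g_1^{(n)}\|_{H_{\mu_1}}^2 + \|f_2 - g_2^{(n)}\|_{H_{\mu_2}}^2 \to 0$, so $f$ lies in the closure of $\mathrm{int}(\mu, \la)$; hence $\mu \ll_{R} \la$. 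The main obstacle I anticipate is pinning down the precise form of the NC Aronszajn sum theorem and its norm estimate in the NC RKHS category, in particular confirming that the minimum-norm decomposition bound holds verbatim for the kernels $\prescript{\mu_i}{}{K}$; once that is secured, the approximation argument is immediate.
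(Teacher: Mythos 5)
Your proof is correct and follows the same core strategy as the paper's: decompose an element of $\mathscr{H}^{+}(H_{\mu_1+\mu_2})$ via the NC Aronszajn sum theorem, approximate each summand inside the corresponding intersection space using $\mu_i \ll_{R} \la$, and control the norm of the sum of the approximants. Two differences in execution are worth recording. First, you close the convergence argument with the minimum-norm inequality $\|f_1+f_2\|_{H_{\mu}}^{2} \le \|f_1\|_{H_{\mu_1}}^{2} + \|f_2\|_{H_{\mu_2}}^{2}$, which makes the identification of the limit immediate; the paper instead uses the contractive embeddings $e_{\mu_i}$ of \cite[Lemma 4.2]{JM-ncld} to show the approximating sequence is Cauchy in $\mathscr{H}^{+}(H_{\mu})$ and then identifies its limit as the target element by pointwise evaluation. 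These are equivalent in substance (contractivity of the embeddings is the special case $f_2=0$ of the minimum-norm inequality, both coming from the NC Aronszajn theorem of \cite{BMV}), but your version is more streamlined, and the caveat you raise about pinning down the NC form of the norm estimate is legitimate but harmless, since it is exactly the result underlying the embeddings the paper itself invokes. Second, you verify closure under multiplication by positive scalars via $H_{t\mu}=tH_{\mu}$, a step the paper's proof omits entirely even though the statement asserts a positive cone; this routine addition makes your argument strictly more complete than the paper's.
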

\bp
$AC_{R}[\la]$ is not empty since it contains $\la$, so let $\mu$ and $\nu$ be in it. We must show that  $\gamma=\mu+\nu \in AC_{R}[\la]$, i.e.,
$$\mathscr{H}^{+}(H_{\gamma}) = \overline{\mathscr{H}^{+}(H_{\gamma}) \cap \mathscr{H}^{+}(H_{\la})}^{\gamma}.$$
So, suppose $h \in \mathscr{H}^{+}(H_{\gamma}) $. We have to find a sequence $h_n \in \mathrm{int}(\gamma, \la)$ which is $|| \cdot ||_{H_{\gamma}}$-convergent to $h$. However, note that $h=f+g$ where $f \in \mathscr{H}^{+}(H_{\mu}) $ and $\mathscr{H}^{+}(H_{\nu})$. Also, recall that $\mu \ll_{R} \la$ and  $\nu \ll_{R} \la$. Therefore, 
\begin{eqnarray*}
    \exists ~ (f_n) \subseteq \mathscr{H}^{+}(H_{\mu}) \cap \mathscr{H}^{+}(H_{\la}) \quad \mathrm{ so ~that} \quad f_n \longrightarrow f ~ \mathrm{in} ~ \mathscr{H}^{+}(H_{\mu})\\
    \exists ~ (g_n) \subseteq \mathscr{H}^{+}(H_{\nu}) \cap \mathscr{H}^{+}(H_{\la}) \quad \mathrm{ so ~that} \quad g_n \longrightarrow g ~ \mathrm{in} ~ \mathscr{H}^{+}(H_{\nu})
\end{eqnarray*}
Since $\mu , \nu \leq \gamma$, by \cite[Lemma 4.2]{JM-ncld} we have the  contractive  embeddings $e_\mu$ and $e_\nu$ of $\mathscr{H}^{+}(H_{\mu})$ and $\mathscr{H}^{+}(H_{\nu})$ into $\mathscr{H}^{+}(H_{\gamma})$. For the sequence
$$ h_n := e_\mu f_n + e_\nu g_n \in \mathscr{H}^{+}(H_{\gamma}) \cap \mathscr{H}^{+}(H_{\la}),$$
 we have
 \begin{eqnarray*}
     || h_n - h_m ||_{H_{\gamma}} &\leq& || e_\mu( f_n - f_m )||_{H_{\gamma}} + || e_\nu (g_n - g_m )||_{H_{\gamma}}\\
     &\leq& ||  f_n - f_m ||_{H_{\mu}} + || g_n - g_m ||_{H_{\nu}},
 \end{eqnarray*}
  which shows $h_n$ is Cauchy in $\mathscr{H}^{+}(H_{\gamma})$, and hence it must be convergent. Nevertheless, for any $Z \in \mathbb{B}_{\mathbb{N}}^{d}$ we see
  $$h_n (Z) := e_\mu f_n (Z) + e_\nu g_n (Z)=f_n (Z) +  g_n (Z)  \longrightarrow f(Z)+g(Z), $$
  so $h$ is the limit of $h_n$. This shows that $\mathscr{H}^{+}(H_{\gamma}) \cap \mathscr{H}^{+}(H_{\la})$ is dense in $\mathscr{H}^{+}(H_{\gamma}) $, i.e., $\gamma \ll_{R} \la$.
\ep
\begin{prop}
Let $\la$ be a non-Cuntz positive NC measure. Then, the set $SG_{R}[\la]=\{ \mu \in \left(\mathcal{A}_d \right)^{\dagger}_{+}: \mu \perp_{R} \la \}$ is hereditary, i.e., if $\mu \in SG_{R}[\la]$ and $\nu$ is any positive NC measure such that $\nu \leq \mu$, then $\nu \in SG_{R}[\la]$.
\end{prop}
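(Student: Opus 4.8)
The plan is to reduce the heredity of $SG_{R}[\la]$ to the monotonicity of the reproducing-kernel containment order with respect to the measure order, and then read off the result directly from the definition of RK-singularity. The key observation is that $\mu \perp_{R} \la$ is a purely \emph{set-theoretic} condition on NC function spaces: it asserts that $\mathrm{int}(\mu, \la) = \mathscr{H}^{+}(H_{\mu}) \cap \mathscr{H}^{+}(H_{\la}) = \{ 0 \}$, where the intersection is taken inside the ambient space of NC holomorphic functions on $\mathbb{B}^{d}_{\mathbb{N}}$. Consequently I never need to compare norms or inner products; I only need to control which functions belong to $\mathscr{H}^{+}(H_{\nu})$ as opposed to $\mathscr{H}^{+}(H_{\mu})$.

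First I would invoke the NC Aronszajn inclusion recorded in the preliminaries (see \cite[Theorems 4.1 and 4.2]{JM-ncld} and \cite{BMV}): since $0 \le \nu \le \mu$, we obtain the containment $\mathscr{H}^{+}(H_{\nu}) \subseteq \mathscr{H}^{+}(H_{\mu})$, realized by the contractive embedding $e_{\mu, \nu} \colon f \mapsto f$. It is essential that this is a genuine containment of the \emph{underlying sets of functions}, not merely a bounded injection of abstract Hilbert spaces; this is exactly what the embedding $f \mapsto f$ records.

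Intersecting both sides of this containment with the fixed space $\mathscr{H}^{+}(H_{\la})$ then yields
$$ \mathrm{int}(\nu, \la) = \mathscr{H}^{+}(H_{\nu}) \cap \mathscr{H}^{+}(H_{\la}) \subseteq \mathscr{H}^{+}(H_{\mu}) \cap \mathscr{H}^{+}(H_{\la}) = \mathrm{int}(\mu, \la). $$
Since $\mu \perp_{R} \la$ by hypothesis, the right-hand side is $\{ 0 \}$, which forces $\mathrm{int}(\nu, \la) = \{ 0 \}$, i.e., $\nu \perp_{R} \la$ and hence $\nu \in SG_{R}[\la]$. This is the entire argument.

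There is no serious technical obstacle here; the whole content is getting the inclusion theorem to apply and recognizing that the defining intersection is set-theoretic, so that RKHS monotonicity transfers verbatim. The one point I would flag is conceptual rather than computational: the non-Cuntz hypothesis on $\la$ is not actually used in this monotonicity argument. It is carried along because, as discussed in the Remark following Definition \ref{RK-AC-Sing}, RK-singularity is only the correct antithesis of RK-absolute continuity when $\la$ is non-Cuntz; for Cuntz splitting measures the very notion must be replaced, which is why the refined, Toeplitz-based definitions of the later sections are needed before an analogous heredity statement can even be formulated in that generality.
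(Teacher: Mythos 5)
Your proof is correct, and it takes a genuinely more direct route than the paper's. The paper argues by contradiction: assuming $\nu$ is not RK-singular, it invokes the maximal Lebesgue RKHS decomposition of $\nu$ with respect to $\la$ (Theorem \ref{rkhs-non-CE}, which is precisely where the non-Cuntz hypothesis enters) to extract a nonzero absolutely continuous part $\nu_{ac}$ with $\nu_{ac} \leq \nu \leq \mu$; then the nonzero space $\mathscr{H}^{+}(H_{\nu_{ac}}) \cap \mathscr{H}^{+}(H_{\la})$ sits inside $\mathscr{H}^{+}(H_{\mu}) \cap \mathscr{H}^{+}(H_{\la})$, contradicting $\mu \perp_{R} \la$. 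Your argument bypasses the decomposition entirely: from $\nu \leq \mu$ and the NC Aronszajn containment $\mathscr{H}^{+}(H_{\nu}) \subseteq \mathscr{H}^{+}(H_{\mu})$ you get $\mathrm{int}(\nu, \la) \subseteq \mathrm{int}(\mu, \la) = \{0\}$ in one line. Both proofs ultimately rest on the same monotonicity of RKHS containment in the measure order, but you apply it once, to $\nu$ itself, rather than routing through $\nu_{ac}$; in effect the paper's proof contains your inclusion as a sub-step, wrapped in a contradiction that forces it to call on heavier machinery. What your route buys: it is shorter, it avoids Theorem \ref{rkhs-non-CE} altogether, and --- as you correctly flag --- it shows that heredity of $SG_{R}[\la]$ under Definition \ref{RK-AC-Sing} holds for an \emph{arbitrary} splitting measure $\la$, the non-Cuntz hypothesis serving only to guarantee that $\perp_{R}$ is the appropriate notion of singularity in the first place (cf.\ Example \ref{classical-sing-not-RKHS-Sing}). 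What the paper's route buys is mainly expository: it ties heredity of the singular cone to the existence of the Lebesgue decomposition, in keeping with how the neighboring results are organized.
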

\bp
If $\nu$ happen not to be singular, then it  has an absolutely continuous part. However, $0 \lvertneqq \nu_{ac} \leq \nu \leq \mu$. Thus, 
$$\{0\} \subsetneqq \mathscr{H}^{+}(H_{\nu_{ac}}) \cap  \mathscr{H}^{+}(H_{\la})  \subseteq \mathscr{H}^{+}(H_{\mu}), $$
which means $ \mathscr{H}^{+}(H_{\mu})$ has a non-trivial intersection with $ \mathscr{H}^{+}(H_{\la})$, contradicting the assumption. Hence, $\nu$ must be singular.
\ep

\section{The Lebesgue form decomposition with respect to a non-Cuntz measure}
The quadratic form method for the Lebesgue decomposition of measures has its roots in the von Neumann approach  to the Lebesgue decomposition of measures \cite{vN3}, and in fact it recovers von Neuman's proof \cite{Simon1}.

Consider a positive semi-definite quadratic form $q: \mathrm{Dom(q)} \times \mathrm{Dom(q)} \longrightarrow \mathbb{C} $, where $\mathrm{Dom(q)}$ is a dense linear subspace of some Hilbert space $\mathcal{H}$. Define a new inner product on $\mathrm{Dom(q)}$ by
$$\langle x , y \rangle_{q+1} := q(x,y) + \langle x , y \rangle_{\mathcal{H}}, $$
\noindent and let $\mathcal{H}(q+1)$ be the Hilbert space completion of $\mathrm{Dom(q)}$ with respect to the inner product  $\langle \cdot , \cdot \rangle_{q+1}$. Now, the form $q$ is \emph{closed} if $\mathrm{Dom(q)}$ is complete in the norm $|| \cdot ||_{q+1}$, i.e., $\mathrm{Dom(q)}=\mathcal{H}(q+1)$.  Equivalently, $q$ is closed if and only if $x_{n} \in \mathrm{Dom(q)}$ converges in $|| \cdot ||_{\mathcal{H}}$ to $x \in \mathcal{H}$ and $q(x_n - x_m , x_n - x_m) \longrightarrow 0$, then $x \in Dom(q)$ and $q(x_n-x , x_n-x) \longrightarrow 0$.  The importance of closed forms is that they obey a Riesz-like lemma. That is, $q$ is closed if and only if there is a unique closed, positive semi-definite operator $A$, with dense domain in $\mathcal{H}$ so that $\mathrm{Dom}(q)=\mathrm{Dom} (\sqrt{A})$ and 
$$q(x,y)=q_{A}(x,y)=\langle \sqrt{A} x, \sqrt{A}y \rangle_{\mathcal{H}}; \quad x,y \in \mathrm{Dom}(q),$$
\cite[Chapter VI, Theorem 2.1, Theorem 2.23]{Kato}. 

An extension of a form $q$ is another form $p$, symbolically $q \subseteq p$, if $\mathrm{Dom(q)} \subseteq \mathrm{Dom(p)}$ and $q(u,v)=p(u,v)$ over $\mathrm{Dom(q)}$. A form $q$ is closable if it has a closed extension; equivalently, whenever $x_{n} \in \mathrm{Dom(q)}$ converges in $|| \cdot ||_{\mathcal{H}}$ to $0$ and $q(x_n - x_m , x_n - x_m) \longrightarrow 0$, then $q(x_n , x_n) \longrightarrow 0$.   A Closable form $q$ has a minimal closed extension, $\overline{q}$, with $\mathrm{Dom(\overline{q})} \subseteq \mathcal{H}$ equal to the set of all $h\in \mathcal{H}$ so that there exists a sequence $(h_n)$ in $\mathrm{Dom(q)}$ converging to $h$  such that $(h_n)$ is Cauchy in the norm of $\mathcal{H}(q+1)$. In this situation, we have
$$\overline{q}(h,h)= \mathrm{lim}_{n \to \infty} q(h_n , h_n).$$

A linear subspace $\mathcal{D} \subseteq \mathrm{Dom(q)}$ is called a form core for a closed form $q$ if $\mathcal{D}$ is dense in $\mathcal{H}(q+1)$. For a closable form $q$, $\mathrm{Dom(q)}$  is a form core for  $\overline{q}$, see \cite[Chapter VI, Theorem 1.21]{Kato}. If $q=q_{A}$ is a closed, positive semi-definite quadratic form, then $\mathcal{D}$ is a core for $q$ if and only if $\mathcal{D}$ is a core for $\sqrt{A}$.  Recall that $\mathcal{D}$ is a core for closed operator $T$ if $\{ (x, Tx): x \in \mathcal{D} \} $ is dense in the graph of $T$.

Let $q$ and $\sigma$ be two densely defined forms. We say that $q \leq \sigma$ if $\mathrm{Dom}(q) \supseteq \mathrm{Dom}(\sigma)$ and $q(h,h) \leq \sigma(h,h)$ for all $h \in \mathrm{Dom}(\sigma)$. In \cite[Section 2]{Simon1}, Simon proved that any densely defined, positive semi-definite form $q$ has a unique maximal Lebesgue decomposition 
$$q=q_{ac}+q_s;$$
where $q_{ac}$ is the maximal closable positive form bounded above by $q$, i.e., $0 \leq q_{ac} \leq q$ and $0 \leq q_s=q-q_{ac} \leq q$. The part $q_{ac}$  is called the absolutely continuous part of $q$, and $q_s$ is the singular part of $q$. For forms on GNS Hilbert spaces we give the following definition.
\begin{defn}\label{Simon's definition}
Let $\mu$ and $\la$ be two positive NC measures and $N_{\mu} \supseteq N_{\la}$. We say:
\begin{itemize}
    \item $\mu$ is absolutely continuous with respect to $\la$ in the forms sense, in notation $\mu \ll_{F} \la$, if the hybrid form
\begin{eqnarray*}
    q_{\mu}^{\la}: \left(\mathbb{A}_d +N_{\la}\right) \times \left(\mathbb{A}_d +N_{\la}\right)  &\longrightarrow &\mathbb{C}\\
    q_{\mu}^{\la}(a+N_{\la} , b+N_{\la}) &=&\left \langle a+N_\mu, b+N_\mu \right \rangle_{\mu}= \mu(a^{*}b)
\end{eqnarray*}
    is closeable. The set of all positive NC measures that are absolutely continuous with respect to $\la$ in the form sense is denoted by $AC_{F}[\la]$.

    \item $\mu$ is singular with respect to $\la$ in the forms sense, in notation $\mu \perp_{F} \la $, if the hybrid form $q_{\mu}^{\la}$ on $\mathbb{H}_{d}^{2}(\la)$ does not majorize any closeable form except the identically zero form. The set of all positive NC measures that are singular with respect to $\la$ in the form sense is denoted by $SG_{F}[\la]$.
    \item the decomposition $\mu=\mu^{F}_{1}+\mu^{F}_{2}$ with respect to $\la$ is called a Lebesgue form decomposition if $\mu^{F}_{1} \ll_{F} \la$ and $\mu^{F}_{2} \perp_{F} \la$.

    We might use the short-hand $\mu$ is form-AC and form-Sing instead of $\mu \ll_{F} \la$ and $\mu \perp_{F} \la $ respectively.
\end{itemize}  
\end{defn} 
\begin{remark}
    The assumption $N_\la \subseteq N_\mu$  in Definition \ref{Simon's definition} is necessary to make sure the hybrid form $q_{\mu}^{\la}$ is well-defined. If we think of the simplest case where $\mu \leq \la$ as positive linear functionals, we notice that $N_\la \subseteq N_\mu$, and in this case $\mu \ll_{F} \la$ as we expect. From now on, when we discuss hybrid forms, or write notations $\mu \ll_{F} \la$ and $\mu \perp_{F} \la $, we implicitly assume that $N_\la \subseteq N_\mu$. So, we write this assumption only when there is a danger of confusion. Also, note that the domain of the hybrid form $q_{\mu}^{\la}$ is the dense subspace $\mathbb{A}_d +N_\la$ of  $\mathbb{H}_{d}^{2}(\la)$. Furthermore, by Remark \ref{GNS-of-non-Cuntz-vs-m}, when we deal with a non-Cuntz measure $\la$, the left ideal $N_\la$ is zero. In this case, we usually write $a$ in place of $a+N_\la$ for $ a \in \mathbb{A}_{d}$ unless there is a danger of confusion. 
\end{remark}
\begin{defn} \label{form-order-of-Nc-measures}
  We put a form order on positive NC measures by declaring that  
  $\mu \leq_{F} \nu$ if there exists a measure $\la$ such that the hybrid forms $q_{\mu}^{\la}$ and $ q_{\nu}^{\la} $ are well-defined and $q_{\mu}^{\la} \leq  q_{\nu}^{\la} $.   That is,  $q_{\mu}^{\la}(h,h) \leq q_{\nu}^{\la}(h,h)$ for all $h \in \mathbb{A}_d +N_{\la}$ . Such a $\la$ always exists, e.g., $\la=m$ the standard NC Lebesgue measure.
\end{defn}
\begin{remark}
 By  \cite[Theorem 4.1]{JM-ncld}, 
 and the discussion after it, the orders $\leq_{R}$ and $\leq_{F}$, whenever defined, are equivalent to the ordinary $\leq$ for positive NC measures. So, it is not important if we work with any of them since all of them coincide.
 \end{remark}
 \begin{defn}
     Let $\la$ be any positive NC measure. A positive semi-definite sesquilinear form $q$ with dense  domain $\mathrm{Dom}(q) \subseteq \mathbb{H}^{2}_{d}(\la)$ is called $\Pi_{\la}$-Toeplitz if: 
\begin{enumerate}
    \item $\mathrm{Dom}(q) $ is $\Pi_\la$-invariant,
    \item $q$ enjoys Toeplitz property; i.e., 
     $$q(\Pi_{\la, i}g ,\Pi_{\la, j} h )=\delta_{i,j}q(g , h); \quad \forall g,h \in \mathrm{Dom}(q). $$
\end{enumerate}
 \end{defn}

\begin{defn}
    Let $\la$ be any positive NC measure. A closed, positive, semi-definite operator $T$ with domain $\mathrm{Dom}(T) \subseteq \mathbb{H}^{2}_{d}(\la) $  will be called $\Pi_{\la}$-Toeplitz if the associated quadratic form 
        $$q_{T}^{\la}(g, h):=\langle \sqrt{T}g, \sqrt{T}h \rangle_{\la} $$ 
        is $\Pi_{\la}$-Toeplitz. That is, if    
    \begin{enumerate}[(i)]
        \item $\mathrm{Dom}(\sqrt{T})$ is $\Pi_\la$-invariant,
        \item  $q_{T}^{\la}(\Pi_{\la, i}g ,\Pi_{\la, j} h )=\delta_{i,j}q_{T}^{\la}(g , h), \quad \forall g,h \in \mathrm{Dom}(\sqrt{T}).$
    \end{enumerate}
    When $T$ is a bounded operator on $\mathbb{H}^{2}_{d}(\la)$, both conditions above reduce to one single condition; i.e., $T$ is $\Pi_{\la}$-Toeplitz if and only if
    $\Pi_{\la, i}^{*} T \Pi_{\la, j}=\delta_{i,j}T$
\end{defn}

\begin{lemma}\label{hybrid-form-Toeplitz}
     Suppose that $\mu$ and $\la$ are two positive NC measures such that $N_\la \subseteq N_\mu$. Then, the hybrid form $q_{\mu}^{\la}$ defined by
     $$q_{\mu}^{\la}(a+N_{\la} , b+N_{\la}) =\left \langle a+N_\mu, b+N_\mu \right \rangle_{\mu}= \mu(a^{*}b)$$
     is $\Pi_\la$- Toeplitz, i.e.,
$$q_{\mu}^{\la}(\Pi_{\la, i}( a+N_\la) ,\Pi_{\la, j} (b+N_\la) )=\delta_{i,j}q_{\mu}^{\la}(a+N_\la , b+N_\la).$$
\end{lemma}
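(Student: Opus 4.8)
The plan is to reduce the entire statement to the defining Cuntz--Toeplitz relation $L_i^* L_j = \delta_{i,j} I_{\mathbb{H}^{2}_{d}}$ recorded in \ref{row-isometry-property}. First I would recall how the GNS row isometry acts on the form domain. Since $\Pi_{\la, i} = \pi_\la(L_i)$ and $\pi_\la(L_i)(a + N_\la) = L_i a + N_\la$, we have $\Pi_{\la, i}(a + N_\la) = L_i a + N_\la$; that is, $\Pi_{\la,i}$ is simply left multiplication by $L_i$. Because $\mathbb{A}_{d}$ is an algebra containing each $L_i$, the product $L_i a$ again lies in $\mathbb{A}_{d}$, so $\Pi_{\la,i}$ carries the form domain $\mathbb{A}_{d} + N_\la$ into itself and the left-hand side of the claimed identity is meaningful.

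Next, I would substitute directly into the definition of the hybrid form. This gives
$$q_{\mu}^{\la}\big(\Pi_{\la,i}(a+N_\la),\, \Pi_{\la,j}(b+N_\la)\big) = q_{\mu}^{\la}\big(L_i a + N_\la,\, L_j b + N_\la\big) = \mu\big((L_i a)^{*}(L_j b)\big) = \mu(a^{*} L_i^{*} L_j\, b).$$
At this point the row-isometry property \ref{row-isometry-property}, namely $L_i^{*} L_j = \delta_{i,j} I_{\mathbb{H}^{2}_{d}}$, collapses the middle factor, so that
$$\mu(a^{*} L_i^{*} L_j\, b) = \delta_{i,j}\, \mu(a^{*} b) = \delta_{i,j}\, q_{\mu}^{\la}(a + N_\la,\, b + N_\la),$$
which is precisely the asserted Toeplitz identity.

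The only issues demanding attention are matters of well-definedness rather than any genuine difficulty. I would verify that $(L_i a)^{*}(L_j b) \in \mathcal{A}_{d} = \mathrm{Dom}(\mu)$, so that applying $\mu$ is legitimate; this is immediate from the semi-Dirichlet property \ref{semi-Dirichlet}, since $L_i a, L_j b \in \mathbb{A}_{d}$ and $\mathbb{A}_{d}^{*}\mathbb{A}_{d} \subseteq \mathcal{A}_{d}$. I would also note that the computation is insensitive to the choice of coset representatives, because the standing hypothesis $N_\la \subseteq N_\mu$ is exactly what makes $q_{\mu}^{\la}$ a well-defined form on $\mathbb{A}_{d} + N_\la$ in the first place. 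Thus there is no real obstacle here: the lemma is essentially a one-line consequence of the Cuntz--Toeplitz relation, and the substance of the argument is merely confirming that every expression remains inside the operator system $\mathcal{A}_{d}$ on which $\mu$ is defined.
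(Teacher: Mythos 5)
Your proposal is correct and follows exactly the paper's argument: identify $\Pi_{\la,i}$ as left multiplication by $L_i$ on the form domain, substitute into the hybrid form, and collapse $\mu(a^{*}L_i^{*}L_j b)$ to $\delta_{i,j}\mu(a^{*}b)$ via the row-isometry relation \ref{row-isometry-property}. The additional remarks on domain invariance, the semi-Dirichlet property, and coset well-definedness are sound refinements that the paper leaves implicit, not a different method.
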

\bp
We have 
\begin{eqnarray*}
    q_{\mu}^{\la}(\Pi_{\la, j}(a+N_\la), \Pi_{\la, k}(b+N_\la))&=&q_{\mu}^{\la}(Z_ja+N_\la, Z_kb+N_\la)\\
    &=& \mu(a^*L_{j}^{*}L_kb)\\
    &=& \delta_{j,k}\mu(a^*b)\\
    &=& \delta_{j,k} q_{\mu}^{\la}(a+N_\la, b+N_\la).
\end{eqnarray*}
\ep

The following proposition is a generalization of \cite[Proposition 1]{JM-ncFatou} with essentially the same proof. However, for the sake of completeness and future record we would like to give a detailed proof here. 
\begin{prop} \label{reducing-projections}
Let $\mu$ and $\la$ be two positive NC measures on $\mathcal{A}_d$, and let $\sigma=\mu+\la$. Then, the contractive co-embedding 
\begin{eqnarray*}
 E_{\lambda}=E_{\lambda, \sigma}:\mathbb{H}_{d}^{2}(\sigma) &\longrightarrow & \mathbb{H}_{d}^{2}(\lambda) \\
  f+N_{\sigma} & \longmapsto & f+N_{\lambda},
  \end{eqnarray*}
  is well-defined.  Suppose that $Q_s$ and $Q_{ac}$ are  orthogonal projections onto $\mathrm{Ker}(E_\la)$ and $\mathrm{Ker}(E_\la)^{\perp}$ respectively. If  $\la$ is  non-Cuntz, then $Q_s$ and $Q_{ac}$ are reducing for $\Pi_{\sigma}$, i.e., $Q_s , ~Q_{ac} \in vN(\Pi_\sigma)^{'}$, where $vN(\Pi_\sigma)$ is the von Neumann algebra generated by $\Pi_\sigma$, and $vN(\Pi_\sigma)^{'}$ is its commutant.
\end{prop}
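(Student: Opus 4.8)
The plan is to derive the reducing property from the intertwining relations satisfied by $E_\la$, using the non-Cuntz hypothesis only at the very last step to pass from one-sided invariance to full reducibility. First, for well-definedness: since $\sigma=\mu+\la\ge\la$ and $\mu$ is positive, for every $a\in\mathbb{A}_d$ we have $\la(a^*a)\le\sigma(a^*a)$, so $N_\sigma\subseteq N_\la$ and $a+N_\sigma\mapsto a+N_\la$ is a well-defined contraction; this is exactly the co-embedding $E_{\la,\tau}$ of the preliminaries with $\tau=\sigma$. From the associated commutative diagram $E_\la$ is an intertwiner, $E_\la\Pi_{\sigma,k}=\Pi_{\la,k}E_\la$ for each $k$. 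Hence if $E_\la x=0$ then $E_\la\Pi_{\sigma,k}x=\Pi_{\la,k}E_\la x=0$, so $\mathrm{Ker}(E_\la)=\mathrm{Ran}(Q_s)$ is $\Pi_\sigma$-invariant; taking adjoints, $\Pi_{\sigma,k}^*E_\la^*=E_\la^*\Pi_{\la,k}^*$ shows $\mathrm{Ker}(E_\la)^\perp=\overline{\mathrm{Ran}(E_\la^*)}=\mathrm{Ran}(Q_{ac})$ is $\Pi_\sigma^*$-invariant. Thus $Q_s$ is an invariant projection and $Q_{ac}$ a co-invariant one, and it remains to promote invariance to reducibility.

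Next I would bring in the positive contraction $P:=E_\la^*E_\la$ on $\mathbb{H}_{d}^{2}(\sigma)$, which satisfies $\mathrm{Ker}(P)=\mathrm{Ker}(E_\la)$. Using the intertwining relation together with $\Pi_{\la,j}^*\Pi_{\la,k}=\delta_{j,k}I$ one gets $\Pi_{\sigma,j}^*P\Pi_{\sigma,k}=E_\la^*\Pi_{\la,j}^*\Pi_{\la,k}E_\la=\delta_{j,k}P$, so $P$ is a bounded $\Pi_\sigma$-Toeplitz operator in the sense of the paper. Writing $Q_\sigma=\sum_k\Pi_{\sigma,k}\Pi_{\sigma,k}^*$ and $P_0=I-Q_\sigma$ for the defect (wandering) projection of the row isometry $\Pi_\sigma$, the Toeplitz identity yields $\sum_k\Pi_{\sigma,k}P\Pi_{\sigma,k}^*=Q_\sigma PQ_\sigma$. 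A direct computation then shows that for $x\in\mathrm{Ker}(E_\la)$ one has $\sum_k\langle P\Pi_{\sigma,k}^*x,\Pi_{\sigma,k}^*x\rangle=\langle PQ_\sigma x,Q_\sigma x\rangle=\|E_\la P_0 x\|_\la^2$, the last equality using $Px=0$. Since each summand on the left is nonnegative, co-invariance of $\mathrm{Ker}(E_\la)$ — which, combined with the invariance already shown, is precisely the reducing property $Q_s\in vN(\Pi_\sigma)^{'}$ — is equivalent to the single statement $E_\la P_0 x=0$ for all $x\in\mathrm{Ker}(E_\la)$; that is, the defect projection $P_0$ must carry $\mathrm{Ker}(E_\la)$ back into itself.

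The hard part, and the only place the hypothesis enters, is precisely this assertion $E_\la P_0|_{\mathrm{Ker}(E_\la)}=0$. The purely algebraic relations among $E_\la$, $\Pi_\sigma$ and $\Pi_\la$ turn out to be self-consistent on $\mathrm{Ker}(E_\la)$ and do \emph{not} by themselves force $E_\la P_0 x=0$; indeed for splitting Cuntz $\la$ the kernel need not be reducing, which is the phenomenon behind the counterexamples of the later sections. To close the gap I would use that $\la\le\sigma$ forces $\mathscr{H}^+(H_\la)\subseteq\mathscr{H}^+(H_\sigma)$, so both contain the constant functions and, by \cite[Theorem 6.4]{JM-freeCE}, $\sigma$ is non-Cuntz; in particular $N_\sigma=N_\la=0$ by Theorem \ref{left-ideal-non-Cuntz=0}. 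Transporting to the RKHS side through the Cauchy-transform unitaries (recall $E_\la=\mathscr{C}_\la^*e_\la^*\mathscr{C}_\sigma$ and $\mathscr{C}_\sigma\Pi_{\sigma,k}=V_{\sigma,k}\mathscr{C}_\sigma$), the presence of constants is exactly the input that converts an invariant subspace into a reducing one, by the same mechanism used in Proposition \ref{reducing-intersection} via \cite[Proposition 4.8]{JM-ncld}, and it matches the non-Cuntz argument of \cite[Proposition 1]{JM-ncFatou}. This yields $E_\la P_0 x=0$ on $\mathrm{Ker}(E_\la)$, and hence both $\mathrm{Ker}(E_\la)$ and $\mathrm{Ker}(E_\la)^\perp$ reduce $\Pi_\sigma$, i.e. $Q_s,Q_{ac}\in vN(\Pi_\sigma)^{'}$.
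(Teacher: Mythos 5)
Your proposal is correct, and at the decisive step it follows the same route as the paper: well-definedness from $N_\sigma\subseteq N_\la$, invariance of $\mathrm{Ker}(E_\la)$ from the intertwining relation $E_\la\Pi_{\sigma,k}=\Pi_{\la,k}E_\la$, and then, for the complementary projection, a transfer to the RKHS side where the non-Cuntz hypothesis enters through the presence of constants in $\mathscr{H}^{+}(H_\la)$. The paper carries this last step out explicitly, computing $(V_{\sigma,k}f)(Z)=(V_{\la,k}f)(Z)+cI_n$ for $f\in\mathrm{int}(\sigma,\la)$ and using \cite[Theorem 6.4]{JM-freeCE}; you instead invoke Proposition \ref{reducing-intersection}, which is legitimate and non-circular since that proposition precedes this one and applies verbatim with $\mu$ replaced by $\sigma$, provided you make explicit the identification $\mathscr{C}_\sigma\,\mathrm{Ran}(Q_{ac})=\overline{\mathrm{Ran}(e_\la)}^{\sigma}=\overline{\mathrm{int}(\sigma,\la)}^{\sigma}$ coming from $E_\la^{*}=\mathscr{C}_\sigma^{*}e_\la\mathscr{C}_\la$. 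What is genuinely different is your middle paragraph: the observation that $P=E_\la^{*}E_\la$ is $\Pi_\sigma$-Toeplitz and the resulting equivalence, via $\sum_k\langle P\Pi_{\sigma,k}^{*}x,\Pi_{\sigma,k}^{*}x\rangle=\lVert E_\la P_0x\rVert_\la^{2}$ for $x\in\mathrm{Ker}(E_\la)$, between co-invariance of the kernel and the vanishing of $E_\la P_0$ on it. Those computations are correct, and the reformulation is illuminating: it pinpoints that any failure of reducibility must come from the defect space of $\Pi_\sigma$ (so, e.g., if $\sigma$ itself were Cuntz, $P_0=0$ and the kernel would reduce automatically). However, this machinery ends up not being load-bearing: you never verify $E_\la P_0|_{\mathrm{Ker}(E_\la)}=0$ by a direct argument, but fall back on the RKHS/constants mechanism, which already gives reducibility of $\mathrm{Ran}(Q_{ac})$ outright and renders the Toeplitz detour redundant. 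For the shortest complete writeup, drop the detour and finish with: $\mathrm{Ran}(Q_{ac})=\overline{\mathrm{Ran}(E_\la^{*})}^{\sigma}$ corresponds under $\mathscr{C}_\sigma$ to $\overline{\mathrm{int}(\sigma,\la)}^{\sigma}$, which is $V_\sigma$-reducing by Proposition \ref{reducing-intersection} applied to the pair $(\sigma,\la)$; hence $Q_{ac}$, and with it $Q_s=I-Q_{ac}$, lies in $vN(\Pi_\sigma)^{\prime}$.
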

\bp
Since $\la \leq \sigma$, we see that $N_\sigma \subseteq N_\la$; hence, $E_\la$ is well-defined.   Note that $Q_{ac}+Q_s=I_{\mathbb{H}_{d}^{2}(\sigma)}$, so it is enough to show that $\mathrm{Ran}(Q_s)$ and $\mathrm{Ran}(Q_{ac})$  are both $\Pi_\sigma$-invariant. Recall that $E_\la$ is an intertwiner, i.e., $E_{\lambda} \Pi_{\sigma,k}=\Pi_{\la, k} E_{\lambda} $ with the following commutative diagram
\begin{center}
\begin{tikzcd}
\mathbb{H}_{d}^{2}(\sigma) \arrow[r, "E_{\lambda}"]
& \mathbb{H}_{d}^{2}(\lambda) \\
\mathbb{H}_{d}^{2}(\sigma) \arrow[r, "E_{\lambda}"] \arrow[u, "\Pi_{\sigma, k}"]
& \mathbb{H}_{d}^{2}(\lambda) \arrow[u, "\Pi_{\lambda , k} "]
\end{tikzcd}
\end{center}
However, this means that $\mathrm{Ran}(Q_s)=\mathrm{Ker}(E_\la)=\mathrm{Ker}(E_{\la}^{*}E_{\la})$ is $\Pi_\sigma$-invariant. In fact, if $x\in \mathrm{Ker}(E_\la) $, then for any $1 \le k \leq d$
$$E_\la (\Pi_{\sigma, k} x)=\Pi_{\la,k} (E_\la x)=\Pi_{\la,k}(0)=0,$$
which shows $\Pi_{\sigma, k} x \in \mathrm{Ker}(E_\la) $ again. Thus, we see that $Q_s$ is $\Pi_{\sigma}$-invariant. This also shows that $Q_{ac}$ is $\Pi_{\sigma}^{*}$-invariant since $Q_{ac}=Q_{s}^{\perp}$. 

We now need to show that $\mathrm{Ran}(Q_{ac})$ is $\Pi_{\sigma}$-invariant. To do so, we transfer to the RKHS model. Recall that $ V_{\sigma,k}=\mathscr{C}_{\sigma} \Pi_{\sigma, k} \mathscr{C}_{\sigma}^{*} $ with the following commutative diagram
\begin{center}
\begin{tikzcd}
\mathbb{H}_{d}^{2}(\sigma) \arrow[r, "\mathscr{C}_{\sigma}"]
& \mathscr{H}^{+}(H_{\sigma}) \\
\mathbb{H}_{d}^{2}(\sigma) \arrow[r, "\mathscr{C}_{\sigma}"] \arrow[u, "\Pi_{\sigma, k}"]
& \mathscr{H}^{+}(H_{\sigma}) \arrow[u, "V_{\sigma , k} "]
\end{tikzcd}
\end{center}
Thus, we need to show that $\check{Q}_{ac}= \mathscr{C}_{\sigma} Q_{ac} \mathscr{C}_{\sigma}^{*}$ is $V_{\sigma}$-invariant. Notice that $\mathrm{Ran}(Q_{ac})=\overline {\mathrm{Ran}(E_{\la}^{*})}^{\sigma}$, so $\mathrm{Ran}(\check{Q}_{ac})=\mathscr{C}_{\sigma}( \overline {\mathrm{Ran}(E_{\la}^{*})}^{\sigma} )=\overline {\mathrm{Ran}(e_{\la})}^{\sigma}= \overline {\mathscr{H}^{+}(H_{\sigma}) \cap \mathscr{H}^{+}(H_{\la})}^{\sigma}$. Hence, it is enough to prove that $\mathscr{H}^{+}(H_{\sigma}) \cap \mathscr{H}^{+}(H_{\la})$ is $V_\sigma$-invariant. So, let $f \in  \mathscr{H}^{+}(H_{\sigma}) \cap \mathscr{H}^{+}(H_{\la})$; hence, by \cite{JM-freeCE} we have
\begin{eqnarray*}
    (V_{\sigma, k} f)(Z)-(V_{\sigma, k} f)(0_n)&=&Z_k f(Z)\\
    &=& (V_{\la, k} f)(Z)-(V_{\la, k} f)(0_n).
\end{eqnarray*}
Thus, $(V_{\sigma, k} f)(Z)=(V_{\la, k} f)(Z)+cI_n$, where $c=(V_{\sigma, k} f)(0_n)-(V_{\la, k} f)(0_n)$ is constant. Nevertheless, $\la$ is non-Cuntz, and by  \cite[Theorem 6.4]{JM-freeCE} $\mathscr{H}^{+}(H_{\la})$ contains the constant functions. So, $(V_{\sigma, k} f)(Z) \in \mathscr{H}^{+}(H_{\la}) $, and obviously $(V_{\sigma, k} f)(Z) \in \mathscr{H}^{+}(H_{\sigma}) $. Therefore, $(V_{\sigma, k} f)(Z) \in \mathscr{H}^{+}(H_{\sigma}) \cap \mathscr{H}^{+}(H_{\la}) $.
\ep

The following theorem and its proof are also a generalization of \cite[Theorem 4]{JM-ncFatou}.
\begin{thm} \label{form-decomposition-nonCE}
    Let $\mu$ and $\la$ be two positive NC measures on $\mathcal{A}_d$, and let $\la$ be non-Cuntz. Then, $\mu$ has the  maximal Lebesgue form decomposition $\mu =\mu_{ac} + \mu_{s} $ with respect to $\lambda$, where $\mu_{ac}$ is the maximal measure which is form-AC with respect to $\la$ and $0 \leq \mu_{ac} \leq \mu$. Also, $\mu_s=\mu-\mu_{ac}$  is form-Sing with respect to $\la$ and $0 \leq \mu_{s} \leq \mu $.
\end{thm}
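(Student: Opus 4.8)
The plan is to reduce the statement to Simon's abstract form decomposition \cite{Simon1} and then promote the abstract pieces to honest positive NC measures using the Toeplitz structure. First I would set $\sigma=\mu+\lambda$ and observe that the ``$+1$'' inner product of the hybrid form is exactly the GNS inner product of $\sigma$: for $a,b\in\mathbb{A}_d$,
$$\langle a+N_\lambda, b+N_\lambda\rangle_{q_\mu^\lambda+1}=\mu(a^*b)+\lambda(a^*b)=\sigma(a^*b)=\langle a+N_\sigma, b+N_\sigma\rangle_\sigma.$$
Hence the completion $\mathcal{H}(q_\mu^\lambda+1)$ is canonically unitarily equivalent to $\mathbb{H}_d^2(\sigma)$ via $a+N_\lambda\mapsto a+N_\sigma$, and under this identification the canonical contraction $\mathcal{H}(q_\mu^\lambda+1)\to\mathbb{H}_d^2(\lambda)$ becomes precisely the co-embedding $E_\lambda=E_{\lambda,\sigma}$ of Proposition \ref{reducing-projections}. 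Since $q_\mu^\lambda$ is densely defined and positive semi-definite, Simon's theorem gives the maximal decomposition $q_\mu^\lambda=q_{ac}+q_s$ into a maximal closeable part $q_{ac}$ and a singular part $q_s$, and in this realization the singular part is $q_s(a+N_\lambda, b+N_\lambda)=\langle Q_s(a+N_\sigma), Q_s(b+N_\sigma)\rangle_\sigma$, where $Q_s$ is the projection onto $\mathrm{Ker}(E_\lambda)$.

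Next I would bring in the non-Cuntz hypothesis. By Proposition \ref{reducing-projections}, because $\lambda$ is non-Cuntz, the projections $Q_s$ and $Q_{ac}=I-Q_s$ reduce the GNS row isometry $\Pi_\sigma$. Consequently $Q_s\Pi_{\sigma,i}=\Pi_{\sigma,i}Q_s$ for each $i$, and the computation of Lemma \ref{hybrid-form-Toeplitz}, combined with the row-isometry relations $\Pi_{\sigma,i}^*\Pi_{\sigma,j}=\delta_{i,j}I$, shows that $q_s$ is $\Pi_\lambda$-Toeplitz:
$$q_s(\Pi_{\lambda,i}(a+N_\lambda),\Pi_{\lambda,j}(b+N_\lambda))=\langle \Pi_{\sigma,i}Q_s(a+N_\sigma),\Pi_{\sigma,j}Q_s(b+N_\sigma)\rangle_\sigma=\delta_{i,j}\,q_s(a+N_\lambda, b+N_\lambda).$$
Since $q_\mu^\lambda$ is $\Pi_\lambda$-Toeplitz by Lemma \ref{hybrid-form-Toeplitz}, the difference $q_{ac}=q_\mu^\lambda-q_s$ is $\Pi_\lambda$-Toeplitz as well.

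Then I would reconstruct the measures from these Toeplitz forms. Because $Q_s$ reduces $\Pi_\sigma$ it commutes with $\pi_\sigma(\mathbb{A}_d)$, so $Q_s(a+N_\sigma)=\pi_\sigma(a)Q_s(1+N_\sigma)$ and $q_s$ rewrites as $q_s(a+N_\lambda, b+N_\lambda)=\langle Q_s(1+N_\sigma),\pi_\sigma(a^*b)Q_s(1+N_\sigma)\rangle_\sigma$. The right-hand side is the vector state of $\pi_\sigma$ compressed to the reducing subspace $\mathrm{Ran}(Q_s)$, so $g\mapsto\langle Q_s(1+N_\sigma),\pi_\sigma(g)Q_s(1+N_\sigma)\rangle_\sigma$ is a bona fide positive NC measure $\mu_s$ with $\mu_s(a^*b)=q_s(a+N_\lambda, b+N_\lambda)$. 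Setting $\mu_{ac}:=\mu-\mu_s$ gives $\mu_{ac}(a^*b)=q_{ac}(a+N_\lambda, b+N_\lambda)$ and $\mu=\mu_{ac}+\mu_s$. The inequalities $0\le q_s\le q_\mu^\lambda$ and $0\le q_{ac}\le q_\mu^\lambda$ from Simon's theorem then transfer, through the equivalence of $\le_F$ with the usual order on positive NC measures (the remark following Definition \ref{form-order-of-Nc-measures}), to $0\le\mu_s\le\mu$ and $0\le\mu_{ac}\le\mu$. Finally, by construction $q_{\mu_{ac}}^\lambda=q_{ac}$ is closeable and $q_{\mu_s}^\lambda=q_s$ majorizes no nonzero closeable form, so $\mu_{ac}\ll_F\lambda$ and $\mu_s\perp_F\lambda$; and since $q_{ac}$ is Simon's maximal closeable minorant of $q_\mu^\lambda$, any $\nu$ with $0\le\nu\le\mu$ and $\nu\ll_F\lambda$ has $q_\nu^\lambda$ closeable and $q_\nu^\lambda\le q_\mu^\lambda$, whence $q_\nu^\lambda\le q_{ac}=q_{\mu_{ac}}^\lambda$, i.e. $\nu\le\mu_{ac}$, giving maximality.

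The step requiring the most care is the reconstruction of $\mu_s$ (equivalently $\mu_{ac}$) as an honest positive functional on all of $\mathcal{A}_d$ out of the form values on $\mathbb{A}_d+N_\lambda$. This is exactly where non-Cuntzness is indispensable: it is the hypothesis that makes $Q_s$ and $Q_{ac}$ reduce $\Pi_\sigma$ (Proposition \ref{reducing-projections}), and the reducing property is simultaneously what yields the representation formula exhibiting $\mu_s$ as a vector state and what forces the Toeplitz identities. Without it, Proposition \ref{reducing-projections} gives only co-invariance of $Q_{ac}$, the compression of $\pi_\sigma$ need no longer be multiplicative, and the candidate singular form can fail to arise from a measure at all — precisely the obstruction that the Cuntz-case theory of the later sections is designed to repair.
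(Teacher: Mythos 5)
Your proof is correct, and it shares the paper's skeleton --- identify $\mathcal{H}(q_{\mu}^{\lambda}+1)$ with $\mathbb{H}_{d}^{2}(\sigma)$, invoke Simon's maximal decomposition, and use Proposition \ref{reducing-projections} --- but the crucial step of promoting the abstract form pieces to honest measures is executed in the opposite order and by a genuinely different mechanism. The paper builds $\mu_{ac}$ first: it defines $\hat{q}_{ac}(a^{*}+b)=(q_{\mu})_{ac}(a,1)+(q_{\mu})_{ac}(1,b)$ on $\mathbb{A}_{d}^{*}+\mathbb{A}_{d}$, proves boundedness from complete contractivity of $\pi_{\sigma}$ together with $Q_{ac}\in vN(\Pi_{\sigma})^{'}$, and then --- the delicate part --- proves positivity by writing $p(L)^{*}p(L)=u(L)^{*}+u(L)$ via the semi-Dirichlet property and using that the Simon derivative $D=Q_{ac}-E_{\lambda}^{*}E_{\lambda}$ is $\Pi_{\sigma}$-Toeplitz, so that $\hat{q}_{ac}(p(L)^{*}p(L))=(q_{\mu})_{ac}(p,p)\geq 0$; the singular measure is then the remainder $\mu_{s}=\mu-\mu_{ac}$. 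You instead build $\mu_{s}$ first, observing that the reducing property makes $Q_{s}$ commute with $\pi_{\sigma}(\mathbb{A}_{d})$, so that $q_{s}$ is the moment restriction of the vector state $g\mapsto\langle Q_{s}(1+N_{\sigma}),\pi_{\sigma}(g)Q_{s}(1+N_{\sigma})\rangle_{\sigma}$, for which positivity and boundedness are automatic, and you recover $\mu_{ac}=\mu-\mu_{s}$ via the equivalence of $\leq_{F}$ with the ordinary order. Your route is shorter and avoids the semi-Dirichlet computation entirely; what you give up is robustness: your construction is hard-wired to the reducing property of $Q_{s}$, which is precisely what fails when $\lambda$ is Cuntz, whereas the paper's longer argument deliberately funnels positivity through the $\Pi_{\sigma}$-Toeplitz property of $D$ alone (Remark \ref{Toeplitz-form-and-measure} stresses that reducibility is not needed there), and that Toeplitz mechanism is exactly what Section 5 reuses for Cuntz splitting measures. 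Two small points: the $\Pi_{\lambda}$-Toeplitz claim for $q_{s}$ in your second paragraph is correct but never used in your reconstruction (what you actually invoke is the commutation $Q_{s}\pi_{\sigma}(a)=\pi_{\sigma}(a)Q_{s}$, not Lemma \ref{hybrid-form-Toeplitz}); and your maximality argument tacitly requires $q_{\nu}^{\lambda}$ to be well-defined for every $0\leq\nu\leq\mu$, which holds here because $N_{\lambda}=0$ by Theorem \ref{left-ideal-non-Cuntz=0} --- worth stating explicitly.
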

\bp 
Put $\sigma=\mu+\la$ and $\mathcal{D}_{\la}=\mathbb{A}_d + N_{\la}$, where $N_{\la}$ is the left ideal of zero length elements under $\la$, and $\mathcal{D}_{\la}$ is a dense linear subspace of $\mathbb{H}_{d}^{2}(\la)$ by the GNS construction. Since $\la$ is non-Cuntz, by Theorem \ref{left-ideal-non-Cuntz=0} we have $N_{\la}=0 \subseteq N_{\mu}$; hence, $q_{\mu}^{\la}$ and $q_{\sigma}^{\la}$  are well-defined. Let's drop the superscript $\la$ until the end of the proof and write $q_\mu$ instead of $q_{\mu}^{\la}$ and so on. Now, the the inner product $\langle \cdot ,\cdot \rangle_{q_{\mu} +1}=q_{\mu}(\cdot ,\cdot)+ \langle\cdot ,\cdot \rangle$ on $\mathcal{D}_{\la}$ is the form corresponding to $q_{\sigma}$ so that the completion of $\mathcal{D}_{\la}$ with respect to $\langle 
\cdot ,\cdot \rangle_{q_{\sigma} }$ is $\mathbb{H}_{d}^{2}(\sigma)$. Since $\la \leq \sigma$, or by Theorem \ref{left-ideal-non-Cuntz=0} and Corollary \ref{left-ideal-Cuntz+non-Cuntz=0},  $N_\la=N_\sigma=0$, we can define the Simon's contractive co-embedding
\begin{eqnarray*} 
    E_{\la}=E_{\la, \sigma}: \mathbb{H}_{d}^{2}(\sigma) &\longrightarrow& \mathbb{H}_{d}^{2}(\la)\\
    f+N_{\sigma} &\longrightarrow& f+N_{\la},
\end{eqnarray*}
and the projections 
$$Q_{ac}=P_{(\mathrm{ker}(E_\la))^{\perp}}, \quad Q_s=P_{\mathrm{ker}(E_\la)}.$$

Now, Simon's formulas for absolutely continuous and singular parts of $q_{\mu}$ reads as
\begin{eqnarray} \label{Simon's-formulas}
    (q_{\mu})_{ac} (a+N_{\la}, b+N_{\la})&=& \langle a+N_{\sigma}, (Q_{ac} -E_{\la}^{*}E_{\la})(b+N_{\sigma}) \rangle_{\sigma}\\
    (q_{\mu})_{s} (a+N_{\la}, b+N_{\la})&=& \langle a+N_{\sigma}, Q_s (b+N_{\sigma}) \rangle_{\sigma}. \nonumber
\end{eqnarray}
Note that the Simon derivative $D=(Q_{ac} -E_{\la}^{*}E_{\la}) $ is bounded on $\mathbb{H}_{d}^{2}(\sigma)$ since $\la \leq \sigma$, $e_{\sigma, \la}: H^{+}(H_\la) \rightarrow H^{+}(H_\sigma)$ is contraction, and $E_{\la}=E_{\la, \sigma}=\mathscr{C}_{\la}^{*} e_{\sigma, \la}^{*} \mathscr{C}_{\sigma}$.  By Simon's decomposition, $q_{\mu} = (q_{\mu})_{ac} +(q_{\mu})_{s} $ the maximal form decomposition of $q_{\mu}$  into absolutely continuous and singular parts. However, one must show that the forms $(q_{\mu})_{ac}$ and $(q_{\mu})_{s}$ are arising from  measures. Our next task is to prove that 
\begin{eqnarray}
    \hat{q}_{ac}:\mathbb{A}_{d}+\mathbb{A}_{d}^{*} &\longrightarrow& \mathbb{C} \nonumber \\
    \hat{q}_{ac}(a^{*}+b)&:=&(q_{\mu})_{ac}(a+N_{\la},I+N_{\la}) + (q_{\mu})_{ac} (I+N_{\la},b+N_{\la}) \nonumber \\
    &=&(q_{\mu})_{ac}(a,1) + (q_{\mu})_{ac} (1,b);     \quad a \in \mathbb{A}_d , \quad ,b \in \mathbb{A}_{d}^{(0)}
\end{eqnarray}
defines a positive NC measure on $\mathcal{A}_d$. Here, $\mathbb{A}_{d}^{(0)}$ denotes those members of $\mathcal{A}_d$ which vanish at zero.  Note that since $N_{\la}=0$, we we may use $a$ instead $a+N_{\la}$ or write 1 in place of the identity operator $I$, but we will write the isotropic ideals $N_\la$ and $N_\sigma$ when there is a danger of confusion.  We first show $\hat{q}_{ac}$ is a bounded linear functional on $\mathbb{A}_{d}+\mathbb{A}_{d}^{*}$. Linearity is obvious, and we need to show that it is bounded on $\mathbb{A}_{d}+\mathbb{A}_{d}^{*}$ since it can extend by continuity to a bounded linear functional on the free disk operator system $\mathcal{A}_d=\overline{\mathbb{A}_{d}+\mathbb{A}_{d}^{*}}$. Note that $\pi_{\sigma}$ is a unital *-representation of the Cuntz-Toeplitz C*-algebra $\mathcal{E}_{d}$; hence,  it  is completely contarctive. By Proposition \ref{reducing-projections}, $Q_{ac} \in vN(\Pi_{\sigma})^{'}$, so by applying formula \ref{Simon's-formulas} for $(q_{\mu})_{ac}$ we see:
\begin{eqnarray*}
    \left|\hat{q}_{ac}(a^{*}+b)\right|&=&\left| 
(q_{\mu})_{ac}(a,1) + (q_{\mu})_{ac} (1,b) \right|\\
&=&\left| \langle a+N_{\sigma}, (Q_{ac} -E_{\la}^{*}E_{\la})(I+N_{\sigma}) \rangle_{\sigma}+ \langle I+N_{\sigma}, (Q_{ac} -E_{\la}^{*}E_{\la})(b+N_{\sigma}) \rangle_{\sigma} \right|  \\
&=&\arrowvert \langle \pi_{\sigma}(a)(I+N_{\sigma}), Q_{ac}(I+N_{\sigma}) \rangle_{\sigma} -\langle a+N_{\la}, I+N_{\la} \rangle_{\la}\\
& &+\langle I+N_{\sigma}, Q_{ac}\pi_{\sigma}(b)(I+N_{\sigma}) \rangle_{\sigma} -\langle I+N_{\la}, b+N_{\la} \rangle_{\la} \arrowvert \\
&=& \arrowvert \langle Q_{ac}(I+N_{\sigma}), \pi_{\sigma}(a^{*})Q_{ac}(I+N_{\sigma}) \rangle_{\sigma} +\langle Q_{ac}(I+N_{\sigma}), \pi_{\sigma}(b) Q_{ac}(I+N_{\sigma}) \rangle_{\sigma} \\
& & -\langle I+N_{\la}, [ \pi_{\la}(a^{*})+ \pi_{\la}(b)] (I+N_{\la}) \rangle_{\la} \arrowvert \\
&=&\arrowvert \langle Q_{ac}(I+N_{\sigma}), [\pi_{\sigma}(a^{*})+\pi_{\sigma}(b)]Q_{ac}(I+N_{\sigma}) \rangle_{\sigma}  \\
& & -\langle I+N_{\la}, [ \pi_{\la}(a^{*})+ \pi_{\la}(b)] (I+N_{\la}) \rangle_{\la} \arrowvert \\
& \leq & \lVert \pi_{\sigma}(a^{*}+b) \rVert \lVert Q_{ac}(I+N_{\sigma}) \rVert^{2}+ \lVert \pi_{\la}(a^{*}+b) \rVert \lVert I+N_{\la} \rVert^{2}\\
& \leq &  \lVert {a(L)}^{*}+b(L) \rVert (\mu(I) +2\la(I) )
\end{eqnarray*}
So, $\hat{q}_{ac}$ defines a bounded linear functional on ${\mathbb{A}_{d}}^{*} +\mathbb{A}_{d}$. Note that we have used the facts that $\lVert I+N_{\sigma} \rVert^{2}=\langle I+N_{\sigma} , I+N_{\sigma} \rangle_{\sigma}=\sigma(I)=\mu(I)+\la(I)$ and $\lVert I+N_{\sigma} \rVert^{2}=\la(I)$ in the final calculations above.

We now show that $\hat{q}_{ac}$ is positive. We first show it is self-adjoint. Let $a \in \mathcal{A}_d$. Since the form $(q_{\mu})_{ac}$ is positive semi-dfinite, we have
$$\overline{\hat{q}_{ac}(a^{*})}= \overline{(q_{\mu})_{ac}(a, 1)}=(q_{\mu})_{ac}(1, a).$$
By definition of $\hat{q}_{ac}$,
$$\hat{q}_{ac}(a)=\hat{q}_{ac}(a(0)1+[a-a(0)]1)=(q_{\mu})_{ac}(\overline{a(0)}1, 1)+(q_{\mu})_{ac}(1, [a-a(0)]1)=(q_{\mu})_{ac}(1, a).$$
Upon comparing the last two equations, we see that $\overline{\hat{q}_{ac}(a^{*})}=\hat{q}_{ac}(a)$, which shows that $\hat{q}_{ac}$ is self-adjoint.

We claim that the Simon derivative $D=Q_{ac} -E_{\la}^{*}E_{\la}$ is $\Pi_{\sigma}$-Toeplitz. Since $D$ is bounded, we must show that 
\begin{eqnarray} \label{Toeplitz-property-Pi_sigma}
    \Pi_{\sigma, i}^{*} D \Pi_{\sigma,j}=\delta_{i,j} D; \quad 1 \leq i, j \leq d.
\end{eqnarray}
Recall that the contractive co-embedding $E_{\la}$ is an intertwiner between $\Pi_{\sigma}$ and $\Pi_{\la}$, and by Proposition \ref{reducing-projections},  $Q_{ac}$ commutes with components of $\Pi_{\sigma}$. So, 
\begin{eqnarray*}
    \Pi_{\sigma, i}^{*} D \Pi_{\sigma,j}&=& \Pi_{\sigma, i}^{*} (Q_{ac} -E_{\la}^{*}E_{\la}) \Pi_{\sigma,j}\\
    &=& \Pi_{\sigma, i}^{*} \Pi_{\sigma,j} Q_{ac}-E_{\la}^{*} \Pi_{\la, i}^{*} \Pi_{\la,j}E_{\la}\\
    &=&\delta_{i,j}I_{\sigma}Q_{ac}-E_{\la}^{*} \delta_{i,j}I_{\la}E_{\la}\\
    &=&\delta_{i,j}(Q_{ac}-E_{\la}^{*}E_{\la})\\
    &=&\delta_{i,j} D.
\end{eqnarray*}

We are now left with the task of proving that $\hat{q}_{ac}$ is positive. To do so, by \cite[Lemma 4.6]{JMS-ncratClark} we notice that any positive element of $\mathcal{A}_d$ is the norm limit of $\sum_{1}^{N}{p_n(L)}^{*}p_n(L)$ where each $p_n \in \mathbb{C}\{Z\}$ is a free polynomial in $d$ variables. Thus, we only need to check that $\hat{q}_{ac}({p(L)}^{*}p(L))$ is positive semi-definite for any free polynomial. Let $p$ be any free polynomial of homogenous degree $N$. Then, by using the semi-Dirichlet property \ref{semi-Dirichlet}, one can find free polynomial $u$ of degree at most $N$ such  that ${p(L)}^{*}p(L)=u(L)^{*}+u(L)$, see \cite[Theorem 4]{JM-ncFatou}.  So,
\begin{eqnarray} \label{middle-formula-Toeplitz}
    \hat{q}_{ac}({p(L)}^{*}p(L)) &=& \hat{q}_{ac}(u(L)^{*}+u(L))\nonumber \\
    &=& (q_{\mu})_{ac}(u(L),1) + (q_{\mu})_{ac} (1,u(L))\nonumber \\
    &=& \langle u+N_{\sigma}, D(I+N_{\sigma}) \rangle_{\sigma} +\langle I+N_{\sigma}, D(u+N_{\sigma}) \rangle_{\sigma} \nonumber \\
    &=& \langle u(\Pi_\sigma)(I+N_{\sigma}), D(I+N_{\sigma}) \rangle_{\sigma} +\langle I+N_{\sigma}, Du(\Pi_\sigma)(I+N_{\sigma}) \rangle_{\sigma} \nonumber \\
    &=& \langle I+N_{\sigma}, [u(\Pi_\sigma)^{*} D+ Du(\Pi_\sigma)](I+N_{\sigma}) \rangle_{\sigma}.
\end{eqnarray}
However, since $D$ is $\Pi_{\sigma}$-Toeplitz,  like the proof of \cite[Theorem 4]{JM-ncFatou} we have
$$p(\Pi_\sigma)^{*} D p(\Pi_\sigma)=u(\Pi_\sigma)^{*} D+ Du(\Pi_\sigma).$$
By combining this with equation \ref{middle-formula-Toeplitz}, we see that 
\begin{eqnarray*}
    \hat{q}_{ac}({p(L)}^{*}p(L)) &=& \langle I+N_{\sigma}, p(\Pi_\sigma)^{*} D p(\Pi_\sigma)(I+N_{\sigma}) \rangle_{\sigma}\\
    &=& \langle p(\Pi_\sigma)(I+N_{\sigma}), D p(\Pi_\sigma)(I+N_{\sigma}) \rangle_{\sigma}\\
    &=&  \langle p+N_{\sigma}, D( p+N_{\sigma}) \rangle_{\sigma}\\
    &=& (q_{\mu})_{ac}(p,p) \geq 0.
\end{eqnarray*}
Note that positivity of $\hat{q}_{ac}$ totally depends on $\Pi_\sigma$-Toeplitz property of $D$, and this property can hold without any knowledge about reducing projections.  Anyway, $\hat{q}_{ac}$ is a positive NC measure, so we define $\mu_{ac}:=\hat{q}_{ac}$ and call it the absolutely continuous part of $\mu$ with respect to $\la$. This naming is justified by the fact that $q_{{\mu}_{ac}}=(q_{\mu})_{ac}$ is closed by construction. The measure $\mu_{ac}$ is also the maximal absolutely continuous positive NC measure less than $\mu$, and this is because the Simon construction gives the maximal closeable form. Also, by a similar discussion the singular part $(q_{\mu})_{s}$ induces a positive NC measure which we call the singular part of $\mu$, and we denote it by $\mu_s$. This nomenclature is justified by that fact that $(q_{\mu})_{s}$ is singular, and it can not majorize any positive NC measure which is absolutely continuous. Also, note that by Simon formulas, we have $\mu_{s}=\mu-\mu_{ac}$.
\ep

\begin{remark} \label{Toeplitz-form-and-measure}
Notice that that positivity of $\hat{q}_{ac}$ in the end of proof of Theorem \ref{form-decomposition-nonCE}  depends merely on $\Pi_\sigma$-Toeplitz property of $D$ not on the reducing properties of Simon's projections. It might happen that $D$ is $\Pi_\sigma$-Toeplitz without Simon's projections being reducing, and $\hat{q}_{ac}$ still gives a measure. For instance by taking into account the notations of Proposition \ref{Pi-Toeplitz-ops-GNS-la}, let $\la$ be any positive NC measure, and  suppose that $T$ is a closed, positive, semi-definite and $\Pi_{\la}$-Toeplitz operator such that free polynomials are a core for $\sqrt{T}$. Define
\begin{eqnarray*}
    \mu: {\mathbb{A}_d}^{*}+\mathbb{A}_d &\longrightarrow& \mathbb{C} \\
    \mu(p^* +q)&=&\langle \sqrt{T}(p+N_{\la}), \sqrt{T}(1+N_{\la})\rangle_{\la} +\langle \sqrt{T}(1+N_{\la}), \sqrt{T}(q+N_{\la})\rangle_{\la} \\
    \mu(p^* q)&=&\langle \sqrt{T}(p+N_{\la}), \sqrt{T}(q+N_{\la}) \rangle_{\la},
\end{eqnarray*}
then by the same discussion as in the end of proof of Theorem \ref{form-decomposition-nonCE}, we see that $\mu$ is positive. Also, note that
$$\mu(1)=\|\sqrt{T}(1+N_{\la})\|^{2},$$
and by \cite[Corollary 2.8]{Paulsen-cb}, we see that $\| \mu\| \leq \mu(1)$, i.e., $\mu$ is bounded. Hence, $\mu$ can be extended to a positive bounded linear functional on the closure of ${\mathbb{A}_d}^{*}+\mathbb{A}_d$. Thus, $\mu$
defines a positive linear functional on $\mathcal{A}_d$. So, by Riesz lemma we conclude that any closed Toeplitz form induces a measure.$\blacksquare$

\end{remark}

\begin{cor} \label{ac-sing-equiv-form-sense}
   Let $\mu$ and $\la$ be two positive NC measures on $\mathcal{A}_d$, and let $\la$ be non-Cuntz. Following the notations of Theorem \ref{form-decomposition-nonCE}, we see that:
   \begin{enumerate}[(i)]
   \item $(q_{\mu}^{\la})_{ac}=q_{{\mu}_{ac}}^{\la}$ and $(q_{\mu}^{\la})_{s}=q_{{\mu}_{s}}^{\la}$,
       \item $\mu \ll_{F} \la  \Leftrightarrow q_{\mu}^{\la} ~\mathrm{ is ~closable} \Leftrightarrow q_{\mu}^{\la}=(q_{\mu}^{\la})_{ac} \Leftrightarrow Q_s =0 \Leftrightarrow Q_{ac}=I_{\mathbb{H}^{2}_{d}(\sigma)}$ ,
       \item $\mu \perp_{F} \la \Leftrightarrow q_{\mu}^{\la} ~\mathrm{ is~ singular} \Leftrightarrow q_{\mu}^{\la}=(q_{\mu}^{\la})_{s} \Leftrightarrow Q_{ac} =E_{\la}^{*}E_{\la} \Leftrightarrow Q_{s}=I_{\mathbb{H}^{2}_{d}(\sigma)}-E_{\la}^{*}E_{\la}  $.
   \end{enumerate}
\end{cor}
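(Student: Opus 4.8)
The plan is to deduce all the equivalences mechanically from three inputs: Simon's abstract form-decomposition theory \cite{Simon1}, the explicit operator formulas \ref{Simon's-formulas} produced in the proof of Theorem \ref{form-decomposition-nonCE}, and the density of $\mathbb{A}_d + N_\sigma$ in $\mathbb{H}_{d}^{2}(\sigma)$ furnished by the GNS construction, where as before $\sigma = \mu + \la$. For part (i), I would note that the identity $q_{\mu_{ac}}^{\la} = (q_\mu^{\la})_{ac}$ is essentially already obtained inside Theorem \ref{form-decomposition-nonCE}, where one proves $\hat q_{ac}(p(L)^* p(L)) = (q_\mu^{\la})_{ac}(p,p)$ for every free polynomial $p$. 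Since both $\mu_{ac}(\cdot^*\,\cdot)$ and $(q_\mu^{\la})_{ac}(\cdot,\cdot)$ are sesquilinear, I would apply the polarization identity to upgrade this diagonal equality to $\mu_{ac}(a^* b) = (q_\mu^{\la})_{ac}(a,b)$ on free polynomials, and then extend by the density of polynomials in $\mathbb{A}_d$ (recall $N_\la = 0$) to all of $\mathbb{A}_d$; this is exactly $q_{\mu_{ac}}^{\la} = (q_\mu^{\la})_{ac}$. The statement for the singular parts is then automatic: using $\mu_s = \mu - \mu_{ac}$ and Simon's additivity $(q_\mu^{\la})_s = q_\mu^{\la} - (q_\mu^{\la})_{ac}$ gives $q_{\mu_s}^{\la}(a,b) = \mu(a^* b) - \mu_{ac}(a^* b) = q_\mu^{\la}(a,b) - (q_\mu^{\la})_{ac}(a,b) = (q_\mu^{\la})_s(a,b)$.

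For (ii), the first equivalence is simply Definition \ref{Simon's definition}. For ``$q_\mu^{\la}$ closable $\Leftrightarrow q_\mu^{\la} = (q_\mu^{\la})_{ac}$'' I would invoke Simon's maximality: $(q_\mu^{\la})_{ac}$ is the largest closable form dominated by $q_\mu^{\la}$, so if $q_\mu^{\la}$ is itself closable then maximality forces equality, while the reverse implication holds because $(q_\mu^{\la})_{ac}$ is closable. For ``$q_\mu^{\la} = (q_\mu^{\la})_{ac} \Leftrightarrow Q_s = 0$'' I would read off the second line of \ref{Simon's-formulas}: the equality means $(q_\mu^{\la})_s \equiv 0$, i.e. $\langle a+N_\sigma, Q_s(b+N_\sigma)\rangle_\sigma = 0$ for all $a,b \in \mathbb{A}_d$, and since the vectors $b + N_\sigma$ are dense and $Q_s$ is a bounded projection, this vanishing is equivalent to $Q_s = 0$. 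The final link $Q_s = 0 \Leftrightarrow Q_{ac} = I_{\mathbb{H}_{d}^{2}(\sigma)}$ is the complementarity $Q_{ac} + Q_s = I$.

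Part (iii) follows the same pattern, dualized. The first equivalence is Definition \ref{Simon's definition} again. For ``$q_\mu^{\la}$ singular $\Leftrightarrow q_\mu^{\la} = (q_\mu^{\la})_s$'' I would use Simon's dual description: a form majorizes no nonzero closable form precisely when its absolutely continuous part vanishes, i.e. $(q_\mu^{\la})_{ac} = 0$, equivalently $q_\mu^{\la} = (q_\mu^{\la})_s$. For ``$q_\mu^{\la} = (q_\mu^{\la})_s \Leftrightarrow Q_{ac} = E_\la^* E_\la$'' I would use the first line of \ref{Simon's-formulas}: the equality says the Simon derivative $D = Q_{ac} - E_\la^* E_\la$ satisfies $\langle a+N_\sigma, D(b+N_\sigma)\rangle_\sigma = 0$ for all $a,b$, and since $D$ is a bounded operator on $\mathbb{H}_{d}^{2}(\sigma)$ and the vectors $a+N_\sigma$ are dense, this forces $D = 0$. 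The last equivalence $Q_{ac} = E_\la^* E_\la \Leftrightarrow Q_s = I_{\mathbb{H}_{d}^{2}(\sigma)} - E_\la^* E_\la$ is again $Q_s = I - Q_{ac}$.

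The one step that needs genuine care is part (i): I must make sure the polarization legitimately transfers the positivity identity, which Theorem \ref{form-decomposition-nonCE} establishes only on diagonal elements $p(L)^* p(L)$, to the full sesquilinear identity on $\mathbb{A}_d \times \mathbb{A}_d$, and this transfer leans on the semi-Dirichlet property \ref{semi-Dirichlet} to guarantee that each $a^* b$ lands in $\mathcal{A}_d$ so that $\mu_{ac}(a^* b)$ is even defined. Everything else reduces either to the bookkeeping of Simon's maximality and singularity statements or to the elementary principle that a bounded operator whose quadratic form vanishes on a dense set must itself vanish, so I expect no further obstacle.
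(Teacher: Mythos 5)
Your proposal is correct, and it takes exactly the route the paper intends: the paper states this corollary without a written proof, treating it as an immediate consequence of the constructions in Theorem \ref{form-decomposition-nonCE} and of Simon's decomposition theory, and your argument supplies precisely those details (Simon's maximality/singularity dichotomy, the operator formulas \ref{Simon's-formulas}, and the fact that a bounded operator whose sesquilinear form vanishes on the dense subspace $\mathbb{A}_d + N_\sigma$ is zero). Your polarization-plus-density argument in part (i), including the remark that the semi-Dirichlet property is what makes $\mu_{ac}(a^*b)$ meaningful, is a legitimate and careful filling-in of the identity $q_{\mu_{ac}}^{\la}=(q_{\mu}^{\la})_{ac}$ that the paper asserts inside the proof of Theorem \ref{form-decomposition-nonCE} only on diagonal elements $p(L)^*p(L)$.
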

The following corollary is a generalization of \cite[Corollary 1 and Corollary 2]{JM-ncFatou} with basically the same proof; however,  for the sake of completeness and future reference we will give the complete proof.
\begin{cor} \label{RK-Form-AC-equiv}

Let $\mu$ and $\la$ be two positive NC measures on $\mathcal{A}_d$ with  $\la$ being  non-Cuntz. Then,
\begin{enumerate}[(i)]
    \item $\mu \ll_{F} \la$ if and only if $\mu+\la \ll_{R} \la$.
    \item $\mu \perp_{F} \la$ if and only if  $\mathbb{H}_{d}^{2}(\mu+\la)= \mathbb{H}_{d}^{2}(\mu) \oplus \mathbb{H}_{d}^{2}(\la)$.
\end{enumerate}

\end{cor}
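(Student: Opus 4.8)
The plan is to reduce both equivalences, through Corollary~\ref{ac-sing-equiv-form-sense}, to operator statements about the contractive co-embedding $E_{\la}=E_{\la,\sigma}$ and the Simon projections $Q_{ac},Q_s$ on the GNS space $\mathbb{H}_{d}^{2}(\sigma)$, where $\sigma:=\mu+\la$. Since $\la$ is non-Cuntz we have $N_\la=0$, and $\sigma$ inherits a non-zero $L$-part (Remark~\ref{Popescu-decomposition-of-row-isometry}, Corollary~\ref{left-ideal-Cuntz+non-Cuntz=0}), so $N_\sigma=0$ and Theorem~\ref{form-decomposition-nonCE}, Proposition~\ref{reducing-projections} and Corollary~\ref{ac-sing-equiv-form-sense} all apply. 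Throughout I will also use the co-embedding $E_{\mu}=E_{\mu,\sigma}:\mathbb{H}_{d}^{2}(\sigma)\to\mathbb{H}_{d}^{2}(\mu)$, which is well defined because $\mu\leq\sigma$ gives $N_\sigma\subseteq N_\mu$.

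For (i), Corollary~\ref{ac-sing-equiv-form-sense}(ii) already gives $\mu\ll_F\la\Leftrightarrow Q_{ac}=I_{\mathbb{H}_{d}^{2}(\sigma)}$, so it only remains to identify this with $\mu+\la\ll_R\la$. I would reuse the computation inside the proof of Proposition~\ref{reducing-projections}: setting $\check{Q}_{ac}=\mathscr{C}_\sigma Q_{ac}\mathscr{C}_\sigma^*$ one has
$$\mathrm{Ran}(\check{Q}_{ac})=\overline{\mathscr{H}^{+}(H_\sigma)\cap\mathscr{H}^{+}(H_\la)}^{\sigma}=\mathrm{Int}(\sigma,\la).$$
Because $\mathscr{C}_\sigma$ is unitary, $Q_{ac}=I\Leftrightarrow\check{Q}_{ac}=I\Leftrightarrow\mathrm{Int}(\sigma,\la)=\mathscr{H}^{+}(H_\sigma)$, and the last equality is exactly Definition~\ref{RK-AC-Sing} of $\sigma\ll_R\la$. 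Chaining these equivalences proves (i).

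For (ii), the first step is to make the asserted equality precise. Since $\sigma=\mu+\la$, for $a\in\mathbb{A}_d$ we have $\|a+N_\sigma\|_\sigma^2=\mu(a^*a)+\la(a^*a)=\|a+N_\mu\|_\mu^2+\|a+N_\la\|_\la^2$, so $a+N_\sigma\mapsto(a+N_\mu,a+N_\la)$ extends to an isometry
$$W:=(E_\mu,E_\la):\mathbb{H}_{d}^{2}(\sigma)\longrightarrow\mathbb{H}_{d}^{2}(\mu)\oplus\mathbb{H}_{d}^{2}(\la),$$
and the statement $\mathbb{H}_{d}^{2}(\mu+\la)=\mathbb{H}_{d}^{2}(\mu)\oplus\mathbb{H}_{d}^{2}(\la)$ means precisely that $W$ is onto, i.e. unitary. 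Writing $WW^*$ as the $2\times2$ block matrix with entries $E_\mu E_\mu^*,E_\mu E_\la^*,E_\la E_\mu^*,E_\la E_\la^*$, unitarity of $W$ forces in particular the $(2,2)$ block $E_\la E_\la^*=I$, i.e. $E_\la$ is a co-isometry. Since $E_\la$ has dense range (its image contains every $a+N_\la$), $E_\la E_\la^*=I$ is equivalent to $E_\la^*E_\la$ being a projection, and as $E_\la^*E_\la$ is a positive contraction with $\overline{\mathrm{Ran}(E_\la^*E_\la)}=\mathrm{Ker}(E_\la)^{\perp}=\mathrm{Ran}(Q_{ac})$, that projection can only be $Q_{ac}$. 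Thus $E_\la E_\la^*=I\Leftrightarrow E_\la^*E_\la=Q_{ac}$, which by Corollary~\ref{ac-sing-equiv-form-sense}(iii) is exactly $\mu\perp_F\la$.

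The direction I expect to be most delicate is the converse, namely that $E_\la^*E_\la=Q_{ac}$ recovers full unitarity of $W$ (all four blocks of $WW^*$), so that it genuinely matches the claimed direct-sum decomposition rather than only the single block identity. From $W$ being an isometry, $E_\mu^*E_\mu+E_\la^*E_\la=I$, so $E_\la^*E_\la=Q_{ac}$ gives $E_\mu^*E_\mu=I-Q_{ac}=Q_s$; hence both $E_\mu$ and $E_\la$ are partial isometries, each with dense (hence, being a partial isometry, full) range, so $E_\mu E_\mu^*=I$ and $E_\la E_\la^*=I$. For the off-diagonal blocks, $\mathrm{Ker}(E_\mu)=\mathrm{Ker}(E_\mu^*E_\mu)=\mathrm{Ran}(Q_s)^{\perp}=\mathrm{Ran}(Q_{ac})=\overline{\mathrm{Ran}(E_\la^*)}\supseteq\mathrm{Ran}(E_\la^*)$, whence $E_\mu E_\la^*=0$ and $E_\la E_\mu^*=(E_\mu E_\la^*)^*=0$, so $WW^*=I$. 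The main care points are keeping straight which initial/final projections are the identity, the equalities $\overline{\mathrm{Ran}(E_\la^*)}=\mathrm{Ran}(Q_{ac})$ and $\mathrm{Ker}(E_\mu)=\mathrm{Ran}(Q_{ac})$, and fixing at the outset that the intended meaning of the orthogonal direct sum is the unitarity of $W$, so the equivalence is literally the one stated.
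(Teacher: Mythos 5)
Your proof is correct and takes essentially the same route as the paper's: both parts reduce, via Corollary~\ref{ac-sing-equiv-form-sense}, to the operator identities $Q_{ac}=I_{\mathbb{H}^{2}_{d}(\sigma)}$ (resp.\ $Q_{ac}=E_{\la}^{*}E_{\la}$), with (i) translated into density of $\mathrm{int}(\sigma,\la)$ through the Cauchy transform and (ii) handled through the identity $E_{\mu}^{*}E_{\mu}+E_{\la}^{*}E_{\la}=I$ and the resulting partial-isometry structure of $E_\mu$ and $E_\la$. Your part (ii) is in fact slightly more careful than the paper's, since you fix the precise meaning of the direct sum as unitarity of $W=(E_\mu,E_\la)$ and verify the off-diagonal blocks $E_\mu E_\la^*=0$, which the paper leaves implicit.
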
 
\bp
(i) Let $\sigma=\mu+\la$. By Lemma \ref{ac-sing-equiv-form-sense}, we know that $\mu \ll_{F} \la$ if and only if $Q_{ac}=I_{\mathbb{H}^{2}_{d}(\sigma)}$. However, $\mathrm{Ran}(Q_{ac})=\overline {\mathrm{Ran}(E_{\la}^{*})}^{\sigma}$. So, $Q_{ac}=I_{\mathbb{H}^{2}_{d}(\sigma)}$ if and only if $E_{\la}^{*}$ has  dense range. The latter happens if and only if $e_{\la}=\mathscr{C}_{\sigma} E_{\la}^{*} \mathscr{C}_{\la}^{*} $ has dense range. That is, $\mathrm{int}(\sigma, \la)$ is dense in $\mathbb{H}^{2}_{d}(\sigma)$.

(ii) Note that $ \mu , \la \leq \sigma = \mu+\la $. So, we can define the contractive co-embeddings $E_\mu$ and $E_\la$ with dense ranges, and also $E_{\mu}^{*}E_{\mu}+E_{\la}^{*}E_{\la}=I_{\mathbb{H}^{2}_{d}(\sigma)}$ . By Lemma \ref{ac-sing-equiv-form-sense},  $\mu \perp_{F} \la$ if and only if $Q_{ac}=E_{\la}^{*}E_{\la}$, or equivalently,  $Q_s=I_{\mathbb{H}^{2}_{d}(\sigma)} - Q_{ac}=I_{\mathbb{H}^{2}_{d}(\sigma)} -E_{\la}^{*}E_{\la}=E_{\mu}^{*}E_{\mu} $. That is, $E_{\la}^{*}E_{\la}$ and $E_{\mu}^{*}E_{\mu}$ are projections; thus, $E_{\la}$ and $E_{\mu}$ are partial isometries onto their ranges. This means that $\mathrm{Ran}(Q_{ac}) \cong \mathbb{H}^{2}_{d}(\la) $ and $\mathrm{Ran}(Q_{s}) \cong \mathbb{H}^{2}_{d}(\mu)$, and by Simon's construction $\mathrm{Ran}(Q_{ac})$ and $\mathrm{Ran}(Q_{s})$ are orthogonal to each other.
\ep

When $\la$ is a non-Cuntz measure, by Theorem \ref{left-ideal-non-Cuntz=0} we know that $N_\la =0$. So, by  Remark \ref{GNS-of-non-Cuntz-vs-m} we can write $\mathbb{A}_{d}$ in place of $\mathbb{A}_{d} +N_{\la}$, and also  $\mathbb{C}\{Z_1,\cdots, Z_d\}$ in place of $\mathbb{C}\{Z_1,\cdots, Z_d\}+N_\la$. Thus, the definition of $\Pi_\la$-Toeplitz is essentially the same as \cite[Definition 5.2]{JM-ncld}, and this leads us to the following proposition which is a generalization of \cite[Lemma 2]{JM-ncld}.

\begin{prop} \label{Pi-Toeplitz-ops-GNS-la}
 Let $\la$ be a positive NC measure and $T$ be a closed, positive, semi-definite operator  in $\mathbb{H}^{2}_{d}(\la)$. Suppose that $T$ is $\Pi_{\la}$-Toeplitz and $\mathbb{A}_{d} +N_{\la}$ is a core for $\sqrt{T}$. Then, $\mathrm{Dom}(\sqrt{T})$ is $\Pi_{\la}$-invariant and $\mathbb{C}\{Z_1,\cdots, Z_d\}+N_\la$ is a core for $\sqrt{T}$.
\end{prop}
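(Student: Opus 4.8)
The plan is to establish the two conclusions in turn: first the $\Pi_{\la}$-invariance of $\mathrm{Dom}(\sqrt{T})$, and then the upgrade of the core from $\mathbb{A}_{d}+N_{\la}$ down to the free polynomials $\mathbb{C}\{Z_1,\cdots,Z_d\}+N_{\la}$. For invariance I would fix $g\in\mathrm{Dom}(\sqrt{T})$ and, using that $\mathbb{A}_{d}+N_{\la}$ is a core, choose $a_n\in\mathbb{A}_{d}$ with $a_n+N_{\la}\to g$ in the graph norm of $\sqrt{T}$. Since $L_k a_n\in\mathbb{A}_{d}$, each $\Pi_{\la,k}(a_n+N_{\la})=L_k a_n+N_{\la}$ again lies in the core, hence in $\mathrm{Dom}(\sqrt{T})$, and the Toeplitz identity gives $\|\sqrt{T}\,\Pi_{\la,k}(a_n-a_m+N_{\la})\|_{\la}=\|\sqrt{T}(a_n-a_m+N_{\la})\|_{\la}\to 0$. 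Thus $\Pi_{\la,k}(a_n+N_{\la})$ is graph-Cauchy, while $\Pi_{\la,k}(a_n+N_{\la})\to\Pi_{\la,k}g$ in $\|\cdot\|_{\la}$ because $\Pi_{\la,k}$ is a $\|\cdot\|_{\la}$-isometry; closedness of $\sqrt{T}$ then forces $\Pi_{\la,k}g\in\mathrm{Dom}(\sqrt{T})$.

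The same computation yields $\langle\Pi_{\la,i}x,\Pi_{\la,j}y\rangle_{\sqrt{T}}=\delta_{i,j}\langle x,y\rangle_{\sqrt{T}}$ on $\mathrm{Dom}(\sqrt{T})$, where $\langle x,y\rangle_{\sqrt{T}}=\langle x,y\rangle_{\la}+\langle\sqrt{T}x,\sqrt{T}y\rangle_{\la}$ is the graph inner product; that is, $(\Pi_{\la,k})$ is a row isometry for the graph Hilbert space structure. I will record this, since it is exactly what makes the graph-closure of the polynomials $\Pi_{\la}$-invariant and will organize the second part.

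For the second assertion, since cores compose transitively it suffices to show that each $a+N_{\la}$ with $a\in\mathbb{A}_{d}$ is a graph-limit of elements of $\mathbb{C}\{Z_1,\cdots,Z_d\}+N_{\la}$. The engine is again the Toeplitz identity: iterating it to peel shifts off monomials gives $q_{T}^{\la}(L^{\alpha}+N_{\la},L^{\beta}+N_{\la})=\delta_{\alpha,\beta}\,\|\sqrt{T}(1+N_{\la})\|_{\la}^{2}$ whenever $|\alpha|=|\beta|$, so that $\|\sqrt{T}(a^{(n)}+N_{\la})\|_{\la}=\|\sqrt{T}(1+N_{\la})\|_{\la}\,\|a^{(n)}\|_{\mathbb{H}^{2}_{d}}$ for the degree-$n$ homogeneous part $a^{(n)}$; combined with the uniform operator-norm bound $\|a^{(n)}\|_{\mathbb{H}^{2}_{d}}\le\|a\|$ (from the complete isometry $L_k\mapsto zL_k$ on $\mathbb{A}_{d}$), this controls $\sqrt{T}$ on low-degree data. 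I would then approximate $a$ by free Ces\`aro means $\sigma_N(a)$, or by radial Abel dilates $a_r$, which are polynomials (respectively convergent series) tending to $a$ in operator norm, whence $\sigma_N(a)+N_{\la}\to a+N_{\la}$ in $\|\cdot\|_{\la}$. For a fixed radius $r<1$ the geometric tail bound coming from the homogeneous estimate shows that the polynomial truncations of $a_r$ are graph-Cauchy, so $a_r+N_{\la}$ already lies in the graph-closure of the polynomials; letting $r\to1$ and invoking closedness of $\sqrt{T}$ would finish the argument.

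The main obstacle is precisely this final passage to the limit in the $\sqrt{T}$-component. The vectors $\sqrt{T}(L^{\alpha}+N_{\la})$ are orthogonal only within a fixed word length, not across lengths (their stationary Gram has nonzero entries along the prefix order), so a crude term-by-term estimate on $\sqrt{T}(\sigma_N(a)-a+N_{\la})$ diverges; one must instead exploit the positivity and concentration of the free Fej\'er/Poisson summability kernel together with the stationarity encoded in the Toeplitz property. In the Lebesgue case $\la=m$ this limit is the averaging of a unitary gauge orbit, but for a general (possibly Cuntz) splitting measure the gauge $L_k\mapsto e^{i\theta}L_k$ need not induce a unitary, or even a well-defined map, on $\mathbb{H}^{2}_{d}(\la)$; the Toeplitz property of $T$ is exactly what must substitute for gauge-unitarity, and verifying that the summability means stay graph-Cauchy is the step demanding the most care.
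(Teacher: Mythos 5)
Your first half is correct and is essentially the paper's own argument: approximate $g$ in graph norm by $a_n+N_\la$ from the core, use the Toeplitz identity to see that $\Pi_{\la,k}(a_n+N_\la)$ is graph-Cauchy, and invoke closedness of $\sqrt{T}$. No issues there.

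The second half, however, has a genuine gap, and you name it yourself: you never complete the passage from operator-norm convergence of polynomial approximants to graph-norm convergence. Your route via homogeneous decomposition plus Ces\`aro/Abel summability founders exactly where you say it does --- the vectors $\sqrt{T}(L^{\alpha}+N_\la)$ are not orthogonal across word lengths, so degree-wise estimates do not sum, and no positivity of the Fej\'er kernel obviously repairs this. The missing idea (which is how the paper proceeds) is to use the Toeplitz property, together with the invariance just proved, to define a \emph{row isometry} $\Pi=(\Pi_1,\dots,\Pi_d)$ on $\overline{\mathrm{Ran}(\sqrt{T})}$ by $\Pi_k\sqrt{T}x:=\sqrt{T}\,\Pi_{\la,k}x$; well-definedness and $\Pi_j^*\Pi_k=\delta_{j,k}I$ are immediate from $q_T^\la(\Pi_{\la,k}x,\Pi_{\la,j}y)=\delta_{j,k}q_T^\la(x,y)$. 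Then for any free polynomial $p$ one has $\sqrt{T}\left(p(L)1+N_\la\right)=p(\Pi)\sqrt{T}(1+N_\la)$, and the Popescu--von Neumann inequality gives $\|p(\Pi)\|\le\|p(L)\|$. Consequently, taking \emph{any} sequence of free polynomials with $p_n(L)\to a(L)$ in operator norm (which exists since $\mathbb{A}_d=\overline{\mathbb{C}\{L\}}^{op}$; no summability kernel is needed),
\begin{equation*}
\bigl\| \sqrt{T}\,(p_n-p_m+N_\la)\bigr\|_\la \;\le\; \|p_n(L)-p_m(L)\|\,\bigl\|\sqrt{T}(1+N_\la)\bigr\|_\la ,
\end{equation*}
so the sequence is automatically graph-Cauchy, and closedness of $\sqrt{T}$ finishes the proof. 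Ironically, the auxiliary fact you record after part one --- that $(\Pi_{\la,k})$ is a row isometry for the graph inner product on $\mathrm{Dom}(\sqrt{T})$ --- is an equivalent form of this engine: applying Popescu--von Neumann to that graph-space row isometry and to the vector $1+N_\la$ yields the same bound. You had the right lemma in hand but paired it with the wrong (Fourier-summability) strategy instead of the von Neumann inequality, and that substitution is what leaves your argument incomplete, in particular for Cuntz $\la$ where the gauge action is unavailable.
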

\bp
Let $x \in \mathrm{Dom(\sqrt{T})}$. Since $\mathbb{A}_{d}+N_{\la}$ is a core for $\sqrt{T}$, we can find  $a_n \in \mathbb{A}_{d}$  so that $a_n+N_{\la} \longrightarrow x$ in $\mathbb{H}^{2}_{d}(\la)$ and $\sqrt{T}(a_n+N_{\la}) \longrightarrow \sqrt{T}x$. For each $1 \leq k \leq d$, note that $\Pi_{\la,k}=\Pi_{\la}(L_k)$ is a bounded operator on $\mathbb{H}^{2}_{d}(\la)$  which acts by left multiplication through $L_k$ on $\mathbb{A}_{d}+N_{\la}$. However,  $\mathbb{A}_{d}$ is invariant under left multiplications; hence, $\Pi_{\la,k}(a_n+N_{\la}) \in \mathbb{A}_{d}+N_{\la} \subseteq  \mathrm{Dom}(\sqrt{T})$ and $ \Pi_{\la,k}(a_n +N_{\la})\longrightarrow \Pi_{\la,k}( x)$. So,
\begin{eqnarray*}
    \lVert \sqrt{T}( \Pi_{\la,k}(a_n+N_{\la}) -\Pi_{\la,k}(a_m+N_{\la}))  \rVert^{2}_{\mathbb{H}^{2}_{d}(\la)} &=&q_{T}^{\la}( \Pi_{\la,k}(a_n-a_m+N_{\la}), \Pi_{\la,k}(a_n-a_m+N_{\la}) )\\
    &=&q_{T}^{\la}(a_n-a_m+N_\la, a_n-a_m+N_\la)\\
    &=& \lVert \sqrt{T}( a_n -a_m+N_{\la})  \rVert^{2}_{\mathbb{H}^{2}_{d}(\la)}.
\end{eqnarray*}
The middle equality holds since $q_{T}^{\la}$ is $\Pi_{\la}$-Toeplitz. However, $\sqrt{T}(a_n+N_{\la}) $ is a Cauchy sequence, so from the above equations $\sqrt{T} \Pi_{\la,k}(a_n+N_{\la})$ is also Cauchy and must converge to some $y \in \mathbb{H}^{2}_{d}(\la)$. Since $\sqrt{T}$ is closed, we see that $\sqrt{T}\Pi_{\la,k} x =y$. So, $\Pi_{\la,k} x \in \mathrm{Dom}(\sqrt{T})$, and it follows that $\mathrm{Dom}(\sqrt{T})$ is $\Pi_{\la}$-invariant.

To show that $\mathbb{C}\{Z\}+N_{\la}$ is a core for $\sqrt{T}$, we proceed like \cite[Lemma 2]{JM-ncFatou} and show that the set $\{(p+N_{\la},\sqrt{T}(p+N_{\la})): p \in \mathbb{C}\{Z\} \} $
is dense in $\{(a+N_{\la},\sqrt{T}(a+N_{\la})): a \in \mathbb{A}_d \}$. So, let $a(L)+N_\la \in \mathbb{A}_d+N_\la $. Since $\mathbb{A}_d=\overline{\mathbb{C}\{L\}}^{op}$, we can find a sequence of free polynomials $p_n(L)$ such that $p_n(L)+N_\la \longrightarrow a(L)+N_\la$ in the operator norm. In particular, $p_n +N_\la:=p_n(L)1 +N_\la \longrightarrow a(L)1+N_\la=a(L)1+N_{\la}=a+N_{\la}$ in $\mathbb{H}^{2}_{d}(\la)$. For each $1\leq k \leq d$, we define the linear map 
\begin{eqnarray*}
    \Pi_k : \mathrm{Ran}\sqrt{T} \longrightarrow \mathbb{H}^{2}_{d}(\la) \\
    \Pi_k \sqrt{T}x=\sqrt{T} \Pi_{\la,k}x
\end{eqnarray*}
This is well-defined since $\mathrm{Dom}(\sqrt{T})$ is $\Pi_{\la}$-invariant, and note that for any $a\in \mathbb{A}_d$ we have $\sqrt{T} \Pi_{\la,k}(a+N_{\la})=\sqrt{T} (L_k a+N_{\la})$. Also,  $\Pi=(\Pi_1, \cdots, \Pi_d)$ extends to a row isometry on $\mathcal{H}_{T}(\la):=\overline{\mathrm{Ran}(\sqrt{T})}^{\la} $ because the form $q_T$ is $\Pi_{\la}$-Toeplitz and for $x,y \in \mathrm{Dom}(\sqrt{T}) $
\begin{eqnarray*}
    \langle \Pi_k \sqrt{T}x, \Pi_j \sqrt{T}y \rangle &=&   \langle  \sqrt{T}\Pi_{\la,k}x, \sqrt{T}\Pi_{\la,j}y \rangle \\
    &=& \delta_{j,k} \langle  \sqrt{T}x, \sqrt{T}y \rangle.
\end{eqnarray*}
So, $ \Pi_{j}^{*} \Pi_k=\delta_{j,k} I_{\mathbb{H}^{2}_{d}(\la)}$.  Furthermore, by the very definition of $\Pi_k$, for any $a\in \mathbb{A}_d$ we have 
$$p(\Pi_k)\sqrt{T}(a+N_{\la})=\sqrt{T}p(\Pi_{\la,k})(a+N_{\la})=\sqrt{T}(p(L_k)a+N_{\la}).$$ 
Note that this is just true for elements of $\mathbb{A}_d +N_\la$.  Now, we see that
\begin{eqnarray*}
    \lVert \sqrt{T}(p_n - p_m+N_{\la}) \rVert_{\mathbb{H}^{2}_{d}(\la)} &=& \lVert \sqrt{T}([p_n (L) - p_m(L)]1+N_{\la}) \rVert_{\mathbb{H}^{2}_{d}(\la)}\\
    &=& \lVert (p_n (\Pi) - p_m(\Pi))\sqrt{T}(1+N_{\la}) \rVert_{\mathbb{H}^{2}_{d}(\la)}\\
    &\leq& \lVert (p_n (\Pi) - p_m(\Pi)) \rVert \lVert \sqrt{T}(1+N_{\la}) \rVert \\
    &\leq& \lVert (p_n (L) - p_m(L)) \rVert \lVert \sqrt{T}(1 +N_{\la})\rVert. \\
    \end{eqnarray*}
    The last inequality is an application of Popescu-von Neumann inequality \cite{Pop-vN-inq}.  Anyway, $p_n(L)$ is convergent to $a(L)$, so it is Cauchy; hence, by the above inequality, $(\sqrt{T}[p_n+N_{\la}])$ must converge to some $y \in \mathbb{H}^{2}_{d}(\la) $. However,  $p_n(L)+N_{\la} \longrightarrow a(L)+N_{\la}$ and  $\sqrt{T}$ is closed. It follows $\sqrt{T}(a+N_{\la})=y$. Consequently, $\mathbb{C}\{Z\}+N_{\la}$ is a core for $\sqrt{T}$.
\ep
\begin{prop} \label{transfer-of-the-domain}
    Assume that $\mu$ and $\la$ are two positive NC measures such that $\mu \ll_{F} \la$. Then there is a  closed, positive, semi-definite and $\Pi_\la$-Toeplitz operator $T$ such that  $\mathbb{A}_d +N_\la$ is a core for 
$\sqrt{T}$ and $\overline{q_{\mu}^{\la}}=q_{T}^{\la}$. Also, $\mathrm{Dom}(T)$ can be  identified  with a dense subspace $\mathcal{D}_{\mu}(T)$ of  $\mathbb{H}^{2}_{d}(\mu)$ such that $\mathscr{C}_{\mu}\mathcal{D}_{\mu}(T) \subseteq \mathscr{H}^{+}(H_{\mu}) \cap \mathscr{H}^{+}(H_{\la}) $.
\end{prop}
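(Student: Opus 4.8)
The plan is to realize $T$ as the operator that Kato's representation theorem attaches to the minimal closed extension of the hybrid form $q_\mu^\la$, and then to push both the $\Pi_\la$-Toeplitz structure and the Cauchy-transform comparison through the closure. First, since $\mu \ll_F \la$ the form $q_\mu^\la$ on its dense domain $\mathbb{A}_d + N_\la \subseteq \mathbb{H}_d^2(\la)$ is closable, so it admits a minimal closed extension $\overline{q_\mu^\la}$. The Riesz-type representation theorem for closed positive semi-definite forms \cite[Chapter VI, Theorems 2.1, 2.23]{Kato} then supplies a unique closed, positive, semi-definite operator $T$ with $\mathrm{Dom}(\sqrt T) = \mathrm{Dom}(\overline{q_\mu^\la})$ and $q_T^\la = \overline{q_\mu^\la}$. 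Because $\mathbb{A}_d + N_\la = \mathrm{Dom}(q_\mu^\la)$ is a form core for its own closure, and a subspace is a form core for $q_T^\la$ exactly when it is a core for $\sqrt T$, the subspace $\mathbb{A}_d + N_\la$ is a core for $\sqrt T$, which is the first assertion.

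Next I would show $T$ is $\Pi_\la$-Toeplitz, i.e. that the closed form $\overline{q_\mu^\la}$ inherits the Toeplitz identity of $q_\mu^\la$ established in Lemma \ref{hybrid-form-Toeplitz}. Let $h \in \mathrm{Dom}(\overline{q_\mu^\la})$ with a core sequence $h_n \in \mathbb{A}_d + N_\la$ that converges to $h$ in $\mathbb{H}_d^2(\la)$ and is Cauchy in $\|\cdot\|_{q_\mu^\la + 1}$. Then $\Pi_{\la,k} h_n \in \mathbb{A}_d + N_\la$, and since each $\Pi_{\la,k}$ is an isometry and $q_\mu^\la$ is Toeplitz, a direct computation gives $\|\Pi_{\la,k}(h_n - h_m)\|_{q_\mu^\la + 1} = \|h_n - h_m\|_{q_\mu^\la + 1}$; hence $(\Pi_{\la,k} h_n)$ is again a Cauchy core sequence converging to $\Pi_{\la,k} h$, so $\Pi_{\la,k} h \in \mathrm{Dom}(\overline{q_\mu^\la})$ and the domain is $\Pi_\la$-invariant. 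Taking limits in the identity $q_\mu^\la(\Pi_{\la,i} h_n, \Pi_{\la,j} h_n') = \delta_{i,j} q_\mu^\la(h_n, h_n')$ then yields the Toeplitz identity for $\overline{q_\mu^\la} = q_T^\la$, so $T$ is $\Pi_\la$-Toeplitz by definition. This step is the main obstacle: Proposition \ref{Pi-Toeplitz-ops-GNS-la} presupposes that $T$ is already $\Pi_\la$-Toeplitz, so here one must instead derive that property directly from the form, the point being that the isometry of $\Pi_{\la,k}$ together with the Toeplitz identity for $q_\mu^\la$ is exactly what preserves the $\|\cdot\|_{q_\mu^\la + 1}$-Cauchy condition under the shifts.

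For the identification, I would introduce the map sending $\sqrt T(a + N_\la) \mapsto a + N_\mu$. This is well-defined because $N_\la \subseteq N_\mu$, and it is isometric since $\|\sqrt T(a + N_\la)\|_\la^2 = q_T^\la(a, a) = \mu(a^* a) = \|a + N_\mu\|_\mu^2$; as $\{\sqrt T(a + N_\la)\}$ is dense in $\overline{\mathrm{Ran}\,\sqrt T}^{\la}$ and $\{a + N_\mu\}$ is dense in $\mathbb{H}_d^2(\mu)$, it extends to a unitary $U : \overline{\mathrm{Ran}\,\sqrt T}^{\la} \to \mathbb{H}_d^2(\mu)$. I then set $\mathcal{D}_\mu(T) := U\sqrt T(\mathrm{Dom}(T))$; it is dense in $\mathbb{H}_d^2(\mu)$ because $\mathrm{Dom}(T)$ is a core for $\sqrt T$, so $\sqrt T(\mathrm{Dom}(T))$ is dense in $\overline{\mathrm{Ran}\,\sqrt T}^{\la}$.

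Finally, for $x \in \mathrm{Dom}(T)$ I claim $\mathscr{C}_\mu(U\sqrt T x) = \mathscr{C}_\la(Tx)$ as NC holomorphic functions, which finishes the proof since $Tx \in \mathbb{H}_d^2(\la)$ forces the right-hand side into $\mathscr{H}^+(H_\la) = \mathrm{Ran}(\mathscr{C}_\la)$ while the left-hand side lies in $\mathscr{H}^+(H_\mu)$. Using the coefficient form of the Cauchy transform, the $\alpha$-th Taylor coefficient of $\mathscr{C}_\la(Tx)$ is $\langle L^\alpha + N_\la, Tx\rangle_\la = q_T^\la(L^\alpha + N_\la, x)$, while that of $\mathscr{C}_\mu(U\sqrt T x)$ is $\langle L^\alpha + N_\mu, U\sqrt T x\rangle_\mu$. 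Approximating $x$ by a core sequence $a_n + N_\la$ in the graph norm of $\sqrt T$ and using $U\sqrt T(a_n + N_\la) = a_n + N_\mu$ together with $\mu((L^\alpha)^* a_n) = q_\mu^\la(L^\alpha + N_\la, a_n + N_\la) = q_T^\la(L^\alpha + N_\la, a_n + N_\la)$, both coefficients are seen to equal $q_T^\la(L^\alpha + N_\la, x)$ in the limit. Hence the two functions agree, giving $\mathscr{C}_\mu(\mathcal{D}_\mu(T)) \subseteq \mathscr{H}^+(H_\mu) \cap \mathscr{H}^+(H_\la)$.
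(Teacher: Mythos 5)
Your proof is correct and follows essentially the same route as the paper's: Kato's representation theorem applied to $\overline{q_{\mu}^{\la}}$, transfer of the Toeplitz identity to the closed form, identification of $\mathrm{Dom}(T)$ inside $\mathbb{H}^{2}_{d}(\mu)$ via the isometry $\sqrt{T}(a+N_{\la}) \mapsto a+N_{\mu}$, and a coefficient-wise comparison showing $\mathscr{C}_{\mu}(U\sqrt{T}x) = \mathscr{C}_{\la}(Tx)$. If anything, your direct argument that the closure inherits the $\Pi_{\la}$-Toeplitz property (via shift-invariance of the $\|\cdot\|_{q+1}$-Cauchy condition) and your density argument for $\mathcal{D}_{\mu}(T)$ make explicit two steps that the paper compresses by citing Lemma \ref{hybrid-form-Toeplitz} and Proposition \ref{Pi-Toeplitz-ops-GNS-la}.
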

\bp
Note that $\overline{q_{\mu}^{\la}}$ is closed, so  a closed, positive, semi-definite operator $T$ exist by \cite[Chapter VI, Theorem 2.1, Theorem 2.23]{Kato}. Also  by the definition, the hybrid form $\overline{q_{\mu}^{\la}}$ has the domain $\mathbb{A}_d +N_\la$, and by Lemma \ref{hybrid-form-Toeplitz} it is $\Pi_\la$-Toeplitz. So, $T$ is $\Pi_\la$-Toeplitz, and  $\mathbb{C}\{Z\}+N_{\la}$ is a core for $\sqrt{T}$ by Proposition \ref{Pi-Toeplitz-ops-GNS-la}. 

Next, we will show how we can lift  $\mathrm{Dom}(T)$ onto a subset of $\mathbb{H}^{2}_{d}(\mu)$ denoted by $\mathcal{D}_{\mu}(T)$. Recall that $N_\la \subseteq N_\mu$ by assumption, hence we can define the following mapping:
\begin{eqnarray*}
    E_{\mu ,\la}: \mathbb{H}^{2}_{d}(\la) &\longrightarrow &\mathbb{H}^{2}_{d}(\mu)\\
    a+N_\la & \longrightarrow & a+N_\mu.
\end{eqnarray*}
Put $\mathcal{D}_{\mu}(T)=E_{\mu ,\la} (\mathrm{Dom}(T))$, and we will show that $\mathcal{D}_{\mu}(T)$ has the desired properties. Let's $x \in \mathrm{Dom}(T) \subseteq \mathrm{Dom}(\sqrt{T}) $. Since $\mathbb{C}\{Z\}+N_\la$ is a core for $\sqrt{T}$, there is a sequence of free polynomials $(p_n)$ such that 
$$p_n +N_\la \longrightarrow x, \quad \mathrm{and} \quad \sqrt{T} (p_n+N_\la) \longrightarrow \sqrt{T}x.$$
Note that 
\begin{eqnarray*} 
    \langle \sqrt{T} (p_n-p_m+N_\la), \sqrt{T} (p_n-p_m+N_\la) \rangle_{\mathbb{H}^{2}_{d}(\la)} &=&q_{\mu}^{\la}(p_n-p_m+N_\la, p_n-p_m+N_\la)\\
    &=& \langle p_n-p_m+N_\mu, p_n-p_m+N_\mu \rangle_{\mathbb{H}^{2}_{d}(\mu)}.
\end{eqnarray*}
In particular, 
$$ \lVert p_n-p_m+N_\mu \rVert_{\mu}=\lVert \sqrt{T} (p_n-p_m+N_\la) \rVert_{\la} \longrightarrow 0,$$
whence the sequence $(p_n+N_\mu)$ is Cauchy in $\mathbb{H}^{2}_{d}(\mu)$, so it converges to an $\hat{x} \in \mathbb{H}^{2}_{d}(\mu)$. This shows that we can identify $\mathrm{Dom}(T)$ with a subspace $\mathcal{D}_{\mu}(T)$ of $\mathbb{H}^{2}_{d}(\mu)$.

We now claim that $\hat{y} \in \mathcal{D}_{\mu}(T)$ implies that $\mathscr{C}_{\mu}(\hat{y}) \in \mathscr{H}^{+}(H_{\la}) $. When $\hat{y} \in \mathcal{D}_{\mu}(T)$, there is a $y \in \mathrm{Dom}(T)$ such that $\hat{y}=E_{\mu, \la}(y)$. Also, there is a sequence of free polynomials $(p_n)$ such that $p_n+N_\la \longrightarrow y $, $\sqrt{T}p_n+N_\la \longrightarrow \sqrt{T}y$ in $\mathbb{H}^{2}_{d}(\la)$, and $p_n+N_\mu \longrightarrow \hat{y}$ in $\mathbb{H}^{2}_{d}(\mu)$. The free Cauchy transform of $\hat{y}$ is then
\begin{eqnarray*} 
\mathscr{C}_{\mu}(\hat{y})(Z)&=&\sum_{\alpha \in \mathbb{F}^{d}} Z^{\alpha} \langle L^{\alpha}1+N_\mu,\hat{y} \rangle_{\mu}\\
    &=& \lim_{n \to \infty} \sum_{\alpha \in \mathbb{F}^{d}} Z^{\alpha} \langle L^{\alpha}1+N_\mu,p_n(L)1+N_\mu \rangle_{\mu}\\
    &=& \lim_{n \to \infty} \sum_{\alpha \in \mathbb{F}^{d}} Z^{\alpha} q_{\mu}^{\la}(L^{\alpha}1+N_\la,p_n(L)1+N_\la)\\
    &=& \lim_{n \to \infty} \sum_{\alpha \in \mathbb{F}^{d}} Z^{\alpha} \langle \sqrt{T}(L^{\alpha}1+N_\la),\sqrt{T}(p_n(L)1+N_\la) \rangle_{\la}\\
    &=&  \sum_{\alpha \in \mathbb{F}^{d}} Z^{\alpha} \langle L^{\alpha}1+N_\la,Ty \rangle_{\la}\\
    &=& \mathscr{C}_{\la}(Ty)(Z).
\end{eqnarray*}
Note that in the above, we have used the fact that the series converges uniformly in sub-balls of the unit row ball, so the limit and the sum can commute. Recall that $\mathscr{C}_{\mu}(\hat{y}) \in \mathscr{H}^{+}(H_{\mu}) $ and $\mathscr{C}_{\la}(Ty) \in \mathscr{H}^{+}(H_{\la})$. Thus, from the above calculations it follows that $\mathscr{C}_{\mu}\mathcal{D}_{\mu}(T) \subseteq \mathscr{H}^{+}(H_{\mu}) \cap \mathscr{H}^{+}(H_{\la}) $.
\ep

In he following theorems we will prove that $AC_{R}[\la]=AC_{F}[\la]$  and $SG_{R}[\la]=SG_{F}[\la]$.
\begin{thm} \label{RKHS-AC=Form-AC}
    Let $\mu$ and $\la$ be two positive NC measures, with  $\la$ being non-Cuntz. Then, $\mu \ll_{R} \la$ if and only if $\mu \ll_{F} \la$. In other words, $AC_{R}[\la]=AC_{F}[\la]$.
\end{thm}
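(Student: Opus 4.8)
The plan is to prove the two implications separately, each reduced to a result already in hand. The analytic core of the equivalence is packaged in Proposition \ref{transfer-of-the-domain}, which lets me pass from closability of the hybrid form to density of the intersection space; the reverse passage I would route through the auxiliary measure $\sigma := \mu + \la$ and the cone structure of $AC_R[\la]$.

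First I would treat $\mu \ll_F \la \Rightarrow \mu \ll_R \la$, which is where all the real weight sits. Assuming $\mu \ll_F \la$, Proposition \ref{transfer-of-the-domain} supplies a closed, positive, $\Pi_\la$-Toeplitz operator $T$ whose domain is identified with a \emph{dense} subspace $\mathcal{D}_\mu(T) \subseteq \mathbb{H}^2_d(\mu)$ satisfying $\mathscr{C}_\mu \mathcal{D}_\mu(T) \subseteq \mathscr{H}^+(H_\mu) \cap \mathscr{H}^+(H_\la) = \mathrm{int}(\mu,\la)$. Since the Cauchy transform $\mathscr{C}_\mu : \mathbb{H}^2_d(\mu) \to \mathscr{H}^+(H_\mu)$ is unitary, it carries the dense set $\mathcal{D}_\mu(T)$ onto a dense subset of $\mathscr{H}^+(H_\mu)$; as that image is contained in $\mathrm{int}(\mu,\la)$, the intersection space is itself norm-dense in $\mathscr{H}^+(H_\mu)$, which is exactly $\mu \ll_R \la$. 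All the substance here has already been discharged by Proposition \ref{transfer-of-the-domain}; the only remaining move is the unitarity-plus-density bookkeeping just described.

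For the converse $\mu \ll_R \la \Rightarrow \mu \ll_F \la$, I would argue through $\sigma = \mu + \la$. Since $\mathrm{int}(\la,\la) = \mathscr{H}^+(H_\la)$ is trivially dense in itself, we have $\la \ll_R \la$, i.e.\ $\la \in AC_R[\la]$. By Theorem \ref{positive-cone-AC-{R}(la)} the set $AC_R[\la]$ is a positive cone, hence closed under addition, so combining $\mu \ll_R \la$ with $\la \ll_R \la$ gives $\sigma = \mu + \la \ll_R \la$. Finally Corollary \ref{RK-Form-AC-equiv}(i) asserts precisely that $\mu \ll_F \la$ if and only if $\mu + \la \ll_R \la$ (this is the one spot where the non-Cuntz hypothesis on $\la$ is used), so $\mu \ll_F \la$ follows. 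Together the two implications give $\mu \ll_R \la \Leftrightarrow \mu \ll_F \la$, that is, $AC_R[\la] = AC_F[\la]$.

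I expect the only genuine obstacle to be the first implication, but it has been isolated into Proposition \ref{transfer-of-the-domain}: the content there is that closability of $q_\mu^\la$ manufactures, via the Toeplitz operator $T$ and the identity $\mathscr{C}_\mu \hat{y} = \mathscr{C}_\la(Ty)$, enough common elements of $\mathscr{H}^+(H_\mu)$ and $\mathscr{H}^+(H_\la)$ to fill out $\mathscr{H}^+(H_\mu)$. A direct RKHS proof of $\mu+\la \ll_R \la \Rightarrow \mu \ll_R \la$ would be much harder, since convergence in the $\sigma$-norm does not control the $\mu$-norm and membership in $\mathscr{H}^+(H_\la)$ does not imply membership in $\mathscr{H}^+(H_\mu)$; routing through the form side sidesteps exactly this difficulty. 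Once Proposition \ref{transfer-of-the-domain} is granted, everything else is a formal consequence of the cone property and the form/RKHS dictionary recorded in Corollary \ref{RK-Form-AC-equiv}.
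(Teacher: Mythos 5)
Your proposal is correct and follows essentially the same route as the paper's own proof: the direction $\mu \ll_{R} \la \Rightarrow \mu \ll_{F} \la$ via $\la \ll_{R} \la$, the cone property of $AC_{R}[\la]$ (Theorem \ref{positive-cone-AC-{R}(la)}), and Corollary \ref{RK-Form-AC-equiv}(i); and the converse via Proposition \ref{transfer-of-the-domain} together with unitarity of the Cauchy transform to push the dense set $\mathcal{D}_{\mu}(T)$ into $\mathrm{int}(\mu,\la)$. Your side observations---that the non-Cuntz hypothesis enters only through Corollary \ref{RK-Form-AC-equiv}(i), and that the form-to-RKHS direction is where the analytic weight sits---also match the paper's remark following the theorem.
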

\bp
Assume that $\mu \ll_{R} \la$. Obviously, $\la \ll_{R} \la$, so  $\mu + \la \ll_{R} \la$ by Theorem \ref{positive-cone-AC-{R}(la)}. Thus,  $\mu \ll_{F} \la$ by Corollary \ref{RK-Form-AC-equiv}. Conversely, suppose that  $\mu \ll_{F} \la$, i.e., $q_{\mu}^{\la}$ is a closeable form in $\mathbb{H}^{2}_{d}(\la)$. Now, by Proposition \ref{transfer-of-the-domain}, there is an operator $T$ such that $\mathscr{C}_{\mu}\mathcal{D}_{\mu}(T) \subseteq \mathscr{H}^{+}(H_{\mu}) \cap \mathscr{H}^{+}(H_{\la}) $. However, $\mathrm{Dom}(T)$ contains $\mathbb{A}_{d}+N_{\la}$, and it is a core for $\sqrt{T}$. So, $\mathcal{D}_{\mu}(T)$ contains $\mathbb{A}_{d}+N_{\mu}$; hence, $\mathcal{D}_{\mu}(T)$ must be dense in $\mathbb{H}^{2}_{d}(\mu)$. Since $\mathscr{C}_{\mu}$ is unitary and $\mathscr{C}_{\mu}\mathcal{D}_{\mu}(T) \subseteq \mathscr{H}^{+}(H_{\mu}) \cap \mathscr{H}^{+}(H_{\la}) $, it follows that the intersection space 
$$\mathscr{H}^{+}(H_{\mu}) \cap \mathscr{H}^{+}(H_{\la}) $$
is dense in $\mathscr{H}^{+}(H_{\mu})$, which proves that $\mu \ll_{R} \la$.
\ep 
\begin{remark}
    Note that in the above theorem, the direction $\mu \ll_{F} \la \Rightarrow  \mu \ll_{R} \la$ does not depend on $\la$ being non-Cuntz, i.e., it is true for any splitting positive NC measure.
\end{remark}
\begin{lemma} \label{Lebesgue-RKHS-decomposition-of-sum}
    Let $\mu$ and $\la$ be two positive NC measures. Put $\sigma=\mu+\la$, and suppose that $\mu=\mu_{ac}^{R}+\mu_{s}^{R}$ is the maximal Lebesgue RKHS decomposition of $\mu$ with respect to $\la$. Also, assume that $\sigma=\sigma_{ac}^{R}+\sigma_{s}^{R}$ is  the maximal Lebesgue RKHS decomposition of  $\sigma$  with respect to $\la$. Then,
    $$\sigma_{ac}^{R}=\mu_{ac}^{R} +\la, \quad 
    \sigma_{s}^{R}=\mu_{s}^{R}.$$
    That is, $(\mu+\la)_{ac}=\mu_{ac}+\la$ and  $(\mu+\la)_{s}=\mu_{s}$ provided that we have necessary Lebesgue decompositions in advance.
\end{lemma}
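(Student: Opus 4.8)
The plan is to show that $\mu_{ac}^{R}+\la$ is precisely the maximal RK-AC part of $\sigma=\mu+\la$ with respect to $\la$; uniqueness of the maximal Lebesgue RKHS decomposition (Remark~\ref{maximal-Lebesgue-decomp}) then forces $\sigma_{ac}^{R}=\mu_{ac}^{R}+\la$, and subtracting from $\sigma$ gives $\sigma_{s}^{R}=\sigma-\sigma_{ac}^{R}=\mu_{s}^{R}$. I would prove the two absolutely continuous parts agree by establishing the order inequality in both directions.

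For $\mu_{ac}^{R}+\la\leq\sigma_{ac}^{R}$, first observe that $\la\ll_{R}\la$ trivially, since $\mathrm{int}(\la,\la)=\mathscr{H}^{+}(H_{\la})$ is dense in itself. As $\mu_{ac}^{R}\ll_{R}\la$ and $AC_{R}[\la]$ is a positive cone (Theorem~\ref{positive-cone-AC-{R}(la)}), the sum satisfies $\mu_{ac}^{R}+\la\ll_{R}\la$. Since moreover $\mu_{ac}^{R}+\la\leq\mu+\la=\sigma$, the maximality of $\sigma_{ac}^{R}$ among RK-AC measures dominated by $\sigma$ (Theorem~\ref{rkhs-non-CE} applied to $\sigma$) yields $\mu_{ac}^{R}+\la\leq\sigma_{ac}^{R}$. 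In particular $\la\leq\sigma_{ac}^{R}\leq\sigma$, so $\rho:=\sigma_{ac}^{R}-\la$ is a positive NC measure with $0\leq\rho\leq\mu$.

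The reverse inequality is the crux, and I would pass to the form picture. For the non-Cuntz measure $\la$ one has $N_{\la}=0$ and $AC_{R}[\la]=AC_{F}[\la]$ (Theorem~\ref{RKHS-AC=Form-AC}), so every hybrid form $q_{\cdot}^{\la}$ is defined on $\mathbb{A}_{d}$, dense in $\mathbb{H}_{d}^{2}(\la)$. Since $\sigma_{ac}^{R}\ll_{R}\la$, the form $q_{\sigma_{ac}^{R}}^{\la}$ is closable, and by additivity of the hybrid form in the measure, $q_{\sigma_{ac}^{R}}^{\la}=q_{\rho}^{\la}+q_{\la}^{\la}$. The key observation is that $q_{\la}^{\la}(a,b)=\la(a^{*}b)=\langle a,b\rangle_{\mathbb{H}_{d}^{2}(\la)}$ is exactly the ambient inner product, so $q_{\sigma_{ac}^{R}}^{\la}$ is the ``$+1$'' form attached to $q_{\rho}^{\la}$. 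Closability is insensitive to adding the ambient inner product: if $x_{n}\to 0$ in $\mathbb{H}_{d}^{2}(\la)$ with $q_{\rho}^{\la}(x_{n}-x_{m})\to 0$, then $\|x_{n}-x_{m}\|^{2}\to 0$ gives $q_{\sigma_{ac}^{R}}^{\la}(x_{n}-x_{m})\to 0$, whence $q_{\sigma_{ac}^{R}}^{\la}(x_{n})\to 0$ and therefore $q_{\rho}^{\la}(x_{n})\to 0$. Thus $q_{\rho}^{\la}$ is closable, i.e.\ $\rho\ll_{F}\la$, hence $\rho\ll_{R}\la$ (the implication $\ll_{F}\Rightarrow\ll_{R}$ holds for any splitting measure). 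Maximality of $\mu_{ac}^{R}$ below $\mu$ then gives $\rho\leq\mu_{ac}^{R}$, so $\sigma_{ac}^{R}=\rho+\la\leq\mu_{ac}^{R}+\la$, completing the equality.

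The main obstacle is precisely this reverse step. The tempting shortcut is to argue that $\rho\leq\sigma_{ac}^{R}$ with $\sigma_{ac}^{R}\ll_{R}\la$ should force $\rho\ll_{R}\la$ by ``heredity of absolute continuity,'' but this is \emph{false} in general: a positive form dominated by a closable form need not be closable (rank-one functionals supply counterexamples), so $AC_{R}[\la]$ is not a hereditary family. What rescues the argument is that the subtracted piece is $\la$ itself, whose hybrid form is the ambient inner product; only this special structure makes closability descend from $\sigma_{ac}^{R}$ to $\rho$. I would therefore be careful to route the proof through this identification of $q_{\la}^{\la}$ rather than through any heredity claim.
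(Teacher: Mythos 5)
Your proof is correct in the setting where your cited tools apply, and its crux is genuinely different from the paper's. The easy inequality $\mu_{ac}^{R}+\lambda\leq\sigma_{ac}^{R}$ is obtained the same way in both arguments (the cone property of Theorem \ref{positive-cone-AC-{R}(la)} plus maximality of $\sigma_{ac}^{R}$). For the reverse inequality the paper never leaves the RKHS picture: using the Aronszajn sum theorem \cite[Theorem 4.4]{JM-ncld} and the inclusion $\mathrm{int}(\mu,\lambda)\subseteq\mathscr{H}^{+}(H_{\mu_{ac}})$ (which comes from the construction of $\mu_{ac}$ in Theorem \ref{rkhs-non-CE}), it proves $\mathscr{H}^{+}(H_{\mu_{s}})\cap\mathscr{H}^{+}(H_{\mu_{ac}+\lambda})=\{0\}$, hence $\mathscr{H}^{+}(H_{\sigma})=\mathscr{H}^{+}(H_{\mu_{ac}+\lambda})\oplus\mathscr{H}^{+}(H_{\mu_{s}})$ with $\mathscr{H}^{+}(H_{\mu_{ac}+\lambda})$ closed in $\mathscr{H}^{+}(H_{\sigma})$, and then pushes the dense subspace $\mathscr{H}^{+}(H_{\sigma_{ac}})\cap\mathscr{H}^{+}(H_{\lambda})\subseteq\mathscr{H}^{+}(H_{\mu_{ac}+\lambda})$ through the closure to get $\mathscr{H}^{+}(H_{\sigma_{ac}})\subseteq\mathscr{H}^{+}(H_{\mu_{ac}+\lambda})$; the identity $\sigma_{s}^{R}=\mu_{s}^{R}$ then drops out of the resulting orthogonal decomposition. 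You instead set $\rho:=\sigma_{ac}^{R}-\lambda\geq 0$, pass to hybrid forms, observe $q_{\sigma_{ac}^{R}}^{\lambda}=q_{\rho}^{\lambda}+\langle\cdot,\cdot\rangle_{\lambda}$, and transfer closability from $q_{\sigma_{ac}^{R}}^{\lambda}$ to $q_{\rho}^{\lambda}$ by the elementary ``$+1$-form'' argument; then $\rho\ll_{F}\lambda\Rightarrow\rho\ll_{R}\lambda$ and maximality of $\mu_{ac}^{R}$ finish the proof. Your closability transfer is sound, and it buys a shorter and more transparent reverse step. What it costs is scope and self-containment: you invoke $N_{\lambda}=0$ (Theorem \ref{left-ideal-non-Cuntz=0}) and the equivalence $\ll_{R}\Leftrightarrow\ll_{F}$ of Theorem \ref{RKHS-AC=Form-AC}, both of which assume $\lambda$ non-Cuntz, whereas the lemma is stated for arbitrary measures subject only to existence of the two maximal decompositions. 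Since the lemma's only use in the paper (Theorem \ref{RKHS-Sing=Form-Sing}) is for non-Cuntz $\lambda$, this is a restriction of scope rather than an error; to cover Cuntz $\lambda$ you would have to substitute Theorem \ref{R-AC-Cuntz=F-Ac-Cuntz} (which appears only later in the paper) and additionally verify that the hybrid forms of $\sigma_{ac}^{R}$ and $\rho$ are well defined, i.e., $N_{\lambda}\subseteq N_{\sigma_{ac}^{R}}$ and $N_{\lambda}\subseteq N_{\rho}$.

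One aside should be corrected. From the true statement that a positive form dominated by a closable form need not be closable, you conclude that $AC_{R}[\lambda]$ is not a hereditary family. That inference is not justified: hybrid forms of NC measures are $\Pi_{\lambda}$-Toeplitz, so abstract rank-one counterexamples need not arise from measures at all, and in the classical case $d=1$ the family $AC_{R}[\lambda]$ coincides with the classically absolutely continuous measures (Corollary \ref{RK-AC-equiv-AC}), which is hereditary. What your argument actually needs, and what is true, is only that heredity of $AC_{R}[\lambda]$ is not established in the paper, so the shortcut cannot be cited; your detour through the $+1$-form identity is exactly the right repair, and the rest of your proof does not depend on the overstated claim.
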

\bp
By $\sigma=\mu_{ac}^{R}+\mu_{s}^{R} +\la $ and \cite[Theorem 4.4]{JM-ncld}, we see
$$\mathscr{H}^{+}(H_{\sigma})=\mathscr{H}^{+}(H_{\mu_{ac}})+\mathscr{H}^{+}(H_{\mu_{s}})+\mathscr{H}^{+}(H_{\la}).$$
Since in $\sigma=\mu_{ac}^{R}+\mu_{s}^{R} +\la $ each summand   is less than $\sigma$, by \cite[Lemma 4.2]{JM-ncld} each corrosponding RKHS space is contractively contained in $\mathscr{H}^{+}(H_{\sigma})$. Meanwhile, $\mu_{ac}+\la \in AC_{R}[\la] $ by Theorem \ref{positive-cone-AC-{R}(la)}. However, by maximality of $\sigma_{ac}=(\mu+\la)_{ac}$, it follows $\mu_{ac}+\la \leq (\mu+\la)_{ac} =\sigma_{ac}$. To show the converse inequality, we will prove that $\mathscr{H}^{+}(H_{\sigma_{ac}})=\overline{\mathscr{H}^{+}(H_{\sigma_{ac}}) \cap \mathscr{H}^{+}(H_{\la})}^{\sigma} \subseteq \mathscr{H}^{+}(H_{\mu_{ac}+\la})$. To this end, we will show that $\mathscr{H}^{+}(H_{\sigma_{ac}}) \cap \mathscr{H}^{+}(H_{\la}) \subseteq \mathscr{H}^{+}(H_{\mu_{ac}+\la}) $, and  $\mathscr{H}^{+}(H_{\mu_{ac}+\la})$ is closed in $\mathscr{H}^{+}(H_{\sigma})$.

To prove $\mathscr{H}^{+}(H_{\mu_{ac}+\la})$ is closed in $\mathscr{H}^{+}(H_{\sigma})$, we  establish that $\sigma=(\mu_{ac}+\la)+\mu_s$ is a Lebesgue RKHS decomposition of $\sigma$. From the Lebesgue RKHS decomposition  $\mu=\mu_{ac}^{R}+\mu_{s}^{R}$, we have 
$$\mathscr{H}^{+}(H_{\mu_{s}}) \cap \mathscr{H}^{+}(H_{\mu_{ac}})=\{0\},$$
and since $\mu_s \perp_{R} \la$, we have
$$\mathscr{H}^{+}(H_{\mu_{s}}) \cap \mathscr{H}^{+}(H_{\la})=\{0\}.$$
Now, we claim that 
$$\mathscr{H}^{+}(H_{\mu_{s}}) \cap \mathscr{H}^{+}(H_{\mu_{ac}+\la})=\{0\}.$$
First of all $\mathscr{H}^{+}(H_{\mu_{ac}+\la})=\mathscr{H}^{+}(H_{\mu_{ac}})+\mathscr{H}^{+}(H_{\la})$, so if $f \in \mathscr{H}^{+}(H_{\mu_{s}}) \cap \mathscr{H}^{+}(H_{\mu_{ac}+\la})$, then
$$f=f^{\mu}=g^{\mu}+h^{\la}; \quad g^{\mu}\in \mathscr{H}^{+}(H_{\mu_{ac}}), ~ h^{\la} \in \mathscr{H}^{+}(H_{\la}). $$
It follows that 
$$f^{\mu}-g^{\mu}=h^{\la} \in \mathscr{H}^{+}(H_{\mu}) \cap \mathscr{H}^{+}(H_{\la}) \subseteq \mathscr{H}^{+}(H_{\mu_{ac}}).  $$
Now,  $g^{\mu} \in \mathscr{H}^{+}(H_{\mu_{ac}}) $ and $f^{\mu}-g^{\mu} \in \mathscr{H}^{+}(H_{\mu_{ac}})$, so $f=g^{\mu}+(f^{\mu}-g^{\mu}) \in \mathscr{H}^{+}(H_{\mu_{ac}})$. However, $f \in \mathscr{H}^{+}(H_{\mu_{s}})$ by assumption; hence, $f \in \mathscr{H}^{+}(H_{\mu_{s}}) \cap \mathscr{H}^{+}(H_{\mu_{ac}})=\{0\} $, whence $f=0$. So, by \cite[Theorem 4.4]{JM-ncld} we have the direct sum decomposition:
\begin{eqnarray*}
    \mathscr{H}^{+}(H_{\sigma})=\mathscr{H}^{+}(H_{\mu_{ac}+\la})\oplus \mathscr{H}^{+}(H_{\mu_{s}}),
\end{eqnarray*}
which proves that $\mathscr{H}^{+}(H_{\mu_{ac}+\la})$ is closed in $\mathscr{H}^{+}(H_{\sigma})$. 

On the other hand, $\mathscr{H}^{+}(H_{\sigma_{ac}}) \cap \mathscr{H}^{+}(H_{\la})$ is dense in $\mathscr{H}^{+}(H_{\sigma_{ac}})$, and 
\begin{eqnarray*}
    \mathscr{H}^{+}(H_{\sigma_{ac}}) \cap \mathscr{H}^{+}(H_{\la}) &\subseteq &\mathscr{H}^{+}(H_{\la}) \\
    &\subseteq &\mathscr{H}^{+}(H_{\la}) +\mathscr{H}^{+}(H_{\mu_{ac}})\\
    &=& \mathscr{H}^{+}(H_{\mu_{ac}+\la}).
\end{eqnarray*}
So, $ \mathscr{H}^{+}(H_{\sigma_{ac}}) \cap \mathscr{H}^{+}(H_{\mu_{ac}+\la})$ must be  dense in  $\mathscr{H}^{+}(H_{\sigma_{ac}})$. Hence, from the above inclusions and closedness of $\mathscr{H}^{+}(H_{\mu_{ac}+\la})$,  we deduce that
$$\mathscr{H}^{+}(H_{\sigma_{ac}}) \subseteq \mathscr{H}^{+}(H_{\mu_{ac}+\la}),$$
i.e., $\sigma_{ac} \leq \mu_{ac}+\la $ as we wanted. Therefore, $\sigma_{ac} = \mu_{ac}+\la $, and from the Lebesgue RKHS decomposition
$$\mathscr{H}^{+}(H_{\sigma})= \mathscr{H}^{+}(H_{\sigma_{ac}})\oplus \mathscr{H}^{+}(H_{\sigma_{s}})$$
it follows that $\mathscr{H}^{+}(H_{\sigma_{s}})=\mathscr{H}^{+}(H_{\mu_{s}})$, i.e., $\sigma_s=\mu_s$.
\ep
\begin{thm} \label{RKHS-Sing=Form-Sing}
    Let $\mu$ and $\la$ be two positive NC measures, and let $\la$ be non-Cuntz. Then, $\mu \perp_{R} \la$ if and only if $\mu \perp_{F} \la$. In other words, $SG_{R}[\la]=SG_{F}[\la]$.
\end{thm}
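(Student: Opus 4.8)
The plan is to reduce singularity, in both the reproducing kernel and the form senses, to the vanishing of the maximal absolutely continuous part of $\mu$, and then invoke the already-established identity $AC_{R}[\la]=AC_{F}[\la]$ from Theorem \ref{RKHS-AC=Form-AC}. Since $\la$ is non-Cuntz, both maximal decompositions are available: the RKHS one $\mu=\mu_{ac}^{R}+\mu_{s}^{R}$ from Theorem \ref{rkhs-non-CE} and the form one $\mu=\mu_{ac}^{F}+\mu_{s}^{F}$ from Theorem \ref{form-decomposition-nonCE}. The key point I would exploit is that each of these $\mu_{ac}$ is, by construction, the \emph{maximal} positive NC measure dominated by $\mu$ lying in the respective absolute-continuity cone, so the whole argument turns on comparing these two cones and their maximal elements.

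First I would establish the RKHS collapse criterion: $\mu\perp_{R}\la$ if and only if $\mu_{ac}^{R}=0$. By Theorem \ref{rkhs-non-CE}, $\mathscr{H}^{+}(H_{\mu_{ac}^{R}})=\mathrm{Int}(\mu,\la)=\overline{\mathrm{int}(\mu,\la)}^{H_{\mu}}$, while $\mu\perp_{R}\la$ is by definition the statement $\mathrm{int}(\mu,\la)=\{0\}$; hence the two are equivalent once one knows that a positive NC measure with trivial RKHS must vanish. This last implication is immediate from the Herglotz transform: evaluating at $Z=0$ gives $H_{\nu}(0)=\nu(I)I_{n}$, so $\mathscr{H}^{+}(H_{\nu})=\{0\}$ forces $\nu(I)=0$ and therefore $\nu=0$.

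Next I would establish the form collapse criterion: $\mu\perp_{F}\la$ if and only if $\mu_{ac}^{F}=0$. By Corollary \ref{ac-sing-equiv-form-sense} we have $\mu\perp_{F}\la\Leftrightarrow q_{\mu}^{\la}=(q_{\mu}^{\la})_{s}\Leftrightarrow(q_{\mu}^{\la})_{ac}=0$, and the same corollary identifies $(q_{\mu}^{\la})_{ac}=q_{\mu_{ac}^{F}}^{\la}$. Since $q_{\mu_{ac}^{F}}^{\la}(a,a)=\mu_{ac}^{F}(a^{*}a)$, the vanishing of this form on $\mathbb{A}_{d}$ forces $\mu_{ac}^{F}(a^{*}b)=0$ for all $a,b$ by Cauchy--Schwarz, whence $\mu_{ac}^{F}=0$ by density of $\mathbb{A}_{d}+\mathbb{A}_{d}^{*}$ in $\mathcal{A}_{d}$; the converse is trivial.

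Finally I would combine the two criteria. By Theorem \ref{RKHS-AC=Form-AC} the cones coincide, $AC_{R}[\la]=AC_{F}[\la]$, and by the remark following Definition \ref{form-order-of-Nc-measures} the orders $\leq_{R}$, $\leq_{F}$ agree with the ordinary order on positive NC measures; consequently the maximal element of $\{\nu:0\leq\nu\leq\mu,\ \nu\in AC_{R}[\la]\}=\{\nu:0\leq\nu\leq\mu,\ \nu\in AC_{F}[\la]\}$ is unique, so $\mu_{ac}^{R}=\mu_{ac}^{F}$. Chaining the equivalences then gives $\mu\perp_{R}\la\Leftrightarrow\mu_{ac}^{R}=0\Leftrightarrow\mu_{ac}^{F}=0\Leftrightarrow\mu\perp_{F}\la$, which is the claim. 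The step I expect to be the main obstacle is the identification $\mu_{ac}^{R}=\mu_{ac}^{F}$: it hinges on both theorems genuinely producing the maximum of the \emph{same} cone under the \emph{same} partial order, so one must verify that ``maximal form-AC'' and ``maximal RK-AC'' are measured against the identical order before invoking uniqueness of a maximum. An alternative, more hands-on route would avoid this by arguing that $\mu\perp_{R}\la$ forces, via Lemma \ref{Lebesgue-RKHS-decomposition-of-sum}, the RKHS splitting $\sigma_{ac}^{R}=\la$, $\sigma_{s}^{R}=\mu$ for $\sigma=\mu+\la$, and then transporting this through $\mathscr{C}_{\sigma}$ to the GNS direct sum $\mathbb{H}_{d}^{2}(\sigma)=\mathbb{H}_{d}^{2}(\mu)\oplus\mathbb{H}_{d}^{2}(\la)$ that characterizes $\mu\perp_{F}\la$ by Corollary \ref{RK-Form-AC-equiv}(ii).
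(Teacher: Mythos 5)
Your proposal is correct, but it follows a genuinely different route from the paper's. The paper proves the stronger statement that the singular parts themselves coincide: setting $\sigma=\mu+\lambda$, it establishes $\mathrm{Ker}(E_{\lambda})=\mathscr{C}_{\sigma}^{*}\left(\mathrm{Ker}(e_{\lambda}^{*})\right)$, invokes Lemma \ref{Lebesgue-RKHS-decomposition-of-sum} together with Theorem \ref{positive-cone-AC-{R}(la)} to show that $\mathrm{Ran}(e_{\lambda})$ is norm dense in $\mathscr{H}^{+}(H_{\sigma_{ac}})$, deduces $\mathrm{Ker}(E_{\lambda})=\mathbb{H}^{2}_{d}(\mu_{s}^{R})$, and then reads off $\mu_{s}^{F}=\mu_{s}^{R}$ from Simon's formula for the singular form. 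You instead reduce both notions of singularity to the vanishing of the respective maximal absolutely continuous parts and identify $\mu_{ac}^{R}=\mu_{ac}^{F}$ by an order-theoretic uniqueness argument inside the common cone $AC_{R}[\lambda]=AC_{F}[\lambda]$ supplied by Theorem \ref{RKHS-AC=Form-AC}; this is shorter, avoids the Cauchy-transform kernel computation and Lemma \ref{Lebesgue-RKHS-decomposition-of-sum} entirely, and yields the paper's subsequent corollary (equality of the two maximal decompositions) at no extra cost, whereas the paper's route gives the explicit identification of $\mathrm{Ker}(E_{\lambda})$ as a GNS space, which is information in its own right. The obstacle you flag---whether both ``maximal'' parts are maxima of the same ordered set---is genuine but dissolves using the paper's own statements: Simon's construction makes $\mu_{ac}^{F}$ a true greatest element of $\{\nu : 0\leq\nu\leq\mu,\ \nu\ll_{F}\lambda\}$, since every closeable form dominated by $q_{\mu}^{\lambda}$ is dominated by $(q_{\mu}^{\lambda})_{ac}$, and form domination passes to domination of measures because positive elements of $\mathcal{A}_{d}$ are norm limits of sums $\sum p_{n}(L)^{*}p_{n}(L)$; Theorem \ref{rkhs-non-CE} supplies at least maximality of $\mu_{ac}^{R}$, and since $\mu_{ac}^{R}$ lies in the same cone and is therefore dominated by $\mu_{ac}^{F}$, maximality forces equality---so greatest-ness is needed only on the form side. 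Note finally that your closing alternative, passing through Lemma \ref{Lebesgue-RKHS-decomposition-of-sum} and Corollary \ref{RK-Form-AC-equiv}(ii), is essentially the paper's own argument.
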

\bp
It is enough to show that $\mu_{s}^{F}=\mu_{s}^{R}$, where $\mu_{s}^{F}$ is the singular part of $\mu$ obtained in the form sense of Theorem \ref{form-decomposition-nonCE}, and $\mu_{s}^{R}$ is the singular part of $\mu$ obtained in the RKHS sense of Theorem \ref{rkhs-non-CE}. Put $\sigma=\mu+\la$, and let $E_\la: \mathbb{H}^{2}_{d}(\sigma) \longrightarrow \mathbb{H}^{2}_{d}(\la) $ and $e_\la : \mathscr{H}^{+}(H_{\la}) \longrightarrow \mathscr{H}^{+}(H_{\sigma}) $ be the contractive co-embedding / embedding of Section 2. Recall that $E_\la = \mathscr{C}_{\la}^{*} e_{\la}^{*}\mathscr{C}_{\sigma} $, and  Cauchy transformations are unitary. So,
\begin{eqnarray*}
    x\in \mathrm{Ker}(E_\la)  &\Leftrightarrow & \mathscr{C}_{\la}^{*} e_{\la}^{*}\mathscr{C}_{\sigma}(x)=0_{\mathbb{H}^{2}_{d}(\sigma)} \\
    &\Leftrightarrow &  e_{\la}^{*}\mathscr{C}_{\sigma}(x)=0_{\mathscr{H}^{+}(H_{\la})} \\
    &\Leftrightarrow &  \mathscr{C}_{\sigma}(x) \in \mathrm{Ker}(e_{\la}^{*});
\end{eqnarray*}
hence, $\mathrm{Ker}(E_\la)=\mathscr{C}_{\sigma}^{*}\left(\mathrm{Ker}(e_{\la}^{*})\right)$.

Since, $\la$ is non-Cuntz, we have the RKHS decomposition $\sigma=\sigma_{ac}+\sigma_{s}$ by Theorem \ref{rkhs-non-CE}. Hence,  $(\mu+\la)_{ac}=\mu_{ac}+\la$ and $(\mu+\la)_{s}=\mu_{s}$ by Lemma \ref{Lebesgue-RKHS-decomposition-of-sum}. By \cite[Theorem 4.4]{JM-ncld}, $\mathscr{H}^{+}(H_{\mu_{ac}+\la})=\mathscr{H}^{+}(H_{\mu_{ac}})+\mathscr{H}^{+}(H_{\la})$. In addition, $\mathscr{H}^{+}(H_{\la})$ is contractively contained in $\mathscr{H}^{+}(H_{\mu_{ac}+\la})$, so $\mathscr{H}^{+}(H_{\la})=\mathscr{H}^{+}(H_{\la}) \cap \mathscr{H}^{+}(H_{\mu_{ac}+\la})$. By Theorem \ref{positive-cone-AC-{R}(la)}, $\mu_{ac}+\la \ll_{R} \la$; hence, $\mathscr{H}^{+}(H_{\la})$ is norm dense in $\mathscr{H}^{+}(H_{\mu_{ac}+\la})$. Also by Lemma \ref{Lebesgue-RKHS-decomposition-of-sum}, $\mathscr{H}^{+}(H_{\mu_{ac}+\la})=\mathscr{H}^{+}(H_{(\mu+\la)_{ac}})=\mathscr{H}^{+}(H_{\sigma_{ac}})$. These observations imply that the range of $e_\la : \mathscr{H}^{+}(H_{\la}) \longrightarrow \mathscr{H}^{+}(H_{\sigma}) $ is contained and norm dense in $\mathscr{H}^{+}(H_{\sigma_{ac}})$, i.e., 
$$\overline{\mathrm{Ran}(e_\la)}^{\sigma}=\mathscr{H}^{+}(H_{\sigma_{ac}})$$
By Lemma \ref{Lebesgue-RKHS-decomposition-of-sum}, $\mathscr{H}^{+}(H_{\sigma_{ac}})^{\perp}=\mathscr{H}^{+}(H_{\mu_{s}^{R}})$. This combined with the above equality implies
$$\mathrm{Ker}(e_{\la}^{*})={\mathrm{Ran}(e_\la)}^{\perp}=\mathscr{H}^{+}(H_{\sigma_{ac}})^{\perp}=\mathscr{H}^{+}(H_{\mu_{s}^{R}}).$$
Since the Cauchy transform is unitary, it follows $\mathrm{Ker}(E_\la)=\mathscr{C}_{\sigma}^{*}\left(\mathrm{Ker}(e_{\la}^{*})\right)=\mathbb{H}^{2}_{d}(\mu_{s}^{R})$. However, if we look at the form decomposition formulas in the proof of Theorem \ref{form-decomposition-nonCE}, the latter equality means that $\mu_{s}^{F}=\mu_{s}^{R}$.
\ep
\begin{cor}
   Let $\mu$ and $\la$ be two positive NC measures, and let $\la$ be non-Cuntz. Then, the maximal Lebesgue form decomposition and maximal  Lebesgue RKHS decomposition of  $\mu$ with respect to $\la$ are the same.
\end{cor}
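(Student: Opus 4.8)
The plan is to show that both decompositions solve one and the same extremal problem, so that uniqueness of the maximal Lebesgue decomposition forces them to coincide. First I would invoke Theorem \ref{rkhs-non-CE} and Theorem \ref{form-decomposition-nonCE} to record that, since $\la$ is non-Cuntz, both maximal decompositions exist: write $\mu = \mu_{ac}^{R} + \mu_{s}^{R}$ for the RKHS decomposition and $\mu = \mu_{ac}^{F} + \mu_{s}^{F}$ for the form decomposition. By those theorems, $\mu_{ac}^{R}$ is the maximal positive NC measure bounded above by $\mu$ that lies in $AC_{R}[\la]$, while $\mu_{ac}^{F}$ is the maximal positive NC measure bounded above by $\mu$ that lies in $AC_{F}[\la]$, the maximality being taken with respect to the ordinary order $\leq$ on positive NC measures, which by the remark following Definition \ref{form-order-of-Nc-measures} agrees with both $\leq_{R}$ and $\leq_{F}$.

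Next I would appeal to Theorem \ref{RKHS-AC=Form-AC}, which yields $AC_{R}[\la] = AC_{F}[\la]$. Consequently the two extremal problems are literally the same: both $\mu_{ac}^{R}$ and $\mu_{ac}^{F}$ are the greatest element of the single set
$$\{\, \nu \in \left(\mathcal{A}_d\right)^{\dagger}_{+} : 0 \leq \nu \leq \mu \ \text{and} \ \nu \ll_{R} \la \,\}.$$
Since a greatest element of a set is unique whenever it exists (cf. Remark \ref{maximal-Lebesgue-decomp}), I conclude $\mu_{ac}^{R} = \mu_{ac}^{F}$, and therefore $\mu_{s}^{R} = \mu - \mu_{ac}^{R} = \mu - \mu_{ac}^{F} = \mu_{s}^{F}$. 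This already identifies the two decompositions. As a consistency check one may note that Theorem \ref{RKHS-Sing=Form-Sing} ($SG_{R}[\la] = SG_{F}[\la]$) confirms that the common measure $\mu_s$ is singular in both senses, which is exactly the property the singular part of each decomposition is required to have; indeed the proof of that theorem directly establishes $\mu_s^{F} = \mu_s^{R}$, giving an alternative one-line route.

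There is essentially no genuine obstacle here beyond assembling the pieces correctly. The only point demanding care is that maximality in Theorem \ref{rkhs-non-CE} and Theorem \ref{form-decomposition-nonCE} must refer to a common order, so that uniqueness of a greatest element can be applied; this is guaranteed by the coincidence of $\leq$, $\leq_{R}$ and $\leq_{F}$ recorded earlier. In this sense the entire weight of the corollary rests on Theorem \ref{RKHS-AC=Form-AC}, whose nontrivial direction (form-AC implies RKHS-AC) was already supplied by the transfer-of-domain argument of Proposition \ref{transfer-of-the-domain}.
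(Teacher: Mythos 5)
Your proposal is correct and takes essentially the same route as the paper: the paper's entire proof is ``Combine Theorems \ref{RKHS-AC=Form-AC} and \ref{RKHS-Sing=Form-Sing},'' and your argument is exactly this assembly, made explicit through the uniqueness of the greatest element recorded in Remark \ref{maximal-Lebesgue-decomp} and the coincidence of the orders $\leq$, $\leq_{R}$, $\leq_{F}$. Your further observation that Theorem \ref{RKHS-AC=Form-AC} together with the subtraction identities $\mu_{s}^{R}=\mu-\mu_{ac}^{R}$ and $\mu_{s}^{F}=\mu-\mu_{ac}^{F}$ already suffices, so that Theorem \ref{RKHS-Sing=Form-Sing} serves only as a consistency check, is a minor streamlining rather than a genuinely different approach.
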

\bp
Combine Theorems \ref{RKHS-AC=Form-AC} and \ref{RKHS-Sing=Form-Sing}.
\ep

If we have had the above theorem, or equivalently Theorems  \ref{RKHS-AC=Form-AC} and \ref{RKHS-Sing=Form-Sing}, in hand at the beginning, we could obtain  the maximal Lebesgue decomposition in one of the approaches (form or RKHS), and then we could say the maximal Lebesgue decomposition in another sense is the same. While this result is obtained after a long theory, in the next section we will see that for  splitting Cuntz measures we have such an equivalence at the beginning. That is  the Lebesgue decomposition against a Cuntz measure has the same meaning in both senses at the beginning; however, obtaining such a Lebesgue decomposition in either senses is much more involved than in the non-Cuntz case.

\section{Lebesgue decomposition with respect to a Cuntz measure}

So far we have generalized Jury-Martin decomposition to the case when the splitting measure $\la$ is non-Cuntz. In this section, we will obtain  the maximal Lebesgue  decomposition when the splitting measure $\la$ is Cuntz. In search of the AC part of $\mu$ with respect to $\la$, we have to give some other results along the way, then we will return to the decomposition problem after them. We should mention that some of the one-dimensional results in this section were developed  with Prof. Martin, Mr. Bal and the current author as in \cite{BMN}, and we will cite these results when mentioned. However, we should mention that our premises are different as we had started differently, and our approaches in proofs are somehow different. 

 \begin{defn}
   Let $\la$ be a positive NC measure with GNS row isometry  $\Pi_{\la}$ living on  ${\mathbb{H}}_{d}^{2} (\lambda)$. Suppose $\Pi$ is a row isometry whose components live on the Hilbert space $\mathcal{H}$. A closed operator, $X: \mathrm{Dom}(X) \longrightarrow \mathcal{H}$, with dense domain in ${\mathbb{H}}_{d}^{2} (\lambda)$ is called an intertwiner if $\mathrm{Dom}(X)$ is $\Pi_{\la}$-invariant and 
   $$X\Pi_{\la,k} x= \Pi_{k}Xx; \quad x \in \mathrm{Dom}(X)$$
\end{defn}
 \begin{prop} \label{composition-of-closed-op} Suppose that $\mathcal{H}_i$, $i=1,2,3$, are Hilbert spaces.
 \begin{enumerate}[(i)]
     \item  Let $T:\mathcal{H}_1 \longrightarrow \mathcal{H}_2 $ be a bonded operator  and $S: \mathrm{Dom}(S) \longrightarrow \mathcal{H}_3$ be a closed, unbounded operator with dense domain $\mathrm{Dom}(S)$ in $\mathcal{H}_2$. Then $ST$ is a closed operator. 
     
     \item  Let $S: \mathrm{Dom}(S) \longrightarrow \mathcal{H}_2$ be a closed, unbounded operator with dense domain $\mathrm{Dom}(S)$ in $\mathcal{H}_1$, and $U:\mathcal{H}_2\longrightarrow \mathcal{H}_3$ be a unitary operator. Then $US$ is a closed operator.   
     \item Let $S: \mathrm{Dom}(S) \longrightarrow \mathcal{H}_2$ be a closed, unbounded operator with dense domain $\mathrm{Dom}(S)$ in $\mathcal{H}_1$, and $B:\mathrm{Dom}(B)\longrightarrow \mathcal{H}_3$ be a closed operator, where $\mathrm{Dom}(B)$  is a dense linear subspace inside $\mathcal{H}_2$. Assume  also $B$ is bounded from below. Then $BS$ is a closed operator on its domain.
 \end{enumerate}
 \end{prop}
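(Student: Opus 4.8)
The plan is to verify in each case the standard sequential (graph) characterization of closedness: an operator $A$ is closed precisely when, for every sequence $x_n \in \mathrm{Dom}(A)$ with $x_n \to x$ and $Ax_n \to y$, one has $x \in \mathrm{Dom}(A)$ and $Ax = y$. In all three parts the composite operator carries its natural domain, e.g. $\mathrm{Dom}(ST) = \{ x \in \mathcal{H}_1 : Tx \in \mathrm{Dom}(S) \}$, and the entire content is to propagate the two convergences through the composition, using boundedness on the appropriate side together with the closedness of the unbounded factor.

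For part (i), I would take $x_n \in \mathrm{Dom}(ST)$ with $x_n \to x$ and $STx_n \to y$. Since $T$ is bounded, hence continuous, $Tx_n \to Tx$; moreover $Tx_n \in \mathrm{Dom}(S)$ and $S(Tx_n) \to y$, so closedness of $S$ forces $Tx \in \mathrm{Dom}(S)$ and $S(Tx) = y$, i.e. $x \in \mathrm{Dom}(ST)$ and $STx = y$. Part (ii) is the mirror situation: here $\mathrm{Dom}(US) = \mathrm{Dom}(S)$, and given $x_n \to x$ with $USx_n \to y$ I would apply the bounded operator $U^*$ to recover $Sx_n = U^* U S x_n \to U^* y$; closedness of $S$ then yields $x \in \mathrm{Dom}(S)$ and $Sx = U^* y$, whence $USx = y$. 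Note that (ii) in fact only uses that $U$ is bounded with bounded inverse.

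The substantive case is (iii), and this is where I expect the only real obstacle. Take $x_n \in \mathrm{Dom}(BS)$ with $x_n \to x$ and $BSx_n \to y$; the difficulty is that, a priori, we have no control over the intermediate sequence $Sx_n$, so closedness of $S$ cannot be invoked directly. This is exactly what the bounded-below hypothesis on $B$ repairs: if $\lVert Bz \rVert \geq c \lVert z \rVert$ for some $c > 0$ and all $z \in \mathrm{Dom}(B)$, then
$$ \lVert Sx_n - Sx_m \rVert \leq \tfrac{1}{c}\lVert B(Sx_n - Sx_m) \rVert = \tfrac{1}{c}\lVert BSx_n - BSx_m \rVert \longrightarrow 0, $$
so $(Sx_n)$ is Cauchy and converges to some $z \in \mathcal{H}_2$. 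Now closedness of $S$ applied to $x_n \to x$ and $Sx_n \to z$ gives $x \in \mathrm{Dom}(S)$ with $Sx = z$; and closedness of $B$ applied to $Sx_n \to z$, $BSx_n \to y$ (with $Sx_n \in \mathrm{Dom}(B)$) gives $z \in \mathrm{Dom}(B)$ with $Bz = y$. Combining, $x \in \mathrm{Dom}(BS)$ and $BSx = y$, so $BS$ is closed. The only remaining point is to confirm that $\mathrm{Dom}(BS) = \{x \in \mathrm{Dom}(S) : Sx \in \mathrm{Dom}(B)\}$ is the domain intended by the statement, which is immediate.
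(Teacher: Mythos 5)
Your proposal is correct and follows essentially the same argument as the paper's proof: all three parts use the sequential graph characterization of closedness, with (i) pushing convergence through the bounded factor, (ii) applying $U^*$ to recover convergence of $Sx_n$, and (iii) using the bounded-below estimate to show $(Sx_n)$ is Cauchy before invoking closedness of $S$ and then of $B$. The only differences are cosmetic: you state the domains of the composites explicitly and observe that (ii) needs only boundedness of $U$ and $U^{-1}$, neither of which changes the substance.
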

 \bp
\begin{enumerate}[(i)]
\item  Let $x_n \longrightarrow x$ and $STx_n \longrightarrow y$ both in norm. Since $T$ is bounded, we have $Tx_n \longrightarrow Tx$. Now we have that $Tx_n \longrightarrow Tx$ and  $S(Tx_n) \longrightarrow y$.
However, $S$ is closed and by assumption, so by the definition of a closed operator we must have that $Tx \in \mathrm{Dom}(S)$ and $y=S(Tx)$. This  proves that  $ST$ is a closed operator.

\item Let $x_n \longrightarrow x$ and $US(x_n) \longrightarrow y$ both in norm. Since $U$ is unitary, we have $U^{*}(USx_n )\longrightarrow U^{*}y$ in norm, which means $Sx_n \longrightarrow U^{*}y $. However, $x_n \longrightarrow x$ by assumption and $S$ is closed, so $x\in \mathrm{Dom}(S)$ and $U^{*}y=Sx $. Since $U$ is unitary, we must have $y=USx$. Therefore, $US$ is a closed operator.

\item Let $x_n \longrightarrow x$ and $BS(x_n) \longrightarrow w$ both in norm. Since $B$ is bounded below, there is $\delta >0$ such that 
$$||B(Sx_n)-B(Sx_m)|| \geq \delta ||Sx_n-Sx_m||. $$
By the above inequality and Cauchy property of $BS(x_n)$, we see that $S(x_n)$ must also be Cauchy. Thus, $S(x_n) \longrightarrow y$ for some $y$. Nevertheless, $S$ is closed, $x_n \longrightarrow x$ and $S(x_n) \longrightarrow y$. Therefore, $x \in \mathrm{Dom}(S)$ and $y=Sx$.
Now,  $S(x_n) \longrightarrow Sx$ , and $BS(x_n) \longrightarrow w$. However, $B$ is closed, so  $Sx \in \mathrm{Dom}(B) $, and  $w=B(Sx)$; showing that $BS$ is closed.

 \end{enumerate}
 \ep
 
 \begin{prop} \label{a-special-intertwiner}
   Let $\mu$ and $\la$ be two positive NC measures with row isometries $\Pi_{\mu}$ and $\Pi_{\la}$ on ${\mathbb{H}}_{d}^{2} (\mu)$ and ${\mathbb{H}}_{d}^{2} (\lambda)$ respectively such that $N_\la \subseteq N_\mu$ . Then,    $\mu \ll_{R} \la$ if and only if the (possibly) unbounded operator
   \begin{eqnarray*}
   E_{\mu, \la}:  \mathbb{A}_d +N_{\la} \subseteq {\mathbb{H}}_{d}^{2} (\lambda) &\longrightarrow& {\mathbb{H}}_{d}^{2} (\mu) \\
      p+N_{\la}&\longrightarrow& p+N_{\mu},
   \end{eqnarray*}
   is closed and densely defined. In this situation, $ E_{\mu, \la}$ is a closed intertwiner between $\Pi_{\la}$ and $\Pi_{\mu}$; i.e., $ E_{\mu, \la} \Pi_{\la}=\Pi_{\mu} E_{\mu, \la}$.
 \end{prop}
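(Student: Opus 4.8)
The plan is to treat $E_{\mu,\la}$ as the closure of the densely defined map $p+N_\la\mapsto p+N_\mu$: the rule is well defined because $N_\la\subseteq N_\mu$, and its domain $\mathbb{A}_d+N_\la$ is dense in $\mathbb{H}_d^2(\la)$ by the GNS construction, so the only real content is to match \emph{closability} of this operator with $\mu\ll_R\la$ and then to record the intertwining. The single computational engine will be the coefficient formula $(\mathscr{C}_\mu w)(Z)=\sum_{\alpha}Z^{\alpha}\langle L^{\alpha}+N_\mu,w\rangle_\mu$, valid for every $w\in\mathbb{H}_d^2(\mu)$ and likewise on the $\la$-side, together with the uniqueness of the NC Taylor coefficients of an NC function. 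From these I would extract the key consistency identity: if $F\in\mathrm{int}(\mu,\la)$ and $\phi_\mu=\mathscr{C}_\mu^{*}F$, $\phi_\la=\mathscr{C}_\la^{*}F$, then $F=\mathscr{C}_\mu\phi_\mu=\mathscr{C}_\la\phi_\la$ is \emph{one} NC function, so comparing coefficients gives $\langle L^{\alpha}+N_\mu,\phi_\mu\rangle_\mu=\langle L^{\alpha}+N_\la,\phi_\la\rangle_\la$ for every word $\alpha$; by linearity and an operator-norm approximation of $a\in\mathbb{A}_d$ by free polynomials this upgrades to $\langle a+N_\mu,\phi_\mu\rangle_\mu=\langle a+N_\la,\phi_\la\rangle_\la$ for all $a\in\mathbb{A}_d$. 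This identity is what transports information between the two incomparable GNS spaces.

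For the implication $\mu\ll_R\la\Rightarrow E_{\mu,\la}$ closable, I would verify the closability criterion directly: suppose $p_n+N_\la\to 0$ in $\mathbb{H}_d^2(\la)$ while $p_n+N_\mu\to g$ in $\mathbb{H}_d^2(\mu)$, and show $g=0$. For $F\in\mathrm{int}(\mu,\la)$ set $\phi_\mu=\mathscr{C}_\mu^*F$ and $\phi_\la=\mathscr{C}_\la^*F$; since each $p_n\in\mathbb{A}_d$, the consistency identity gives the termwise equality $\langle p_n+N_\mu,\phi_\mu\rangle_\mu=\langle p_n+N_\la,\phi_\la\rangle_\la$, so $\langle g,\phi_\mu\rangle_\mu=\lim_n\langle p_n+N_\mu,\phi_\mu\rangle_\mu=\lim_n\langle p_n+N_\la,\phi_\la\rangle_\la=0$. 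Because $\mathscr{C}_\mu$ is unitary and $\mathrm{int}(\mu,\la)$ is dense in $\mathscr{H}^{+}(H_\mu)$ (this is exactly $\mu\ll_R\la$), the vectors $\phi_\mu=\mathscr{C}_\mu^*F$ run over a dense subset of $\mathbb{H}_d^2(\mu)$; hence $g=0$ and $E_{\mu,\la}$ is closable.

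For the converse, assuming $E_{\mu,\la}$ is closed (equivalently, passing to the closure of the closable operator), its adjoint $E_{\mu,\la}^{*}\colon\mathbb{H}_d^2(\mu)\to\mathbb{H}_d^2(\la)$ is densely defined. For $w\in\mathrm{Dom}(E_{\mu,\la}^{*})$ and each word $\alpha$ one has $L^{\alpha}+N_\la\in\mathbb{A}_d+N_\la\subseteq\mathrm{Dom}(E_{\mu,\la})$ with $E_{\mu,\la}(L^{\alpha}+N_\la)=L^{\alpha}+N_\mu$, so $\langle L^{\alpha}+N_\mu,w\rangle_\mu=\langle E_{\mu,\la}(L^{\alpha}+N_\la),w\rangle_\mu=\langle L^{\alpha}+N_\la,E_{\mu,\la}^{*}w\rangle_\la$. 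Feeding this into the coefficient formula yields $\mathscr{C}_\mu w=\mathscr{C}_\la(E_{\mu,\la}^{*}w)$ as NC functions, whence $\mathscr{C}_\mu w\in\mathscr{H}^{+}(H_\mu)\cap\mathscr{H}^{+}(H_\la)=\mathrm{int}(\mu,\la)$. As $\mathrm{Dom}(E_{\mu,\la}^{*})$ is dense and $\mathscr{C}_\mu$ is unitary, the set $\{\mathscr{C}_\mu w\}$ is dense in $\mathscr{H}^{+}(H_\mu)$, so $\mathrm{int}(\mu,\la)$ is dense there, i.e. $\mu\ll_R\la$.

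Finally, the intertwining is immediate on the core and then propagates: on $\mathbb{A}_d+N_\la$ one has $E_{\mu,\la}\Pi_{\la,k}(p+N_\la)=L_kp+N_\mu=\Pi_{\mu,k}E_{\mu,\la}(p+N_\la)$, and $\mathbb{A}_d+N_\la$ is $\Pi_\la$-invariant since $\mathbb{A}_d$ is closed under left multiplication by the $L_k$. To extend to $\mathrm{Dom}(\overline{E_{\mu,\la}})$ I would take $p_n+N_\la\to x$ with $E_{\mu,\la}(p_n+N_\la)\to\overline{E_{\mu,\la}}x$, apply the bounded operators $\Pi_{\la,k}$ and $\Pi_{\mu,k}$, and invoke closedness exactly as in Proposition \ref{composition-of-closed-op} to conclude $\Pi_{\la,k}x\in\mathrm{Dom}(\overline{E_{\mu,\la}})$ and $\overline{E_{\mu,\la}}\Pi_{\la,k}=\Pi_{\mu,k}\overline{E_{\mu,\la}}$. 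The main obstacle I anticipate is not any single estimate but the bookkeeping around ``closed'' versus ``closable'': on the bare domain $\mathbb{A}_d+N_\la$ the graph is generally not closed, so the statement should be read through the closure on a suitable domain (as flagged in Section 2), and the true linchpin is the coefficient-level consistency identity, whose rigorous justification rests on uniqueness of NC Taylor coefficients together with the uniform convergence of Cauchy transforms on sub-balls coming from inequality \ref{pointwise bounded}, needed to interchange the limit with the power series.
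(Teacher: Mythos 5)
Your proof is correct, and it reaches the same conclusion as the paper, but the forward implication runs on a genuinely different engine. The paper's forward direction is ``soft'': from $\mu \ll_R \la$ it takes the embedding $e_{\la,\mu}\colon \mathrm{int}(\mu,\la) \hookrightarrow \scr{H}^+(H_\la)$, which is densely defined and closed, forms its adjoint, and defines $E_{\mu,\la} := \mathscr{C}_\mu^{*}\, e_{\la,\mu}^{*}\, \mathscr{C}_\la$; closedness is then automatic from Proposition \ref{composition-of-closed-op}, and a kernel-vector computation $e_{\la,\mu}^{*}\bigl(\prescript{\la}{}{K}_{\alpha^{t}}\bigr) = \prescript{\mu}{}{K}_{\alpha^{t}}$ (valid precisely because $\mathrm{Int}(\mu,\la)=\scr{H}^+(H_\mu)$ under $\mu\ll_R\la$) identifies the action on monomials as $L^\alpha+N_\la \mapsto L^\alpha+N_\mu$. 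You instead verify the sequential closability criterion by hand, using your coefficient-consistency identity $\langle a+N_\mu,\phi_\mu\rangle_\mu=\langle a+N_\la,\phi_\la\rangle_\la$ for $F=\mathscr{C}_\mu\phi_\mu=\mathscr{C}_\la\phi_\la\in\mathrm{int}(\mu,\la)$; this is more elementary (no adjoint machinery is needed in that direction) and it is the same mechanism that the paper's converse uses but leaves implicit --- the paper asserts that density of $\mathrm{Dom}(e_{\la,\mu})=\mathscr{C}_\mu\bigl(\mathrm{Dom}(E_{\mu,\la}^{*})\bigr)$ forces density of $\mathrm{int}(\mu,\la)$, and your computation $\mathscr{C}_\mu w = \mathscr{C}_\la(E_{\mu,\la}^{*}w)$ for $w\in\mathrm{Dom}(E_{\mu,\la}^{*})$ is exactly the missing justification that this domain sits inside the intersection space. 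What the paper's route buys is the explicit factorization $E_{\mu,\la}=\mathscr{C}_\mu^{*}e_{\la,\mu}^{*}\mathscr{C}_\la$, which is reused downstream (e.g.\ the derivative $D=E_{\mu,\la}^{*}E_{\mu,\la}$ and the polar decomposition $\sqrt{D}=Ue^{*}\mathscr{C}_\la$ in Theorem \ref{Cuntz-RK-AC-implies-AC}); what yours buys is a cleaner logical bookkeeping: you note explicitly that the operator on the bare domain $\mathbb{A}_d+N_\la$ is only closable, not closed --- a point on which the proposition's wording is loose and which the paper sidesteps by silently replacing $E_{\mu,\la}$ with the closed composition, whose domain is larger --- and you extend the intertwining relation from the core to the closure, whereas the paper states it only on $\mathbb{A}_d+N_\la$. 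Both readings agree with the paper's stated convention (Section 2) that these co-embeddings are understood as closed operators on suitable domains, so your interpretation is faithful and the argument stands.
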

 \bp
  Remember that $ \mathcal{D}_{\la}=\mathbb{A}_d +N_{\la}$ is dense in ${\mathbb{H}}_{d}^{2} (\lambda)$ and $\Pi_\la$ -invariant since $\Pi_\la$ acts by left multiplication on $\mathbb{A}_d +N_{\la}$. Since the Cauchy transform is a unitary operator, the linear space $\mathcal{D}_{\mu}^{'}=\mathscr{C}_{\mu} (\mathbb{A}_d +N_{\la})$ is dense in the RKHS $\mathscr{H}^{+}(H_{\mu})$. Let $\mu \ll_{R} \la$; hence, $e_{\la, \mu}:\mathrm{int}(\mu,\la) \longrightarrow \mathscr{H}^{+}(H_{\la})$ is densely defined. Since  $e_{\la, \mu}$ is left multiplication by the identity, it is closed; thus, its adjoint is well-defined. 
  Now,  $E_{\mu, \la} = \mathscr{C}_{\mu}^{*} e_{\la, \mu}^{*}\mathscr{C}_{\la}$ is well-defined, and
  \begin{eqnarray*}
      E_{\mu, \la}(L^{\alpha}+N_{\la})&=&\mathscr{C}_{\mu}^{*} e_{\la, \mu}^{*}\mathscr{C}_{\la} (L^{\alpha}+N_{\la})\\
      &=&\mathscr{C}_{\mu}^{*} e_{\la, \mu}^{*} (\prescript{\la}{}{K}_{{\alpha}^{t}})\\
      &=&\mathscr{C}_{\mu}^{*}  (\prescript{\mu}{}{K}_{{\alpha}^{t}}), \quad \text{since}~ \mu \ll_{R} \la\\
      &=& L^{\alpha}+N_{\mu}.
  \end{eqnarray*}
  So, by linearity, $E_{\mu, \la}(p+N_{\la})=p+N_{\mu}$. Nevertheless, Cauchy transforms are uintary operators and  $E_{\mu, \la} = \mathscr{C}_{\mu}^{*} e_{\la, \mu}^{*}\mathscr{C}_{\la}$ with  $e_{\la, \mu}^{*}$ being closed. Hence, by Proposition \ref{composition-of-closed-op}, $E_{\mu, \la}$ is a closed operator.

  Conversely, let $E_{\mu, \la}$ be closed and densely defined operator. By the formula   $e_{\la, \mu} = \mathscr{C}_{\la} E_{ \mu, \la}^{*}\mathscr{C}_{\mu}^{*}$ and Proposition \ref{composition-of-closed-op}, we see that $e_{\la, \mu}$ is densely defined closed operator. In particular, $\mathrm{int}(\mu,\la)$ must be dense in $\mathscr{H}^{+}(H_{\mu})$, i.e., $\mu \ll_{R} \la$.
  
  Now, by the way the row isometries $\Pi_{\la}$ and $\Pi_{\mu}$ act on $\mathbb{A}_d +N_{\la}$ and $\mathbb{A}_d +N_{\mu}$, and the definition of the mapping $E_{\mu, \la}$, we see that $E_{\mu, \la}\Pi_{\la}x=\Pi_{\mu} E_{\mu, \la}x $ for any $x\in \mathrm{Dom}(E_{\mu, \la})=\mathcal{D}_{\la}$; i.e., $X=E_{\mu, \la}$  is an unbounded closed intertwiner between $\Pi_{\la}$ and $\Pi_{\mu}$. 
\ep
\begin{remark} \label{bounded-intertwiner-bounded-above-measure}
    If in  Proposition \ref{a-special-intertwiner} instead of $\mu \ll_{R} \la$ we assume that $\mu \leq t^2 \la$ for some positive $t$, then $e_{\la, \mu}$ is bounded and have norm at most $t$, see \cite[Lemma 4.2]{JM-ncld}. Hence, $E_{\mu, \la}=\mathscr{C}_{\mu}^{*} e_{\la, \mu}^{*}\mathscr{C}_{\la}$ is a bounded intertwiner with norm at most $t$. 
\end{remark}

\begin{remark} 
    Let $\mu$ and $\la$ be two positive NC measures on $\mathcal{A}_d$, and $\mu \ll_{F} \la$. Therefore, $q_{\mu}^{\la}$ with form domain $\mathbb{A}_d +N_\la$ is closeable and densely defined in $\mathbb{H}^{2}_{d}(\la)$. So, by Proposition \ref{transfer-of-the-domain} there is a closed, densely defined, and positive operator $D$ such that $\mathbb{A}_d +N_\la \subseteq \mathrm{Dom}(\sqrt{D})$ is a core for $\sqrt{D}$, and 
$$q_{\mu}^{\la}(a+N_\la, b+N_\la)=\langle \sqrt{D} (a+N_\la), \sqrt{D}(b+N_\la) \rangle_{\la}.$$
 Furthermore, recall that $q_{\mu}^{\la}$ is $\Pi_\la$-Toeplitz by Lemma \ref{hybrid-form-Toeplitz}.
\end{remark}

\begin{lemma}\label{a-row-ism-on-Ran-sqrtD}
    Let $\mu$ and $\la$ be two positive NC measures on $\mathcal{A}_d$, $\mu \ll_{F} \la$, and the operator $D$ be as in the above. Define    
\begin{eqnarray*}
    \Pi_k : \mathrm{Ran}\sqrt{D} \longrightarrow \mathbb{H}^{2}_{d}(\la) \\
    \Pi_k \sqrt{D}x=\sqrt{D} \Pi_{\la,k}x
\end{eqnarray*}
    Then, $\Pi=(\Pi_1, \cdots, \Pi_d)$ is a row isometry on $\overline{\mathrm{Ran}(\sqrt{D})}^{\la}$.
\end{lemma}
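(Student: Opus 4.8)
The plan is to deduce the row-isometry relations $\Pi_j^{*}\Pi_k = \delta_{j,k}\,I$ on $\overline{\mathrm{Ran}(\sqrt{D})}^{\la}$ from a single inner-product identity driven by the $\Pi_\la$-Toeplitz property of $D$, exactly as in the computation embedded in the proof of Proposition \ref{Pi-Toeplitz-ops-GNS-la}. First I would record the two structural facts I need. Because $D$ is $\Pi_\la$-Toeplitz and $\mathbb{A}_d + N_\la$ is a core for $\sqrt{D}$, Proposition \ref{Pi-Toeplitz-ops-GNS-la} guarantees that $\mathrm{Dom}(\sqrt{D})$ is $\Pi_\la$-invariant; hence $\Pi_{\la,k} x \in \mathrm{Dom}(\sqrt{D})$ whenever $x \in \mathrm{Dom}(\sqrt{D})$, so the prescription $\Pi_k \sqrt{D}x := \sqrt{D}\,\Pi_{\la,k}x$ at least makes sense as a formula. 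Moreover the form $q_D^\la(g,h) = \langle \sqrt{D}g, \sqrt{D}h\rangle_\la$ satisfies the Toeplitz identity $q_D^\la(\Pi_{\la,i}g, \Pi_{\la,j}h) = \delta_{i,j}\,q_D^\la(g,h)$ for all $g,h \in \mathrm{Dom}(\sqrt{D})$.

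The substance of the argument is the identity, valid for $x,y \in \mathrm{Dom}(\sqrt{D})$,
\begin{align*}
\langle \Pi_k\sqrt{D}x, \Pi_j\sqrt{D}y\rangle_\la
&= \langle \sqrt{D}\,\Pi_{\la,k}x, \sqrt{D}\,\Pi_{\la,j}y\rangle_\la \\
&= q_D^\la(\Pi_{\la,k}x, \Pi_{\la,j}y) \\
&= \delta_{k,j}\,q_D^\la(x,y) \\
&= \delta_{k,j}\,\langle \sqrt{D}x, \sqrt{D}y\rangle_\la,
\end{align*}
from which I would read off every assertion at once. Setting $k=j$ and letting $\sqrt{D}x = \sqrt{D}y$ shows $\lVert \Pi_k\sqrt{D}x - \Pi_k\sqrt{D}y\rVert_\la = 0$, so $\Pi_k$ is well-defined as a map on $\mathrm{Ran}(\sqrt{D})$ depending only on the vector $\sqrt{D}x$ and not on the representative $x$. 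The case $k=j$ also shows each $\Pi_k$ is isometric on $\mathrm{Ran}(\sqrt{D})$, so it extends uniquely by continuity to an isometry of the closure $\overline{\mathrm{Ran}(\sqrt{D})}^\la$. The case $k \neq j$ gives $\langle \Pi_k u, \Pi_j v\rangle_\la = 0$ for all $u, v \in \mathrm{Ran}(\sqrt{D})$, and this orthogonality of ranges persists to the closure by continuity. Combining the isometry with the orthogonality yields $\Pi_j^{*}\Pi_k = \delta_{j,k}\,I$ on $\overline{\mathrm{Ran}(\sqrt{D})}^\la$, which is exactly the statement that $\Pi = (\Pi_1, \dots, \Pi_d)$ is a row isometry.

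I expect no serious obstacle here: once the displayed identity is in place everything is forced by it. The only matters needing care are bookkeeping. I must check that $\Pi_k$ genuinely descends to a map on $\mathrm{Ran}(\sqrt{D})$ (rather than on $\mathrm{Dom}(\sqrt{D})$) before speaking of its operator norm, and I must justify extending both the isometry and the orthogonality relations from $\mathrm{Ran}(\sqrt{D})$ to its closure $\overline{\mathrm{Ran}(\sqrt{D})}^\la$ by the usual density-and-continuity reasoning. Both are routine given the identity above.
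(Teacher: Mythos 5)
Your proposal is correct and follows essentially the same route as the paper: both proofs rest on the single computation $\langle \Pi_j\sqrt{D}x, \Pi_k\sqrt{D}y\rangle_\la = q_{D}^{\la}(\Pi_{\la,j}x,\Pi_{\la,k}y) = \delta_{j,k}\langle\sqrt{D}x,\sqrt{D}y\rangle_\la$, using the $\Pi_\la$-Toeplitz property of the form together with the $\Pi_\la$-invariance of $\mathrm{Dom}(\sqrt{D})$ from Proposition \ref{Pi-Toeplitz-ops-GNS-la}. Your treatment is in fact slightly more careful than the paper's, since you explicitly verify representative-independence of $\Pi_k$ on $\mathrm{Ran}(\sqrt{D})$ and the density-continuity extension to the closure, both of which the paper leaves implicit.
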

\bp
Recall that in the proof of Proposition \ref{Pi-Toeplitz-ops-GNS-la} we showed that this is well-defined since $\mathrm{Dom}(\sqrt{D})$ is $\Pi_\la$-invariant. We just need to show that $\Pi_{j}^{*}\Pi_k=\delta_{j,k}I$. To this end note that 
\begin{eqnarray*}
    \langle \Pi_j \sqrt{D}(a+N_\la), \Pi_k \sqrt{D}(b+N_\la) \rangle_{\la} 
    &=& \langle  \sqrt{D} \Pi_{\la,j}(a+N_\la),  \sqrt{D}\Pi_{\la,k}(b+N_\la) \rangle_{\la}\\
    &=& q_{\mu}^{\la}(\Pi_{\la, j}(a+N_\la), \Pi_{\la, k}(b+N_\la))\\
    &=& \delta_{j,k} q_{\mu}^{\la}(a+N_\la, b+N_\la)  \\
    &=& \delta_{j,k} \langle \sqrt{D} (a+N_\la), \sqrt{D}(b+N_\la) \rangle_{\la}
\end{eqnarray*}
\ep

Let $T$ be an unbounded operator on a Hilbert space $\mathscr{H}$, and $\mathscr{M}$ be a von Neumann algebra inside $\mathscr{B}(\mathscr{H})$. We say that $T$ is affiliated with $\mathscr{M}$, and we write $T \sim \mathscr{M} $ if for any $B \in \mathscr{M}^{\prime}$, we have $TB \supseteq BT$; i.e., $\mathrm{Dom}(T)$ is $B$-invariant, and $TBh=BTh$ for any $h \in \mathrm{Dom}(T)$. What is important here is that $T$ commutes with elements of $\mathscr{M}^{\prime}$ over the domain of $T$. When, $T$ is bounded, the condition $T \sim \mathscr{M} $ means $T \in \mathscr{M}$; however, we need a bit more general condition for possibly an unbounded operator $T$.
\begin{cor} \label{affiliation}
    Assuming the notations of Lemma \ref{a-row-ism-on-Ran-sqrtD}, and assuming that $\la$ is Cuntz, then we have that $D \sim vN(\Pi_{\la})^{\prime}$; i.e., $D$ commutes with $\Pi_\la$ on the domain of $D$.
\end{cor}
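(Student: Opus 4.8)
The plan is to upgrade the $\Pi_{\la}$-Toeplitz identity for $D$ to a genuine commutation relation by exploiting the Cuntz relation $\sum_{k=1}^{d}\Pi_{\la,k}\Pi_{\la,k}^{*}=I$, which holds precisely because $\la$ is Cuntz, so that $\Pi_{\la}$ is a surjective, hence unitary, row isometry. The guiding computation is the bounded one: if $D$ were bounded, the Toeplitz relation reads $\Pi_{\la,i}^{*}D\Pi_{\la,j}=\delta_{i,j}D$, and multiplying on the left by $\Pi_{\la,i}$ and summing over $i$ gives $\sum_{i}\Pi_{\la,i}\Pi_{\la,i}^{*}D\Pi_{\la,j}=\Pi_{\la,j}D$; the Cuntz relation collapses the left-hand side to $D\Pi_{\la,j}$, so $D\Pi_{\la,j}=\Pi_{\la,j}D$ for every $j$, and taking adjoints yields commutation with each $\Pi_{\la,j}^{*}$. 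Since these generate $vN(\Pi_{\la})$, this is exactly $D\sim vN(\Pi_{\la})'$. Recall from Lemma \ref{hybrid-form-Toeplitz} and Proposition \ref{transfer-of-the-domain} that $D$ is closed, positive and $\Pi_{\la}$-Toeplitz, with $\mathbb{A}_{d}+N_{\la}$ a core for $\sqrt{D}$, and from Lemma \ref{a-row-ism-on-Ran-sqrtD} that $\Pi_{k}\sqrt{D}=\sqrt{D}\Pi_{\la,k}$ defines a row isometry $\Pi$ on $\overline{\mathrm{Ran}(\sqrt{D})}$.

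Because $D$ is unbounded, the real content is to make this summation rigorous, and the only missing ingredient is invariance of the form domain $Q:=\mathrm{Dom}(\sqrt{D})$ under the \emph{backward} shifts $\Pi_{\la,k}^{*}$; invariance under the forward shifts is already furnished by Proposition \ref{Pi-Toeplitz-ops-GNS-la}. I would obtain this by identifying the form Hilbert space. On $\mathbb{A}_{d}+N_{\la}$ one has $\langle a+N_{\la},b+N_{\la}\rangle_{q_{\mu}^{\la}+1}=\mu(a^{*}b)+\la(a^{*}b)=\sigma(a^{*}b)$, where $\sigma:=\mu+\la$; hence the completion of $\mathbb{A}_{d}+N_{\la}$ in the form norm is isometrically $\mathbb{H}_{d}^{2}(\sigma)$, and because $\mu\ll_{F}\la$ forces $q_{\mu}^{\la}$ to be closeable (so the canonical map into $\mathbb{H}_{d}^{2}(\la)$ is injective) this completion is realized inside $\mathbb{H}_{d}^{2}(\la)$ exactly as $Q$, via the contractive co-embedding $E_{\la,\sigma}$ of Proposition \ref{reducing-projections}. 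Under this identification $\Pi_{\la,k}|_{Q}$ corresponds to $\Pi_{\sigma,k}$, since $E_{\la,\sigma}\Pi_{\sigma,k}=\Pi_{\la,k}E_{\la,\sigma}$. Consequently $Q$ is invariant under every $\Pi_{\la,k}^{*}$ if and only if the row isometry $\Pi_{\sigma}$ is surjective, i.e. if and only if $\sigma$ is a Cuntz measure.

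This is the heart of the matter, and the step I expect to be the main obstacle: I would have to prove that $\sigma=\mu+\la$ is Cuntz. Here the hypothesis $\mu\ll_{F}\la$ is indispensable and cannot be relaxed to $\la\le\sigma$ alone; for instance, with $d=1$, $\mu$ the Lebesgue measure and $\la$ a singular (hence Cuntz) measure, one has $\la\le\sigma$ yet $\sigma$ is non-Cuntz by the Szeg\H{o} criterion of Corollary \ref{H2(la)-Cuntz-nonCuntz}, but in that example $\mu\not\ll_{F}\la$. Using $\mu\ll_{F}\la$ (which already yields $\mu\ll_{R}\la$ by the remark after Theorem \ref{RKHS-AC=Form-AC}) together with the characterization in \cite[Theorem 6.4]{JM-freeCE} — a measure is Cuntz exactly when its Herglotz space contains no nonzero constant, equivalently when $1+N$ is approximable by nonconstant free monomials in the corresponding norm — and the identity $\mathscr{H}^{+}(H_{\sigma})=\mathscr{H}^{+}(H_{\mu})+\mathscr{H}^{+}(H_{\la})$ from \cite[Theorem 4.4]{JM-ncld}, I would argue that $\mathscr{H}^{+}(H_{\sigma})$ has no nonzero constant function, so $\sigma$ is Cuntz. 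The delicate point is that the approximation of $1$ by nonconstant monomials available in the $\la$-norm must be promoted to the strictly larger $\sigma=\mu+\la$ norm, and it is exactly here that absolute continuity has to be invoked in an essential way.

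Once the backward invariance of $Q$ is secured, the argument closes: for $x\in\mathrm{Dom}(D)$ the forward and backward invariance of $Q$ legitimize each manipulation, and summing the Toeplitz identity against the Cuntz relation as in the bounded model shows $\Pi_{\la,k}x\in\mathrm{Dom}(D)$ with $D\Pi_{\la,k}x=\Pi_{\la,k}Dx$. Equivalently, passing to the bounded resolvent I would verify $\Pi_{\la,k}(I+D)^{-1}=(I+D)^{-1}\Pi_{\la,k}$ for each $k$; taking adjoints (legitimate since $(I+D)^{-1}$ is self-adjoint) gives commutation with every $\Pi_{\la,k}^{*}$, whence $(I+D)^{-1}\in vN(\Pi_{\la})'$. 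As all bounded functions of the self-adjoint operator $D$ then lie in $vN(\Pi_{\la})'$, this is precisely $D\sim vN(\Pi_{\la})'$, and in particular $D$ commutes with $\Pi_{\la}$ on $\mathrm{Dom}(D)$.
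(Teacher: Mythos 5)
You have located the right difficulty, but the step you yourself call the heart of the matter is not merely hard --- it is false, and with it your whole route collapses. Your reduction is fine: the form domain $Q=\mathrm{Dom}(\sqrt{D})$ is indeed $\mathrm{Ran}(E_{\la,\sigma})$ with $\sigma=\mu+\la$, and (given that $\la$ is Cuntz and $E_{\la,\sigma}$ is injective, which follows from closability) $Q$ is invariant under all backward shifts $\Pi_{\la,k}^{*}$ if and only if $\Pi_{\sigma}$ is surjective, i.e.\ iff $\sigma$ is Cuntz. But $\sigma$ need not be Cuntz under the standing hypotheses ($\la$ Cuntz, $\mu\ll_{F}\la$). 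Take $d=1$, $g(e^{i\theta})=1/|\theta|$ on $(-\pi,\pi]$, $\la=e^{-g}\,dm$ and $\mu=(1-e^{-g})\,dm$. Then $\int\log e^{-g}\,dm=-\int g\,dm=-\infty$, so $\la$ is Cuntz by the Szeg\H{o} criterion (Corollary \ref{H2(la)-Cuntz-nonCuntz}); moreover $d\mu=(e^{g}-1)\,d\la$ with $e^{g}-1\in L^{1}(\la)$, so $\mu\ll\la$ classically, hence $\mu\ll_{R}\la$ by Corollary \ref{RK-AC-equiv-AC} and therefore $\mu\ll_{F}\la$ by Theorem \ref{R-AC-Cuntz=F-Ac-Cuntz}; yet $\sigma=\mu+\la=m$ is the standard Lebesgue measure, which is non-Cuntz. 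This is not an isolated accident: here $1-e^{-g}$ is bounded below away from $\theta=0$, so $\mu$ is non-Cuntz, and by the paper's own Corollary \ref{left-ideal-Cuntz+non-Cuntz=0} the sum of a Cuntz and a non-Cuntz measure is always non-Cuntz; your claim would thus force every measure that is form-AC with respect to a Cuntz $\la$ to be Cuntz itself, which the example refutes. Consequently, by your own equivalence, $Q$ is genuinely not backward invariant in such cases, the summation against $\sum_{k}\Pi_{\la,k}\Pi_{\la,k}^{*}=I$ cannot be performed on the form domain, and the resolvent identity you propose at the end is a restatement of the affiliation to be proved, not an independent argument for it.

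The paper's proof avoids backward invariance entirely: Lemma \ref{a-row-ism-on-Ran-sqrtD} exhibits $\sqrt{D}$ as a closed, densely defined intertwiner, $\sqrt{D}\,\Pi_{\la,k}=\Pi_{k}\sqrt{D}$ on $\mathrm{Dom}(\sqrt{D})$, from the Cuntz row isometry $\Pi_{\la}$ to the row isometry $\Pi$ acting on $\overline{\mathrm{Ran}(\sqrt{D})}$, and then invokes \cite[Lemma 8.9]{JM-ncld}, which asserts precisely that for a closed intertwiner $X$ of a Cuntz row isometry, $X^{*}X$ is affiliated with $vN(\Pi_{\la})^{\prime}$. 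Only forward invariance of $\mathrm{Dom}(\sqrt{D})$ (Proposition \ref{Pi-Toeplitz-ops-GNS-la}) is needed; the Cuntz hypothesis is consumed inside the cited lemma rather than through any structural property of $\mu+\la$. If you want a self-contained proof, the thing to reprove is that intertwining lemma, not Cuntz-ness of $\mu+\la$.
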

\bp
By Lemma \ref{a-row-ism-on-Ran-sqrtD}, we see that $\sqrt{D}$ is a closed operator that intertwines the row isometries $\Pi$ and $\Pi_{\la}$. Since $\Pi_\la$ is Cuntz, by  \cite[Lemma 8.9]{JM-ncld} the result follows for $D=(\sqrt{D})^{*} \sqrt{D}$. 
\ep

\begin{thm} \label{R-AC-Cuntz=F-Ac-Cuntz}
    Let $\mu$ and $\la$ be two positive NC measures on $\mathcal{A}_d$, and let $\la$ be Cuntz. Then,  $\mu \ll_{R} \la$ if and only if $\mu \ll_{F} \la$.
\end{thm}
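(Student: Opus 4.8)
The plan is to bypass Corollary \ref{RK-Form-AC-equiv} entirely, since its proof rests on Proposition \ref{reducing-projections} and therefore on $\la$ being non-Cuntz. Instead I would route the argument through Proposition \ref{a-special-intertwiner}, which holds for \emph{any} pair of positive NC measures with $N_\la \subseteq N_\mu$ (the standing assumption whenever the hybrid form $q_\mu^\la$ is in play), and which characterizes $\mu \ll_R \la$ by the closedness and dense definition of the intertwiner $E_{\mu,\la}\colon p+N_\la \mapsto p+N_\mu$. The point is that this characterization is insensitive to the Cuntz/non-Cuntz dichotomy, so it supplies exactly the tool that Section 4's route lacks for Cuntz $\la$.

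For the forward implication $\mu \ll_R \la \Rightarrow \mu \ll_F \la$, I would first invoke Proposition \ref{a-special-intertwiner} to obtain that $E_{\mu,\la}$ is a closed, densely defined operator whose domain contains $\mathbb{A}_d + N_\la$. The key observation is that the hybrid form is precisely the form induced by this closed operator: for $a,b \in \mathbb{A}_d$,
\[
q_\mu^\la(a+N_\la, b+N_\la) = \mu(a^{*}b) = \langle a+N_\mu, b+N_\mu\rangle_\mu = \langle E_{\mu,\la}(a+N_\la), E_{\mu,\la}(b+N_\la)\rangle_\mu .
\]
I would then use the standard correspondence (Kato, \cite{Kato}) that the form $q(x,y) := \langle E_{\mu,\la}x, E_{\mu,\la}y\rangle_\mu$ with form domain $\mathrm{Dom}(E_{\mu,\la})$ is closed, because its form norm $\lVert x\rVert_{q+1}^{2} = \lVert E_{\mu,\la}x\rVert_\mu^{2} + \lVert x\rVert_\la^{2}$ coincides with the graph norm of $E_{\mu,\la}$, and closedness of the operator is exactly completeness of its domain in the graph norm. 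Since $q$ is a closed extension of $q_\mu^\la$, the hybrid form $q_\mu^\la$ is closable, which is the definition of $\mu \ll_F \la$.

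For the converse $\mu \ll_F \la \Rightarrow \mu \ll_R \la$, nothing new is needed: as remarked after Theorem \ref{RKHS-AC=Form-AC}, this direction never used the non-Cuntz hypothesis. Concretely, closability of $q_\mu^\la$ yields, via Proposition \ref{transfer-of-the-domain}, a closed positive $\Pi_\la$-Toeplitz operator $T$ with $\mathscr{C}_{\mu}\mathcal{D}_{\mu}(T) \subseteq \mathscr{H}^{+}(H_{\mu}) \cap \mathscr{H}^{+}(H_{\la})$ and with $\mathcal{D}_{\mu}(T)$ dense in $\mathbb{H}_{d}^{2}(\mu)$; since $\mathscr{C}_{\mu}$ is unitary, the intersection space is then norm-dense in $\mathscr{H}^{+}(H_{\mu})$, i.e. $\mu \ll_R \la$.

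The genuinely new content is thus confined to the forward direction, and the main point to get right is the identification of $q_\mu^\la$ with the operator form of $E_{\mu,\la}$ together with the operator--form closedness dictionary; once this is in place, closability is automatic. I expect the only delicate bookkeeping to be confirming that $\mathbb{A}_d + N_\la$ lies inside $\mathrm{Dom}(E_{\mu,\la})$ and that $E_{\mu,\la}$ acts there by $p+N_\la \mapsto p+N_\mu$ (both recorded in Proposition \ref{a-special-intertwiner}), and keeping track of the fact that for Cuntz $\la$ the ideal $N_\la$ may be nonzero, so the domain $\mathbb{A}_d + N_\la$ cannot be collapsed to $\mathbb{A}_d$ as in the non-Cuntz case. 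For the reader's orientation I would also note that this reasoning in fact establishes the equivalence for \emph{any} splitting measure $\la$, the Cuntz hypothesis being singled out only because the reducing-projection route of Section 4 is unavailable there.
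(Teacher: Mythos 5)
Your proposal is correct, and its two halves relate to the paper's proof differently. The forward direction $\mu \ll_{R} \la \Rightarrow \mu \ll_{F} \la$ is essentially the paper's own argument: the paper constructs $e \colon \mathrm{int}(\mu,\la) \hookrightarrow \mathscr{H}^{+}(H_{\la})$, forms the closed operator $E = \mathscr{C}_{\mu}^{*}e^{*}\mathscr{C}_{\la}$, and identifies $q_{\mu}^{\la}$ with the closed form $q_{E^{*}E}$ on $\mathbb{A}_d + N_\la$ — exactly the content of Proposition \ref{a-special-intertwiner} that you cite, together with the graph-norm/form-norm dictionary you spell out. (One small point: the paper does not assume $N_\la \subseteq N_\mu$ here but \emph{derives} it inside this computation, since $E(a+N_\la)=a+N_\mu$ forces $N_\la \subseteq N_\mu$; your "standing assumption" stance is defensible given the paper's stated convention, but the derivation is cleaner if one wants the theorem to start from $\ll_{R}$ alone.) The converse is where you genuinely diverge. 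You reuse Proposition \ref{transfer-of-the-domain}, whose proof is Cuntz-independent — as the paper's own remark after Theorem \ref{RKHS-AC=Form-AC} concedes — to push $\mathrm{Dom}(T)$ into $\mathbb{H}^{2}_{d}(\mu)$ and land its Cauchy transforms densely in $\mathrm{int}(\mu,\la)$. The paper instead gives a new argument exploiting the Cuntz hypothesis: the Riesz derivative $D$ of the closed form is affiliated with $vN(\Pi_\la)'$ (Corollary \ref{affiliation}), so the spectral truncations $D_n = \psi_n(D)D$ commute with $\Pi_\la$ and induce genuine NC measures $\mu_n$ with $\mu_n \leq n\la$ and $\mu_n \uparrow \mu$, whence NC Aronszajn gives density of the intersection. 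Your route is shorter and, as you note, proves the equivalence uniformly for any splitting measure; the paper's route buys additional structure — a monotone approximation of $\mu$ by $\la$-dominated measures, which is precisely the classical characterization of absolute continuity recalled at the start of Section 3 and is what connects $\ll_{F}$ to the classical notion in the Cuntz setting. Both are valid; yours is not circular, since Proposition \ref{transfer-of-the-domain} rests only on Lemma \ref{hybrid-form-Toeplitz} and Proposition \ref{Pi-Toeplitz-ops-GNS-la}, neither of which uses the non-Cuntz hypothesis.
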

\bp 
(i) Suppose that $\mu \ll_{R} \la$; we must show that $q_{\mu}=q_{\mu}^{\la}$ is well-defined and closable in ${\mathbb{H}}_{d}^{2}(\lambda)$. Since $\mathrm{int}(\mu, \la)$ is dense in ${\mathscr{H}}^{+}(H_{\mu})$, we can define the densely defined and closed operator
$e: \mathrm{int}(\mu, \la) \subseteq  {\mathscr{H}}^{+}(H_{\mu})\hookrightarrow {\mathscr{H}}^{+}(H_{\la}) $, which is defined by identity ( or left multiplication by the identity). So, $ee^*$ is densely defined, closed, and positive operator on ${\mathscr{H}}^{+}(H_{\la})$.  The operator $E=\mathscr{C}_{\mu}^{*} e^* \mathscr{C}_{\la}$ is densely defined and closed. To see why it is closed note that by \cite{JM-freeCE} we have $\mathscr{C}_{\la} (L^{{\omega}^{t}}+N_{\lambda})=\prescript{\la}{}{K_{\omega}}$,  where $t$ means the transpose of words, and $\prescript{\la}{}{K_{\omega}}$ denotes the coefficient kernel vectors, which are dense in ${\mathscr{H}}^{+}(H_{\la})$ \cite{JM-freeCE}, and hence they give rise to a core for $e^*$. Also, note that
\begin{eqnarray*}
    E: \mathscr{C}_{\la}^{*} Dom(e^*) \subseteq {\mathbb{H}}^{2}_{d}(\la) &\longrightarrow& {\mathbb{H}}^{2}_{d}(\mu)\\
    a+N_{\la} &\longrightarrow& a+N_{\mu}
\end{eqnarray*}
Now, consider the closed form $q_{E^*E}$ with form domain $\mathrm{Dom}(q_{E^*E})=\mathrm{Dom}(E)\subseteq {\mathbb{H}}^{2}_{d}(\la)$. Then,
\begin{eqnarray*}
  q_{E^*E} (a+N_{\la}, b+N_{\la})&=&\langle  a+N_{\la}, E^*E( b+N_{\la} )\rangle_{\la}\\
  &=&\langle E( a+N_{\la}), E(b+N_{\la} )\rangle_{\mu}\\
  &=&\langle  a+N_{\mu}, b+N_{\mu} \rangle_{\mu}
\end{eqnarray*}
Obviously, the above equation implies that $N_\mu \supseteq N_\la$, so $q_{\mu}^{\la}(a+N_{\la}, b+N_{\la})=\langle  a+N_{\mu}, b+N_{\mu}\rangle_{\mu}$ is well-defined. Thus, we see that $q_{\mu}^{\la}=q_{E^*E}$ on $\mathrm{Dom}(q_{\mu}^{\la})=\mathbb{A}_{d}+N_\la \subseteq \mathrm{Dom}(E)=\mathrm{Dom}(q_{E^*E})$, and $q_{E^*E}$ is closed in ${\mathbb{H}}_{d}^{2}(\lambda)$. Hence, $q_{\mu}^{\la}$ is closable in ${\mathbb{H}}_{d}^{2}(\lambda)$.
   
   Conversely, Suppose $\mu \ll_{F} \la$ holds, so by definition $N_{\mu} \supseteq N_{\la}$, and hence  $q_{\mu}=q_{\mu}^{\la}$ is a well-defined and  closable form on $\mathbb{H}_{d}^{2}(\lambda)$. Hence, we can find a positive closed unbounded operator $D$ on $\mathbb{H}_{d}^{2}(\lambda)$ such that 
   \begin{equation*}
       q_{\mu}(a+N_{\lambda}, b+N_{\lambda})=\langle \sqrt{D}(a+N_{\lambda}), \sqrt{D}(a+N_{\lambda}) \rangle.
   \end{equation*}
   By Corollary \ref{affiliation}, $D \sim vN(\Pi_{\lambda})^{'}$ so that $D$ commutes with $\Pi_{\lambda}$ on the domain of $D$. Let $\psi_n$ be the characteristic function of $[0,n] \cap \sigma(D)$. By the Borel functional calculus discussion, the positive closed operators $D_n=\psi_n(D) D$ belong to $vN(\Pi_{\lambda})^{'}$ and the sequence $D_n$ converges in strong operator topology (SOT) to $D$. Since $D_n$ commutes with $\Pi_{\lambda}$, the positive quadratic form
\begin{equation*}
    q_n (a+N_{\lambda}, b+N_{\lambda})=\langle \sqrt{D_n}(a+N_{\lambda}), \sqrt{D_n}(a+N_{\lambda}) \rangle
\end{equation*}
   stems from an  NC measure $\mu_n$. Also, the sequence $(q_n)$  increases to $q_{\mu}$. Therefor, for any positive element $a \in \mathcal{A}_d$ the sequence $\mu_n(a)$ increase to  $\mu(a)$. Furthermore, $D_n \leq n I_{\mathbb{H}_{d}^{2}(\lambda)}$; hence, $\mu_{n} \leq n \lambda$. By  \ref{mu-kernel}, we see that $\prescript{\mu_{n}}{}{K} \leq \prescript{\mu}{}{K}$; hence, by the NC Aronszajn's theorem \cite[Theorem 5.1]{BMV},  $\mathcal{H}_{nc}(\prescript{\mu_{n}}{}{K}) \subseteq \mathcal{H}_{nc}(\prescript{\mu}{}{K})$. From $\mu_n \uparrow \mu$, we have
\begin{equation*} 
    \bigvee \mathscr{H}^{+}(H_{\mu_{n}}) =\mathscr{H}^{+}(H_{\mu}).
\end{equation*}

\noindent Similarly, for $\mu_{n} \leq n \lambda$ we see that  $\mathcal{H}_{nc} (\prescript{\mu_{n}}{}{K}) \subseteq \mathcal{H}_{nc} (\prescript{\lambda}{}{K})$. Thus, the intersection space 
$$ \mathrm{int}(\mu , \lambda)={\mathscr{H}}^{+}(H_{\mu}) \cap {\mathscr{H}}^{+}(H_{\lambda}) $$ 
\noindent is non-empty and dense in ${\mathscr{H}}^{+}(H_{\mu})$. Now, the definition \ref{RK-AC-Sing} applies.
\ep

In section 3, when we introduced the notion of RK-AC, we showed how  $\mu \ll \la$ led to $\mu \ll_{R} \la$. The converse is also true, and  we explain it later. For the Cuntz case, we give the proof here. For an altenative and somehow similar proof see \cite[Theorem 6]{BMN}.

\begin{thm} \label{Cuntz-RK-AC-implies-AC}
  Let $\mu$ and $\la$ be two positive classical measures on $\mathbb{T}$, and $\la$ be Cuntz. If  ~$\mu \ll_{R} \la$, then $\mu \ll \la$ in the classical setting.
\end{thm}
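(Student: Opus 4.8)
The plan is to pass to the form picture, recognize the Simon derivative as a multiplication operator, and then read off a classical Radon--Nikodym density. Since $\la$ is Cuntz, Theorem~\ref{R-AC-Cuntz=F-Ac-Cuntz} turns the hypothesis $\mu \ll_R \la$ into $\mu \ll_F \la$, so the hybrid form $q_{\mu}^{\la}$ is closable. Moreover, by Corollary~\ref{H2(la)-Cuntz-nonCuntz} the Cuntz hypothesis gives $\mathbb{H}_{1}^{2}(\la) = H^{2}(\la) = L^{2}(\la)$, so this form lives on $L^{2}(\la)$ with domain the (dense) image of the analytic polynomials, and the GNS row isometry is $\Pi_{\la} = M_{z}$, which is \emph{unitary} on $L^{2}(\la)$.

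First I would apply Proposition~\ref{transfer-of-the-domain} to obtain a closed, positive, semi-definite, $\Pi_{\la}$-Toeplitz operator $D$ on $L^{2}(\la)$ for which the analytic polynomials form a core for $\sqrt{D}$ and $\overline{q_{\mu}^{\la}} = q_{D}^{\la}$. In particular, evaluating on a polynomial $p$ gives $\int_{\mathbb{T}} |p|^{2}\,d\mu = q_{\mu}^{\la}(p,p) = \langle \sqrt{D}\,p, \sqrt{D}\,p\rangle_{\la}$.

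The key structural step is to identify $D$ as a multiplication operator. By Corollary~\ref{affiliation}, $D$ is affiliated with $vN(\Pi_{\la})'$. Because the analytic polynomials are dense in $L^{2}(\la)$, the constant $1$ is a cyclic vector for the unitary $M_{z}$, so $vN(\Pi_{\la}) = vN(M_{z})$ is maximal abelian, equal to $L^{\infty}(\la)$ acting by multiplication and hence equal to its own commutant. Therefore $D = M_{w}$ for some measurable $w \ge 0$, with $\mathrm{Dom}(\sqrt{D}) = \{ f \in L^{2}(\la) : \int_{\mathbb{T}} |f|^{2}\, w\, d\la < \infty \}$. Substituting back yields $\int_{\mathbb{T}} |p|^{2}\, d\mu = \int_{\mathbb{T}} |p|^{2}\, w\, d\la$ for every analytic polynomial $p$, and taking $p = 1$ shows $w \in L^{1}(\la)$ with $\int_{\mathbb{T}} w\, d\la = \mu(\mathbb{T})$.

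Finally I would polarize and invoke density. Polarizing the last identity gives $\int_{\mathbb{T}} \overline{p}\, q\, d\mu = \int_{\mathbb{T}} \overline{p}\, q\, w\, d\la$ for all analytic polynomials $p, q$; since the products $\overline{p}\,q$ span the trigonometric polynomials, which are uniformly dense in $C(\mathbb{T})$, we obtain $\int_{\mathbb{T}} f\, d\mu = \int_{\mathbb{T}} f\, w\, d\la$ for all $f \in C(\mathbb{T})$. By the uniqueness clause of the Riesz--Markov theorem, $d\mu = w\, d\la$, so $\mu \ll \la$. The main obstacle is the identification $D = M_{w}$: this is exactly where the Cuntz hypothesis is indispensable, since it is needed both to make $\Pi_{\la} = M_{z}$ unitary (so that the affiliation in Corollary~\ref{affiliation} applies) and to make $1$ cyclic (so that $vN(\Pi_{\la})$ is a MASA); once $D$ is known to be a multiplication operator, the remainder is routine polarization and approximation.
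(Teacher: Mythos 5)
Your proof is correct and is essentially the paper's own argument: both hinge on the Cuntz facts that $H^{2}(\la)=L^{2}(\la)$ and that the Radon--Nikodym operator $D$ commutes with the unitary $M_{z}$, then use maximality of the abelian algebra $L^{\infty}(\la)$ acting on $L^{2}(\la)$ to conclude $D=M_{w}$ and hence $d\mu=w\,d\la$. The differences are only organizational: you produce the commuting derivative by citing Theorem \ref{R-AC-Cuntz=F-Ac-Cuntz}, Proposition \ref{transfer-of-the-domain} and Corollary \ref{affiliation}, whereas the paper rebuilds it concretely from the closed intertwiner $E_{\mu,\la}$ of Proposition \ref{a-special-intertwiner} and verifies the Toeplitz/commutation property by explicit Cauchy-transform computations; and your closing polarization--Stone--Weierstrass--Riesz--Markov step spells out what the paper leaves implicit in passing from its Equation \ref{equality-of-integrals} to $d\mu=f\,d\la$.
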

{\bf Proof.} Note that in this classical setting the operator disk system $\mathcal{A}_1$ is just the C*-algebra $C(\mathbb{T})$. Thus, our measures live on a C*-algebra, so we can use the theory of Gheaondea-Kavruk in \cite{GK-ncld} for some parts of the proof. When $\la$ is Cuntz: By using Proposition \ref{a-special-intertwiner} to the classical setting, so we see that $X=E_{\mu, \la}$ is a closed intertwiner between $\Pi_{\la}$ and $\Pi_{\mu}$; i.e., $E_{\mu,\la}\Pi_{\la}=\Pi_{\mu}E_{\mu,\la}$, and  $X$ sends polynomials to polynomials. Obviously, $D=X^*X=\mathscr{C}_{\la}^{*}e_{\la, \mu}e_{\la, \mu}^{*}\mathscr{C}_{\la}$ is positive. Since $\la$ is Cuntz, we see that $H^2(\la)=L^2(\la)$, see \cite[Proposition 5.11]{JMT-NCFM} or use Szego theorem \cite[p. 49]{Hoff}, so both $D$ and $M_z$ live on $H^2(\la)=L^2(\la)$. We claim that $D$ is $\Pi_{\la}$-Toeplitz. Consider the quadratic form 
$${\bf q}_{D}(p,q)=\langle \sqrt{D}p, \sqrt{D}q \rangle_{\la} =\int_{\mathbb{T}} \overline{\sqrt{D}p(z)}\sqrt{D}q(z) d\la(z), $$
where $p$ and $q$ are two polynomials. Recall that polynomials are a core for the unbounded operator $D$, and also that $\Pi_{\la}$ acts as $M_z$ on polynomials. By linearity, we only need to work with $p(z)=z^n$ and $q(z)=z^m$. Furthermore, we use the sloppy language that $p=z^n$ and $M_zp=z^{n+1}$ instead of using function notation. Now, 
\begin{eqnarray*}
    {\bf q}_{D}(\Pi_\la p, \Pi_\la q)&=& \langle \sqrt{D} \Pi_\la p, \sqrt{D} \Pi_\la q \rangle_{\la}\\
    &=& \langle \sqrt{D} M_z p, \sqrt{D} M_z q \rangle_{\la}\\
    &=& \langle \sqrt{D} z^{n+1}, \sqrt{D} z^{m+1} \rangle_{\la}\\
    &=& \langle \sqrt{D} z^{n+1}, \sqrt{D} z^{m+1} \rangle_{\la}, \quad \text{by polar decomposition}~ \sqrt{D}=Ue^{*}\mathscr{C}_\la\\
    &=& \langle e^{*}\mathscr{C}_\la  z^{n+1}, e^{*}\mathscr{C}_\la  z^{m+1} \rangle_{H^{+}(H_\mu)}, \quad U ~\text{is unitary}\\
     &=& \langle e^{*} K_{n+1}^{\la}, e^{*} K_{m+1}^{\la} \rangle_{H^{+}(H_\mu)}, \quad \text{effect of Cauchy transform} \\
     &=& \langle  K_{n+1}^{\mu}, K_{m+1}^{\mu} \rangle_{H^{+}(H_\mu)}, \quad \text{effect of}~ e^*~ \text{and}~\mu \ll_R \la \\
     &=& \langle z^{n+1}, z^{m+1} \rangle_{H^{2}(\mu)}, \quad \text{using Cauchy transform} \\
     &=&\mu ( z^{-n-1}z^{m+1})\\
     &=&\mu( z^{-n}z^{m} )\\
    &=& \langle z^{n}, z^{m} \rangle_{H^{2}(\mu)}\\
     &=& \int \overline{p}q d\mu
\end{eqnarray*}
On the other hand, 
\begin{eqnarray*}
    {\bf q}_{D}( p,  q)&=& \langle \sqrt{D}  p, \sqrt{D}  q \rangle_{\la}\\
    &=& \langle \sqrt{D} z^{n}, \sqrt{D} z^{m} \rangle_{\la}, \quad \text{by polar decomposition}~ \sqrt{D}=Ue^{*}\mathscr{C}_\la\\
    &=& \langle e^{*}\mathscr{C}_\la  z^{n}, e^{*}\mathscr{C}_\la  z^{m} \rangle_{H^{+}(H_\mu)}, \quad U ~\text{is unitary}\\
     &=& \langle e^{*} K_{n}^{\la}, e^{*} K_{m}^{\la} \rangle_{H^{+}(H_\mu)}, \quad \text{effect of Cauchy transform} \\
     &=& \langle  K_{n}^{\mu}, K_{m}^{\mu} \rangle_{H^{+}(H_\mu)}, \quad \text{effect of}~ e^*~ \text{and}~\mu \ll_R \la \\
     &=& \langle z^{n}, z^{m} \rangle_{H^{2}(\mu)}, \quad \text{using Cauchy transform}\\
     &=& \int \overline{p}q d\mu
\end{eqnarray*}
The above two set formulas prove that 
$$ {\bf q}_{D}(\Pi_\la p, \Pi_\la q)= {\bf q}_{D}( p,  q),$$
or equivalently,
$$\langle \sqrt{D} M_Z (p) , \sqrt{D} M_Z (q) \rangle_{H^{2}(\la)}=\langle \sqrt{D} p , \sqrt{D} q \rangle_{H^{2}(\la)},  $$
that is $D$ is $\Pi_\la$-Toeplitz. Also,
\begin{equation} \label{equality-of-integrals}
    \int \overline{p}q d\mu =\int_{\mathbb{T}} \overline{\sqrt{D}p(z)}\sqrt{D}q(z) d\la(z).
\end{equation}
On the other hand, 
\begin{eqnarray*}
    \langle M_Z \sqrt{D} p , M_Z \sqrt{D} q \rangle_{H^{2}(\la)} &=& \int_{\mathbb{T}} \overline{z\sqrt{D}p(z)} z\sqrt{D}q(z)   ~d\la(z) \\
    &=& \int_{\mathbb{T}} \overline{\sqrt{D}p(z)} \bar{z}z\sqrt{D}q(z)   ~d\la(z)\\
    &=&\int_{\mathbb{T}} \overline{\sqrt{D}p(z)} \sqrt{D}q(z)   ~d\la(z) ,\quad \text{since}~  \bar{z}z=|z|^2=1\\
     &=&\langle \sqrt{D} p , \sqrt{D} q \rangle_{H^{2}(\la)}.
\end{eqnarray*}
Thus,
$$\sqrt{D}M_Z=M_Z\sqrt{D}, \quad \text{on} ~ \mathscr{P}_0; $$
which means that $\sqrt{D} $ is affiliated with  the commutant of the range of the following representation of $L^{\infty}(\la)$:
\begin{eqnarray*}
    \phi: L^{\infty}(\la) &\longrightarrow &\mathscr{B}(L^2(\la)) \\
    \phi(f)&=&M_f.
\end{eqnarray*}
 However, $\left(\phi(L^{\infty}(\la)\right)'\cong L^{\infty}(\la)$ and all affiliated unbounded operators with $L^{\infty}(\la)$ arise from multiplication by $L^{2}(\la)$ functions. So, $\sqrt{D}=M_{g}$ for some positive function $g \in L^{2}(\la)$, and consequently $D=M_f$ where $f=g^2 \in L^1(\la)$. Thus, from Equation \ref{equality-of-integrals} we see that  $d\mu=f d\la$, and consequently $\mu \ll \la$.
$\blacksquare$

When $\la$ is Cuntz, we should not expect that  $\mathrm{Int}(\mu, \la)$ be invariant or has a non-epmty invariant subspace as the following example shows.

\begin{eg} \label{non-example}
Let $m$ be the Lebesgue measure on the unit circle $\mathbb{T}$, $\Pi=M^{L}_{Z}$ be the operator of multiplication, shift, on $\mathbb{H}^{2}(m)=\mathbb{H}^{2}$ and $V$ be the unitarily equivalent operator, via Cauchy transform, on $\mathscr{H}^{+}(H_{m})=\mathbb{H}^{2}$. Suppose that $m_1$  equals the Lebesgue measures on the upper half of $\mathbb{T}$  and equals zero on the lower half of $\mathbb{T}$. Similarly, let $m_2$  equal the Lebesgue measures on the lower half of $\mathbb{T}$  and equals zero on the upper half of $\mathbb{T}$. It is well-known that these measures are Cuntz, see for example \cite[Example 3.20]{JMT-NCFM} and \cite[Example 2.2]{MK-rowiso}, but we will give a different proof based on our results here. Assume that $\Pi_1$ and $\Pi_2$ denote the operator of multiplication by the independent variable on $\mathbb{H}^{2}({m_1})$ and $\mathbb{H}^{2}({m_2})$. Also, let $V_1$ and $V_2$ be the corresponding operators, via Cauchy transform, on the $\mathscr{H}^{+}(H_{m_1})$ and $\mathscr{H}^{+}(H_{m_2})$. For the analytic polynomial $f(z)=1+z$, we see that the function
\begin{equation}
f_{1}(e^{i\theta}) =
    \begin{cases}
        0, & \text{if } 0 \leq \theta < \pi \\
        1+e^{i \theta}, & \text{if }  \pi \leq \theta < 2 \pi
    \end{cases}
\end{equation}
belongs to $\mathscr{H}^{+}(H_{m_1})$. Now, $\int_{\mathbb{T}} |f_1(e^{i\theta})|^{2} dm_1(\theta)=0$, so $f_1 \in N_{m_1}$. However, $f_1 \neq 0$; hence, by Theorem \ref{left-ideal-non-Cuntz=0}, $m_1$ cannot be a non-Cuntz measure; i.e., $m_1$ is a Cuntz measure. Similarly, $m_2$ is a Cuntz measure.  Since $m_1$ and $m_2$ are absolutely continuous with respect to $m$, by \cite[Definition 6.8]{JM-ncld} $m_1$ and $m_2$ are Cuntz type L measures. Obviously, $m=m_1+m_2$; hence, $m_1 \leq m$, so by Aronszajn's theorem \cite[Theorem 5.1]{Paulsen-rkhs},  $\mathscr{H}^{+}(H_{m_1}) \subseteq \mathscr{H}^{+}(H_{m})$; thus, $\mathrm{int}(m, m_1)=\mathscr{H}^{+}(H_{m_1})$.   Also, note that $\mathrm{Int}(m, m_1) \neq \mathscr{H}^{+}(H_{m})$; otherwise, by Definition \ref{RK-AC-Sing} $m\ll_{R} m_1$. However, in Theorem \ref{Cuntz-RK-AC-implies-AC}, we showed that RK-AC implies classical-AC; thus, $m$ is absolutely continuous with respect to $m_1$ in the classical sense.  Nevertheless, this is false; or see for example \cite[Example 2.2]{MK-rowiso}. Now, we claim that $\mathrm{Int}(m, m_1)$ cannot be $V_\mu=V_m \cong \Pi=M_{Z}^{L}$ invariant. By \cite[Theorem 4.5]{JM-ncld}, $\mathrm{Int}(m, m_1)$ is always $V_m$-co-invariant. So, if it were $V_m$-invariant as well, then it means that the shift operator $V_m$ would have a non-trivial reducing  subspace, which is impossible by \cite[Theorem 3.5.5]{Murphy}.
\end{eg}

\begin{remark}
    Interestingly, the previous example reveals three phenomena. Note that the two measures $m_1$ and $m_2$ are Cuntz type L or absolutely continuous Cuntz (ACC) measures bounded above by the type L measure $m$ which  is the Lebesgue measure. Firstly, the previous example shows that the sum of two 
    two ACC measures is not ACC again. Thus, the set of ACC measures is not a cone. The second observation is that though $m_i \leq m$ and $m$ is type L, the measures $m_i$ are not type L. Hence, the set of type L measures are not hereditary. Thirdly, absolutely continuity of a measure with respect to another one is not a symmetric relation; this is unlike singularity. To be more specific, obviously $dm_1 = f dm$ for a suitable characteristic function corresponding to the upper half circle (or half interval); thus, $m_1 \ll m$. However, as we discussed in the previous example,  $m \not \ll m_1$.  
\end{remark}

\begin{eg} \label{a-reducing-cuntz-eg}
    Let's give an example for splitting Cuntz measures in which the intersection space is $V_\mu$-reducing. This example is interesting since we know for non-Cuntz measures the intersection space is always $V_\mu$-reducing, but as this example illustrates it might happen for Cuntz measures despite the fact that their RKHS spaces do not contain constant functions.
    
    Suppose that the points on the unit circle are represented by their polar angles, so we can look at the unit circle as the interval $[0, 2\pi)$ on the real line. Consider the Dirac measures $\mu=\delta_{\{0,\pi \}}$ and $\la=\delta_{0}$. Obviously, $\delta_{\{0,\pi\}}=\delta_{0}+\delta_{\pi}$ defines a Lebesgue decomposition of $\mu$ with respect to $\la$ with $\mu_{ac}=\delta_0=\la$ and $\mu_s=\delta_{\pi}$. Since both measures are singular, we deduce that both $\Pi_\mu$ and $\Pi_\la$ are unitary operators, which act like bilateral shift, or invertible multiplication by $z$, on their domains. Consequently, $V_\mu=\mathscr{C}_{\mu}\Pi_\mu\mathscr{C}^{*}_{\mu}$ and $V_\la=\mathscr{C}_{\la}\Pi_\la\mathscr{C}^{*}_{\la}$ are unitaries by their definitions.  Let $\prescript{\mu}{}{K}_{n}$ be the coefficient  evaluation vector in the RKHS of $\mu$; i.e.,
    $$\langle \prescript{\mu}{}{K}_{n} , f \rangle=\hat{f}(n); $$
    also
    $$\langle \prescript{\la}{}{K}_{n} , f \rangle=\hat{f}(n); $$
    where $\hat{f}(n)$ is the Taylor's $n$-th coefficient of $f$. Note that
    $$\prescript{\mu}{}{K}_{n}(Z)=\sum_{m=0}^{\infty} \prescript{\mu}{}{K}_{n,m}Z^m,$$
    where
    $$\prescript{\mu}{}{K}_{n,m}=\langle Z^n, Z^m\rangle_\mu = 1+e^{i(m-n)\pi}.$$
    Now, since $\Pi_\mu$ and $\Pi_\la$ are Cuntz unitaries, they act act through left multiplication by $z$, and we see that
    \begin{eqnarray*}
V_\mu(\prescript{\mu}{}{K}_{n})&=&\mathscr{C}_{\mu}\Pi_\mu\mathscr{C}^{*}_{\mu}(\prescript{\mu}{}{K}_{n})\\
&=& \mathscr{C}_{\mu}M_Z(z^n+N_{\mu})\\
&=& \mathscr{C}_{\mu}(z^{n+1}+N_\mu)\\
&=&\prescript{\mu}{}{K}_{n+1},
    \end{eqnarray*}
    and similarly
    $$V_\la(\prescript{\la}{}{K}_{n})=\prescript{\la}{}{K}_{n+1}$$
However, some calculations reveal that
\begin{eqnarray*}
   \prescript{\mu}{}{K}_{n}(z) &=&  \sum_{k=-\lfloor 
\frac{n}{2} \rfloor}^{\infty} 2 z^{n+2k}\\
&=& 2 z^{n} \sum_{k=0}^{\infty}  z^{2k}  + 2 z^{n}\sum_{k=-\lfloor 
\frac{n}{2} \rfloor}^{-1}  z^{2k} \\
&=& \frac{2z^n}{1-z^2}+ 2 z^{n}\sum_{1}^{k=\lfloor 
\frac{n}{2} \rfloor}  (z^{-2})^{k} \\
&=& \frac{2z^n}{1-z^2} + 2 z^{n}\frac{1-z^{-2\lfloor 
\frac{n}{2} \rfloor}}{z^2-1}\\
&=&\frac{2z^{n-2\lfloor 
\frac{n}{2} \rfloor}}{1-z^2}.
\end{eqnarray*}
Thus,
$$\prescript{\mu}{}{K}_{n}(z)=\begin{cases}
        \frac{2}{1-z^2}, & \text{if } n ~\text{is even}\\
        \frac{2z}{1-z^2}, & \text{if }  n ~\text{is odd}
    \end{cases},$$
and also $\prescript{\la}{}{K}_{n,m}=1$, so
$$ \prescript{\la}{}{K}_{n}(z)=\frac{1}{1-z^2}.$$
Hence,
$$\begin{cases}
        V_{\mu}(\prescript{\mu}{}{K}_{n})=M_z (\prescript{\mu}{}{K}_{n}), & \text{if } n ~\text{is even}\\
        V_{\mu}(\prescript{\mu}{}{K}_{n})=M_{z}^{*} (\prescript{\mu}{}{K}_{n}), & \text{if }  n ~\text{is odd}
    \end{cases},$$
while
$$ V_\la (\prescript{\mu}{}{K}_{n})=I(\prescript{\mu}{}{K}_{n})$$ 
Furthermore, recall that the coefficient kernel vectors are dense in the corresponding RKHS spaces so, the above formulas reveal some properties of $V_\mu$ and $V_\la$. In particular, $V_\la=I$, but the situation for $V_\mu$ is abit vague as it changes behavior from multiplication by $z$ to division by $z$. However, by \cite[Theorem 4.5]{JM-ncld}, we see that $V_{\mu}^*=V_{\la}^{*}$ on the intersection space. So,  we can say for sure that $V_\mu$ acts like identity on the intersection space, and hence the intersection space is $V_\mu$-reducing.

Also, note that the Herglotz function of $\delta_0$ is $H_{\delta_{0}}=\frac{1+z}{1-z}$, and the RKHS $\mathscr{H}^{+}(H_{\delta_{0}})$ has the kernel $K(z,w)=\frac{1}{(1-z)(1-\overline{w})}$; hence, $\mathscr{H}^{+}(H_{\delta_{0}})$ is the one dimensional space generated by $\frac{1}{(1-z)}$. While, for $\delta_\pi$, the Herglotz function is $H_{\delta_{\pi}}=\frac{1-z}{1+z}$, and the RKHS $\mathscr{H}^{+}(H_{\delta_{\pi}})$ has the kernel $K^{'}(z,w)=\frac{1}{(1+z)(1+\overline{w})}$; therefore, $\mathscr{H}^{+}(H_{\delta_{\pi}})$ is the one dimensional space generated by $\frac{1}{(1+z)}$.

By the Aronsjan sum of RKHS', we see that $\mathscr{H}^{+}(H_{\mu})=\mathscr{H}^{+}(H_{\delta_{0}})+\mathscr{H}^{+}(H_{\delta_{1}}) $. So,
    $$\mathrm{int}(\mu , \la)=\mathscr{H}^{+}(H_{\delta_{0}}).  $$
    Furthermore, $ \mathscr{H}^{+}(H_{\delta_{0}})\cap \mathscr{H}^{+}(H_{\delta_{1}})=0$, so by \cite[Theorem 4.4]{JM-ncld}
    $$\mathscr{H}^{+}(H_{\mu})=\mathscr{H}^{+}(H_{\delta_{0}})\oplus \mathscr{H}^{+}(H_{\delta_{1}}). ~\blacksquare$$
\end{eg}

In fact a more general version of Theorem \ref{Cuntz-RK-AC-implies-AC} is true which is proved in \cite{BMN}.
\begin{thm} \label{RK-AC-implies-AC}
  Let $\mu$ and $\la$ be two positive classical measures on $\mathbb{T}$. If  ~$\mu \ll_{R} \la$, then $\mu \ll \la$ in the classical setting.
\end{thm}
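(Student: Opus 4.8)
The plan is to split on the dichotomy ``$\la$ is Cuntz versus non-Cuntz,'' which is exhaustive since every positive NC (here classical) measure is exactly one of the two. When $\la$ is Cuntz there is nothing new to do: this is precisely Theorem \ref{Cuntz-RK-AC-implies-AC}, where the hypothesis $\mu \ll_{R} \la$ was already converted into a genuine Radon--Nikodym derivative $d\mu = f\,d\la$ with $f \in L^{1}(\la)$, forcing $\mu \ll \la$. Hence the entire content of the general statement lies in the non-Cuntz case, and I would isolate and treat that case on its own.

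For non-Cuntz $\la$ the first step is to pass from the reproducing-kernel notion of absolute continuity to the form notion. Since $\la$ is non-Cuntz we have $N_{\la}=0$ by Theorem \ref{left-ideal-non-Cuntz=0}, and by Corollary \ref{H2(la)-Cuntz-nonCuntz} the space $H^{2}(\la)$ is the closure of the analytic polynomials inside $L^{2}(\la)$. Theorem \ref{RKHS-AC=Form-AC} then yields $\mu \ll_{R} \la \Rightarrow \mu \ll_{F} \la$; that is, the hybrid form $q_{\mu}^{\la}(p,q)=\mu(\bar{p}q)$ with domain the disk algebra is closable in $H^{2}(\la)$. Equivalently, by Proposition \ref{transfer-of-the-domain}, there is a closed, positive, $\Pi_{\la}$-Toeplitz operator $T$ with $\overline{q_{\mu}^{\la}}=q_{T}^{\la}$ for which the analytic polynomials are a core for $\sqrt{T}$. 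The only task that remains is the purely classical implication: a closable hybrid form forces the classical absolute continuity $\mu \ll \la$.

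To finish I would argue by contradiction using the classical Lebesgue decomposition $\mu=\mu_{a}+\mu_{s}$ with $\mu_{a}\ll\la$ and $\mu_{s}\perp\la$, assuming $\mu_{s}\neq 0$ and carried by a $\la$-null Borel set $S$ on which $\mu_{s}$ concentrates. Closability of $q_{\mu}^{\la}$ says that whenever $a_{n}$ in the form domain satisfy $a_{n}\to 0$ in $L^{2}(\la)$ while $(a_{n})$ is Cauchy in $L^{2}(\mu)$, then $\|a_{n}\|_{L^{2}(\mu)}\to 0$; so I would manufacture a violating sequence concentrated near $S$, i.e.\ functions that are $\approx 1$ on most of $S$ in the $\mu_{s}$-sense yet small in $L^{2}(\la)$. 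The genuine difficulty---and the reason this is deferred to \cite{BMN}---is that the form domain consists of \emph{analytic} functions (the disk algebra), so one cannot simply invoke Urysohn bump functions as in the plain von Neumann--Simon argument \cite{Simon1}; one must instead produce analytic approximants, for instance via outer functions or Rudin--Carleson peak-set technology adapted to the mutually singular pair $(\la,\mu_{s})$. An alternative finish, mirroring the Cuntz proof of Theorem \ref{Cuntz-RK-AC-implies-AC}, is to show that the positive $\Pi_{\la}$-Toeplitz operator $T$ is multiplication by a nonnegative symbol $\varphi$ on $H^{2}(\la)$, whence $d\mu=\varphi\,d\la$ directly; here the obstacle is that, unlike the unitary (Cuntz) case where the commutant is exactly $L^{\infty}(\la)$, the shift on $H^{2}(\la)$ has only the analytic commutant, so identifying $T$ with a bona fide symbol is the delicate point. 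Either way, this last classical step is the main obstacle, while the operator-theoretic reductions preceding it are routine given the machinery already developed.
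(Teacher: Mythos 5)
Your proposal is not a complete proof: in the non-Cuntz case you carry out the (correct) reductions---$\mu \ll_{R} \la \Rightarrow \mu \ll_{F} \la$ by Theorem \ref{RKHS-AC=Form-AC}, and then a closed, positive, $\Pi_\la$-Toeplitz operator $T$ with $\overline{q_{\mu}^{\la}}=q_{T}^{\la}$ via Proposition \ref{transfer-of-the-domain}---but you then explicitly leave the decisive step (``closability of $q_{\mu}^{\la}$ forces classical $\mu \ll \la$'') as an unresolved obstacle, sketching two finishing strategies and conceding that each is blocked. A proof that terminates at its self-declared main difficulty has a genuine gap. For the record, the paper itself gives no in-text proof of this theorem either: it defers entirely to \cite{BMN}, and the only case it proves itself is the Cuntz one (Theorem \ref{Cuntz-RK-AC-implies-AC}), which you correctly reuse. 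So your reductions go beyond what the paper writes, but they stop short of an argument.

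The gap is fillable, and the ingredient you missed is that the non-Cuntz hypothesis is precisely what rescues the peak-set construction you dismissed. Non-Cuntz means the Szeg\H{o}--Kolmogorov--Krein condition holds (cf.\ Corollary \ref{H2(la)-Cuntz-nonCuntz} and \cite[Chapter 4]{Hoff}): writing $\la = w\,dm + \la_{s}$, one has $\log w \in L^{1}(m)$, hence $w>0$ $m$-a.e., and therefore every $\la$-null Borel set is $m$-null. Now run the von Neumann--Simon contradiction you outlined. If $\mu_{s}\neq 0$ in the classical decomposition $\mu=\mu_{a}+\mu_{s}$ against $\la$, inner regularity gives a compact $K$ inside the carrier of $\mu_{s}$ with $\mu_{s}(K)>0$ and $\la(K)=0$; then $m(K)=0$, so by Fatou's peak-set theorem there is $f\in A(\mathbb{D})$ with $f\equiv 1$ on $K$ and $|f|<1$ off $K$. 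The powers $a_{n}=f^{n}$ lie in the form domain and converge boundedly pointwise to $\chi_{K}$; by dominated convergence $a_{n}\to 0$ in $H^{2}(\la)$ (since $\la(K)=0$), while $(a_{n})$ is Cauchy in $L^{2}(\mu)$ with
\begin{equation*}
q_{\mu}^{\la}(a_{n},a_{n})=\lVert a_{n}\rVert^{2}_{L^{2}(\mu)} \longrightarrow \mu(K) \geq \mu_{s}(K)>0,
\end{equation*}
contradicting closability of $q_{\mu}^{\la}$. Hence $\mu_{s}=0$ and $\mu\ll\la$. In other words, the ``analytic bump functions'' you claimed could not be manufactured do exist here, exactly because non-Cuntz forces $\la$-null compacta to be Lebesgue-null; when $\la$ is Cuntz this fails (compare Example \ref{non-example}), which is why that case genuinely requires the separate commutant argument of Theorem \ref{Cuntz-RK-AC-implies-AC}. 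Note also that your second strategy (identifying $T$ with a multiplication operator) is unnecessary: closability of the hybrid form alone suffices for the contradiction.
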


Combining this theorem with the discussion at the beginning of section 3 reveals that: 

\begin{cor} \label{RK-AC-equiv-AC}
  Let $\mu$ and $\la$ be two positive classical measures on $\mathbb{T}$. Then,  ~$\mu \ll_{R} \la$ if and only if $\mu \ll \la$ in the classical setting.
\end{cor}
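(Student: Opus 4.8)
The plan is to prove the equivalence by assembling its two implications separately, since each has essentially been prepared elsewhere in the paper. The statement combines a forward direction, $\mu \ll \la \Rightarrow \mu \ll_{R} \la$, which is a direct unwinding of the classical definition of absolute continuity, with a converse, $\mu \ll_{R} \la \Rightarrow \mu \ll \la$, which is the genuinely substantial content.

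For the forward direction I would invoke the classical characterization of absolute continuity recalled at the start of Section 3. If $\mu \ll \la$ classically, there is an increasing sequence of positive measures with $0 \leq \mu_{n} \leq \mu$, $\mu_{n} \uparrow \mu$, and $\mu_{n} \leq t_{n}^{2} \la$ for suitable $t_{n} > 0$. From $\mu_{n} \leq \mu$ and $\mu_{n} \leq t_{n}^{2}\la$, the kernel-domination formula \ref{mu-kernel} together with Aronszajn's theorem yields the contractive containments $\mathscr{H}^{+}(H_{\mu_{n}}) \subseteq \mathscr{H}^{+}(H_{\mu})$ and $\mathscr{H}^{+}(H_{\mu_{n}}) \subseteq \mathscr{H}^{+}(H_{\la})$, so each $\mathscr{H}^{+}(H_{\mu_{n}})$ lies inside the intersection space $\mathrm{int}(\mu, \la)$. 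Since $\mu_{n} \uparrow \mu$ gives $\bigvee_{n} \mathscr{H}^{+}(H_{\mu_{n}}) = \mathscr{H}^{+}(H_{\mu})$, the intersection space is norm-dense in $\mathscr{H}^{+}(H_{\mu})$, which is exactly the condition $\mu \ll_{R} \la$ of Definition \ref{RK-AC-Sing}.

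For the converse there is nothing new to carry out: this is precisely Theorem \ref{RK-AC-implies-AC}. Combining the two implications yields the stated equivalence. The only nontrivial step is thus the converse, and the expected obstacle there — already isolated and handled in Theorem \ref{RK-AC-implies-AC} (proved in \cite{BMN}, and in the Cuntz case via the affiliation and multiplication-operator argument of Theorem \ref{Cuntz-RK-AC-implies-AC}) — is showing that the abstract RKHS density condition forces the existence of an honest Radon--Nikodym density, i.e. that the $\Pi_{\la}$-Toeplitz derivative is a multiplication operator $M_{f}$ with $f \in L^{1}(\la)$ so that $d\mu = f\, d\la$. With that result in hand, the forward direction being a routine approximation argument, the proof is complete.
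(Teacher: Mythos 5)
Your proposal is correct and takes essentially the same route as the paper: the paper's own proof is precisely the combination of Theorem \ref{RK-AC-implies-AC} (giving $\mu \ll_{R} \la \Rightarrow \mu \ll \la$) with the discussion at the beginning of Section 3 (the increasing sequence $\mu_n \leq t_n^2 \la$, $\mu_n \uparrow \mu$, plus Aronszajn containment, giving the forward direction). Your write-up simply spells out that Section 3 argument in more detail, which is exactly what the paper intends by its one-line citation.
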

Let $q$ be a bounded and positive semi-definite quadratic form on $\mathbb{H}^{2}_{d} (\mu )$ for some $\mu \in \left( \scr{A} _d   \right) ^\dag _{+}$. We will call such a form \emph{sub-Toeplitz with respect to $\mu$}. If $q$ is sub-Toeplitz with respect to $\mu$, then by the Riesz lemma, there is a bounded, positive semi-definite $D_{ac} \in \scr{B} (\mathbb{H}^{2}_{d} (\mu ))$ so that 
$$q (p_1 + N_\mu , p_2 + N_\mu ) = \ip{ p_1 + N_\mu}{D_{ac} p_2 + N_\mu } _\mu. $$ 
Note that a more exact notation for the above $q$ is $q^{\mu}$. We can define the space of all $q-$Cauchy transforms by:
\ba \scr{C} _q (p) (Z) & := & \sum _\om Z^\om q(L^\om + N_\mu , p + N_\mu )  \\
& = & \sum _\om Z^\om \ip{L^\om + N_\mu}{D_{ac} p + N_\mu}_\mu = \scr{C} _\mu (D_{ac} p ) (Z). \ea
That is, the set of all $q-$Cauchy transforms is the range of $\scr{C} _\mu D \scr{C} _\mu ^*$. 

Let $\mu , \la \in \left( \scr{A} _d   \right) ^\dag _{+}$, and $N_\la \subseteq N_\mu$. Consider the densely--defined and positive semi-definite quadratic form, $q_{\mu}^{\la}$, in $\mathbb{H}^{2}_{d} (\la )$ with form domain $ \mathbb{A}-d + N_\la, $ and form core $\langle \mathfrak{z} _1 , ..., \mathfrak{z} _d \rangle + N_\la$. Recall that $q_{\mu}^{\la}$  is $\Pi _\la-$Toeplitz in the sense that 
$$ q_{\mu}^{\la} ( \Pi _{\la ; j} p_1 + N_\la , \Pi _{\la ; k} p_2 + N_\la ) = \delta _{j,k} q_{\mu}^{\la} (p_1 + N_\la , p_2 + N_\la ).$$
Recall that by B. Simon, $q_{\mu}^{\la}$ has the Lebesgue form decompostion:
$$ q_{\mu}^{\la} = q_{ac}^{\la} + q_{s}^{\la}, $$ where $ 0 \leq q_{ac}^{\la} , q_{s}^{\la} \leq q_{\mu}^{\la}$, $q_{ac}^{\la}$ is the maximal closeable quadratic form in $\mathbb{H}^{2}_{d} (\la )$ bounded above by $q_{\mu}^{\la}$ and $q_{s}^{\la}$ is singular. These forms are given by the formulas:
\be q_{s}^{\la} (p_1 + N_{\la} , p_2 + N_\la ) = \ip{ p_1 + N _{\mu +\la}}{ Q_s p_2 + N_{\mu +\la}}_{\mu +\la}, \ee and 
\be q_{ac}^{\la} (p_1 + N_{\la} , p_2 + N_\la ) = \ip{ p_1 + N _{\mu +\la}}{ (Q_{ac} - E^* E) p_2 + N_{\mu +\la}}_{\mu +\la}, \ee 
where $E : \mathbb{H}^{2}_{d} (\mu + \la ) \hookrightarrow \mathbb{H}^{2}_{d} (\la )$ is the contractive co-embedding, $Q_s = P _{\nbker E}$ and $Q_{ac} = I - Q_s$. Note that  $q_{\mu}^{\la} = q_{ac}^{\la} + q_{s}^{\la}$ where $q_{\mu}^{\la}, q_{ac}^{\la}, q_{s}^{\la}$ have the common form core $\mathbb{A}_d + N_\la$ which is norm--dense in $\mathbb{H}^{2}_{d} (\mu )$. With these observation, we now see that:
\begin{lemma}
 One can view $q_{ac}^{\la}, q_{s}^{\la}$ as contractive sub-Toeplitz forms, e.g.,
$$q_{ac}^{\mu}(a+N_\mu, b+N_\mu):=q_{ac}^{\la}(a+N_\la, b+N_\la),$$
in $\mathbb{H}^{2}_{d} (\mu )$ so that $q_{ac}^{\la} + q_{s}^{\la} = \ip{ \cdot }{\cdot }_\mu$. In particular, since
$$ q_{ac}^{\mu} (p_1 + N_\mu , p_2 + N_\mu ) = \ip{p_1 + N_\mu}{D_{ac} p_2 + N_\mu} _\mu,$$ and similarly for $q_{s}^{\mu}$, we have that 
$0 \leq D_{ac} , D_s \leq I$ and that $D_{ac} + D_s = I$.    
\end{lemma}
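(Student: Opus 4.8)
The plan is to transport Simon's form decomposition $q_{\mu}^{\la} = q_{ac}^{\la} + q_{s}^{\la}$ from the dense subspace $\mathbb{A}_d + N_\la \subseteq \mathbb{H}^{2}_{d}(\la)$ across the canonical identification
$$\iota : \mathbb{A}_d + N_\la \longrightarrow \mathbb{A}_d + N_\mu, \qquad a + N_\la \longmapsto a + N_\mu,$$
which is well defined and surjective precisely because $N_\la \subseteq N_\mu$. The single observation that drives everything is that, under $\iota$, the ambient form pulls back the $\mu$-inner product: indeed
$$q_{\mu}^{\la}(a + N_\la , b + N_\la) = \mu(a^* b) = \langle a + N_\mu , b + N_\mu \rangle_\mu,$$
so that $q_{\mu}^{\la}(h,h) = \| \iota h \|_\mu^2$ for every $h \in \mathbb{A}_d + N_\la$.

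First I would show that $q_{ac}^{\la}$ and $q_{s}^{\la}$ descend through $\iota$. Since Simon's decomposition gives $0 \leq q_{ac}^{\la}, q_{s}^{\la} \leq q_{\mu}^{\la}$, any $h = a + N_\la$ with $\iota h = 0$ (that is, $a \in N_\mu$) satisfies $0 \leq q_{ac}^{\la}(h,h) \leq q_{\mu}^{\la}(h,h) = \| \iota h \|_\mu^2 = 0$; Cauchy--Schwarz for the positive semi-definite form $q_{ac}^{\la}$ then forces $q_{ac}^{\la}(h,\cdot) \equiv 0$, and likewise for $q_{s}^{\la}$. Consequently the prescriptions $q_{ac}^{\mu}(\iota h, \iota k) := q_{ac}^{\la}(h,k)$ and $q_{s}^{\mu}(\iota h, \iota k) := q_{s}^{\la}(h,k)$ are unambiguous, defining positive semi-definite forms on the dense subspace $\mathbb{A}_d + N_\mu$ of $\mathbb{H}^{2}_{d}(\mu)$. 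This descent step---removing the $N_\mu / N_\la$ ambiguity---is the only place any genuine work is required, and it is exactly where the domination $q_{ac}^{\la} \leq q_{\mu}^{\la}$ is used; I expect it to be the main obstacle.

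Next I would verify contractivity and extend by continuity. For $\xi = \iota h \in \mathbb{A}_d + N_\mu$ the bound above reads $0 \leq q_{ac}^{\mu}(\xi,\xi) \leq \|\xi\|_\mu^2$, so Cauchy--Schwarz yields $|q_{ac}^{\mu}(\xi,\eta)| \leq \|\xi\|_\mu \|\eta\|_\mu$; hence $q_{ac}^{\mu}$, and symmetrically $q_{s}^{\mu}$, extends uniquely to a bounded positive semi-definite---that is, contractive sub-Toeplitz---form on all of $\mathbb{H}^{2}_{d}(\mu)$. Adding the defining identities and invoking $q_{ac}^{\la} + q_{s}^{\la} = q_{\mu}^{\la}$ together with the pullback identity gives $q_{ac}^{\mu}(\xi,\eta) + q_{s}^{\mu}(\xi,\eta) = \langle \xi, \eta \rangle_\mu$ on the dense subspace, and therefore $q_{ac}^{\mu} + q_{s}^{\mu} = \langle \cdot , \cdot \rangle_\mu$ everywhere by continuity.

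Finally, since $q_{ac}^{\mu}$ and $q_{s}^{\mu}$ are bounded sesquilinear forms on the Hilbert space $\mathbb{H}^{2}_{d}(\mu)$, the Riesz representation lemma supplies bounded operators $D_{ac}, D_s$ with $q_{ac}^{\mu}(p_1 + N_\mu, p_2 + N_\mu) = \langle p_1 + N_\mu , D_{ac}(p_2 + N_\mu) \rangle_\mu$ and the analogous formula for $D_s$. Positive semi-definiteness of the forms gives $D_{ac}, D_s \geq 0$; the contractivity bound $q_{ac}^{\mu} \leq \langle \cdot , \cdot \rangle_\mu$ gives $D_{ac} \leq I$ and likewise $D_s \leq I$; and the sum identity $q_{ac}^{\mu} + q_{s}^{\mu} = \langle \cdot , \cdot \rangle_\mu$ translates directly into $D_{ac} + D_s = I$. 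Beyond the descent step, everything is a routine application of Cauchy--Schwarz and Riesz representation.
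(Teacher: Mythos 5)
Your proof is correct and takes essentially the same approach as the paper: the paper's own proof consists solely of the well-definedness (descent) check, showing via the domination $q_{ac}^{\la} \leq q_{\mu}^{\la}$ that $a+N_\mu = 0$ forces $q_{ac}^{\la}(a+N_\la, a+N_\la)=0$, and delegates the contractivity, continuous extension, and Riesz-representation steps to the discussion preceding the lemma. Your write-up performs the identical descent argument (made slightly more explicit with Cauchy--Schwarz in both arguments) and then spells out those same routine remaining steps.
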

\bp
By the discussion before the lemma, we are only left with the task of provig the well-defindness.  So, let $a+N_\mu$=0. Then, $0=\mu(a^*a)=q_{\mu}^{\la}(a+N_\la, a+N_\la)$. However, $q_{ac}^{\la} \leq q_{\mu}^{\la}$; thus, $q_{ac}^{\la}(a+N_\la, a+N_\la)=0$.
\ep
\begin{remark} \label{types-of-derivatives}
    Now let's mentioned that we have three understanding of $q_{ac}^{\la}$ ( the absolutely continuous part of $q_{\mu}^{\la}$)  in mind, and all coincide:
    \begin{itemize}
        \item {\bf Simon:} as $q_{ac}^{\la}$ on $\mathbb{H}^{2}_{d}(\la)$ given by Simon's formula
        $$q_{ac}^{\la}(a+N_\la, b+N_\la)=\langle a+N_{\mu+\la} ,(Q_{ac}-E^*E) b+ N_{\mu+\la} \rangle_{\mu+\la},$$
        Here, we can call the bonded operator $Q_{ac}-E^*E$ on $\mathbb{H}^{2}_{d}(\sigma)$  the Simon derivative, and we denote it by either $D_\sigma$ or $D_S$.
        \item {\bf Riesz:} as the closeable form  $q_{ac}^{\la}$ on $\mathbb{H}^{2}_{d}(\la)$ given by an unbounded, closed, operator $D$ such that $\mathbb{A}_{d}+N_\la$ is a form core  for $\sqrt{D}$ and 
        $$q_{ac}^{\la}(a+N_\la, b+N_\la)=\langle \sqrt{D} (a+N_{\la}) , \sqrt{D}( b+ N_{\la}) \rangle_{\la},$$
        Here, we can call the unbounded operator $D$ on $\mathbb{H}^{2}_{d}(\la)$ the Riesz derivative, and we denote it by either $D_\la$ or $D_R$.
        \item {\bf Toeplitz:} as the sub-Toeplitz form $q_{ac}^{\mu}$  on $\mathbb{H}^{2}_{d}(\mu)$ defined  by 
        $$q_{ac}^{\mu}(a+N_\mu, b+N_\mu):=q_{ac}^{\la}(a+N_\la, b+N_\la).$$
        In this case, since $q_{ac}^{\mu} \leq q_{\mu}^{\mu}$ we see that $q_{ac}^{\mu}$ is bounded, so it is given by 
        $$q_{ac}^{\mu}(a+N_\mu, b+N_\mu)=\langle a+N_{\mu} ,D_{ac}( b+ N_{\mu} )\rangle_{\mu}.$$
        
        Here, we can call the bonded operator $D_{ac}$ on $\mathbb{H}^{2}_{d}(\mu)$ the Toeplitz derivative, and we denote it by either $D_\mu$ or $D_T$.        
    \end{itemize}
    
        In the sequel, we refrain from using the superscripts $\la$ and $\mu$, and we write a unified symbol $q_{ac}$ since the superscripts can be read by paying attention to the domain of the forms. Also, we implicitly assume that $N_\la \subseteq N_\mu$.
    
\end{remark}

\begin{thm} \label{q-Cauchy-dense}
The set of all $q_{ac}$ Cauchy transforms is contained and dense in $\mr{Int} (\mu , \la )$. 
\end{thm}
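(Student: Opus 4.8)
The plan is to exploit the three equivalent descriptions of the absolutely continuous part recorded in Remark~\ref{types-of-derivatives}. By the Toeplitz description the set of $q_{ac}$ Cauchy transforms is $\{ \mathscr{C}_\mu ( D_{ac}(p + N_\mu) ) : p \in \mathbb{A}_d \}$, i.e. the range of $\mathscr{C}_\mu D_{ac} \mathscr{C}_\mu^*$; since $\mathscr{C}_\mu$ is unitary and $\mathbb{A}_d + N_\mu$ is dense in $\mathbb{H}^2_d(\mu)$, its closure in $\| \cdot \|_{H_\mu}$ is $\mathscr{C}_\mu ( \overline{\mathrm{Ran}(D_{ac})} ) = \mathscr{C}_\mu ( \mathrm{Ker}(D_{ac})^\perp )$. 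By the Riesz description $q_{ac}^\la$ is a closable positive form on $\mathbb{H}^2_d(\la)$ with form domain $\mathbb{A}_d + N_\la$; I let $D_R$ be its Riesz operator, a closed positive operator for which $\mathbb{A}_d + N_\la$ is a core for $\sqrt{D_R}$ and $q_{ac}^\la(a+N_\la, b+N_\la) = \langle \sqrt{D_R}(a+N_\la), \sqrt{D_R}(b+N_\la) \rangle_\la$. The bridge between the two sides is the defining identity $\mathscr{C}_\mu(D_{ac}(p+N_\mu))(Z) = \sum_\om Z^\om\, q_{ac}^\la(L^\om + N_\la, p + N_\la)$, which I would use throughout.

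For the inclusion in $\mathrm{Int}(\mu,\la)$ I would first handle $y \in \mathrm{Dom}(D_R)$. Pick $a_n \in \mathbb{A}_d$ with $a_n + N_\la \to y$ and $\sqrt{D_R}(a_n + N_\la) \to \sqrt{D_R} y$, possible since $\mathbb{A}_d + N_\la$ is a core for $\sqrt{D_R}$. Because $0 \le D_{ac} \le I$ forces $D_{ac}^2 \le D_{ac}$, one has $\| D_{ac}(a_n - a_m + N_\mu) \|_\mu^2 \le q_{ac}^\mu(a_n - a_m, a_n - a_m) = \| \sqrt{D_R}(a_n - a_m + N_\la) \|_\la^2 \to 0$, so $D_{ac}(a_n + N_\mu)$ converges to some $\xi_y$ in $\mathbb{H}^2_d(\mu)$. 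Passing to the limit in the bridge identity yields $\mathscr{C}_\mu(\xi_y)(Z) = \sum_\om Z^\om \langle L^\om + N_\la, D_R y \rangle_\la = \mathscr{C}_\la(D_R y)(Z)$, whence $\mathscr{C}_\mu(\xi_y) \in \mathscr{H}^+(H_\mu) \cap \mathscr{H}^+(H_\la) = \mathrm{int}(\mu,\la)$. For general $p \in \mathbb{A}_d$ one has only $p + N_\la \in \mathrm{Dom}(\sqrt{D_R})$; approximating $p + N_\la$ by $y_k \in \mathrm{Dom}(D_R)$ in the $\sqrt{D_R}$ graph norm and invoking the same estimate shows $\mathscr{C}_\mu(\xi_{y_k}) \to \mathscr{C}_\mu(D_{ac}(p + N_\mu))$ in $\| \cdot \|_{H_\mu}$, so every $q_{ac}$ Cauchy transform lies in $\overline{\mathrm{int}(\mu,\la)}^{H_\mu} = \mathrm{Int}(\mu,\la)$. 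Note that this uses only closability of $q_{ac}^\la$, not any reducing structure.

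For density it suffices to prove $\mathrm{Int}(\mu,\la) \subseteq \mathscr{C}_\mu(\mathrm{Ker}(D_{ac})^\perp)$; taking orthogonal complements this is equivalent to $\mathscr{C}_\mu(\mathrm{Ker}(D_{ac})) \subseteq \mathrm{int}(\mu,\la)^\perp$, while the containment already proved gives the reverse inclusion $\mathrm{Int}(\mu,\la)^\perp \subseteq \mathscr{C}_\mu(\mathrm{Ker}(D_{ac}))$, so the two together yield equality of the closures. Thus I would fix $x \in \mathrm{Ker}(D_{ac})$ and $f \in \mathrm{int}(\mu,\la)$, write $f = \mathscr{C}_\mu(g) = \mathscr{C}_\la(v)$ with $g = \mathscr{C}_\mu^* f$ and $v = \mathscr{C}_\la^* f$, and observe that matching Taylor coefficients gives $\langle L^\om + N_\mu, g \rangle_\mu = \langle L^\om + N_\la, v \rangle_\la$ for every $\om$. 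Approximating $x$ by $a_n + N_\mu$ with $a_n$ free polynomials, the hypothesis $D_{ac} x = 0$ forces $q_{ac}^\mu(a_n, a_n) = \| \sqrt{D_R}(a_n + N_\la) \|_\la^2 \to 0$, and the coefficient identity gives $\langle \mathscr{C}_\mu x, f \rangle_{H_\mu} = \langle x, g \rangle_\mu = \lim_n \langle a_n + N_\la, v \rangle_\la$. The proof is then finished by showing that membership of $f$ in $\mathscr{H}^+(H_\la)$ places $v$ in $\overline{\mathrm{Ran}(\sqrt{D_R})}$, so that after controlling the $\la$ norms of $a_n + N_\la$ this limit vanishes.

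The main obstacle is that, for Cuntz $\la$, the form $q_{ac}^\la$ is in general \emph{not} $\Pi_\la$ Toeplitz and the Simon projections are not reducing, so the clean invariance arguments behind Proposition~\ref{reducing-projections} and Theorem~\ref{RKHS-AC=Form-AC} are unavailable and there is no measure $\mu_{ac}$ to lean on. This is precisely why I route the containment through the unbounded Riesz operator $D_R$ together with the contraction estimate $D_{ac}^2 \le D_{ac}$. The genuinely delicate point is the final step of the density direction: showing that the $\la$ Cauchy preimage $v$ of an intersection element lands in $\overline{\mathrm{Ran}(\sqrt{D_R})}$, equivalently that intersection vectors pull back into $\overline{\mathrm{Ran}(D_{ac})}$. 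This is where the compatibility between the $\mu$ and $\la$ GNS and Cauchy pictures must be exploited most carefully, and it is the analogue, in the Cuntz setting, of the reducing decomposition that is handed to us for free in the non Cuntz case.
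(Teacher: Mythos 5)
Your containment direction is correct and is essentially the paper's own argument in different clothing: the paper also takes $y \in \mathrm{Dom}(D)$ for the Riesz operator $D$ of the closable form $q_{ac}$ in $\mathbb{H}^{2}_{d}(\la)$, computes $\scr{C}_{q_{ac}} y = \scr{C}_{\la}(Dy) \in \scr{H}^{+}(H_{\la})$, uses the core property of $\mathbb{A}_d + N_\la$ together with the contractive containment $\scr{H}^{+}(q_{ac}) \subseteq \scr{H}^{+}(H_\mu)$ (your estimate $D_{ac}^{2} \le D_{ac}$ plays exactly this role), and then uses that $\mathrm{Dom}(D)$ is a core for $\sqrt{D}$ to conclude that every $q_{ac}$-Cauchy transform lies in $\mathrm{Int}(\mu,\la)$. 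Your reduction of density to the orthogonality $\scr{C}_{\mu}(\mathrm{Ker}(D_{ac})) \perp \mathrm{int}(\mu,\la)$ is also a correct reformulation.

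The problem is that the density half is then left genuinely open, at precisely the point you flag. Having fixed $x \in \mathrm{Ker}(D_{ac})$ and $f = \scr{C}_\mu g = \scr{C}_\la v \in \mathrm{int}(\mu,\la)$, and approximated $x$ by polynomials $a_n + N_\mu$ in the $\mu$-norm, you need $\lim_n \langle a_n + N_\la , v \rangle_\la = 0$, and your route to this requires both (a) $v \in \overline{\mathrm{Ran}(\sqrt{D_R})}$ and (b) a uniform bound on $\| a_n + N_\la \|_\la$. Neither is proved, and (b) faces a structural obstruction: the $a_n$ are chosen to converge in $\mathbb{H}^{2}_{d}(\mu)$, and in the interesting Cuntz case $\mu$ and $\la$ need not be comparable (think of mutually singular classical measures), so a $\mu$-convergent sequence of polynomials can have unbounded $\la$-norms; nothing in your construction lets you control both norms simultaneously. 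Claim (a) is not visibly any easier than the theorem itself. The paper sidesteps this dead end by invoking Simon's characterization of $q_{ac}$ as the \emph{maximal} closable positive form dominated by $q_\mu$: it builds a competitor form directly from the intersection space, namely $q := q_T$ with $T = \scr{C}_\la^{*}\, \mr{e}\, \mr{e}^{*}\, \scr{C}_\la$, where $\mr{e} : \mathrm{int}(\mu,\la) \hookrightarrow \scr{H}^{+}(H_\la)$ is the closable embedding, verifies by a reproducing-kernel computation that $\scr{C}_q (L^{\ga} + N_\la) = K^{\mu \cap \la}_{\ga}$ so that the $q$-Cauchy transforms fill out $\mathrm{Int}(\mu,\la)$, and concludes $q_{ac} \le q \le q_\mu$, whence $q = q_{ac}$ by maximality. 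If you want to salvage your orthogonality route, you would in effect have to show that this form $q_T$ is dominated by $q_{ac}$ --- which is exactly the maximality step --- so the cleanest repair is to replace your second half with that argument.
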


\begin{proof}
First, since $q_{ac} \leq q_{\mu}$ as quadratic forms with common form domains $\mathbb{A}_{d} + N_{\lambda}$, and $\mathbb{A}_d + N_{\mu}$ is dense in $\mathbb{H}^{2}_{d} (\mu )$, it follows that $q_{ac}$ can be identified with a contractive sub-Toeplitz form on $\mathbb{H}^{2}_{d} (\mu )$ so that the set of all $q_{ac}-$Cauchy transforms is contractively contained in $\scr{H} ^+ (H _{\mu} )$.

Since $q_{ac}$ is a closeable form in $\mathbb{H}^{2}_{d} (\la )$, we have that for any $a_1, a_2 \in \mathbb{A}_{d}$
$$ q_{ac} (a_1 + N _{\la } , a_2 + N_\la ) = \ip{ \sqrt{D} a_1 + N_\la }{\sqrt{D} a_2 + N_\la } _\la,$$ for some densely--defined, closed, self-adjoint and positive semi-definite operator $D \geq 0$ with $\mathbb{A}_{d} + N_\la$ as a core for $\sqrt{D}$. Now we claim that the set of all $q_{ac}-$Cauchy transforms of the dense linear subspace $\nbdom D$ belongs to $\scr{H} ^+ (H _\la )$. Indeed, for any $y \in \nbdom D$,
\ba (\scr{C} _{q_{ac}} y) (Z) & = & \sum  Z^\om q_{ac} \left( L^\om + N_\la , y \right) \\
& = & \sum Z^\om \ip{\sqrt{D} L^\om + N_\la}{\sqrt{D} y }_\la \\
& = & \sum Z^\om \ip{L^\om + N_\la}{Dy} _\la \\
& = & (\scr{C} _\la Dy ) (Z). \ea 
On the other hand if $y \in \mathrm{Dom}(D ) \subseteq \mathrm{Dom}(\sqrt{D})$, then since $\mathbb{A}_d + N_\la$ is a core for $\sqrt{D}$, there is a sequence $a_n + N_\la$, so that $a_n + N_\la  \rightarrow y$ and $\sqrt{D} a_n + N_\la \rightarrow \sqrt{D} y$. Hence,
$$ \scr{C} _{q_{ac}} a_n + N_\la \rightarrow \scr{C} _{q_{ac}} y \in \scr{H} ^+ ( H _\la ). $$ However, since $q_{ac} \leq q_\mu$, we have that $\scr{H} ^+ (q_{ac} ) \subseteq \scr{H} ^+ ( H_\mu )$ and the embedding $e : \scr{H} ^+ (q_{ac} ) \hookrightarrow \scr{H} ^+ (H _\mu )$ is a contraction. Hence, $\mr{e} \circ \scr{C} _{q_{ac}} a_n + N_\la \in \scr{H} ^+ (H _\mu )$ is a Cauchy sequence in $\scr{H} ^+ ( H_\mu )$ and  we conclude that 
$$ \scr{C} _{q_{ac}} \nbdom D \subseteq \scr{H} ^+ ( H _\la ) \cap \scr{H} ^+ (H _\mu ) = \mr{int} (\mu , \la ). $$  Finally, since $\nbdom D$ is a core for $\sqrt{D}$, it follows that $\scr{C} _{q_{ac}} \nbdom D$ is dense in the set of all $q_{ac}-$ Cauchy transforms so that the set of all $q_{ac}-$Cauchy transforms is contained in $\mr{Int} (\mu, \la ) = \mr{int} (\mu, \la ) ^{-\| \cdot \| _{H_\mu}}$.  

Conversely, consider $\mr{int} (\mu , \la ) \subseteq \mr{Int} (\mu , \la ) \subseteq \scr{H} ^+ ( H _\mu )$. We define the embedding $\mr{e} : \mr{int} (\mu , \la ) \hookrightarrow \scr{H} ^+ (H _\la )$. Then it is easy to check that $\mr{e}$ is closeable and that it's adjoint obeys 
$$ \mathrm{e} ^* K ^\la \{ Z , y , v \}  = K ^{\mu \cap \la } \{ Z , y ,v \} = P _{\mu \cap \la} K ^\mu \{ Z , y ,v \}. $$ 
Here,
$K ^{\mu \cap \la }$ and $P _{\mu \cap \la}$ are the reproducing kernel of $\mathrm{Int} (\mu , \la )$ and the orthogonal projection of $\scr{H} ^+ (H_\mu )$ onto the $V_\mu-$co-invariant subspace $\mathrm{Int} (\mu, \la )$, respectively. Consider the closeable quadratic form, $q:= q_{T}$, ~$T=\scr{C} _\la ^* \mathrm{e} \mathrm{e} ^* \scr{C} _\la$,  with form domain $\mathbb{A}_d + N _\la$. By construction, since $T := \scr{C} _\la ^* \mathrm{e} \mathrm{e} ^* \scr{C} _\la$ is a closeable, densely--defined and positive semi-definite quadratic form in $\mathbb{H}^{2}_{d} (\la )$ with form domain $\mathbb{A}_d + N_\la$, we have that $q$ is an absolutely continuous form in $\mathbb{H}^{2}_{d} (\la )$, \emph{i.e.} $q$ is absolutely continuous with respect to $\la$. Consider the $q-$Cauchy transform of any monomial $L^{\gamma} + N_\la$,
\ba  \scr{C} _q \left(L^\ga + N _\la \right)  (Z) & = & \sum _\om Z^\om \ip{\mr{e} ^* K ^\la _\om}{\mr{e} ^* K ^\la _\ga}_{H_\mu} \\ 
& = & \sum Z^\om \ip{K ^{\mu \cap \la} _\om}{K^{\mu \cap \la} _\ga} \\
& = & \sum Z^\om K ^{\mu \cap \la} _{\om , \ga}. \ea 
On the other hand,
\ba y ^* K ^{\mu \cap \la} _\ga (Z) v & = & \ip{ K ^{\mu \cap \la} \{ Z , y , v \} }{K ^{\mu \cap \la} _\ga}_{H_\mu} \\
& = & \sum _\om y^* Z^\om v \ip{ K ^{\mu \cap \la} _\om }{K ^{\mu \cap \la} _\ga}_{H_\mu} \\
& = & y^* \left( \sum Z^\om K^{\mu \cap \la} _{\om, \ga} \right) v, \ea 
and so we conclude that  $\scr{C} _q ( L^\ga + N_\la) = K ^{\mu \cap \la} _\ga$. It follows that the set of all $q-$Cauchy transforms is equal to $\mr{Int} (\mu , \la )$ and that this in turn implies that $q_{ac} \leq q \leq q_\mu$. However, $q_{ac}$ is, by definition the maximal closeable positive semi-definite quadratic form in $\mathbb{H}^{2}_{d} (\la )$ which is bounded above by $q_\mu$. By maximality, $q = q_{ac}$ and the proof is complete. 
\end{proof}

\begin{remark}
Though in \cite[Theorem 4.7]{JM-ncld} the absolutely continuous (AC) part was extracted from a reducing subspace, but not all AC measures are arising from a reducing subspace. The reducing subspace theory works well for splitting non-Cuntz measures. For Cuntz measure and any measure in general, we have observed that the method of the proof of Proposition \ref{q-Cauchy-dense} somehow tells us which subspaces of $\mathscr{H}^{+}(H_{\mu})$ are a better candidate to define the AC part of the measure $\mu$. This subspaces somehow are related to Riesz derivative which are $\Pi_\la$-Toeplitz. We later give a modified definition which extends Jury-Martin definition, and then we prove  equivalent theorems for it.  $\blacksquare$
\end{remark}

To motivate our definition, we will give a discussion.

{\bf Discussion:} Suppose that $\gamma$ and $\lambda$ are two positive clasical measures on $\mathbb{T}$ with $\la$ being Cuntz, and $\gamma \ll \la$ in the classical sense. Since $\la$ is Cuntz, by 
Corollary \ref{H2(la)-Cuntz-nonCuntz}, we see that ${H}^{2}(\la)=L^{2}(\la)$. Let $f \geq 0$ be he Radon-Nikodym derivative of $\ga$ with respect to $\la$; i.e., $d\ga =f d\la$. Then, for any polynomials $p$ and $q$ in $\mathbb{C}[Z]$, we define the positive linear functional $\check{\ga}$ by $\check{\ga}(p^{*}q):=\langle p , q\rangle_{H^{2}(\ga)}$. So,
\begin{eqnarray*}
    \check{\ga}(p^{*}q)&:=&\langle p , q\rangle_{H^{2}(\ga)}\\
    &=& \int_{\mathbb{T}} \overline{p(z)} q(z) ~d\ga(Z) \\
    &=& \int_{\mathbb{T}} \overline{p(z)} q(z) f(z) ~ d \la(z) \\
    &=& \langle \sqrt{D} p , \sqrt{D} q\rangle_{H^{2}(\la)},
\end{eqnarray*}
where $\sqrt{D}$ is the compression of the multiplication by $\sqrt{f}$ on $L^{2}(\la)$ to $H^{2}(\la)$, i.e., $\sqrt{D}=P_{H^{2}(\la)} M_{\sqrt{f}}|_{H^{2}(\la)}$. Also, note that $\mathbb{C}[Z] \subseteq \mathrm{Dom}(\sqrt{D})$, and $D=P_{H^{2}(\la)} M_{f}|_{H^{2}(\la)}$ is the Radon-Nikodym derivative of $\ga$ with respect to $\la$. $D$ is also $\Pi_{\la}$-Toeplitz, where $\Pi_{\la}=M_{z} |_{H^{2}(\la)}$. To see this, let
\begin{eqnarray*}
    \langle \sqrt{D} \Pi_{\la} p , \sqrt{D} \Pi_{\la} q\rangle_{H^{2}(\la)}&=& \int_{\mathbb{T}} \overline{zp(z)} zq(z) f(z) ~d\la(Z) \\
    &=& \int_{\mathbb{T}} \overline{p(z)} q(z) f(z) ~ d \la(z) \\
    &=& \langle \sqrt{D} p , \sqrt{D} q\rangle_{H^{2}(\la)}.
\end{eqnarray*}
Furthermore, recall that when $\ga \ll \la$, then $\ga \ll_{R} \la$ so that $\mathrm{Int}(\ga, \la)=\overline{\mathrm{int}(\ga , \la)}^{\ga}=\mathscr{H}^{+}(H_\ga)$. In this situation, we see that  $\mathrm{int}(\ga , \la)$ is the largest subspace such that $D$, the Radon-Nikodym derivative of $\ga$ with respect to $\la$, is $\Pi_{\la}$-Toeplitz. If we look at  Proposition \ref{composition-of-closed-op}, we see that $\mathrm{int}(\ga, \la)=\mathrm{Dom}(e_{\la,\ga})$ and $D=E_{\ga,\la}^{*} E_{\ga, \la}$.

Conversely, suppose that $\mu$ and $\la$ are two positive clasical measures on $\mathbb{T}$ with $\la$ being Cuntz. Recall that $H^2(\la)=L^2 (\la)$. Let $\check{M} \subseteq \mathrm{int}(\mu ,\la) \subseteq \mathscr{H}^{+}(H_{\mu})$. Now, the embedding
$$e=e_{\la, \mu}: \check{M} \subseteq \mathscr{H}^{+}(H_{\mu}) \hookrightarrow \mathscr{H}^{+}(H_{\la}) $$
is  closed and densely defined over $\overline{\check{M}}^{\mu}$. So, the operator, $ee^{*}: \mathrm{Dom(ee^{*})} \subseteq \mathscr{H}^{+}(H_{\la}) \longrightarrow \mathscr{H}^{+}(H_{\la}) $ is densely defined and closed. Thus, by Proposition \ref{composition-of-closed-op}, the operator $D:={\mathscr{C}_{\la}}^{*} ee^{*} \mathscr{C}_{\la} $ is  closed, densely defined operator in $H^{2}(\la)$. Since $\la$ is Cuntz, constant functions can be approximated by monomials $z^n$ for all $n \in \mathbb{N}$. We have that  $\mathscr{P}_0 \subseteq \mathrm{Dom}(\sqrt{D})$ is a core for $\sqrt{D}$ , where $\mathscr{P}_0$ is the dense linear subspace generated by monomials $z^n, ~n \in \mathbb{N}$. Note that if we look at the proof of Proposition \ref{a-special-intertwiner}, then we see that $D=E_{\mu, \la}^{*} E_{\mu, \la}$; however, unlike Proposition \ref{a-special-intertwiner}  we do not assumed  $\mu \ll \la$ here.  By observing the above explanations, \emph{let $\check{M}$ be the largest subspace of $\mathrm{int}(\mu ,\la) $ such that  $D$ is $\Pi_{\la}$-Toeplitz}. Then, we can define
$$\check{\ga}(p^{*}q):=\langle \sqrt{D} p , \sqrt{D} q \rangle_{H^{2}(\la)} =\int_{\mathbb{T}} \overline{\sqrt{D}p(z)} \sqrt{D}q(z)   ~d\la(z).$$

Obviously, $\check{\ga}(p^{*})=\overline{\check{\ga}(p)}$ and polynomials are dense in the space of continuous functions $C(\mathbb{T})$; hence, $\check{\ga} $ has a unique  extension to a positive linear functional on $C(\mathbb{T})$. Thus, by Riesz-Markov theorem it corresponds to a positive measure $\ga$ by
$$\check{\ga}(p^{*}q)=\int_{\mathbb{T}} \overline{p(z)} q(z) ~d\ga(z). $$
By the latter two formulas, we see that
\begin{equation} \label{two-integrals-ac}
  \int_{\mathbb{T}} \overline{p(z)} q(z) ~d\ga(z)=\int_{\mathbb{T}} \overline{\sqrt{D}p(z)} \sqrt{D}q(z)   ~d\la(z).  
\end{equation}
We claim that $D=M_f$, where $M_f$ is the multiplication operator by an $f \in L^1(\la)$. Since $\la$ is Cuntz, we have $H^2(\la)=L^2(\la)$, so one can use the spectral theorem for unbounded operators \cite[Theorem VIII.4]{RnS1} to guess that $D=M_f$ for some positive measurable function $f$. We will give an alternative proof which is more streamlined with our notations. Let $M_z$ be the multiplication operator by the independent variable. Since $H^2(\la)=L^2(\la)$, both $D$ and $M_z$ live on $L^2(\la)$. In addition, on polynomials $\Pi_{\la}$ acts by $M_z$, so $\mathrm{Dom}(\sqrt{D})$ is $\Pi_\la$-invariant. Since $D$ is $\Pi_{\la}$-Toeplitz, we have that
$$\langle \sqrt{D} M_Z (p) , \sqrt{D} M_Z (q) \rangle_{H^{2}(\la)}=\langle \sqrt{D} p , \sqrt{D} q \rangle_{H^{2}(\la)}.  $$
On the other hand, 
\begin{eqnarray*}
    \langle M_Z \sqrt{D} p , M_Z \sqrt{D} q \rangle_{H^{2}(\la)} &=& \int_{\mathbb{T}} \overline{z\sqrt{D}p(z)} z\sqrt{D}q(z)   ~d\la(z) \\
    &=& \int_{\mathbb{T}} \overline{\sqrt{D}p(z)} \bar{z}z\sqrt{D}q(z)   ~d\la(z)\\
    &=&\int_{\mathbb{T}} \overline{\sqrt{D}p(z)} \sqrt{D}q(z)   ~d\la(z) ,\quad \text{since}~  \bar{z}z=|z|^2=1\\
    &=& \ga(p^*q)\\
    &=&\langle \sqrt{D} p , \sqrt{D} q \rangle_{H^{2}(\la)}.
\end{eqnarray*}
Thus,
$$\sqrt{D}M_Z=M_Z\sqrt{D}, \quad \text{on} ~ \mathscr{P}_0; $$
which means that $\sqrt{D} $ is affiliated with  the commutant of the range of the following representation of $L^{\infty}(\la)$:
\begin{eqnarray*}
    \phi: L^{\infty}(\la) &\longrightarrow &\mathscr{B}(L^2(\la)) \\
    \phi(f)&=&M_f.
\end{eqnarray*}
 However, $\left(\phi(L^{\infty}(\la)\right)'\cong L^{\infty}(\la)$ and all affiliated unbounded operators with $L^{\infty}(\la)$ arise from multiplication by $L^{2}(\la)$ functions. So, $\sqrt{D}=M_{g}$ for some positive function $g \in L^{2}(\la)$, and consequently $D=M_f$ where $f=g^2 \in L^1(\la)$. Thus, from Equation \ref{two-integrals-ac} we see that $d\ga=f d\la$, and consequently $\ga \ll \la$. We remark here that in the non-commutative case, we still have $\sqrt{D}=M_{g}$ for some positive function $g \in L^{2}(\la)$, and $D=M_f$ where $f=g^2$; however, we do not require $f \in L^1(\la)$ as we do not define $L^1$ in the NC case.

We claim that $M=\overline{\check{M}}^{\mu}={\mathscr{H}}^{+}(H_{\ga})$. To do so, it is enough to show that $M$ is the range of the $q$-Cauchy transform of $D$; i.e., $M=\mathrm{Ran}(\mathscr{C}_{q})$, where $q(p,q)=\langle \sqrt{D}p, \sqrt{D}q \rangle_{\la}= \ga(p^*q)$. Actually, this would mean that $M=\mathrm{Ran}(\mathscr{C}_{\ga})$; however, the proof shows why we need to work with form instead of measure. Because $D=\mathscr{C}_{\la}^{*}ee^* \mathscr{C}_{\la}$, by polar decomposition there is a unitary operator $U$ such that $\sqrt{D}=Ue^* \mathscr{C}_{\la}$. So, for the inner product we see that
$$\langle \sqrt{D} (L^\omega+N_\la), \sqrt{D}(L^\alpha+N_\la) \rangle_{\la}=\langle e^* \mathscr{C}_{\la} (L^\omega+N_\la), e^* \mathscr{C}_{\la} (L^\alpha+N_\la) \rangle_{\la}$$
Now, consider the $q$-Cauchy transform of the monomial $L^\alpha+N_\la$,
\begin{eqnarray*}
\mathscr{C}_\ga(L^\alpha+N_\la)&=&\mathscr{C}_{q} (L^\alpha+N_\la)(Z)\\& = & \sum  Z^\om q \left( L^\om + N_\la , L^\alpha +N_\la \right) \\
& = & \sum Z^\om \ip{\sqrt{D} L^\om + N_\la}{\sqrt{D} L^\alpha +N_\la }_\la \\
& = & \sum Z^\om \ip{e^* \mathscr{C}_{\la} L^\om + N_\la}{e^* \mathscr{C}_{\la} L^\alpha +N_\la }_\la \\
& = & \sum Z^\om \ip{e^* K^{\la}_{\om}}{e^* K^{\la}_{\alpha} }_\la \\
& = & \sum Z^\om \ip{e^* K^{\la}_{\om}}{ K^{M}_{\alpha} }_M \\
&=& K_{\alpha}^{M}.
\end{eqnarray*}
Hence, we see that $M=\mathrm{Ran}(\mathscr{C}_{\ga})=H^{+}(H_\ga)$. $\blacksquare$

The above discussion is the main motivation for the modified definition of absolute continuity given below. 

\begin{defn} \label{RK-AC-part-general}
    Let $\mu$ and $\la$ be two positive NC measures. We say that:
    \begin{itemize}
    \item $\mu$ is \emph{absolutely continuous with respect to $\la$ in the reproducing kernel-Toeplitz sense}, in notations $\mu \ll_{RT} \lambda$, if for the embedding $e: \mathrm{int}(\mu , \la) \hookrightarrow \mathscr{H}^{+}(H_{\la}) $, the closed operator $D:={\mathscr{C}_{\la}}^{*} ee^{*} \mathscr{C}_{\la}=E_{\mu, \la}^{*}E_{\mu,\la} $ is  non-zero and $\Pi_\la$-Toeplitz. The set of all positive NC measures that are absolutely continuous with respect to $\la$ in the repreducing kernel Hilbert space-Toeplitz sense is denoted by $AC_{RT}[\la]$.
\item $\mu$ is \emph{singular with respect to $\la$ in the reproducing kernel-Toeplitz sense}, in notations $\mu \perp_{RT} \lambda$, if for the embedding $e: \mathrm{int}(\mu , \la)  \hookrightarrow \mathscr{H}^{+}(H_{\la}) $, either $e=0$, or $e$ does not have any restriction such that the closed operator $D:={\mathscr{C}_{\la}}^{*} ee^{*} \mathscr{C}_{\la}=E_{\mu, \la}^{*}E_{\mu,\la} $ is  $\Pi_\la$-Toeplitz (on any restricted subspace).  The set of all positive NC measures that are singular with respect to $\la$ in the repreducing kernel Hilbert space sense is denoted by $SG_{RT}[\la]$.
\item the decomposition $\mu=\mu_{1}+\mu_{2}$ with respect to $\la$ is called a Lebesgue RKHS-Toelitz  decomposition if $\mu_{1} \ll_{RT} \la$ and $\mu_{2} \perp_{RT} \la$. We may write this decomposition by $\mu=\mu^{RT}_{1}+\mu^{RT}_{2}$, where the superscript $RT$ pints to the RKHS-Toeplitz decomposition sense.
\end{itemize}
\end{defn}

\begin{thm} \label{LD-RKHS-Cuntz}
Let $\mu$ and $\la$ be two positive NC measure with $\la$ being Cuntz. Let $\mu_{ac}$ be given by Definition \ref{RK-AC-part-general}, and $\mu_s$ be defined by 
$\mu_{s}=\mu-\mu_{ac}$
Then, $\mu=\mu_{ac}+\mu_{s}$ is the Lebesgue RKHS-T decomposition of $\mu$ with respect to $\lambda$. 
\end{thm}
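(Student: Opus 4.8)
The plan is to realize $\mu_{ac}$ as the measure attached to the maximal $\Pi_\la$-Toeplitz piece of $\mathrm{int}(\mu,\la)$, exactly as in the Discussion preceding Definition \ref{RK-AC-part-general}, and then to deduce that $\mu_s := \mu - \mu_{ac}$ is RK-Toeplitz singular from the maximality of that piece. First I would form the (possibly unbounded) closed intertwiner $E_{\mu,\la}$ of Proposition \ref{a-special-intertwiner} together with the positive self-adjoint operator $D := E_{\mu,\la}^* E_{\mu,\la} = \mathscr{C}_\la^* e e^* \mathscr{C}_\la$ in $\mathbb{H}_d^2(\la)$, whose quadratic form agrees with $q_\mu^\la$ on $\mathbb{A}_d + N_\la$. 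Let $\check M \subseteq \mathrm{int}(\mu,\la)$ be the largest subspace on which $D$ is $\Pi_\la$-Toeplitz. Because $\la$ is Cuntz, Corollary \ref{H2(la)-Cuntz-nonCuntz} together with the approximation of constants by monomials guarantees that $\mathscr{P}_0 = \mathbb{C}\{Z\} + N_\la$ is a core for the restriction of $\sqrt{D}$, which is precisely the hypothesis needed downstream.

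With $\check M$ in hand, I would invoke Remark \ref{Toeplitz-form-and-measure}: since the restricted $D$ is a closed, positive, semi-definite, $\Pi_\la$-Toeplitz operator for which $\mathscr{P}_0$ is a core for $\sqrt{D}$, the assignment
\[
\mu_{ac}(p^* q) := \langle \sqrt{D}(p + N_\la), \sqrt{D}(q + N_\la) \rangle_\la
\]
extends to a positive NC measure on $\mathcal{A}_d$, the boundedness following from $\|\mu_{ac}\| \le \mu_{ac}(1) = \|\sqrt{D}(1 + N_\la)\|^2$. That $\mu_{ac} \ll_{RT} \la$ is then essentially built in: the operator attached to $\mu_{ac}$ is exactly the $\Pi_\la$-Toeplitz $D$, nonzero whenever $\mu_{ac} \ne 0$. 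Running the $q$-Cauchy transform computation of the Discussion --- equivalently, the argument of Theorem \ref{q-Cauchy-dense} --- identifies $\mathscr{H}^+(H_{\mu_{ac}}) = \overline{\check M}^{\,\mu}$, so that $\mathscr{H}^+(H_{\mu_{ac}}) \subseteq \mathscr{H}^+(H_\la)$ and hence $\mathrm{int}(\mu_{ac},\la) = \mathscr{H}^+(H_{\mu_{ac}})$; this confirms $\mu_{ac} \ll_{RT} \la$ in the sense of Definition \ref{RK-AC-part-general}.

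For $\mu_s$ I would first note positivity: the restricted form is bounded above by $q_\mu^\la$, so $\mu_{ac}(p^*p) \le \mu(p^*p)$ for every free polynomial, and by density $\mu_{ac} \le \mu$, whence $\mu_s = \mu - \mu_{ac} \ge 0$ is a positive NC measure. The substance of the theorem --- and the step I expect to be the main obstacle --- is $\mu_s \perp_{RT} \la$, which I would prove by contradiction from the maximality of $\check M$. If $\mu_s \not\perp_{RT} \la$, there is a nonzero restriction of $e_s : \mathrm{int}(\mu_s,\la) \hookrightarrow \mathscr{H}^+(H_\la)$ on which $D_s = E_{\mu_s,\la}^* E_{\mu_s,\la}$ is $\Pi_\la$-Toeplitz, yielding by the same Remark \ref{Toeplitz-form-and-measure} machinery a measure $0 < \nu \le \mu_s$ with $\nu \ll_{RT} \la$. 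Since the Toeplitz identity $\Pi_{\la,i}^* D \Pi_{\la,j} = \delta_{ij} D$ is linear in $D$ and the hybrid forms add, $q_{\mu_{ac}+\nu}^\la = q_{\mu_{ac}}^\la + q_\nu^\la$, the measure $\mu_{ac} + \nu \le \mu$ is again $\ll_{RT} \la$ and its Toeplitz subspace strictly contains $\check M$, contradicting maximality. The two delicate points I would have to secure are (i) that the closed linear span of $\Pi_\la$-Toeplitz subspaces is again $\Pi_\la$-Toeplitz, so that ``largest'' is well-defined and $\mu_{ac}$ is genuinely the maximal RK-Toeplitz-AC measure below $\mu$ (cf.\ Remark \ref{maximal-Lebesgue-decomp}), and (ii) that the operator governing $\mu_{ac} + \nu$ is, up to the $\mathscr{C}_\la$, $\mathscr{C}_\mu$ identifications, the sum of the two Toeplitz derivatives, so that adjoining the $\nu$-piece legitimately enlarges $\check M$; the remainder is routine bookkeeping with the Cauchy transforms.
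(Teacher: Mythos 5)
Your proposal is correct and takes essentially the same approach as the paper's own proof: both construct $\mu_{ac}$ from the maximal $\Pi_\la$-Toeplitz subspace $\check{M} \subseteq \mathrm{int}(\mu,\la)$ via the Toeplitz-form-to-measure machinery of Remark \ref{Toeplitz-form-and-measure}, and identify $\overline{\check{M}}^{\mu} = \mathscr{H}^{+}(H_{\mu_{ac}})$ by computing $q$-Cauchy transforms of monomials using the polar decomposition $\sqrt{D} = U e^{*}\mathscr{C}_{\la}$. Your contradiction argument for $\mu_{s} \perp_{RT} \la$ actually supplies more detail than the paper, which simply asserts singularity of the remainder and maximality of the decomposition from the maximality of $\check{M}$; the two delicate points you flag (that the span of $\Pi_\la$-Toeplitz subspaces is again such a subspace, and that the Toeplitz derivatives add) are likewise left unaddressed in the paper's own proof.
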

\bp
 Suppose that  $\check{M} \subseteq \mathrm{int}(\mu ,\la)$  is the largest subspace so that for the embedding 
$e: \check{M} \subseteq \mathscr{H}^{+}(H_{\mu}) \hookrightarrow \mathscr{H}^{+}(H_{\la}) $, the operator $D:={\mathscr{C}_{\la}}^{*} ee^{*} \mathscr{C}_{\la} $ is $\Pi_\la$-Toeplitz. In this situation, we call $D$ the Radon-Nikodym derivative of $\mu$ with respect to $\la$. We claim there is a measure $\ga$ such that $M=\overline{\check{M}}^{\mu}={\mathscr{H}}^{+}(H_{\ga})$ and we call it the RKHS-T absolutely continuous part of $\mu$ with respect to $\la$ and is denoted by $\mu_{ac}$. Note that $D$ is densly defined and closed on the Hilbert space ${\mathscr{H}}^{+}(H_{\ga})$ and is $\Pi_\la$-Toeplitz. We show that $M$ is the range of the $q$-Cauchy transform of $D$; i.e., $M=\mathrm{Ran}(\mathscr{C}_{q})$, where $q(p,q)=\langle \sqrt{D}p, \sqrt{D}q \rangle_{\la}= \ga(p^*q)$ is our $\Pi_\la$ Toeplitz form which induces the measure $\ga$ by Remark \ref{Toeplitz-form-and-measure}. Actually, this would mean that $M=\mathrm{Ran}(\mathscr{C}_{\ga})$; however, the proof shows why we need to work with form instead of measure. Because $D=\mathscr{C}_{\la}^{*}ee^* \mathscr{C}_{\la}$, by polar decomposition there is a unitary operator $U$ such that $\sqrt{D}=Ue^* \mathscr{C}_{\la}$. So, for the inner product we see that
$$\langle \sqrt{D} (L^\omega+N_\la), \sqrt{D}(L^\alpha+N_\la) \rangle_{\la}=\langle e^* \mathscr{C}_{\la} (L^\omega+N_\la), e^* \mathscr{C}_{\la} (L^\alpha+N_\la) \rangle_{\la}$$
Now, consider the $q$-Cauchy transform of the monomial $L^\alpha+N_\la$,
\begin{eqnarray*}
   \mathscr{C}_\ga(L^\alpha+N_\la)&=&\mathscr{C}_{q} (L^\alpha+N_\la)(Z)\\& = & \sum  Z^\om q \left( L^\om + N_\la , L^\alpha +N_\la \right) \\
& = & \sum Z^\om \ip{\sqrt{D} L^\om + N_\la}{\sqrt{D} L^\alpha +N_\la }_\la \\
& = & \sum Z^\om \ip{e^* \mathscr{C}_{\la} L^\om + N_\la}{e^* \mathscr{C}_{\la} L^\alpha +N_\la }_\la \\
& = & \sum Z^\om \ip{e^* K^{\la}_{\om}}{e^* K^{\la}_{\alpha} }_\la \\
& = & \sum Z^\om \ip{e^* K^{\la}_{\om}}{ K^{M}_{\alpha} }_M \\
&=& K_{\alpha}^{M}.
\end{eqnarray*}
Hence, we see that $M=\mathrm{Ran}(\mathscr{C}_{\ga})=H^{+}(H_\ga)$.

The remainder $\mu_{s}=\mu-\mu_{ac}$ is the RKHS-T singular part, and the corresponding singular part of $e$ for the derivative purpose lives on $\check{N}= \mathrm{int}(\mu ,\la) \setminus \check{M} $. Also, our decomposition is the maximal one since $\check{M}$ is the largest one.
\ep

\begin{remark}\label{Recovering-old-RK-AC-defn}
In the above theorems,  we should not expect   that $\mathrm{int}(\mu_{s}, \la)=0$. These phenomenon happens when $\la$ is non-Cuntz.
  In fact, our new definition of absolute continuity is more general than the previous definition for absolute continuous part in the non-Cuntz case. That is,  if $\la$ is non-Cuntz and $\mu \ll_{R} \la$, then $\mu \ll_{RT} \la$. When $\la$ is non-Cuntz and $\mu \ll_{R} \la$, we will show that $\mathrm{int}(\mu , \la)$ is indeed the largest subspace such that the Riesz derivative  $D_\la:={\mathscr{C}_{\la}}^{*} ee^{*} \mathscr{C}_{\la} $ is $\Pi_\la$-Toeplitz. For  non-Cuntz $\la$, by Proposition \ref{reducing-intersection} the intersection space $\mathrm{Int}(\mu , \la)$ is $V_\mu$ reducing. In addition, $\mu \ll_{R} \la$ implies that the intersection space is dense in ${\mathscr{H}}^{+}(H_{\mu})$. Also, from Theorem \ref{form-decomposition-nonCE} we see that the Simon derivative $D_\sigma=Q_{ac}-E_{\la,\sigma}^{*}E_{\la, \sigma}$ is $\Pi_\sigma$-Toeplitz. Recall that the left regular representations act on their natural domains by left multiplication. So, from Remark \ref{types-of-derivatives} and coincidence of various forms, we see that the form $q_{\mu}^{\la}$ must be  $\Pi_\la$-Toeplitz. This alongside with densely defindness of $D_\la$, implies that the Riesz derivative $D_\la$ must be a closed $\Pi_\la$-Toeplitz operator. \\
 Conversely, if $\la$ is non-Cuntz and $\mu \ll_{RT} \la$, we show that $\mu \ll_{R} \la$. If the latter does not happen, then by the method of decomposition of Theorem \ref{rkhs-non-CE}, we see that $\mu = \mu_{ac}^{R} + \mu_{s}^{R}$ with $\mu_s \neq 0$,
 $${\mathscr{H}}^{+}(H_{\mu_{ac}}):= {\overline{{\mathscr{H}}^{+}(H_{\mu}) \cap {\mathscr{H}}^{+}(H_{\lambda}) }}^{H^{+}(H_{\mu})}=\mathrm{Int}(\mu, \la)$$
and 
$${\mathscr{H}}^{+}(H_{\mu_{s}}):={\mathscr{H}}^{+}(H_{\mu}) \ominus {\mathscr{H}}^{+}(H_{\mu_{ac}}).$$
 
Thus, $\mu_{ac}^{R} < \mu$. By NC-Aronszajn theorem \cite[Theorem 4.1]{JM-ncld}, we see that $ {\mathscr{H}}^{+}(H_{\mu_{ac}}) \subsetneqq  {\mathscr{H}}^{+}(H_{\mu})$ and $ \mathrm{int}(\mu_{ac} , \la) \subsetneqq \mathrm{int}(\mu , \la)  $, so $M=\mathrm{int}(\mu , \la)\setminus \mathrm{int}(\mu_{ac} , \la) \neq \varnothing$. Also, $\mathrm{int}(\mu_s , \la)=0$. However, the remainder $\mu_s$ shows that $e$ has a restriction to  subspace $M$ of $\mathrm{int}(\mu , \la)$ such that $D_\la$ is not $\Pi_\la$-Toeplitz; contradicting $\mu \ll_{RT} \la$. Thus as we can see, for non-Cuntz measures, the AC-part in both RT and RKHS senses is the same. We will show that it is also true for AC-parts of Cuntz measures; however, the singular parts might not be the same, see Example \ref{classical-sing-not-RKHS-Sing}. That is the definition of singularity based on RKHS has limitations as previously explained. $\blacksquare$ 
 \end{remark}

\begin{thm} \label{form-rkhs-T-equiv-CE}
    Let $\mu$ and $\la$ be two positive NC measures on $\mathcal{A}_d$, and let $\la$ be Cuntz. Then,
\begin{enumerate}[(i)]
\item  $\mu \ll_{RT} \la $ if and only if $\mu \ll_{F} \la$. 
\item  $\mu \perp_{RT} \la$ if and only if $\mu \perp_{F} \la$. 
\end{enumerate}
\end{thm}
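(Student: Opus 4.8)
The plan is to derive both equivalences by matching the RKHS--Toeplitz decomposition $\mu = \mu_{ac} + \mu_s$ of Theorem \ref{LD-RKHS-Cuntz} against Simon's form decomposition $q_\mu^\la = q_{ac} + q_s$, and by exploiting the description $q_{ac} = q_D$ with $D = \mathscr{C}_\la^* e e^* \mathscr{C}_\la$ furnished by Theorem \ref{q-Cauchy-dense}. The organizing observation is that, by Proposition \ref{a-special-intertwiner}, the co-embedding $E_{\mu,\la}$ is closed and densely defined exactly when $\mu \ll_R \la$, so that the operator $D = E_{\mu,\la}^* E_{\mu,\la}$ of Definition \ref{RK-AC-part-general} genuinely represents all of $\mu$, rather than merely its absolutely continuous part, precisely in that case.

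For part (i) I would first handle $\mu \ll_F \la \Rightarrow \mu \ll_{RT} \la$, which does not use the Cuntz hypothesis: Proposition \ref{transfer-of-the-domain} produces a closed positive operator $D$ with $\overline{q_\mu^\la} = q_D^\la$, Lemma \ref{hybrid-form-Toeplitz} makes $q_\mu^\la$ and hence $D$ a $\Pi_\la$--Toeplitz operator, and the generally valid implication $\mu \ll_F \la \Rightarrow \mu \ll_R \la$ lets me identify $D = E_{\mu,\la}^* E_{\mu,\la}$ with $E_{\mu,\la}$ closed; nonvanishing being clear, this gives $\mu \ll_{RT} \la$. For the converse $\mu \ll_{RT} \la \Rightarrow \mu \ll_F \la$, the existence of the nonzero closed operator $E_{\mu,\la}^* E_{\mu,\la}$ forces $E_{\mu,\la}$ to be closed and densely defined; Proposition \ref{a-special-intertwiner} then yields $\mu \ll_R \la$, and Theorem \ref{R-AC-Cuntz=F-Ac-Cuntz} upgrades this to $\mu \ll_F \la$. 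Corollary \ref{affiliation} guarantees that, for Cuntz $\la$, the Toeplitz requirement in Definition \ref{RK-AC-part-general} is automatically consistent with absolute continuity, so no information is lost in this reduction.

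For part (ii) I would argue contrapositively through the identity $q_{ac} = q_D$ together with the fact (Theorem \ref{q-Cauchy-dense}) that the $q_{ac}$--Cauchy transforms are dense in $\mathrm{Int}(\mu,\la)$, which gives $q_{ac} = 0 \Leftrightarrow \mathrm{int}(\mu,\la) = \{0\} \Leftrightarrow e = 0$. If $\mu \perp_F \la$ then $q_{ac} = 0$, so $e = 0$, which is the first alternative in the definition of $\perp_{RT}$, and hence $\mu \perp_{RT} \la$. Conversely, if $\mu \not\perp_F \la$ then $q_{ac} \neq 0$, and the crux is to show that $q_{ac}$ is itself $\Pi_\la$--Toeplitz; granting this, Remark \ref{Toeplitz-form-and-measure} produces a nonzero measure $\mu_{ac} \leq \mu$ and a nontrivial subspace $\check M \subseteq \mathrm{int}(\mu,\la)$ on which $D$ is $\Pi_\la$--Toeplitz, contradicting $\mu \perp_{RT} \la$.

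The principal obstacle, located in the converse of (ii), is establishing that Simon's maximal closable part $q_{ac}$ of the Toeplitz form $q_\mu^\la$ is again $\Pi_\la$--Toeplitz when $\la$ is Cuntz; this does not follow from closability by itself. I expect to prove it by using that $\Pi_\la$ is a row unitary, so that $\sum_i \Pi_{\la,i}\Pi_{\la,i}^* = I$, together with the canonicity of Simon's decomposition under the isometries $\Pi_{\la,i}$, or equivalently by applying Corollary \ref{affiliation} to the measure carved out by $q_{ac}$ and invoking the coincidence of the Simon, Riesz, and Toeplitz derivatives recorded in Remark \ref{types-of-derivatives}. A secondary subtlety pervading both parts is the domain bookkeeping that distinguishes ``$D$ is Toeplitz on the absolutely continuous directions'' from ``$E_{\mu,\la}$ is a closed operator representing all of $\mu$''; resolving it is exactly what ties $\ll_{RT}$ to $\ll_R$, and hence, through Theorem \ref{R-AC-Cuntz=F-Ac-Cuntz}, to $\ll_F$.
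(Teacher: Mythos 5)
Your part (i) follows essentially the paper's own argument: the paper proves $\mu \ll_{RT} \la \Rightarrow \mu \ll_F \la$ through the chain of identities $q_D = q_{E^*E} = q_\mu^\la$ plus closedness of $D$, and the converse by representing the closure of $q_\mu^\la$ by a closed positive operator and invoking Corollary \ref{affiliation} (this is where Cuntz-ness enters) --- exactly the ingredients you cite, merely re-routed through Proposition \ref{transfer-of-the-domain} and Theorem \ref{R-AC-Cuntz=F-Ac-Cuntz}. One caveat: Proposition \ref{a-special-intertwiner} is a statement about the specific coset map $p+N_\la \mapsto p+N_\mu$, and knowing only that $E_{\mu,\la}^*E_{\mu,\la}$ is nonzero, closed and $\Pi_\la$-Toeplitz does not by itself identify $E_{\mu,\la}$ with that coset map; that identification is precisely what $\ll_R$ supplies. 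The paper's proof leans on the same unexamined identity $E(a+N_\la)=a+N_\mu$, so this looseness is shared rather than introduced by you.

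For the direction $\mu \perp_F \la \Rightarrow \mu \perp_{RT} \la$ you take a genuinely different route. The paper argues by contradiction: a closed $\Pi_\la$-Toeplitz restriction $G$ of $D$ would, via Remark \ref{Toeplitz-form-and-measure}, generate a positive NC measure $\ga$ with $q_G=q_\ga^\la$ a nonzero closable form bounded above by $q_\mu^\la$, contradicting singularity. You instead observe that form-singularity means Simon's $q_{ac}=0$, so by Theorem \ref{q-Cauchy-dense} the set of $q_{ac}$-Cauchy transforms, which is dense in $\mathrm{Int}(\mu,\la)$, is $\{0\}$; hence $\mathrm{int}(\mu,\la)=\{0\}$ and $e=0$, which is the first alternative in the definition of $\perp_{RT}$. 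This is correct, more direct, and gives the stronger conclusion that form-singularity annihilates the whole intersection space.

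The genuine gap is in the remaining direction, $\mu \perp_{RT}\la \Rightarrow \mu\perp_F\la$, which you run contrapositively: from $q_{ac}\neq 0$ you must produce a nonzero restriction of $e$ whose operator is $\Pi_\la$-Toeplitz, and your plan is to first show that Simon's maximal closable part $q_{ac}$ is itself $\Pi_\la$-Toeplitz. You rightly flag that this does not follow from closability, but every fallback you propose is circular within the paper's toolkit: Remark \ref{Toeplitz-form-and-measure} produces a measure only \emph{from} a form that is already Toeplitz, so it cannot be the source of the Toeplitz property; Corollary \ref{affiliation} rests on Lemma \ref{a-row-ism-on-Ran-sqrtD}, which needs the form to be Toeplitz in order to even define the intertwining row isometry on $\overline{\mathrm{Ran}(\sqrt{D})}$; and Remark \ref{types-of-derivatives} records the coincidence of the Simon, Riesz and Toeplitz \emph{forms}, not any Toeplitz property of the Simon derivative. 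Moreover, the mechanism that delivers this property in the non-Cuntz case (Theorem \ref{form-decomposition-nonCE}) is reducingness of the Simon projections, which is exactly what fails for Cuntz $\la$ (Example \ref{non-example}), so no existing result can be quoted. The paper is structured to sidestep your crux entirely: for this direction it argues from the hypothesis $\perp_{RT}$ via the identity $q_D=q_\mu^\la$ (if $D=0$ the form vanishes; if no Toeplitz restriction exists, $q_\mu^\la$ cannot be closable), and never asserts that Simon's $q_{ac}$ is Toeplitz. As it stands, your proposal establishes part (i) and one implication of part (ii); the other implication remains open pending a proof of the crux, which is a substantive claim, not a routine verification.
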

\begin{proof}
   (i) Suppose that $\mu \ll_{RT} \la$; we must show that $q_{\mu}=q_{\mu}^{\la}$ is well-defined and closable in ${\mathbb{H}}_{d}^{2}(\lambda)$. However, this claim is because of the following identity and closedness of $D$
   \begin{eqnarray*}
  q_{D} (a+N_{\la}, b+N_{\la})&=&q_{E^*E} (a+N_{\la}, b+N_{\la})\\
  &=&\langle  a+N_{\la}, E^*E( b+N_{\la} )\rangle_{\la}\\
  &=&\langle E( a+N_{\la}), E(b+N_{\la} )\rangle_{\mu}\\
  &=&\langle  a+N_{\mu}, b+N_{\mu} \rangle_{\mu}\\
  &=& q_{\mu}^{\la} (a+N_{\la}, b+N_{\la})
\end{eqnarray*}
      
   Conversely, Suppose $\mu \ll_{F} \la$ holds, so by definition $N_{\mu} \supseteq N_{\la}$, and hence  $q_{\mu}=q_{\mu}^{\la}$ is a well-defined and  closable form on $\mathbb{H}_{d}^{2}(\lambda)$. Hence, we can find a positive closed unbounded operator $D$ on $\mathbb{H}_{d}^{2}(\lambda)$ such that 
   \begin{equation*}
       q_{\mu}(a+N_{\lambda}, b+N_{\lambda})=\langle \sqrt{D}(a+N_{\lambda}), \sqrt{D}(a+N_{\lambda}) \rangle.
   \end{equation*}
   By Corollary \ref{affiliation}, $D \sim vN(\Pi_{\lambda})^{'}$ so that $D$ commutes with $\Pi_{\lambda}$ on the domain of $D$, hence $D$ is $\Pi_{\lambda}$-Toeplitz, i.e., $\mu \ll_{RT} \la$.

(ii) The forward direction is like the previous part. If $D=0$, then $q_{\mu}^{\la}=0$ which is obviously singular. Also, if $D$  does not have a closed and $\Pi_{\la}$-Toeplitz restriction to any subspace, then by the equality $q_D = q_{\mu}^{\la}$, the form $q_{\mu}^{\la}$ cannot be closeable.

For the converse, let $\mu \perp_{F} \la$ and $\la$ be Cuntz . Note that $N_\mu \supseteq N_\la$, and $q_{\mu}=q_{\mu}^{\la} $ is well-defined and is singular by assumption, i.e., $q_{\mu}=(q_{\mu})_{s}$. Define $e$ and let  $D \neq 0$. Obviously, $q_\mu = q_D$. Now, suppose to the contrary that $D$ has a restriction to a subspace on which it is closed and $\Pi_{\la}$-Toeplitz. Call this restricted operator $G$. This Toeplitz operator gives rise to a measure $\ga$ like Remark \ref{Toeplitz-form-and-measure}. Now, by the method of the previous part, we can define a closeable form $q=q_{G}=q_{\ga}^{\la}$. Since $G$ is the restriction of $D$, this means that $q=q_{G} \leq q_\mu$ contradicting the singularity of $q_\mu$.

\end{proof}

\begin{cor} \label{R-AC=RT-AC}
    Let $\mu$ and $\la$ be any two NC measures. Then, $\mu \ll_{R} \la$ if and only if $\mu \ll_{RT} \la$
\end{cor}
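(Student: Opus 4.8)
The plan is to dispatch this corollary by splitting on whether the splitting measure $\la$ is Cuntz or non-Cuntz, and in each case chaining together equivalences that have already been established. By the dichotomy recorded in Remark \ref{Popescu-decomposition-of-row-isometry} (the GNS row isometry $\Pi_\la$ either has a nonzero $L$-part, making $\la$ non-Cuntz, or is purely Cuntz), these two cases are exhaustive, so it suffices to prove the equivalence separately in each.

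First I would treat the case where $\la$ is non-Cuntz. Here the equivalence $\mu \ll_{R} \la \Leftrightarrow \mu \ll_{RT} \la$ is precisely the content of Remark \ref{Recovering-old-RK-AC-defn}. The forward direction there shows that when $\la$ is non-Cuntz and $\mu \ll_{R} \la$, the intersection space $\mathrm{int}(\mu,\la)$ is the largest subspace on which the Riesz derivative $D_\la = {\mathscr{C}_{\la}}^{*} e e^{*} \mathscr{C}_{\la}$ is $\Pi_\la$-Toeplitz, which is exactly $\mu \ll_{RT} \la$; the converse direction uses the RKHS decomposition of Theorem \ref{rkhs-non-CE} to argue that a failure of $\mu \ll_{R} \la$ would exhibit a restriction of the embedding $e$ on which $D_\la$ is not $\Pi_\la$-Toeplitz, contradicting $\mu \ll_{RT} \la$. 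So nothing new is needed in this case.

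Next I would treat the case where $\la$ is Cuntz, passing through the form notion of absolute continuity as an intermediary. By Theorem \ref{R-AC-Cuntz=F-Ac-Cuntz}, for Cuntz $\la$ we have $\mu \ll_{R} \la \Leftrightarrow \mu \ll_{F} \la$; and by Theorem \ref{form-rkhs-T-equiv-CE}(i), again for Cuntz $\la$, we have $\mu \ll_{RT} \la \Leftrightarrow \mu \ll_{F} \la$. Chaining these two biconditionals gives $\mu \ll_{R} \la \Leftrightarrow \mu \ll_{F} \la \Leftrightarrow \mu \ll_{RT} \la$, which is the desired conclusion in the Cuntz case.

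Since the two cases exhaust all possibilities and yield the equivalence in each, the corollary follows. I do not expect any genuine obstacle: all the substantive work was carried out in the cited theorems and in Remark \ref{Recovering-old-RK-AC-defn}, and the only points to double-check are that the case split is genuinely exhaustive and that each cited result was proved for an \emph{arbitrary} pair of positive NC measures $\mu,\la$ (subject only to the standing hypothesis $N_\la \subseteq N_\mu$ needed to make the hybrid form $q_{\mu}^{\la}$ and the co-embedding $E_{\mu,\la}$ well-defined), rather than under any comparability assumption. Since this is indeed the setting of Theorems \ref{R-AC-Cuntz=F-Ac-Cuntz} and \ref{form-rkhs-T-equiv-CE} and of Remark \ref{Recovering-old-RK-AC-defn}, the argument goes through verbatim.
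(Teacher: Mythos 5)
Your proof is correct and follows exactly the paper's own argument: the paper's proof of Corollary \ref{R-AC=RT-AC} consists precisely of citing Theorem \ref{R-AC-Cuntz=F-Ac-Cuntz}, Theorem \ref{form-rkhs-T-equiv-CE}, and Remark \ref{Recovering-old-RK-AC-defn}, which is the same case split (Cuntz versus non-Cuntz $\la$) and the same chaining through $\ll_{F}$ in the Cuntz case that you spell out. Your version is in fact more explicit than the paper's one-line proof, since you articulate why the two cases are exhaustive and which direction of Remark \ref{Recovering-old-RK-AC-defn} handles which implication.
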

\bp
This is because of Theorem \ref{R-AC-Cuntz=F-Ac-Cuntz}, Theorem \ref{form-rkhs-T-equiv-CE}, and Remark \ref{Recovering-old-RK-AC-defn}.
\ep

\begin{remark} \label{GK-MN-relation}
Suppose we want to decompose the NC measures $\mu$ against the NC measure $\la$. One can extend $\mu$ and $\la$ to the Cuntz-Toeplitz  C*-algebra $\mathcal{E}_d$ by Krein's or Arveson's extension theorem, see \cite[Chapters 2 and 7]{Paulsen-cb}.  When the splitting measure $\la$ is Cuntz, it has a unique extension to  $\mathcal{E}_d$ \cite[Proposition 5]{JMT-NCFM}. Now the Gheondea-Kavruk method gives a Lebesgue decomposition which can be restricted back to the operator system $\mathcal{A}_d$, . However, at this point it is not obvious to us that this decomposition coincides with ours because we are not sure that the form domains in Gheondea-Kavruk theory coincides with ours, i.e., form domains might be different. Besides, for splitting non-Cuntz measures the problem of non-uniquness of $\la$ prevents us from using Gheondea-Kavruk decomposition. This is while the RKHS method still works. So, we prefer to implement the decomposition using our RKHS language, which works for all NC measures on the disc operator system. $\blacksquare$
\end{remark}

\begin{thm}
    Let $\la$ be a Cuntz measure. Then the set $SG_{RT}[\la]$ has  hereditary property. 
\end{thm}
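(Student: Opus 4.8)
The plan is to transport the problem to the form picture, where singularity is manifestly hereditary. Since $\la$ is Cuntz, Theorem~\ref{form-rkhs-T-equiv-CE} gives $SG_{RT}[\la] = SG_F[\la]$, so it suffices to prove that $SG_F[\la]$ is hereditary; that is, I would show that if $\mu \perp_F \la$ and $\nu$ is a positive NC measure with $0 \leq \nu \leq \mu$, then $\nu \perp_F \la$. The whole difficulty of the Cuntz case has already been absorbed into that equivalence, so the argument that remains is essentially a one-line order comparison between hybrid forms.

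First I would verify that $q_\nu^\la$ is well-defined, i.e. that $N_\la \subseteq N_\nu$. From $\mu \perp_F \la$ we have $N_\la \subseteq N_\mu$ by the standing convention attached to Definition~\ref{Simon's definition}. From $\nu \leq \mu$ it follows that $N_\mu \subseteq N_\nu$: if $\mu(a^*a)=0$ then $0 \leq \nu(a^*a) \leq \mu(a^*a) = 0$, so $a \in N_\nu$. Chaining these inclusions gives $N_\la \subseteq N_\mu \subseteq N_\nu$, so the hybrid form $q_\nu^\la$ is defined on the same dense form core $\mathbb{A}_d + N_\la \subseteq \mathbb{H}_d^2(\la)$ as $q_\mu^\la$. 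Next, the pointwise inequality $\nu \leq \mu$ yields the form inequality $q_\nu^\la \leq q_\mu^\la$ on this common domain, since $q_\nu^\la(a+N_\la,a+N_\la) = \nu(a^*a) \leq \mu(a^*a) = q_\mu^\la(a+N_\la,a+N_\la)$.

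With these two facts in hand the heredity is immediate from the definition of form-singularity. Suppose for contradiction that $\nu \not\perp_F \la$; then $q_\nu^\la$ majorizes some nonzero closable form $p$, i.e. $0 \leq p \leq q_\nu^\la$ with $p$ closable and $p \neq 0$. But then $0 \leq p \leq q_\nu^\la \leq q_\mu^\la$, so $q_\mu^\la$ also majorizes the nonzero closable form $p$, contradicting $\mu \perp_F \la$. Hence $q_\nu^\la$ majorizes only the zero closable form, i.e. $\nu \perp_F \la$, and therefore $\nu \in SG_F[\la] = SG_{RT}[\la]$. This establishes that $SG_{RT}[\la]$ is downward-closed under the positive order, which is exactly the hereditary property.

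The substance of the proof is carried entirely by the equivalence $SG_{RT}[\la] = SG_F[\la]$ of Theorem~\ref{form-rkhs-T-equiv-CE}; once in the form picture, singularity is a downward-closed property by inspection. The only points that require care, and the main obstacle such as it is, are the bookkeeping of the nested isotropic ideals $N_\la \subseteq N_\mu \subseteq N_\nu$ needed to put $q_\nu^\la$ and $q_\mu^\la$ on the same form core, and checking that the form order is read with the correct domain convention, so that a closable minorant of $q_\nu^\la$ is automatically a closable minorant of $q_\mu^\la$. A direct proof staying in the RT picture would instead require comparing $\mathrm{int}(\nu,\la)$ with $\mathrm{int}(\mu,\la)$ through the contractive RKHS embedding induced by $\nu \leq \mu$ and re-examining the $\Pi_\la$-Toeplitz restrictions, which is considerably more laborious; the form route avoids this entirely.
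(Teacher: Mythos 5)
Your proposal is correct and follows essentially the same route as the paper's own proof: both pass to the form picture via the equivalence $\mu \perp_{RT} \la \Leftrightarrow \mu \perp_F \la$ of Theorem~\ref{form-rkhs-T-equiv-CE} and then observe that a nonzero closable form $p \leq q_\nu^\la \leq q_\mu^\la$ would contradict the form-singularity of $\mu$. Your write-up is in fact slightly more careful than the paper's, since you explicitly verify the ideal inclusions $N_\la \subseteq N_\mu \subseteq N_\nu$ needed for $q_\nu^\la$ to be well-defined on the common form core.
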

\bp
 suppose that $\nu \leq \mu$ and $\mu \perp_{RT} \la$. We will show that the form $q_{\nu}^{\la}$ cannot majorize any closeable form except the zero one. Suppose to the contrary that it majorizes the non-zero closeable form $p$. Obviously, $p \leq q_{\nu}^{\la} \leq q_{\mu}^{\la}$, so $q_{\mu}^{\la}$ majorizes a non-zero closable form. However, this means that $\mu$ cannot be singular with respect to $ \la$ in the form sense, so by the equivalence of form and RT-sense, we see that $\mu$ cannot be singular with respect to $ \la$ in the RT- sense, which is false.

\ep

    The content of Corollary  \ref{R-AC=RT-AC} is that the AC part in RKHS-T decomposition is the same as the AC part in  RKHS decomposition.  However, the same symmetry cannot exist for the singular parts as the following theorem and the next example show.

\begin{thm} \label{RK-Sing-implies-Sing}
    Let $\mu$ and $\la$ be two positive classical measures on the unit circle $\mathbb{T}$.  If $\mu \perp_{R} \la$, then $\mu \perp \la$. 
\end{thm}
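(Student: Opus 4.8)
The plan is to prove the statement via the classical Lebesgue--Radon--Nikodym decomposition, reducing everything to showing that the classically absolutely continuous part of $\mu$ vanishes. Write $\mu = \mu_{ac} + \mu_s$ for the classical decomposition of $\mu$ against $\la$, where $\mu_{ac} \ll \la$ and $\mu_s \perp \la$ are positive measures on $\mathbb{T}$. Since $\mu \perp_R \la$ means precisely that $\mathrm{int}(\mu,\la) = \mathscr{H}^{+}(H_{\mu}) \cap \mathscr{H}^{+}(H_{\la}) = \{0\}$, it suffices to show $\mu_{ac} = 0$; then $\mu = \mu_s \perp \la$ and we are done. So I would assume for contradiction that $\mu_{ac} \neq 0$ and extract a nonzero vector of the forbidden intersection space.

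First I would transfer absolute continuity to the reproducing kernel side: because $\mu_{ac} \ll \la$ classically, Corollary \ref{RK-AC-equiv-AC} gives $\mu_{ac} \ll_R \la$, that is, $\mathrm{int}(\mu_{ac},\la) = \mathscr{H}^{+}(H_{\mu_{ac}}) \cap \mathscr{H}^{+}(H_{\la})$ is norm-dense in $\mathscr{H}^{+}(H_{\mu_{ac}})$. Next I would observe that $\mathscr{H}^{+}(H_{\mu_{ac}})$ is nontrivial whenever $\mu_{ac}\neq 0$: evaluating the kernel \eqref{mu-kernel} at the origin (where $n=1$) gives $\prescript{\mu_{ac}}{}{K}(0,0) = \re{H_{\mu_{ac}}(0)} = \mu_{ac}(I) \neq 0$, so the associated RKHS cannot be the zero space. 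Hence $\mathrm{int}(\mu_{ac},\la)$, being dense in a nonzero Hilbert space, contains a nonzero element.

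Then I would produce the contradiction using the Aronszajn inclusion recalled in Section 2. Since $0 \le \mu_{ac} \le \mu$, the contractive embedding $f \mapsto f$ gives $\mathscr{H}^{+}(H_{\mu_{ac}}) \subseteq \mathscr{H}^{+}(H_{\mu})$ as spaces of functions. Intersecting with $\mathscr{H}^{+}(H_{\la})$ yields
$$\mathrm{int}(\mu_{ac},\la) = \mathscr{H}^{+}(H_{\mu_{ac}}) \cap \mathscr{H}^{+}(H_{\la}) \subseteq \mathscr{H}^{+}(H_{\mu}) \cap \mathscr{H}^{+}(H_{\la}) = \mathrm{int}(\mu,\la) = \{0\}.$$
This forces $\mathrm{int}(\mu_{ac},\la) = \{0\}$, contradicting the previous paragraph. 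Therefore $\mu_{ac} = 0$ and $\mu = \mu_s \perp \la$.

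The genuinely hard input is not in this assembly but in Corollary \ref{RK-AC-equiv-AC} (equivalently Theorem \ref{RK-AC-implies-AC}), which is the one place where classical measure theory is really used and which the excerpt defers to \cite{BMN}; granting that equivalence, the only points requiring care are the nonvanishing of $\mathscr{H}^{+}(H_{\mu_{ac}})$ for $\mu_{ac}\neq 0$ and the set-theoretic (rather than merely isometric) nature of the Aronszajn containment, both of which are routine. In effect this statement is the exact dual of the absolute-continuity equivalence: triviality of the intersection of kernel spaces annihilates any classically absolutely continuous part of $\mu$.
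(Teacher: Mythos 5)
Your proof is correct, and it runs on the same engine as the paper's proof --- both derive a contradiction by producing a nonzero element of $\mathrm{int}(\mu,\la)$ from a hypothetical nonzero piece of $\mu$ that is classically absolutely continuous with respect to $\la$ --- but the outer structure is different. The paper works with the lattice of measures: it shows that any nonzero positive $\nu$ with $\nu \leq \mu$ and $\nu \leq \la$ would force $\mathrm{int}(\mu,\la) \neq \{0\}$ (for such a \emph{dominated} $\nu$ the Aronszajn containments $\mathscr{H}^{+}(H_{\nu}) \subseteq \mathscr{H}^{+}(H_{\mu}) \cap \mathscr{H}^{+}(H_{\la})$ already give this, together with $\mathscr{H}^{+}(H_{\nu}) \neq \{0\}$), concludes that the joint minimum of $\mu$ and $\la$ is zero, and finishes by citing \cite[Theorem 37.5]{Aliprantis}, i.e.\ the fact that two positive measures with zero infimum are mutually singular. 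You instead take the classical Lebesgue decomposition $\mu = \mu_{ac} + \mu_{s}$ and show $\mu_{ac} = 0$, using classical AC $\Rightarrow$ RK-AC on $\mu_{ac}$ together with Aronszajn only on the side $\mu_{ac} \leq \mu$; your finishing input is thus the Radon--Nikodym/Lebesgue decomposition theorem rather than the infimum characterization of singularity. Both assemblies are sound; yours has to invoke the AC equivalence because $\mu_{ac}$ is merely absolutely continuous with respect to $\la$, not dominated by it, while the paper's dominated $\nu$ could in principle bypass the equivalence entirely.

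One correction to your closing assessment: your argument does not use Theorem \ref{RK-AC-implies-AC} (the hard direction RK-AC $\Rightarrow$ classical AC, deferred to \cite{BMN}) at all. The only direction you need is classical AC $\Rightarrow$ RK-AC, which is the elementary half established in the discussion at the opening of Section 3: approximate a classically AC measure monotonically by measures dominated by $\la$ and apply Aronszajn. So your proof is more self-contained than you claim --- it depends only on that easy direction plus the two routine facts you isolate (nonvanishing of $\mathscr{H}^{+}(H_{\mu_{ac}})$ via the kernel value $\mu_{ac}(I)$ at the origin, and the set-theoretic Aronszajn containment), and in this respect it is slightly more economical than the paper's own write-up, which nominally appeals to the full equivalence of classical-AC and RK-AC.
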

\bp
By the definition of RK-Sing, we notice that the only measure $\nu$ which is less than both $\mu$ and $\la$ is the zero measure. Since if $\nu$ is not zero, then by $\nu \leq \mu, ~\la$, we see that $\nu$ is Classically absolutely continuous with respect to $\mu$ and $\la$. However, we have shown that classical-AC and RK-AC are the same. This, means that $\mathrm{int}(\mu, \la)$ is non-empty, contradicting the assumpton that $\mu \perp_R \la$. Thus, the joint minimum of $\mu$ and $\la$ is zero. Hence, by \cite[Theorem 37.5]{Aliprantis}, $\mu \perp \la$ in the classical sense.
\ep
\begin{eg} \label{classical-sing-not-RKHS-Sing}
    The converse of the above theorem is wrong. That is, there are two classical measures $\mu$ and $\la$  on the unit circle $\mathbb{T}$ such that $\mu \perp \la$ but $\mu \not\perp_{R} \la$, and it should be deduced from the counter  Example  \ref{non-example} by $\mu=m_1$ and $\la=m_2$. Here we outline a  direct way to show that $\mathrm{int}(m_1, m_2) \neq 0$. In fact, one can single out an element in this intersection space. For another example see \cite{BMN}. Note $V_{m_i}$'s are unitary operator (since $m_i$'s are Cuntz measures), their ranges are subspaces of ${\mathscr{H}}^{+}(H_{m_i})$, so  $\mathrm{Ran}(V_{m_1}) \cap \mathrm{Ran}(V_{m_2}) \subseteq \mathrm{int}(m_1, m_2)$. We try to find an element in  $\mathrm{Ran}(V_{m_1}) \cap \mathrm{Ran}(V_{m_2})$. One  can see that 
\begin{eqnarray*}
    H_{m_i}(z)&=&m_i (1) + 2 \sum_{n=1}^{\infty} \overline{m_i(\zeta ^n) }z^n, \qquad i=1,2 \\
    m_i(\zeta ^n)&=&\int \zeta^n d~m_i(\zeta)= \begin{cases} 
          (-1)^{i} \frac{2}{n+1}, & n=even \\
          
          0, &  n=odd
       \end{cases}, \qquad i=1,2 
\end{eqnarray*}
So,
\begin{eqnarray*}
    H_{m_i}(z)&=&m_i (1) + 2 \sum_{n=1}^{\infty} \overline{m_i(\zeta ^n) }z^n=\pi + (-1)^{i} 4 \sum_{k=1}^{\infty} \frac{z^{2k}}{2k+1}.
\end{eqnarray*}
Now, if $K_w(z)=K(z,w)=\frac{1}{1-z\overline{w}}$ is the Szego kernel, then
\begin{equation*}
\prescript{m_i}{}{K}_w(z)=\frac{1}{2}[H_{m_i}(z)+ \overline{H_{m_i}(w)}]K_{w}(z),
\end{equation*}
and for $i=1, 2$ at $w=0$, we have 
\begin{equation*}
\prescript{m_i}{}{K}_0(z)=\frac{1}{2}[H_{m_i}(z)+ \overline{H_{m_i}(0)}] 
= \pi + (-1)^i2\sum_{k=1}^{\infty} \frac{z^{2k}}{2k+1} 
= \prescript{m_i}{}{K}_0(0) + (-1)^i2\sum_{k=1}^{\infty} \frac{z^{2k}}{2k+1}.
\end{equation*}
Thus,
\begin{equation*}
  \prescript{m_i}{}{K}_0(z) -  \prescript{m_i}{}{K}_0(0) =   (-1)^i2\sum_{k=1}^{\infty} \frac{z^{2k}}{2k+1} =\pm f(z),
\end{equation*}
where $f(z)=2\sum_{k=1}^{\infty} \frac{z^{2k}}{2k+1}$. However, by \cite[Section 3.4, p.22]{JM-ncFatou}, we know that 
$$ \prescript{m_i}{}{K}_0(z) -  \prescript{m_i}{}{K}_0(0) \in \mathrm{Ran}(V_{m_i}); $$
also $\mathrm{Ran}(V_{m_i})$ is a linear subspace so $\pm f \in \mathrm{Ran}(V_{m_i})$. This means that 
$$ f \in \mathrm{Ran}(V_{m_1}) \cap \mathrm{Ran}(V_{m_2}) \subseteq \mathrm{int}(m_1, m_2),$$
and since $f \neq 0$, we see that $\mathrm{int}(m_1, m_2) \neq 0$. $\blacksquare$
\end{eg}
\begin{remark}
The counter Example \ref{non-example} does not fall within the scope of Jury-Martin initial decomposition since it has something to do with their construction of singular part through the direct sum decomposition. This is because they use the direct sum decomposition for the Aronsjazn sum of the RKHS corresponding to the AC and Sing parts.  We should mention that singularity with respect to the splitting Lebesgue measure $m$ is equivalent to having zero derivative, see \cite[Proposition 3.30]{Folland}. So, this condition, translated into the language of RKHS, is equivalent to zero intersection, and hence providing direct sums. However, for general splitting measure $\la$ such a theory of differentiation like \cite[Section 3.4]{Folland} does not work, so we do not know that the Radon-Nikodym derivatives is zero in this case. As a result, the intersection space might not be zero. This is why we cannot directly generalize the initial definition of Jury-Martin \cite{JM-ncld}. We think that singularity is an antithesis to absolute continuity in every subspace, so Definition \ref{RK-AC-part-general} is plausible. Finally, we should also comment that our operator theoretic and operator algebraic language allow the existence of the non-zero derivative  $D_{\la}(\mu)=E_{\mu, \la}^{*}E_{\mu, \la}$  of singular measure $\mu$ against Cuntz measure $\la$  so that $D_{\la}(\mu)$ is  not $\Pi_\la$-Toeplitz. The existence of such non-zero derivative through the language of operator algebra is a bit counter intuitive to the classical measure theory. $\blacksquare$
\end{remark}

By \cite[Theorem 4.5]{JM-ncld}, $\mathrm{Int}(\mu, \la)$ is always $V_\mu$-co-invariant. The following proposition reveals when it is $V_\mu$-invariant.
\begin{prop} \label{invariance-of-intersection-space}
    Let $\mu$ and $\la$ be two positive NC measures on $\mathcal{A}_d$. Then, $ \mathrm{Int}(\mu, \la)$ is $V_\mu$-invariant if and only if either $\la$ is non-Cunts, or 
    $$(V_{\mu, j}f)(0)=(V_{\la, j}f)(0), \quad \forall f \in  \mathrm{int}(\mu, \la),\quad j=1, \cdots, d  $$
\end{prop}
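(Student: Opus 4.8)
The plan is to prove the two implications separately, exploiting the Herglotz--shift identity $(V_{\nu,k}f)(Z)-(V_{\nu,k}f)(0_n)=Z_k f(Z)$, valid for every $f\in\mathscr{H}^{+}(H_{\nu})$ \cite{JM-freeCE}, applied to $\nu=\mu$ and $\nu=\la$. Since the right-hand alternative is a disjunction, for the ``if'' direction I would treat its two cases. If $\la$ is non-Cuntz, then $\mathrm{Int}(\mu,\la)$ is $V_{\mu}$-reducing by Proposition \ref{reducing-intersection}, hence a fortiori $V_{\mu}$-invariant, and nothing further is needed. If instead the boundary condition $(V_{\mu,j}f)(0)=(V_{\la,j}f)(0)$ holds for all $f\in\mathrm{int}(\mu,\la)$ and all $j$, then subtracting the two instances of the shift identity gives $V_{\mu,j}f=Z_j f+(V_{\mu,j}f)(0)=Z_j f+(V_{\la,j}f)(0)=V_{\la,j}f$ as NC functions. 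The common function lies in $\mathscr{H}^{+}(H_{\mu})\cap\mathscr{H}^{+}(H_{\la})=\mathrm{int}(\mu,\la)$, so $V_{\mu,j}$ carries $\mathrm{int}(\mu,\la)$ into itself; since each $V_{\mu,j}$ is bounded and $\mathrm{Int}(\mu,\la)$ is the $\|\cdot\|_{H_{\mu}}$-closure of $\mathrm{int}(\mu,\la)$, it follows that $V_{\mu,j}\,\mathrm{Int}(\mu,\la)\subseteq\mathrm{Int}(\mu,\la)$, i.e.\ $\mathrm{Int}(\mu,\la)$ is $V_{\mu}$-invariant.

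For the ``only if'' direction, suppose $\mathrm{Int}(\mu,\la)$ is $V_{\mu}$-invariant. If $\la$ is non-Cuntz we are already in the right-hand alternative, so assume $\la$ is Cuntz and fix $f\in\mathrm{int}(\mu,\la)$ and an index $j$. Put $g:=V_{\mu,j}f$. By invariance $g\in\mathrm{Int}(\mu,\la)$, while the two shift identities give $g-V_{\la,j}f=(V_{\mu,j}f)(0)-(V_{\la,j}f)(0)=:c_j$, a constant NC function, with $V_{\la,j}f\in\mathscr{H}^{+}(H_{\la})$. The goal is exactly $c_j=0$. Since $\la$ is Cuntz, $\mathscr{H}^{+}(H_{\la})$ contains no nonzero constant functions by \cite[Theorem 6.4]{JM-freeCE}; hence it suffices to prove $c_j\in\mathscr{H}^{+}(H_{\la})$, equivalently $g\in\mathscr{H}^{+}(H_{\la})$, i.e.\ that $g$ lies in the un-closed intersection $\mathrm{int}(\mu,\la)$ and not merely in its closure.

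To reach $g\in\mathscr{H}^{+}(H_{\la})$ I would combine three facts: first, $\mathrm{Int}(\mu,\la)$ is always $V_{\mu}$-co-invariant and $V_{\mu,k}^{*}=V_{\la,k}^{*}$ on $\mathrm{int}(\mu,\la)$, by \cite[Theorem 4.5]{JM-ncld}; second, $\la$ Cuntz means $\Pi_{\la}$, hence the unitarily equivalent $V_{\la}$, is a surjective row isometry, so the Cuntz reconstruction identity $\sum_{k=1}^{d}V_{\la,k}V_{\la,k}^{*}=I_{\mathscr{H}^{+}(H_{\la})}$ holds; third, for $g=V_{\mu,j}f$ co-invariance gives the explicit values $V_{\mu,k}^{*}g=\delta_{k,j}f\in\mathrm{int}(\mu,\la)$. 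The idea is then to reconstruct $g$ from its lowered components: for approximants $g_n\in\mathrm{int}(\mu,\la)$ with $g_n\to g$ in $\|\cdot\|_{H_{\mu}}$ one has $g_n=\sum_k V_{\la,k}V_{\la,k}^{*}g_n=\sum_k V_{\la,k}(V_{\mu,k}^{*}g_n)$ with $V_{\mu,k}^{*}g_n\to\delta_{k,j}f$, and passing to the limit should identify $g$ with $V_{\la,j}f\in\mathscr{H}^{+}(H_{\la})$, forcing $c_j=0$; once $g\in\mathrm{int}(\mu,\la)$ is secured, the identity $g=\sum_k V_{\la,k}(V_{\mu,k}^{*}g)=V_{\la,j}f$ closes the argument.

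The hard part, and the main obstacle, is precisely this last limiting step: the approximation $g_n\to g$ is only in the $\mathscr{H}^{+}(H_{\mu})$-norm, whereas the reconstructed sums live in $\mathscr{H}^{+}(H_{\la})$, and the two norms are not comparable on $\mathrm{int}(\mu,\la)$, so one cannot naively push the limit through $V_{\la,k}$. To overcome this I plan to use that $V_{\mu}$-invariance together with the always-available co-invariance makes $\mathrm{Int}(\mu,\la)$ a $V_{\mu}$-\emph{reducing} subspace, and to prove that a reducing $\mathrm{Int}(\mu,\la)$ with $\la$ Cuntz is already closed inside $\mathscr{H}^{+}(H_{\la})$, i.e.\ $\mathrm{Int}(\mu,\la)=\mathrm{int}(\mu,\la)$. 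Concretely, the reconstruction identity yields $\|g_n\|_{H_{\la}}^{2}=\sum_k\|V_{\la,k}^{*}g_n\|_{H_{\la}}^{2}$, so the delicate point is to select the approximants with controlled $\mathscr{H}^{+}(H_{\la})$-norm --- for which the $q_{ac}$-Cauchy transform description of $\mathrm{Int}(\mu,\la)$ in Theorem \ref{q-Cauchy-dense}, where intersection elements arise as $\mathscr{C}_{q_{ac}}y=\mathscr{C}_{\la}(Dy)$ carrying the intrinsic norm $\|\sqrt{D}\,\cdot\|_{\la}$, is the natural source of $H_{\la}$-control. With such a bounded sequence, $c_j$ is realized as a bounded, locally uniform (pointwise) limit of elements of the reproducing kernel space $\mathscr{H}^{+}(H_{\la})$, whence $c_j\in\mathscr{H}^{+}(H_{\la})$ and therefore $c_j=0$ by the absence of constants.
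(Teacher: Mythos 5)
Your ``if'' direction is complete and coincides with the paper's: the non-Cuntz case is exactly Proposition \ref{reducing-intersection}, and under the boundary condition the two shift identities give $V_{\mu,j}f=V_{\la,j}f\in\mathrm{int}(\mu,\la)$, after which boundedness of $V_{\mu,j}$ on $\mathscr{H}^{+}(H_{\mu})$ passes invariance to the closure. Your ``only if'' direction also begins exactly as the paper does --- subtract the shift identities to get a constant $c_j$, then invoke the fact that $\mathscr{H}^{+}(H_{\la})$ contains no nonzero constants when $\la$ is Cuntz --- but it is not finished. The paper closes the argument in one line by asserting $V_{\mu}f\in\mathrm{int}(\mu,\la)\subseteq\mathscr{H}^{+}(H_{\la})$, i.e.\ it reads the invariance hypothesis at the level of the un-closed intersection and concludes $c_j=0$ at once. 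You correctly observe that invariance of $\mathrm{Int}(\mu,\la)$ literally yields only $g=V_{\mu,j}f\in\mathrm{Int}(\mu,\la)$, a $\|\cdot\|_{H_{\mu}}$-closure that need not sit inside $\mathscr{H}^{+}(H_{\la})$; the bridge you build to repair this is where your proposal has a genuine gap.

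Concretely, your plan rests on two unproven claims: (a) that a $V_{\mu}$-reducing $\mathrm{Int}(\mu,\la)$ with $\la$ Cuntz satisfies $\mathrm{Int}(\mu,\la)=\mathrm{int}(\mu,\la)$, and (b) that one can select approximants $g_n\in\mathrm{int}(\mu,\la)$ of $g$ that are bounded in $\|\cdot\|_{H_{\la}}$. Neither is established, and (b) is circular: by the standard weak-compactness argument in a reproducing kernel Hilbert space, the existence of an $H_{\la}$-norm-bounded sequence in $\mathrm{int}(\mu,\la)$ converging pointwise to $g$ is equivalent to $g\in\mathscr{H}^{+}(H_{\la})$, which is precisely the statement you are trying to prove. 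The appeal to Theorem \ref{q-Cauchy-dense} does not supply this control, since that theorem gives density of the $q_{ac}$-Cauchy transforms in $\mathrm{Int}(\mu,\la)$ only in the $H_{\mu}$-norm and carries no $H_{\la}$-bound for an approximating sequence of a prescribed $g$. Note also that the naive limit genuinely fails rather than merely being unjustified: writing $g_n=\sum_k V_{\la,k}V_{\mu,k}^{*}g_n$ and passing to pointwise limits only recovers the tautology $g(Z)=Z_jf(Z)+g(0)$, because the constant terms $(V_{\la,k}V_{\mu,k}^{*}g_n)(0)$ are continuous in $\|\cdot\|_{H_{\la}}$ but are not pointwise-continuous functionals of $V_{\mu,k}^{*}g_n$. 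So you must either prove (a) by an independent argument, or read the hypothesis the way the paper implicitly does --- namely that $V_{\mu}$ maps $\mathrm{int}(\mu,\la)$ itself into $\mathrm{int}(\mu,\la)$ --- in which case your constant-chasing argument closes immediately, exactly as in the paper's proof.
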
 
\bp
The proof of the reverse direction is obvious from \cite[Proposition 4.8]{JM-ncld} and \ref{reducing-intersection}. For the forward implication suppose that $ \mathrm{Int}(\mu, \la)$ is $V_\mu$-invariant  and $\la$ is Cuntz. For any $\forall f \in  \mathrm{int}(\mu, \la)$ and any $Z\in \mathbb{B}_{n}^{d}$, $n \in \mathbb{N}$, we have
\begin{eqnarray*}
    (V_{\mu}f)(Z)=(V_{\la}f)(Z)+I_n [(V_{\mu}f)(0_n)-(V_{\mu}f)(0_n)]
\end{eqnarray*}
Now, $V_{\mu}f \in \mathrm{int}(\mu, \la) \subseteq \mathscr{H}^{+}(H_\la)$ and $V_{\la}f \in \mathscr{H}^{+}(H_\la) $. So, by the above equation we must have that 
$$I_n [(V_{\mu}f)(0_n)-(V_{\mu}f)(0_n)] \in \mathscr{H}^{+}(H_\la).$$
However, $\la$ is Cuntz and by \cite[Theorem 6.4]{JM-freeCE}, $\mathscr{H}^{+}(H_\la)$ cannot contain non-zero constant functions. Hence, $(V_{\mu}f)(0_n)=(V_{\mu}f)(0_n)$, and this is obviously true on each coordinates of the row isometries.
\ep

Again, this Proposition somehow indicates that we should not expect in general that the intersection space be reducing. This condition is a bit difficult to satisfy for two different Cuntz measures. Just like example \ref{non-example}, one might find another counter example by taking $\la$ as Cuntz, and finding another Cuntz measure $\mu$ such that  by basic properties of $\mu$ the intersection space cannot be reducing.

\begin{remark}
    Let $\mu$ and $\la$ be two positive NC measures. We have seen that if $\la$ is non-Cuntz, then Propositin \ref{reducing-intersection} shows that $\mathrm{Int}(\mu, \la)$ is $V_\mu$ reducing. However, if $\la$ is non-Cuntz, Example \ref{non-example} indicates that there is no guarantee that this intersection be $V_\mu$-reducing. Nevertheless, if $\la$ is Cuntz, and 
    $$(V_{\mu, j}f)(0)=(V_{\la, j}f)(0), \quad \forall f \in  \mathrm{int}(\mu, \la),\quad j=1, \cdots, d  $$
    then by Proposition \ref{invariance-of-intersection-space}, then  $ \mathrm{Int}(\mu, \la)$ is $V_\mu$-reducing. In this case, let's say something about the RK-Lebesgue decomposition of $\mu $ with respect $\la$. Obviously, for any Cuntz measure $\la$, the intersection space $\mathrm{Int}(\la, \la)$ is $V_\la$-reducing.
\end{remark}
\begin{prop}
   Let $\la$ be a Cuntz measure, $\mu$ be any positive NC measure, and  $ \mathrm{Int}(\mu, \la)$ be $V_\mu$-reducing. Then the RK-AC part of $\mu$ with respect to $\la$ can be computed like Theorem \ref{rkhs-non-CE}, and in this case $\mu_{ac}$ is pure Cuntz.
\end{prop}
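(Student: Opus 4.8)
The plan is to handle the two assertions separately: first that the RK--AC part is produced by the same mechanism as in Theorem \ref{rkhs-non-CE}, and then that this $\mu_{ac}$ is Cuntz. For the first assertion, observe that the only place where non-Cuntzness of $\la$ entered the proof of Theorem \ref{rkhs-non-CE} was through Proposition \ref{reducing-intersection}, which guaranteed that $\mathrm{Int}(\mu,\la)$ is $V_\mu$-reducing. Here that conclusion is supplied directly by hypothesis. So I would apply \cite[Theorem 4.7]{JM-ncld} verbatim to the reducing subspace $\mathcal{M}:=\mathrm{Int}(\mu,\la)$ and argue exactly as in \cite[Theorem 4.9]{JM-ncld} to extract positive NC measures $\mu_{ac},\mu_s$ with $\mathscr{H}^{+}(H_{\mu_{ac}})=\mathrm{Int}(\mu,\la)$, $\mathscr{H}^{+}(H_{\mu_s})=\mathscr{H}^{+}(H_{\mu})\ominus\mathscr{H}^{+}(H_{\mu_{ac}})$, both $V_\mu$-reducing, and with the model row isometry of $\mu_{ac}$ realized as $W:=V_\mu|_{\mathcal{M}}$. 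Maximality of $\mu_{ac}$ follows by the same argument as in Theorem \ref{rkhs-non-CE}, since $\mu_{ac}$ depends only on $\mathrm{int}(\mu,\la)$ and its $\|\cdot\|_{H_\mu}$-closure.

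For the Cuntz property, by the definition of a Cuntz measure it suffices to prove that $W=V_\mu|_{\mathcal{M}}$ is surjective, i.e. unitary. The key preliminary step is to extract the identity $V_{\mu,k}=V_{\la,k}$ on $\mathrm{int}(\mu,\la)$. Since $\mathrm{Int}(\mu,\la)$ is always $V_\mu$-co-invariant by \cite[Theorem 4.5]{JM-ncld}, the reducing hypothesis is precisely $V_\mu$-invariance, so Proposition \ref{invariance-of-intersection-space} (using that $\la$ is Cuntz) gives $(V_{\mu,j}f)(0)=(V_{\la,j}f)(0)$ for all $f\in\mathrm{int}(\mu,\la)$. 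Combining this with the identity $(V_{\mu,k}f)(Z)-(V_{\mu,k}f)(0_n)=Z_kf(Z)=(V_{\la,k}f)(Z)-(V_{\la,k}f)(0_n)$ from \cite{JM-freeCE} (as already invoked in Proposition \ref{reducing-projections}) yields $V_{\mu,k}f=V_{\la,k}f$ for every $f\in\mathrm{int}(\mu,\la)$. Moreover \cite[Theorem 4.5]{JM-ncld} already provides $V_{\mu,k}^{*}=V_{\la,k}^{*}$ on the intersection space, and $\mathrm{int}(\mu,\la)$ is invariant under all of these operators.

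The surjectivity now comes from the Cuntz relation for $\la$. Since $\la$ is Cuntz, $V_\la$ is unitary, so $\sum_{k=1}^{d}V_{\la,k}V_{\la,k}^{*}=I_{\mathscr{H}^{+}(H_\la)}$. For $f\in\mathrm{int}(\mu,\la)$ put $g_k:=V_{\la,k}^{*}f=V_{\mu,k}^{*}f\in\mathrm{int}(\mu,\la)$; then $V_{\la,k}g_k=V_{\mu,k}g_k$, so
\[
f=\sum_{k=1}^{d}V_{\la,k}V_{\la,k}^{*}f=\sum_{k=1}^{d}V_{\mu,k}V_{\mu,k}^{*}f.
\]
Thus the bounded orthogonal projection $P:=\sum_k V_{\mu,k}V_{\mu,k}^{*}$ onto $\bigvee_k\mathrm{Ran}(V_{\mu,k})$ acts as the identity on the dense subspace $\mathrm{int}(\mu,\la)$ of $\mathcal{M}$, hence by continuity $Ph=h$ for all $h\in\mathcal{M}$. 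Because $\mathcal{M}$ is $V_\mu$-reducing, each summand stays in $\mathcal{M}$ and therefore $\sum_k W_kW_k^{*}=I_{\mathcal{M}}$; that is, $W=V_{\mu_{ac}}$ is a surjective row isometry, so $\mu_{ac}$ is Cuntz. Equivalently, $\mathcal{M}\subseteq\mathrm{Ran}(P)$ is orthogonal to the constant functions, which matches the characterization of Cuntz measures in \cite[Theorem 6.4]{JM-freeCE}.

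The main obstacle I anticipate is the limiting argument in the third step: passing from the identity $\sum_k V_{\mu,k}V_{\mu,k}^{*}f=f$ on the dense subspace $\mathrm{int}(\mu,\la)$ to its validity on the closure $\mathcal{M}$, while keeping track that each intermediate vector $g_k=V_{\la,k}^{*}f$ genuinely remains in $\mathrm{int}(\mu,\la)$ so that the substitution $V_{\la,k}g_k=V_{\mu,k}g_k$ is justified; this is exactly where the co-invariance (for the adjoints) and the invariance (for the $V$'s) must be used together. A secondary point requiring care is the explicit identification $V_{\mu_{ac}}=V_\mu|_{\mathcal{M}}$ coming out of the construction in \cite[Theorem 4.7]{JM-ncld}, since the equivalence between surjectivity of $W$ and $\mu_{ac}$ being Cuntz rests on it.
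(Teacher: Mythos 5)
Your proposal is correct, and its first half coincides with the paper's: since non-Cuntzness of $\la$ entered Theorem \ref{rkhs-non-CE} only through Proposition \ref{reducing-intersection}, the reducing hypothesis lets one run \cite[Theorem 4.7]{JM-ncld} on $\mathcal{M}=\mathrm{Int}(\mu,\la)$ verbatim, with the same maximality argument. Where you genuinely diverge is the Cuntz claim. The paper argues through the zero-length ideals: since $\mu_{ac}\ll_{R}\la$ by construction, the co-embedding $E_{\mu_{ac},\la}$ of Proposition \ref{a-special-intertwiner} forces $N_{\mu_{ac}}\supseteq N_\la$, the paper then asserts $N_\la\neq 0$ because $\la$ is Cuntz, and concludes from (the contrapositive of) Theorem \ref{left-ideal-non-Cuntz=0} that $\mu_{ac}$ has no $L$-part. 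You instead prove surjectivity of $W=V_\mu|_{\mathcal{M}}$ directly: Proposition \ref{invariance-of-intersection-space} together with the constant-difference identity from \cite{JM-freeCE} gives $V_{\mu,k}=V_{\la,k}$ on $\mathrm{int}(\mu,\la)$, \cite[Theorem 4.5]{JM-ncld} gives $V_{\mu,k}^{*}=V_{\la,k}^{*}$ there, and the Cuntz identity $\sum_k V_{\la,k}V_{\la,k}^{*}=I$ then passes to $\sum_k W_kW_k^{*}=I_{\mathcal{M}}$ by boundedness of the projection and density of $\mathrm{int}(\mu,\la)$ in $\mathcal{M}$. Your route is longer but rests on firmer ground: the paper's step ``$\la$ Cuntz $\Rightarrow N_\la\neq 0$'' is the unproven converse of Theorem \ref{left-ideal-non-Cuntz=0}, and in the classical case it can fail (arc-Lebesgue measure $m_1$ is Cuntz, yet no nonzero disk-algebra function vanishes a.e.\ on an arc, so $N_{m_1}=0$), whereas every ingredient you use is already established or cited in the paper. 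Two caveats: your closing aside that $\mathrm{Ran}(P)$ is orthogonal to the constants is neither needed nor justified in a general $\mathscr{H}^{+}(H_\mu)$, so drop it; and, unlike the paper, you never check (via Corollary \ref{affiliation}, using that $\Pi_\la$ is unitary) that the derivative $D=E_{\mu,\la}^{*}E_{\mu,\la}$ is $\Pi_\la$-Toeplitz on $\mathrm{Int}(\mu,\la)$ --- that is the step by which the paper identifies the measure produced by Theorem \ref{rkhs-non-CE} with the RT-AC part of Definition \ref{RK-AC-part-general}, so it should be added if ``RK-AC part'' is read in that stronger sense.
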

\bp
 Note that $D={\mathscr{C}_{\la}}^{*} ee^{*} \mathscr{C}_{\la}=E_{\mu,\la}^{*}E_{\mu, \la}$ defines the Radon-Nikodym derivative of $\mu$ with respect to $\la$, and by Corollary \ref{affiliation}, $D$ is affiliated with the von-Neumann algebra $vN(\Pi_\la)^{'}$. That is $D$ commutes with $\Pi_\la$ on the domain of $D$. However, $\Pi_\la$ is unitary, thus we deduce that $D$ is $\Pi_\la$-Topelitz. However, recall that that by definition, this intersection is the largest subspace such that $D={\mathscr{C}_{\la}}^{*} ee^{*} \mathscr{C}_{\la} $ is $\Pi_\la$-Toeplitz. On the other hand, the method of proof of Theorem \ref{rkhs-non-CE} for RK-Lebesgue decomposition is just using the reducing subspace assumption, so we can use it to detect the AC measure like \cite[Theorem 4.7]{JM-ncld}. By the maximal assumption on $M$ in the Definition \ref{RK-AC-part-general} and the fact that $\overline{M}^{\mu} \subseteq \mathrm{Int}(\mu, \la)  $, we see that these two RK-AC measures coincide. So, we have just one $\mu_{ac}$. Note that $\mu_{ac}$ is Cuntz measure since by our construction $\mu_{ac} \ll_{R} \la$, so 
 \begin{eqnarray*}
    E_{\mu_{ac}, \la}:  \mathbb{A}_d +N_{\la} \subseteq{\mathbb{H}}^{2}_{d}(\la) &\longrightarrow& {\mathbb{H}}^{2}_{d}(\mu_{ac})\\
    a+N_{\la} &\longrightarrow& a+N_{\mu_{ac}}
\end{eqnarray*}
 must be a well-defined closed intertwiner between $\Pi_\la$ and $\Pi_{\mu_{ac}}$. For this to make sense, we must have that $N_{\mu_{ac}} \supseteq N_\la \neq 0$; hence,
  $N_{\mu_{ac}} \neq 0$. Therefore, by Theorem \ref{left-ideal-non-Cuntz=0}, $\mu_{ac}$ cannot have any type $L$ part, so it is purely Cuntz.  Note that in the classical case this is obvious since $D$ is $\Pi_\la$-Toeplitz, we can see that $D=M_f$; thus, $d\mu_{ac}=fd\la$, which proves that $\mu_{ac}$ must be Cuntz. 
\ep
\large { {\bf Acknowledgments:}} The author would like to thank Professor Robert Martin for his support and encouragement.
\bibliographystyle{unsrtnat}

\Addresses

\end{document}